\newcommand{\Tr}{{\rm Tr}}
\newcommand{\RTr}{{\rm RTr}}
\newcommand{\W}{{\rm W}}
\newcommand{\WF}{{\rm WF}}
\newcommand{\ie}{{\rm i.e.,} }
\newcommand{\cf}{{\rm cf.~}}
\newcommand{\nat}{\omega}
\newcommand{\n}{\ensuremath{n \in \nat}}
\newcommand{\iin}{\ensuremath{i \in \nat}}
\newcommand{\ep}{\ensuremath{\varepsilon}}
\newcommand{\eq}{\ensuremath{\Longleftrightarrow}}
\newcommand{\del}{\ensuremath{\Delta^1_1}}
\newcommand{\sig}{\ensuremath{\Sigma^1_1}}
\newcommand{\pii}{\ensuremath{\Pi^1_1}}
\newcommand\tboldsymbol[1]{%
\protect\raisebox{0pt}[0pt][0pt]{%
$\underset{\widetilde{}}{\boldsymbol{#1}}$}\mbox{\hskip 1pt}}
\newcommand{\boldpii}{\ensuremath{\tboldsymbol{\Pi}^1_1}}
\newcommand{\om}{\ensuremath{\omega}}
\newcommand{\Q}{\ensuremath{\mathbb Q}}
\newcommand{\R}{\ensuremath{\mathbb R}}
\newcommand{\ca}[1]{\ensuremath{\mathcal{#1}}}
\newcommand{\set}[2]{\ensuremath{\{#1 \hspace{0.3mm} \mid \hspace{0.3mm} #2\}}}
\newcommand{\barr}[1]{\ensuremath{\overline{#1}}}
\newcommand{\inj}{\rightarrowtail}
\newcommand{\bij}{\mbox{\,$\rightarrowtail\kern -.8em \rightarrow$\,}}
\newcommand\surj{\twoheadrightarrow}
\newcommand{\lh}{\mbox{\rm lh}}
\newcommand{\empt}{\ensuremath{\emptyset}}
\newcommand{\pairint}[2]{\ensuremath{\lfloor#1,#2\rfloor}}
\newcommand{\prodt}[2]{\ensuremath{#1\circledast#2}}
\newcommand{\tu}[1]{\textup{#1}}
\newcommand{\att}[1]{\ensuremath{\mathtt{a}#1}}
\newcommand{\iniseg}{\ensuremath{{\rm seg}}}
\newcommand{\rfn}[1]{\ensuremath{\{#1\}}}
\newcommand\dense{{\mathbf{r}}}
\newcommand{\ggf}{{\varphi}}
\newcommand{\lo}{{\rm L}}
\newcommand{\Seq}{\ensuremath{\mathtt{Seq}}}
\newcommand{\omseq}{\ensuremath{\om^{<\om}}}
\newcommand{\spat}[1]{\ensuremath{\ca{N}^{#1}}}
\newcommand{\cn}[2]{\ensuremath{#1 \ \hat{} \ #2}}
\newcommand{\dec}[1]{\ensuremath{u[{#1}}]}
\newcommand{\ckr}[1]{\ensuremath{\omega_1^{#1}}}
\newcommand{\ck}{\ensuremath{\omega_1^{{\rm CK}}}}
\newcommand{\pdel}{{\mathbf{d}}}
\newcommand{\pker}[1]{\ensuremath{#1_{\rm p}}}
\newcommand{\scat}[1]{\ensuremath{#1_{\rm sc}}}
\newcommand{\hless}{\ensuremath{<_{\rm h}}}
\newcommand{\hleq}{\ensuremath{\leq_{\rm h}}}
\newcommand{\heq}{\ensuremath{=_{\rm h}}}
\newcommand{\dleq}{\ensuremath{\preceq_{\del}}}
\newcommand{\dequal}{\ensuremath{\simeq_{\del}}}
\newcommand{\dless}{\ensuremath{\prec_{\del}}}
\newcommand{\dgreat}{\ensuremath{\succ_{\del}}}
\newcommand{\tleq}{\ensuremath{\leq_{\rm T}}}
\newcommand{\kbleq}{\ensuremath{\leq_{\rm KB}}}
\newcommand{\kbgreat}{\ensuremath{>_{\rm KB}}}
\newcommand{\kbless}{\ensuremath{<_{\rm KB}}}
\newcommand{\Field}{\textrm{\normalfont Field}}
\newcommand{\Dom}{\textrm{\normalfont Domain}}
\newcommand{\suc}{\textrm{\normalfont Succ}}
\newcommand{\Gr}{{\rm Gr}}
\newcommand{\lin}{\ensuremath{\trianglelefteq}}
\newcommand{\slin}{\ensuremath{\triangleleft}}
\newtheorem{theorem}{Theorem}[section]
\newtheorem{lemma}[theorem]{Lemma}
\newtheorem{definition}[theorem]{Definition}
\newtheorem{corollary}[theorem]{Corollary}
\newtheorem{remark}[theorem]{Remark}
\newtheorem{question}{Question}
\title[Classes of Polish spaces]{Classes of Polish spaces under effective Borel isomorphism}
\author[V. Gregoriades]{Vassilios Gregoriades}
\address{Technische Universit\"{a}t Darmstadt,
Fachbereich Mathematik,
Arbeitsgruppe Logik,
Schlo{\ss}gartenstra{\ss}e 7,
64289 Darmstadt
Germany}
\email{gregoriades[at]mathematik[dot]tu-darmstadt[dot]de}
\date{}
\keywords{Recursively presented metric space, effective Borel-isomorphism, \del-isomorphism, \del-injection, Kleene space, Spector-Gandy space.}
\subjclass[2010]{03E15, 03D30, 03D60}
\begin{document}

\maketitle

\thispagestyle{empty}

\begin{abstract}
We study the equivalence classes under \del \ isomorphism, otherwise effective-Borel isomorphism, between complete separable metric spaces which admit a recursive presentation and we show the existence of strictly increasing and strictly decreasing sequences as well as of infinite antichains under the natural notion of \del-reduction, as opposed to the non-effective case, where only two such classes exist, the one of the Baire space and the one of the naturals. A key tool for our study is a mapping $T \mapsto \spat{T}$ from the space of all trees on the naturals to the class of Polish spaces, for which every recursively presented space is \del-isomorphic to some \spat{T} for a recursive $T$, so that the preceding spaces are representatives for the classes of \del-isomorphism. We isolate two large categories of spaces of the type \spat{T}, the Kleene spaces and the Spector-Gandy spaces and we study them extensively. Moreover we give results about hyperdegrees in the latter spaces and characterizations of the Baire space up to \del-isomorphism.
\end{abstract}

\vspace*{55mm}

\emph{This is a preprint of the work published in the \textbf{Memoirs of the American Mathematical Society}, 240 (2016), no. 1135, vii+87}.\newline 

\newpage

\section*{Acknowledgments.} 

This article contains work carried between 2009 and 2013. It includes parts of my Ph.D. Thesis, which was submitted to and approved by the Mathematics Department of the University of Athens Greece in 2009 and supervised by {\sc Y. N. Moschovakis} and {\sc A. Tsarpalias}. On a rough estimate these parts include most of the material in the first two Chapters, the first Section in Kleene spaces as well as results about the Cantor-Bendixson decomposition of Spector-Gandy spaces$-$although the latter spaces are not identified with this name in my Thesis.

The extended abstract of a very early version of this article has been published in the proceedings of the 8th Panhellenic Logic Symposium in 2011, Ioannina, Greece. \newline I would like to express my gratitude to my supervisors and especially to {\sc Y. N. Moschovakis} for his invaluable guidance through the whole process of preparation of this article. \newline The realization of this project would not be possible without {\sc U. Kohlenbach}, who, aside from his valuable advice, has offered to the author a long term postdoctoral position in the Logic Group at TU Darmstadt. I would like to take the opportunity to express my sincere thanks to him. \newline The list of people who have helped me in this task contains one prominent entry, my wife Stella, who knows first hand the difficulties of being the spouse of a researcher in mathematics. I am deeply grateful to her for the enduring support and encouragement.

\pagenumbering{Roman}
\setcounter{tocdepth}{2}\newpage
\tableofcontents
\newpage
\setcounter{page}{1}
\pagenumbering{arabic}

\section{\sc Introduction}
\label{section introduction}

The topic of our research is the class of complete separable metric spaces which admit a recursive presentation and their classification under \del \ (otherwise effective-Borel) isomorphism. Following \cite{yiannis_dst} we refer to the latter spaces simply as recursively presented metric spaces. In the sequel we will explain how this can be carried out in all Polish spaces. We begin this introductory section with a discussion about the motivation and a short description of this article. Then we proceed to the necessary definitions and basic theorems.

\subsection{A few words about the setting} This article originates from the following simple remark about the development of effective descriptive set theory as it is presented in the fundamental textbook \cite{yiannis_dst}, which is perhaps easy to overlook: one assumes that every recursively presented metric space which is not discrete has no isolated points. This assumption may seem unnecessary as all basic notions can be given in a perfect-free context. However the preceding assumption is required for the development of some parts of the theory. For example it is not even clear that the inequality $$\sig \upharpoonright \ca{X} \setminus \pii \upharpoonright \ca{X} \neq \emptyset $$ holds for every recursively presented metric space \ca{X}, and although we will prove this correct, it is still an open problem whether $$\sig \upharpoonright \ca{X} \setminus \tboldsymbol{\Pi}^1_1 \upharpoonright \ca{X} \neq \emptyset$$ holds, if \ca{X} is uncountable non-perfect.

At first sight the preceding assumption seems to have no impact to the applications of effective theory to classical theory, at least from the level of Borel sets and above, since every uncountable Polish space is Borel isomorphic to a perfect Polish space, namely the Baire space. One has to be careful though, for it may be the case that uncountably many spaces are involved, for example the sections of some closed set. In this case one would need to collect the preceding Borel isomorphisms in a uniform way, which may not be so easy to achieve.

Effective theory has also found its way to ``mixed-type" results, where classical and effective notions coexist in the statement, see for example \cite{louveau_a_separation_theorem_for_sigma_sets} and \cite{debs_effective_properties_in_compact_sets} for some very interesting results. In this type of results one should also take care not to apply any theorems of effective theory which use the assumption about perfectness without the space being so. This is often self-evident (as it is for example in \cite{louveau_a_separation_theorem_for_sigma_sets} and \cite{debs_effective_properties_in_compact_sets}), but nevertheless it would be good if one makes sure that this issue will not raise any concerns in future research on the area.

For these reasons it seemed useful to identify the main tools of effective theory and then trace back their proofs to see if the assumption about perfectness is indeed required. As it is probably expected most of them do hold in the general setting and below we give a list of those. There is however a handful of results which do not go through, and they all point to a single problem: \emph{every perfect recursively presented metric space is \del-isomorphic to the Baire space}, but the proof does not go through in the non-perfect setting. This brings us to the motivation of this article.\bigskip

\emph{Does there exist an uncountable recursively presented metric space which is not \del-isomorphic to the Baire space? And if yes what can be said about the structure of such a space and how do these spaces relate to each other?}\bigskip

We will show that there are infinitely many spaces which are incomparable with respect to \del-isomorphism, and that the natural notion
of \del-reduction carries a rich structure. The study of this structure is one of our aims.

We will see that there is a canonical way of choosing representatives from the preceding equivalence classes. More specifically we will define an operation
\[
T \mapsto \spat{T}
\]
which maps a tree $T$ on the naturals to a complete and separable metric space \spat{T} such that if $T$ is recursive then \spat{T} admits a recursive presentation. Every recursively presented metric space is \del-isomorphic to one of the spaces \spat{T} for some recursive tree $T$. Thus the spaces \spat{T} play the role of the Baire space in the non-perfect context -this is the justification for the notation.

For various choices of $T$ the space \spat{T} is not \del-isomorphic to the Baire space. We will focus on two large categories of spaces of this type: the \emph{Kleene spaces} and the \emph{Spector-Gandy spaces}. We will see that Kleene spaces have neither maximal nor minimal element and that they form antichains under the natural notion of \del-reduction. With some considerable elaboration of the arguments used in Kleene spaces, we are able to prove some similar (however weaker) results about Spector-Gandy spaces.

Our research will inevitably lead us to some questions about hyperdegrees. We will give some results on the topic in connection with Kleene and Spector-Gandy spaces. Moreover our results on Kleene spaces can be carried to recursive pseudo well-orderings in a natural way.

We will also deal with the problem of characterizing the Baire space up to \del-isomorphism and more specifically we will provide three such characterizations in terms of the intrinsic properties of trees, the Cantor-Bendixson decomposition and measures.

Occasionally we will review some well-known results of effective theory in modern day language and provide some new short proofs, (as it is for example with the Gandy Basis Theorem \ref{theorem Gandy basis} and Kreisel's results on the Cantor-Bendixson decomposition Theorem \ref{theorem kreisel scattered part}) or even extend them (as it is with the strong form of the Spector-Gandy Theorem for Polish spaces Theorem \ref{theorem strong form spector-gandy general}).

We conclude this article with a section on questions together with some relevant results.

It is worth pointing out that the spaces \spat{T} can be viewed as $\Pi^0_1$ subsets of the Baire space which contain a dense recursive sequence. There are various notions of comparison between $\Pi^0_1$ subsets of the Baire space, for example \emph{Muchnik} and \emph{Medvedev} reducibility\footnote{If $A$ and $B$ are $\Pi^0_1$ subsets of $\om^\om$ we say that $A$ is \emph{Muchnic-reducible} to $B$ if for all $\beta \in B$ there exists $\alpha \in A$ such that $\alpha \tleq \beta$, and $A$ is \emph{Medvedev-reducible} to $B$ if there exists a  recursive function $f: \ca{N} \to \ca{N}$ with $f[A] \subseteq B$. Sometimes this definition is given for subsets of $2^ \om$.}, where some very interesting results have been obtained (see for example \cite{binns_stephen_a_splitting_theorem_Medvedev_Muchnik_lattices}, \cite{binns_simpson_embeddings_into_the_medvedev_and_muchnik_lattices} and \cite{simpson_mass_problems_and_randomness}). However their definition does not go above the level of recursive functions. Moreover in the case of $\Pi^ 0_1$ subsets of $2^ \om$ the questions about \del-recursive functions trivialize because of compactness arguments. (This is the effective version of a result of Kunugui, \cf \cite{kunugui_sur_un_probleme_de_szpilrajn} and \cite{yiannis_dst} 4F.15.)

Another type of comparison, which is closer to our standards, is the \del-version of Muchnik reducibility for subsets of \ca{N}, \ie we replace $\alpha \tleq \beta$ with $\alpha \in \del(\beta)$ in the definition of Muchnik reducibility. Regarding the latter there is a relatively recent incomparability result in \cite{fokina_friedman_sy_toernquist_the_effective_theory_of_Borel_equivalence_relations}, which has been very inspiring for this article. We emphasize though that our approach is somewhat different, for we study a special category of $\Pi^0_1$ subsets of the Baire space (the ones which admit a recursive presentation) and as a result of this we insist on \del-injectivity, for otherwise the problem trivializes by taking constant functions. Despite this, some of our results on Kleene spaces are also results about this \del-version of Muchnik reducibility, see for example Corollary \ref{corollary del of gamma is dense in a Kleene tree} and Lemma \ref{lemma for every kleene tree there exists an incomparable kleene tree}.

\subsection{The basic notions} We give a very brief summary of the notions of effective descriptive set theory that we need. For a proper exposition of the subject the reader should refer to Chapter 3 of \cite{yiannis_dst}. We point out that almost all definitions and results in the remaining of this article that are not numbered are well-known. Numbered results which are also well-known will be indicated as such.

We denote by \omseq \ the set of all finite sequence of naturals. We fix once and for all a recursive injection
\[
\langle \cdot \rangle : \omseq \inj \om
\]
and we let $\Seq$ be the image of $\langle \cdot \rangle$. We think of the preceding function as a recursive encoding of the set of all finite sequences of naturals by a natural number. We may assume that the number $1$ is the code of the empty sequence. We denote the latter by \empt. If $u = (u_0,\dots,u_{n-1}) \in \omseq$ we denote by $\lh(u)$ the preceding natural $n$ -when $n=0$ we mean the empty sequence. As usual the notation $u \sqsubseteq v$ means that $v$ is an extension of $u$ and $u \perp v$ that $u$ and $v$ are incompatible, \ie it is not the case $u \sqsubseteq v$ or $v \sqsubseteq u$. We denote by $\cn{u}{v}$ the \emph{concatenation} of $u$ and $v$.

It is useful to fix a notation for the pre-images of elements of \Seq \ under $\langle \cdot \rangle$. We denote by $\dec{s}$ the unique $u = (u_0,\dots,u_{n-1}) \in \omseq$ such that $\langle u_0,\dots,u_{n-1} \rangle = s$ for all $s \in \Seq$.

The \emph{Baire space} is the set $\om^\om$ of all infinite sequences of naturals with the product topology. We denote the Baire space by \ca{N}. The members of \ca{N} will be denoted by lowercase greek letters such as $\alpha$, $\beta$ and $\gamma$. The usual distance function $p_\ca{N}$ on $\ca{N}$ is defined by
\[
p_\ca{N}(\alpha,\beta) = ((\textrm{least} \ k \ \alpha(k) \neq \beta(k)) + 1)^{-1}
\]
for $\alpha \neq \beta$. The \emph{Cantor space} \ca{C} is $2^\om$ with the product topology.

For all $i \in \om$ we define the function
\[
( \cdot )_i: \ca{N} \to \ca{N}: (\alpha)_i(n) = \alpha(\langle i,n \rangle).
\]
It is clear that for all sequences $(\alpha_i)_{\iin}$ in the Baire space there exists some $\alpha \in \ca{N}$ such that $(\alpha)_i = \alpha_i$ for all \iin.

Also we define
\begin{align*}
\hspace*{-10mm}\alpha \upharpoonright n =& (\alpha(0),\dots,\alpha(n-1))\\
\hspace*{-10mm}\textrm{and} \hspace*{5mm} \barr{\alpha}(n)         =& \langle \alpha(0),\dots,\alpha(n-1) \rangle.
\end{align*}
for all $\alpha \in \ca{N}$ and all \n. Unless stated otherwise by $\alpha^\ast$ we mean the function $(n \mapsto \alpha(n+1))$.

A \emph{tree} on \om \ is a set $T \subseteq \omseq$ which is closed under initial segments. The \emph{body} $[T]$ of a tree $T$ is the set of all infinite sequences $\alpha$ such that $\alpha \upharpoonright n \in T$ for all $n$. A tree $T$ is \emph{well-founded} if $[T] = \emptyset$ and \emph{ill-founded} if $[T] \neq \emptyset$. For every finite sequence $u$ and every tree $T$ we define the \emph{subtree}
\[
T_u = \set{v \in T}{v \ \textrm{is compatible with} \ u}.
\]
The \emph{Lusin-Sierpinski} or \emph{Kleene-Brouwer} ordering\footnote{The reader can refer to \cite{yiannis_dst} Chapter 4 Footnote (9) for an interesting discussion regarding the ambiguity of the origins of this ordering.} \kbleq \ (\cf \cite{lusin_sierpinski_sur_un_enseble_non_measurable_B}, \cite{brouwer_uniformly_continuous}, \cite{kleene_on_the_forms_of_the_predicates_in_the_theory_of_constructive_ordinals_II}) is defined on the set of all finite sequences of naturals by
\begin{align*}
&u \kbleq v \iff v \sqsubseteq u \ \vee \ [u \perp v \ \& \ u \leq_{\rm lex} v],
\end{align*}
where $\leq_{\rm lex}$ stands for the usual lexicographical ordering. It is not hard to verify that a tree $T$ is well-founded exactly when the linearly ordered space $(T,\kbleq)$ is well-ordered.

We will often identify relations with the sets that they define and write $P(x)$ instead of $x \in P$. Given Polish spaces $\ca{X}$, $\ca{Y}$ and $P \subseteq \ca{X} \times \ca{Y}$ we define
\begin{align*}
\hspace*{-10mm}\exists^\ca{Y} P =& \set{x \in \ca{X}}{\exists y \in \ca{Y}P(x,y)}\\
\hspace*{-10mm}\textrm{and}\hspace*{10mm}P_x =& \set{y \in \ca{Y}}{P(x,y)}
\end{align*}
for all $x \in \ca{X}$, \ie $\exists^\ca{Y}P$ is the projection of $P$ along \ca{Y} and $P_x$ is the $x$-section of $P$. By the term \emph{pointclass} we mean any collection of sets in Polish spaces, for example the collection of all open sets.

\subsubsection*{Parametrization and universal sets} Suppose that \ca{X} and \ca{Y} are Polish spaces and that $\Gamma$ is a pointclass.  We denote by $\Gamma \upharpoonright \ca{X}$ the family of all subsets of \ca{X} which are in $\Gamma$.

A set $G \subseteq \ca{Y} \times \ca{X}$ \emph{parametrizes} $\Gamma \upharpoonright \ca{X}$ if for all $P \subseteq \ca{X}$ we have that
\[
P \in \Gamma \iff \textrm{exists $y \in \ca{Y}$ such that} \ P = G_y,
\]
\cf \cite{lusin_diagonal_method} and \cite{lebesgue_sur_les_fonctions}. With think of $y$ as a \emph{$\Gamma$-code} for $P$. The set $G$ is \emph{universal} for $\Gamma \upharpoonright \ca{X}$ if $G$ is in $\Gamma$ and parametrizes $\Gamma \upharpoonright \ca{X}$. A pointclass $\Gamma$ is \emph{$\ca{Y}$-parametrized} if for all Polish spaces \ca{Z} there exists some $G \subseteq \ca{Y} \times \ca{Z}$ which is universal for $\Gamma \upharpoonright \ca{Z}$.

For every pointclass $\Gamma$ of subsets we assign the \emph{boldface} class $\tboldsymbol{\Gamma}$ by saying that $P \subseteq \ca{X}$ is in $\tboldsymbol{\Gamma}$ if there exists a set $Q \subseteq \ca{N} \times \ca{X}$ in $\Gamma$ and an $\ep \in \ca{N}$ such that $P = Q_\ep$.

\subsubsection*{Uniformization} Suppose that $\ca{X}$ and $\ca{Y}$ are Polish spaces and that $P$ is a subset of $\ca{X} \times \ca{Y}$. A set $P^\ast \subseteq P$ \emph{uniformizes} $P$ if $\exists^{\ca{Y}}P^\ast = \exists^\ca{Y}P$ and for all $x \in \exists^\ca{Y}P$ there exists exactly one $y \in \ca{Y}$ with $P^\ast(x,y)$. In other words $P^\ast$ is the graph of a function which is defined on $\exists^\ca{Y}P$ such that $P(x,f(x))$ for all $x \in \exists^\ca{Y}P$.

We say that a pointclass $\Gamma$ has the \emph{uniformization property} if for all Polish spaces \ca{X}, \ca{Y} and all $P \subseteq \ca{X} \times \ca{Y}$ in $\Gamma$ there exists a set $P^\ast \subseteq P$ in $\Gamma$, which uniformizes $P$.

\subsubsection*{Recursive presentations} Suppose that $(\ca{X},d)$ is a complete and separable metric space. A sequence $(x_n)_{\n}$ is a \emph{recursive presentation} of $(\ca{X},d)$ if the set \set{x_n}{\n} is dense in \ca{X} and the relations $P_<, P_\leq \subseteq \om^4$ defined by
\begin{align*}
P_{<}(i,j,k,m) &\eq d(x_i,x_j) < k \cdot (m+1)^{-1}\\
P_{\leq} (i,j,k,m) &\eq d(x_i,x_j) \leq k \cdot (m+1)^{-1}
\end{align*}
are recursive.

We say that $(\ca{X},d)$ \emph{admits a recursive presentation}, or that \emph{$(\ca{X},d)$ is recursively presented}, if there is a sequence $(x_n)_{\n}$ in \ca{X} which is a recursive presentation of $(\ca{X},d)$. Standard examples of recursively presented Polish spaces are the Baire space \ca{N}, the space of real numbers and \om.

For every complete space $(\ca{X},d)$ with a recursive presentation $(x_n)_{\n}$ we define the sets
\[
B(x_n,m,k) = \set{x \in \ca{X}}{d(x,x_n) < k \cdot (m+1)^{-1}}
\]
for all $n,m,k \in \om$ and
\[
N(\ca{X},s) = B(x_{(s)_0},(s)_1,(s)_2)
\]
for all $s \in \om$. The family \set{N(\ca{X},s)}{s \in \om} is the associated \emph{neighborhood system of \ca{X}} with respect to $(x_n)_{\n}$ and $d$ and it is clear that it forms a basis for the topology of \ca{X}. When referring to a recursively presented metric space we will often omit the metric and the recursive presentation.

It is easy to see that recursive presentations are carried to finite products and topological sums of recursively presented metric spaces.

For every recursively presented metric space \ca{X} we say that $A \subseteq \ca{X}$ is \emph{semirecursive} if there is a recursive function $f: \om \to \om$ such that $$A = \cup_{s \in \om} N(\ca{X},f(s)).$$

The definition of semirecursive sets seems to depend on the way we have encoded the neighborhood system of \ca{X}, but as one can see from 3C.12 in \cite{yiannis_dst} this is not true.

It is worth mentioning that one can axiomatize the notion of a semirecursive set the same way one axiomatizes the notion of an open set when shifting from metric spaces to topological spaces. This is the setting of \emph{recursive Polish spaces} which are the effective analogue of Polish spaces: every recursively presented metric space determines a recursive Polish space and every recursive Polish space admits a compatible metric and a compatible recursive presentation. The term \emph{recursively presented metric space} can be replaced in all of its instances in this article by the term \emph{recursive Polish space}. We are not going to pursue this issue further, as the reader can refer to the upcoming notes \cite{gregoriades_moschovakis_notes_on_edst} for a detailed exposition of the topic.

\subsubsection*{The Kleene pointclasses} One defines by recursion the (Kleene) pointclasses
\begin{align*}
\Sigma^0_1 = \textrm{all semirecursive sets}, &\quad \Sigma^1_1 = \exists^\ca{N} \neg \Sigma^0_1,\\
\Sigma^0_{n+1} = \exists^\om \neg \Sigma^0_n, &\quad \Sigma^0_{n+1} = \exists^{\ca{N}} \neg \Sigma^0_n,\\
\Pi^0_n = \neg \Sigma^0_n, &\quad \Pi^1_n = \neg \Sigma^1_n,\\
\Delta^0_n = \Sigma^0_n \cap \Pi^0_n, &\quad \Delta^1_n = \Sigma^1_n \cap \Pi^1_n.
\end{align*}
The pointclasses $\Sigma^0_n$ are the \emph{arithmetical} and $\Sigma^1_n$ are the \emph{analytical} Kleene pointclasses.

The Kleene pointclasses can be relativized with respect to some parameter. Suppose that \ca{X} and \ca{Y} are recursively presented metric spaces, $y \in \ca{Y}$ and $A \subseteq \ca{X}$. We say that $A$ is \emph{$y$-semirecursive} if there is a semirecursive $P \subseteq \ca{Y} \times \ca{X}$ such that $A = P_y$. Similarly one defines the pointclasses $\Sigma^0_n(y)$, $\Pi^0_n(y)$, $\Delta^0_n(y)$, $\Sigma^1_n(y)$, $\Pi^1_n(y)$ and $\Delta^1_n(y)$.

The boldface version of the preceding pointclasses are exactly the corresponding classical pointclasses, \ie $\tboldsymbol{\Sigma}^0_1$ is the pointclass of all open sets and so on.

\subsubsection*{Points in $\Gamma$} For every pointclass $\Gamma$ and every recursively presented metric space \ca{X}, a point $x \in \ca{X}$ \emph{is in $\Gamma$} or \emph{it is a $\Gamma$ point} if the relation $U \subseteq \om$ defined by $$U(s) \eq x \in N(\ca{X},s)$$ is in $\Gamma$. We use the notation $x \in \Gamma$ to denote that $x$ is a $\Gamma$ point.

We say that $x$ is a \emph{recursive} point if $x \in \Sigma^0_1$ and that $x$ is \emph{recursive in $y$} if $x \in \Sigma^0_1(y)$. This is equivalent to the \emph{Turing reducibility} (see 3D.19 in \cite{yiannis_dst}).

We will be mostly concerned with the cases $\Gamma = \del$ and $\Gamma = \del(y)$. The point $x$ is \emph{hyperarithmetical} if $x$ is a \del \ point and $x$ is \emph{hyperarithmetical} in $y$ if $x$ is a $\del(y)$ point. We will also say that $x$ and $y$ have the \emph{same hyperdegree} if $x \in \del(y)$ and $y \in \del(x)$.

Furthermore we write
\begin{align*}
x \hleq y, \ \ & \ \textrm{if} \ x \in \del(y),\\
x \hless y, \ \  & \ \textrm{if} \ x \in \del(y) \ \textrm{and} \ y \not \in \del(x),\\
x \heq y, \ \  & \ \textrm{if} \ x \in \del(y) \ \textrm{and} \ y \in \del(x).
\end{align*}

We will often identify a subset of \om \ with its characteristic function, so when we write $x \in \Gamma(P)$ for some $P \subseteq \om$ we mean that $x$ is a $\Gamma(\chi_P)$ point, where $\chi_P \in \ca{C}$ is the characteristic function of $P$.

\subsubsection*{$\Gamma$-recursive functions} Suppose that $\Gamma$ is a pointclass. A function $f: \ca{X} \to \ca{Y}$ between recursively presented metric spaces is \emph{$\Gamma$-recursive} if the relation $R^f \subseteq \ca{X} \times \om$ defined by
\[
R^f(x,s) \iff f(x) \in N(\ca{Y},s)
\]
is in $\Gamma$. In most cases $\Gamma$ will be $\Sigma^0_1$ or \del. We will often say \emph{recursive} rather than \emph{$\Sigma^0_1$-recursive}.

We think of $\Gamma$-recursive functions as the \emph{effective refinement} of $\tboldsymbol{\Gamma}$-measurable functions. In particular \del-recursive functions can be considered as the effective analogue of Borel-measurable functions.

A function $f: \ca{X} \to \ca{Y}$ is a \emph{$\Gamma$-injection} if it is injective and $\Gamma$-recursive. Moreover $f$ is a \emph{$\Gamma$-isomorphism} if $f$ is bijective, $\Gamma$-recursive and the inverse $f^{-1}$ is also $\Gamma$-recursive.

We will also consider \emph{partial functions} $f: \ca{X} \rightharpoonup \ca{Y}$. The \emph{domain} $\Dom(f)$ of $f$ is the set of all $x \in \ca{X}$ for which $f$ is defined. We also write $f(x) \downarrow$ to say that $f$ is defined in $x$.

A partial function $f: \ca{X} \rightharpoonup \ca{Y}$ is \emph{$\Gamma$-recursive on} $A \subseteq \Dom(f)$ if there exists some $P^f \in \Gamma$ such that
\[
P^f(x,s) \iff f(x) \in N(\ca{Y},s)
\]
for all $x \in A$. We say that the preceding set $P^f$ \emph{computes $f$ on $A$}. The partial function $f$ is \emph{$\Gamma$-recursive} if it is $\Gamma$-recursive on its domain and $\Dom(f) \in \Gamma$.

\begin{lemma}[Well-known, \cf 3D.7, 4D.1 and 4D.7 in \cite{yiannis_dst}.]
\label{lemma the inverse function is del and same hyperdegree}
Suppose that \ca{X} and \ca{Y} are recursively presented metric spaces and that $$f : \ca{X} \to
\ca{Y}$$ is \del-recursive function. Then the following hold.
\begin{enumerate}
\item[(1)] $f(x) \in \del(x)$ for all $x \in \ca{X}$.
\item[(2)] If $A$ is non-empty \sig \ subset of \ca{X} such that  $f$ is injective on $A$ then $x \in \del(f(x))$ for all $x \in A$. It follows that $f(x) \heq x$ for all $x \in A$.
\item[(3)]If $A$ is non-empty \sig \ subset of \ca{X} such that $f$ is injective on $A$, then the inverse function $f^{-1}: \ca{Y} \rightharpoonup \ca{X}$ is partial \del-recursive on $f[A]$. In particular if $f$ is a \del-injection and surjective then $f$ is a \del-isomorphism.
\end{enumerate}
\end{lemma}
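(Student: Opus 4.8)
The plan is to treat (1) as a bookkeeping observation about sections of \del \ relations, then prove the auxiliary fact that the graph of a \del-recursive function is itself \del, and finally read (2) and (3) off from that, invoking \sig-separation for the uniform statement in (3).

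First, for (1): by hypothesis $R^f(x,s) \iff f(x) \in N(\ca{Y},s)$ is \del, so both $R^f$ and its complement are \sig. Since the section of a \sig \ relation at a point $x$ is \sig$(x)$ (\cf \cite{yiannis_dst}), the set $(R^f)_x = \set{s}{f(x) \in N(\ca{Y},s)}$, which is precisely the neighborhood code of $f(x)$, is both \sig$(x)$ and \pii$(x)$, hence \del$(x)$; so $f(x) \in \del(x)$. Next I would establish that $\Gr(f) = \set{(x,y)}{f(x) = y}$ is \del. For the \sig \ direction, use that the balls $N(\ca{Y},s)$ form a basis of arbitrarily small radius, so $f(x) = y$ holds exactly when for every $n$ there is $s$ with $R^f(x,s)$, with $N(\ca{Y},s)$ of radius below $2^{-n}$, and with $y \in N(\ca{Y},s)$; the matrix is \sig \ and \sig \ is closed under number quantifiers. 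For the \pii \ direction, use that $\set{N(\ca{Y},s)}{y \in N(\ca{Y},s)}$ is a neighborhood basis at $y$ with intersection $\{y\}$, so $f(x) = y$ holds exactly when $y \in N(\ca{Y},s)$ implies $R^f(x,s)$ for all $s$; here the matrix is \del \ and $\forall s$ is a number quantifier, giving \pii. Hence $\Gr(f) \in \del$, and consequently $f[A] \in \sig$ whenever $A \in \sig$.

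For (2), fix $x \in A$ and put $y = f(x)$. Since $f$ is injective on $A$, for each $s$ the statements ``$x \in N(\ca{X},s)$'', ``there is $x' \in A$ with $f(x') = y$ and $x' \in N(\ca{X},s)$'', and ``every $x' \in A$ with $f(x') = y$ satisfies $x' \in N(\ca{X},s)$'' are equivalent (the forward passages witnessed by $x$ itself, the backward ones because any such $x'$ must equal $x$). The middle statement is \sig$(y)$ and the last is \pii$(y)$ — using $A \in \sig$ and $\Gr(f) \in \del$ — so the neighborhood code of $x$ is \del$(y)$, \ie $x \in \del(f(x))$. Combined with (1) this gives $f(x) \heq x$ for all $x \in A$.

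For (3), read the last two descriptions as relations $S \subseteq \ca{Y} \times \om$ in \sig \ and $P \subseteq \ca{Y} \times \om$ in \pii; injectivity of $f$ on $A$ yields $S \subseteq P$, and on $f[A] \times \om$ both $S$ and $P$ coincide with ``$f^{-1}(y) \in N(\ca{X},s)$''. The effective Lusin separation theorem then produces a \del \ set $D$ with $S \subseteq D \subseteq P$, and such a $D$ computes $f^{-1}$ on $f[A]$, so $f^{-1}$ is partial \del-recursive on $f[A]$. If moreover $f$ is a \del-injection and onto, take $A = \ca{X}$: then $f[A] = \ca{Y}$ and $f^{-1}$ becomes a total \del-recursive function with $\Dom(f^{-1}) = \ca{Y} \in \del$, so $f$ is a \del-isomorphism. (Alternatively (2) can be recovered from (3) by applying (1) to the partial \del-recursive $f^{-1}$.) I expect the only step carrying genuine content to be this appeal to \sig-separation — the effective form of the Lusin--Souslin phenomenon; everything else is pointclass arithmetic, the one point needing care being the auxiliary claim $\Gr(f) \in \sig$, which relies on the $N(\ca{Y},s)$ forming a basis of shrinking radius.
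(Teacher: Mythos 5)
Your argument is correct, and its core coincides with the paper's: injectivity of $f$ on the \sig \ set $A$ gives $z = x \iff z \in A \ \& \ f(z) = f(x)$, so that $x$ (equivalently $f^{-1}(y)$) is definable from $y = f(x)$ by formulas quantifying over $A$ and the graph of $f$. The difference is how much is cited versus re-proved. The paper quotes 3D.7, 3E.16 and 3E.5 of \cite{yiannis_dst} (for (1), for ``a \sig$(z)$ singleton has a $\del(z)$ member'', and for ``\sig \ graph implies \del-recursive''), whereas you re-derive this content by first checking $\Gr(f) \in \del$ from the shrinking-ball basis and then writing both a \sig \ and a \pii \ description of the neighborhood code of $x$ in terms of $y$. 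Your genuinely extra ingredient is the appeal to the Suslin--Kleene theorem (Theorem \ref{theorem alltogether}-(\ref{theorem 4B.11})) in (3): since your two descriptions $S \in \sig$ and $P \in \pii$ of ``$f^{-1}(y) \in N(\ca{X},s)$'' agree only for $y \in f[A]$, you correctly note that $S \subseteq P$ holds globally (vacuously off $f[A]$) and separate to obtain a single \del \ set computing $f^{-1}$ on $f[A]$; the paper reaches the same conclusion by citing 3E.5 for the partial function $f^{-1}$, whose graph is \sig. Both routes are sound; yours is more self-contained, the paper's is shorter. One small streamlining: in (2) and (3) your formulas use $\Gr(f)$ only positively inside an existential quantifier and negatively inside a universal one, so only $\Gr(f) \in \sig$ is ever needed; the \pii \ half of your graph computation could be dropped.
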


\begin{proof}[\textit{Proof} \tu{(Well-known).}]
Assertion (1) follows from 3D.7 in  \cite{yiannis_dst}. Suppose that $A$ is a non-empty \sig \ subset of \ca{X} such that $f$ is injective on $A$. Then we have that
\[
z = x \iff z \in A \ \& \  f(z) = f(x)
\]
for all $z \in \ca{X}$ and all $x \in A$. From this it follows that the singleton $\{x\}$ is $\sig(f(x))$ and so from 3E.16 in \cite{yiannis_dst} $x$ is in $\del(f(x))$. This proves (2).

Regarding (3) we notice that
\[
f^{-1}(y) = x \iff x \in A \ \& \ f(x) = y.
\]
The graph of $f$ is \sig \ from 3E.5 in \cite{yiannis_dst}, and so from the preceding equivalence the graph of $f^{-1}$ is \sig \ as well. Hence again from 3E.5 in \cite{yiannis_dst} the function $f^{-1}$ is \del-recursive.
\end{proof}

\subsubsection*{Codes of $\Gamma$-recursive functions} (See 7A in \cite{yiannis_dst}.) Suppose that $\Gamma$ is a Kleene pointclass. For all recursively presented metric spaces \ca{X} and \ca{Y} we fix a set $G_{\Gamma}^\ca{X} \subseteq \om \times \ca{X} \times \om$, which is universal for $\Gamma \upharpoonright (\ca{X} \times \om)$ and we define the partial function
\[
U^{\ca{X},\ca{Y}}_{\Gamma} : \ca{X} \rightharpoonup \ca{Y}
\]
as follows:
\begin{align*}
&U^{\ca{X},\ca{Y}}_{\Gamma}(e,x) \ \textrm{is defined} \iff \ \textrm{there exists a unique $y$ in \ca{Y} such that}\\
&\hspace*{51mm} (\forall s)[y \in N(\ca{Y},s) \longleftrightarrow G(e,x,s)]\\
&U^{\ca{X},\ca{Y}}_{\Gamma}(e,x) = \textrm{the unique $y$ as above}.
\end{align*}
It is clear that $U^{\ca{X},\ca{Y}}_{\Gamma}$ is $\Gamma$-recursive on its domain. For every $e \in \om$ we define the partial $\Gamma$-recursive function
\[
\rfn{e}^{\ca{X},\ca{Y}}_{\Gamma}: \ca{X} \rightharpoonup \ca{Y}: \rfn{e}^{\ca{X},\ca{Y}} = U^{\ca{X},\ca{Y}}_{\Gamma}(e,x).
\]
It is not hard to verify that a partial function $f: \ca{X} \rightharpoonup \ca{Y}$ is $\Gamma$-recursive on its domain if there exists some $e \in \om$ such that $f(x) = \rfn{e}^{\ca{X},\ca{Y}}_\Gamma(x)$ for all $x \in \Dom(f)$.

\subsubsection*{Codes of linear orderings on countable domains} In the sequel we denote by $\rfn{e}$ the function $\rfn{e}^{\om,\om}_{\Sigma^0_1}$. The set \lo \ of  \emph{codes of linear orderings} on a subset of \om \ is
\begin{align*}
\set{e \in \om}{\rfn{e} \ \textrm{is total and the relation}& \ \set{(n,m)}{\rfn{e}(\langle n,m \rangle) = 0}\\& \textrm{is a linear ordering}}.
\end{align*}
It is easy to verify that \lo \ is a $\Sigma^0_2$ subset of \om. We denote by $\leq_e$ the linear ordering
\[
\set{(n,m) \in \om}{\rfn{e}(\langle n,m \rangle) = 0}
\]
for every $e \in \lo$.

\subsubsection*{Spector's \W} It is useful to fix a canonical \pii \ subset of \om. \emph{Spector's \W} \ is the set
\[
\set{e \in \lo}{\leq_e \ \textrm{has no strictly decreasing sequences}},
\]
It is well-known that \W \ is a \pii \ set and moreover that it is \pii-\emph{complete} for subsets of \om, \ie for all \pii \ sets $P \subseteq \om$ there exists a recursive function $f: \om \to \om$ such that
\[
e \in P \iff f(e) \in \W
\]
for all $e \in \om$, \cf \cite{spector_recursive_well-orderings}. In particular \W \ is not a \sig \ set.

\subsubsection*{Relativization} The context of recursively presented metric spaces might seem somewhat restrictive, but our results are carried out in all Polish spaces in a natural way with the proper \emph{relativization} arguments.

A sequence $(x_n)_{\n}$ in a complete separable metric space $(\ca{X},d)$ is an \emph{\ep-recursive presentation of $(\ca{X},d)$}, where $\ep \in \ca{N}$, if the set $\set{x_n}{\n}$ is dense in \ca{X} and the preceding relations $P_<$, $P_\leq$ are \ep-recursive. By replacing the term ``recursive" with ``\ep-recursive" in all of its instances one can repeat all preceding definitions in this relativized setting. It is easy to see that every complete separable metric space admits an $\ep$-recursive presentation for some $\ep \in \ca{N}$. Analogously every Polish space is an \emph{\ep-recursive Polish space} for some $\ep \in \ca{N}$.

We think of the class of $\ep$-recursively presented metric spaces as a ``fiber" among the class of complete separable metric spaces. Each complete separable metric space belongs to one such fiber, in fact to infinitely many. All theorems about recursively presented metric spaces are transferred to $\ep$-recursively presented metric spaces, \ie these fibers have a similar effective structure. For more information about relativization the reader can refer to 3I in \cite{yiannis_dst} and \cite{gregoriades_turning_borel_sets_into_clopen_sets_effectively}. For the rest of this article we restrict ourselves in one such fiber of spaces, which we may assume  without loss of generality that is the fiber of recursively presented metric spaces.

\subsection{Theorems in the general context} As mentioned before most of the main results of effective descriptive set theory do hold in the non-perfect setting. All following results go through either in a straightforward way or with some mild
modifications.\footnote{For example the proof of Theorem \ref{theorem
alltogether}-(\ref{theorem 4D.2 and theorem 4D.15}) requires the
following modification; instead of taking a \del-bijection $\pi :
\ca{N} \to \ca{X}$ it is enough to take the recursive surjection
$\pi: \ca{N} \twoheadrightarrow \ca{X}$ of Theorem \ref{theorem
alltogether}-(\ref{theorem 3E.6}) and notice that for all $x \in
\ca{X}$ there is an $\alpha \in A$ such that $\alpha \in \del(x)$
and $\pi(\alpha) = x$.} The references below are to
\cite{yiannis_dst}.

\allowbreak

\begin{theorem}
\label{theorem alltogether}
\newcounter{remarks}
Suppose that \ca{X} is a recursively presented metric space.\vspace{1mm}

\begin{list}{\textup{(\arabic{remarks})}}{\usecounter{remarks}\leftmargin=2mm}
\item{\textup{(3E.6).}} \label{theorem 3E.6}
There is a recursive surjection $\pi: \ca{N} \twoheadrightarrow
\ca{X}$ and a $\Pi^0_1$ subset of the Baire space $A$ such that
$\pi$ is injective on $A$ and $\pi[A] = \ca{X}$. Furthermore
there is a \del-injection $f: \ca{X} \rightarrowtail \ca{N}$
which is the inverse of $\pi$ restricted on $A$, \ie for all
$\alpha \in A$ we have that $f(\pi(\alpha)) = \alpha$ and for all
$x \in f^{-1}[A]$ we have that $\pi(f(x)) = x$.\vspace{3mm}

\item{\textup{The Effective Perfect set Theorem (4F.1, see also \cite{harrison_phd_thesis} and \cite{mansfield_perfect_subsets_of_definable_sets_of_real_numbers}).}} \label{theorem 4F.1}
Every \sig \ subset of \ca{X}, which has at least one member
not in \del, contains a perfect subset.\vspace{3mm}

\item{\textup{The Parametrization Theorem for points in \del (4D.2 and 4D.15).}}
\label{theorem 4D.2 and theorem 4D.15} There is a
\pii-recursive partial function $\pdel : \om \rightharpoonup \ca{X}$ such that
for all $x \in \ca{X}$,
\[
x \in \del \iff (\exists i)[\pdel(i) \ \textrm{\normalfont is defined and} \
\pdel(i)=x].
\]
Moreover there is a \pii-recursive partial function $\mathbf{c}: \ca{X}
\rightharpoonup \om$ such that for all $x \in \ca{X}$, $\mathbf{c}(x)$ is
defined if and only if $x$ is \del, and $\pdel(\mathbf{c}(x))=x$ for all $x \in
\del$.\vspace{3mm}

\item{\textup{Upper bound of the set of points in \del \ (4D.14).}} \label{theorem
4D.14} The set of points of \ca{X} which are in \del \ is a \pii \
set.\vspace{3mm}

\item{\textup{Theorem on Restricted Quantification, Kleene (4D.3).}} \label{theorem 4D.3}
Let \ca{Y} be a recursively presented metric space and let $Q
\subseteq \ca{X} \times \ca{Y}$ be \pii. The set $P \subseteq \ca{X}$ defined by $$P(x) \iff (\exists
y \in \del(x))Q(x,y)$$ is \pii \ as well.\vspace{3mm}

\item{\textup{\del-Uniformization Criterion (4D.4).}} \label{theorem 4D.4}
Let \ca{Y} be a recursively presented Polish space and let $P
\subseteq \ca{X} \times \ca{Y}$ be in \del. Assume that every
section $P_x$ is either empty or contains a member in $\del(x)$. Then the projection
$\exists^\ca{Y}P$ is
also in \del. Furthermore the set $P$ can be uniformized by some
set in \del.\vspace{3mm}

\item{\textup{Characterization of \del \ sets (4D.9).}} \label{theorem 4D.9}
A set $P \subseteq \ca{X}$ is in \del \ if and only
if there is a $\Pi^0_1$ set $A \subseteq \ca{N}$ and a recursive
function $\pi: \ca{N} \to \ca{X}$ which is injective on $A$ and
$\pi[A] = P$.\vspace{3mm}

\item{\textup{Parametrization (3F.6 and 3H.1, see also \cite{kleene_recursive_predicates_and_quantifiers}).}} \label{theorem parametrization over N}
Every Kleene pointclass is $\om$- and $\ca{N}$-parametrized. In fact if $\Gamma$ is a Kleene pointclass then for every recursively presented metric space \ca{X} there exists a set $G \subseteq \ca{N} \times \ca{X}$ in $\Gamma$, which parametrizes $\tboldsymbol{\Gamma} \upharpoonright \ca{X}$. Moreover a set $P \subseteq \ca{X}$ is in $\Gamma$ exactly when $P = G_\ep$ for some recursive $\ep \in \ca{N}$.\footnote{The same is true for perfect \ca{Y} instead of \ca{N} but the proof does not go through for non-perfect \ca{Y}. Whether this assertion holds for non-perfect \ca{Y} is an open problem, \cf Question \ref{question parametrization}.}.\vspace{3mm}

 \item{\textup{The Suslin-Kleene Theorem (4B.11 and 7B.4).}} \label{theorem 4B.11}
Let $A, B$ be two disjoint subsets of \ca{X} which are in \sig.
Then there is a \del \ set $C \subseteq \ca{X}$ such that $A
\subseteq C$ and $C \cap B = \emptyset$. In particular all \del \
subsets of \ca{X} are Borel.

In fact there is a recursive function $\nu: \ca{N} \times \ca{N} \to
\ca{N}$ such that if $\alpha, \beta \in \ca{N}$ are
$\tboldsymbol{\Sigma}^1_1$ codes of sets $A, B \subseteq \ca{X}$
respectively and $A \cap B = \emptyset$, then $\nu(\alpha,\beta)$
is a Borel code of some set $C$ which separates $A$ from $B$.

In particular there is a recursive function $\nu^\ast: \ca{N} \to
\ca{N}$ such that if $\alpha$ is a $\tboldsymbol{\Delta}^1_1$-code of
$A \subseteq \ca{X}$, then $\nu^\ast(\alpha)$ is a Borel code of
$A$.\footnote{$\tboldsymbol{\Sigma}^1_1$ and $\tboldsymbol{\Pi}^1_1$ codes are defined through the corresponding universal sets. One defines $\tboldsymbol{\Delta}^1_1$-codes as pairs of $\tboldsymbol{\Sigma}^1_1$ and $\tboldsymbol{\Pi}^1_1$ codes of the same set. Borel codes are defined ``from inside" using transfinite induction. The reader can refer to 3H and 7B in \cite{yiannis_dst} for more information.}\vspace{3mm}

\item{\textup{The Kondo-Addison Theorem - Uniformization of \pii \ (4E.4, see also Kondo \cf \cite{kondo_uniformisation} and 4E.4 \cite{yiannis_dst}).}} \label{theorem 4E.4}
The pointclass \pii \ and consequently the pointclass \boldpii \ have the uniformization property.
\end{list}
\end{theorem}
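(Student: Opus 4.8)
The plan is to go through the ten items one at a time and, for each, locate the corresponding proof in \cite{yiannis_dst} and check whether the standing assumption there --- that a non-discrete recursively presented metric space is perfect --- is actually invoked. The guiding principle is that essentially all of the basic machinery of effective descriptive set theory rests on three ingredients that survive intact without isolated points: the neighborhood system $\set{N(\ca{X},s)}{s \in \om}$ together with the recursiveness of $P_<, P_\leq$; the recursive surjection $\pi : \ca{N} \twoheadrightarrow \ca{X}$ and the $\Pi^0_1$ set $A$ on which it is injective; and the relativized Kleene calculus --- $\om$- and $\ca{N}$-parametrization, the transfer theorems, Suslin--Kleene and Kondo--Addison --- none of which mentions perfect kernels.

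First I would establish item (1), since everything else leans on it. The surjection $\pi$ and the companion set $A$ are built as in 3E.6 of \cite{yiannis_dst}: assign to each finite sequence a basic neighborhood of radius tending to $0$ in a recursive, nested fashion, let $A$ code the genuinely shrinking branches, and let $\pi(\alpha)$ be the unique point in the intersection; this uses only the recursiveness of $P_<, P_\leq$. Since $\pi \upharpoonright A$ is then a bijection onto \ca{X}, its inverse $f : \ca{X} \to A \subseteq \ca{N}$ is total, injective, and has \sig \ graph, so $f$ is \del-recursive by Lemma \ref{lemma the inverse function is del and same hyperdegree}. Item (7) is the same construction relativized to a \del-code; and item (1) and the recursiveness of $\pi$ together make item (8) go through verbatim from 3F.6 and 3H.1, the crucial point being that the companion space in the parametrization is always \ca{N}, which \emph{is} perfect --- so the open problem about non-perfect companion spaces (the footnote to (8)) simply does not arise.

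The one place the textbook proof genuinely breaks is item (3). In 4D.2 one starts from a \del-bijection $\ca{N} \to \ca{X}$, which need not exist when \ca{X} is not perfect; the fix is the one indicated in the footnote to this section. By Lemma \ref{lemma the inverse function is del and same hyperdegree}(1),(2) applied to $f$ from item (1), every $x \in \ca{X}$ satisfies $f(x) \heq x$, so its preimage $f(x) \in A$ is a $\del(x)$ point; conversely if $x = \pi(\alpha)$ with $\alpha \in A$ and $\alpha$ a \del \ point, then $x \in \del$. Hence, enumerating the \del \ points of the \emph{perfect} space \ca{N} by the \pii-recursive partial function of 4D.2, keeping only the indices whose value lands in the $\Pi^0_1$ set $A$, and composing with $\pi$, yields a \pii-recursive partial function $\pdel : \om \rightharpoonup \ca{X}$ enumerating exactly the \del \ points of \ca{X}; the coding map $\mathbf{c}$ comes from a \pii-recursive choice of index, and item (4) is then immediate. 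Items (2), (5), (6), (9), (10) are statements about pointclasses, closure under quantifiers, separation and uniformization; their proofs in 4F.1, 4D.3, 4D.4, 4B.11/7B.4 and 4E.4 make no reference to isolated points --- in particular 4F.1 locates a perfect subset \emph{inside} a given \sig \ set with a non-\del \ member, which is internal to that set --- and they transcribe unchanged.

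The main obstacle is bookkeeping rather than a single hard step: one must make sure that none of the auxiliary lemmas silently used along the way smuggles perfectness back in (for instance through an appeal to a \del-isomorphism or recursive homeomorphism with \ca{N}). So the real work is the systematic audit, together with the single genuine repair to 4D.2 above; once that repair is in place, item (3), item (4), and everything downstream that relies on the \del-parametrization of points are secured.
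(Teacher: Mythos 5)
Your proposal is correct and takes essentially the same route as the paper: the paper justifies this theorem only by asserting that the proofs in \cite{yiannis_dst} go through with mild modifications, the single modification it records (in the footnote) being precisely your repair of 4D.2/4D.15 --- replace the \del-bijection of the perfect case by the recursive surjection $\pi$ and the $\Pi^0_1$ set $A$ of item (1), and use that every $x \in \ca{X}$ has a preimage $f(x) \in A$ lying in $\del(x)$, so that composing the \ca{N}-parametrization with $\pi$ enumerates exactly the \del \ points of \ca{X}. Your audit of the remaining items matches the paper's (unelaborated) claim that they transcribe unchanged, with everything depending on the point-parametrization secured once that repair is made.
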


\subsection{The Cantor-Bendixson decomposition} The \emph{scattered part} \scat{A} of a closed set $A$ is the set
\[
\scat{A} = \set{x \in A}{(\exists \ \textrm{open} \ V ) [ x \in V \ \& \ V \cap A \ \textrm{is countable}] },
\]
and the \emph{perfect kernel} \pker{A} of $A$ is $A \setminus \scat{A}$. The \emph{Cantor-Bendixson Theorem} states that \scat{A} is countable, \pker{A} is perfect and that the pair $(\pker{A},\scat{A})$ is the unique such decomposition of $A$.

An early indication that the proof of the statement ``every perfect recursively presented metric space is \del-isomorphic to the Baire space" cannot be modified successfully in the non-perfect setting, can be found in the following result of Kreisel \cite{kreisel_cantor_bendixson}: there is a $\Pi^0_1$ subset of the real numbers whose perfect kernel is not \pii. In other words we cannot insist on staying inside the perfect kernel (a standard argument for embedding the Baire space) and preserve \del-recursiveness. However this is only a part of the picture, for as we are going to see it is possible for a recursively presented metric space to be non-\del-isomorphic to the Baire space and yet its perfect kernel is a $\Pi^0_1$ set. The deeper reason for not being \del-isomorphic to the Baire space is the absence of ``many" \del-points inside the perfect kernel of the space, \cf Theorem \ref{theorem characterization of Baire space in terms of perfect sets}.

We review the main results of \cite{kreisel_cantor_bendixson} using modern day notation and proofs.\footnote{The reader may find
Kreisel's notation a bit confusing compared to today's notation.
According to Kreisel a closed subset $F$ of the unit interval is
called \emph{\pii- \tu{(}\sig-, recursive-\tu{)} closed} if the
relation $$P(n,m) \eq (r_n,r_m) \subseteq [0,1] \setminus F$$ is
\pii, (\sig, recursive), where the sequence $(r_n)_{\n}$ is a
recursive enumeration of the rational numbers in $[0,1]$. Notice
the interchange of the symbols $\Sigma$ and $\Pi$. For instance a
closed set is \pii-closed if and only if it is a \sig \ set.}

\begin{theorem}[\cf \cite{kreisel_cantor_bendixson}]
\label{theorem kreisel scattered part}
Let $\ca{X}$ be a recursively presented metric space and $A$
be a closed \sig \ subset of \ca{X}.

\tu{(1)} The perfect kernel \pker{A} of $A$ is a \sig \ subset of \ca{X}.
It follows that the scattered part \scat{A} of $A$ is the difference
of two \sig \ sets. In particular if $A$ is a \del \ set then its
scattered part is a \pii \ set.

\tu{(2)} If \scat{A} is non-empty then there is a \W-recursive sequence $(x_i)_{\iin}$ which enumerates \scat{A}.

\tu{(3)} There exists a closed set in $\pii \setminus \sig$ whose perfect kernel is in $\sig \setminus \pii$.
\end{theorem}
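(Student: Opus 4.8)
Since $\{N(\ca{X},s)\}_{s\in\om}$ is a basis, a point $x\in A$ lies in $\pker{A}$ exactly when $A\cap N(\ca{X},s)$ is uncountable for every $s$ with $x\in N(\ca{X},s)$. Now a $\sig$ set is uncountable if and only if it has a member not in $\del$: one direction is the Effective Perfect Set Theorem (Theorem~\ref{theorem alltogether}), and for the other direction recall that the $\del$ points form a countable set, being the values of the function $\pdel$ of Theorem~\ref{theorem alltogether}. Hence, as $A\cap N(\ca{X},s)$ is $\sig$ uniformly in $s$ and ``$y\notin\del$'' is $\sig$ (the negation of the $\pii$ set of Theorem~\ref{theorem alltogether}), the relation ``$A\cap N(\ca{X},s)$ is uncountable'' is $\sig$ in $s$. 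Therefore $\pker{A}=\{x : x\in A \ \&\ (\forall s)[\,x\in N(\ca{X},s)\to A\cap N(\ca{X},s)\text{ uncountable}\,]\}$ is $\sig$, being built from $\sig$ and $\Pi^0_1\subseteq\del$ sets by conjunction and number quantification. Then $\scat{A}=A\setminus\pker{A}$ is a difference of two $\sig$ sets, and if $A\in\del$ then $\scat{A}=A\cap(\ca{X}\setminus\pker{A})$ is an intersection of two $\pii$ sets, hence $\pii$.

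\textbf{Plan for (2).} First, $\scat{A}\subseteq\del$: if $x\in\scat{A}$ pick $s$ with $x\in N(\ca{X},s)$ and $A\cap N(\ca{X},s)$ countable; then $A\cap N(\ca{X},s)$ is a countable $\sig$ set, so by the Effective Perfect Set Theorem it has no member outside $\del$, whence $x\in\del$. Let $\pdel:\om\rightharpoonup\ca{X}$ be the $\pii$-recursive parametrization of the $\del$ points (Theorem~\ref{theorem alltogether}); then $\scat{A}=\{\pdel(i) : i\in\Dom(\pdel),\ \pdel(i)\in\scat{A}\}$. The sets $A$ and $\pker{A}$ are closed and $\sig$ (by (1)), so for $i\in\Dom(\pdel)$ one has $\pdel(i)\in A$ iff $(\forall s)[\pdel(i)\in N(\ca{X},s)\to N(\ca{X},s)\cap A\neq\emptyset]$, and likewise with $\pker{A}$; this is a $\sig$ condition on $i$, since ``$\pdel(i)\in N(\ca{X},s)$'' is $\pii$ on $\Dom(\pdel)$ while ``$N(\ca{X},s)\cap A\neq\emptyset$'' is $\sig$. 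Hence $I:=\{i:\pdel(i)\downarrow\ \&\ \pdel(i)\in\scat{A}\}=\{i:\pdel(i)\downarrow\}\cap\{i:\pdel(i)\in A\}\setminus\{i:\pdel(i)\in\pker{A}\}$ is a Boolean combination of $\sig$ and $\pii$ subsets of $\om$, hence recursive in $\W$ (as $\W$ is $\pii$-complete). Enumerating $I$ recursively in $\W$ — nonempty by hypothesis — and outputting $\pdel(i)$ for each $i\in I$ (the restriction of $\pdel$ to its domain is recursive in $\W$, again because $\W$ is $\pii$-complete) yields a $\W$-recursive sequence with range $\scat{A}$.

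\textbf{Plan for (3).} I will glue two closed sets living in clopen-separated parts of one recursively presented space. The first part $F$ is a $\Pi^0_1$ set whose perfect kernel already lies in $\sig\setminus\pii$ (recovering Kreisel's statement). Fix a $\pii$-complete $C\subseteq\om$, so $\om\setminus C$ is $\sig$-complete, and use the normal form for $\sig$ to obtain recursive trees $V^e$ on $\om$, uniformly in $e$, with $[V^e]\neq\emptyset\iff e\notin C$. Interleaving a branch of $V^e$ with a free binary sequence gives recursive trees $W^e$ with $[W^e]$ homeomorphic to $[V^e]\times\ca{C}$: this is nonempty precisely for $e\notin C$, and is a perfect set whenever nonempty. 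Now let $T^e$ be the recursive (uniformly in $e$) tree consisting of the spine $\{0^k:k\in\om\}$ together with, below each node $0^m$, a copy of $W^e$ hanging off the child $0^m1$ (juxtaposition denoting concatenation of finite sequences); then $[T^e]=\{0^\om\}\cup\bigcup_{m}\{0^m1\delta:\delta\in[W^e]\}$ equals $\{0^\om\}$ when $e\in C$ and is a perfect set containing $0^\om$ when $e\notin C$, since in that case the perfect bushes accumulate exactly to $0^\om$. Put $F=\{(e,\gamma):\gamma\in[T^e]\}\subseteq\om\times\ca{N}$; it is $\Pi^0_1$, hence closed and $\del$, so (1) gives $\pker{F}\in\sig$. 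As the perfect kernel of a disjoint union over the discrete index $\om$ is the disjoint union of the perfect kernels, $\pker{F}=\bigsqcup_{e\notin C}\{e\}\times[T^e]$, and the recursive map $e\mapsto(e,0^\om)$ witnesses $\om\setminus C\leq_{\mathrm m}\pker{F}$; since $\om\setminus C$ is $\sig$-complete, $\pker{F}\notin\pii$, so $\pker{F}\in\sig\setminus\pii$.

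The second part carries the $\pii$-complexity: fix a $\pii$-complete $B\subseteq\om$ and set $G=\{0^\om\}\cup\{0^n10^\om:n\in B\}\subseteq\ca{C}$. Each $0^n10^\om$ is isolated in $G$ and $0^\om$ is the unique limit point, so $G$ is scattered ($\pker{G}=\emptyset$), while $n\in B\iff 0^n10^\om\in G$ (via the recursive $n\mapsto 0^n10^\om$) shows $G\in\pii\setminus\sig$. Finally let $A$ be the disjoint union of $F$ and $G$ realized in two clopen-separated copies inside one recursively presented space. Then $A$ is closed; $A\in\pii$ as the union of a $\Pi^0_1$ set and a $\pii$ set; $A\notin\sig$, since otherwise $G$, being a relatively clopen part of $A$, would be $\sig$; and $\pker{A}=\pker{F}\cup\pker{G}=\pker{F}\in\sig\setminus\pii$. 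The main obstacle throughout is the design of $F$: the well-foundedness of $V^e$ must be made to register not as emptiness of the $e$-th column but as the scattered-versus-perfect dichotomy of that column — which is precisely the point of the spine-with-perfect-bushes, forcing the single point $(e,0^\om)$ to move between $\scat{F}$ and $\pker{F}$ according to whether $e\in C$ — after which the complexity computations for $\pker{F}$, and the failure of $A$ to be $\sig$, are routine.
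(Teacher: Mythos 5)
Your parts (1) and (2) are correct and are essentially the paper's own argument: in (1) you use the Effective Perfect Set Theorem together with the \pii\ upper bound on the \del\ points to make ``$N(\ca{X},s)\cap A$ is uncountable'' a \sig\ condition on $s$ (the paper phrases it dually, via the \pii\ condition ``countable''), and in (2) you pull \scat{A} back through the parametrization $\pdel$ and reduce the resulting index set, a Boolean combination of \sig\ and \pii\ subsets of \om, to \W; the paper handles the bookkeeping for ``$\pdel(n)\in A$'' with an auxiliary \pii\ set $P(n,w)$, while you instead observe that membership of $\pdel(i)$ in the closed \sig\ sets $A$ and \pker{A} is computed by \sig\ relations on $\Dom(\pdel)$ (using that \pker{A} is closed, which is part of the Cantor--Bendixson theorem) --- both devices are sound and the enumeration is then the same padding construction. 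Part (3) is where you genuinely diverge: the paper obtains it as a corollary of the Cantor--Bendixson analysis of Spector--Gandy spaces (Theorem \ref{theorem CB decomposition of Spector-Gandy spaces}), which rests on the strong form of the Spector--Gandy theorem, whereas your construction is direct and elementary: the spine-with-perfect-bushes trees $T^e$ make well-foundedness of $V^e$ register as the scattered/perfect dichotomy of the $e$-th column, so $F$ is a $\Pi^0_1$ set whose perfect kernel reduces $\om\setminus C$ and is therefore in $\sig\setminus\pii$ (this alone recovers Kreisel's original statement), and the clopen-glued scattered set $G$ supplies the $\pii\setminus\sig$ complexity of the set itself without disturbing the kernel, since $\pker{G}=\emptyset$. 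Your route buys self-containedness --- only the \sig\ normal form, part (1), and closure properties of the pointclasses are used --- at the price of an example in which the two complexities live on separate clopen pieces; the paper's route buys a more organic witness (the body of a Spector--Gandy tree, whose scattered part is exactly its set of \del\ branches) and ties (3) into machinery developed later, but is far from elementary. If you write this up, make two small steps explicit: the identity $\pker{F}=\bigcup_{e\notin C}\{e\}\times[T^e]$ uses that the columns $\{e\}\times\ca{N}$ are clopen, and the final gluing should be realized concretely (for instance inside $\om\times\ca{N}$, placing $F$ on the even columns and a copy of $G$ on column $1$, and then transporting to \ca{N} by the recursive isomorphism of $\om\times\ca{N}$ with \ca{N} if one wants the example in the Baire space).
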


Kreisel \cite{kreisel_cantor_bendixson} credits assertion (1) to Lorenzen. In Theorem \ref{theorem CB decomposition of Spector-Gandy spaces} we show that the perfect kernel of the whole space \ca{X} \ may not be a \pii \ set and thus the first and the last assertion of (1) are optimal. In Theorem \ref{theorem closed sig set} we show that the scattered part of a \sig \ closed set is not necessarily \pii. This shows that the second assertion of (1) is also optimal.

\begin{proof}
Define
\[
Q(t) \iff \textrm{the set $N(\ca{X},t) \cap A$ is countable}.
\]
We claim that $Q$ is a \pii \ relation on \om. Indeed using the Effective Perfect Set
Theorem \ref{theorem alltogether}-(\ref{theorem 4F.1}) we
compute
\begin{align*}
Q(t) \iff& \textrm{the set $A \cap N(\ca{X},t)$ is countable}\\
        \iff& (\forall y)[y \in A \cap N(\ca{X},t) \ \rightarrow \ y \in\del],
\end{align*}
so that
\begin{align*}
x \in \pker{A} \iff& x \in A \ \& \ (\forall t)[ x \in N(\ca{X},t) \
\rightarrow \ t \not \in Q]
\end{align*}
Hence \pker{A} is in \sig. The remaining assertions in (1) follow
easily.

Now we prove (2). Let $Q \subseteq \om$ be as above and let $\pdel : \om \rightharpoonup \ca{X}$ be a \pii-recursive partial function which parametrizes the points of \ca{X} which are in
\del. We adopt the notation $\pdel(n) \downarrow$ for when $\pdel(n)$
is defined. Since $\pdel$ is \pii-recursive we have in particular that the set
\[
I = \set{n}{\pdel(n) \downarrow}
\]
is \pii. Put
$$
R(t,n) \iff Q(t) \ \& \ d(n) \downarrow \ \& \ d(n) \in N(\ca{X},t)
$$
Clearly $R$ is a \pii \ set and
\begin{align*}
x \in \scat{A} \eq (\exists t,n)[R(t,n) \ \& \ \pdel(n) \in A \ \& \ x =
\pdel(n)] \ \ \ \ \ \ \ (*)
\end{align*}
Put also
$$
P(n,w) \eq [ \ (\exists t)R(t,n) \ \& \ w=0 \ ] \ \vee \ [ \ \pdel(n)
\downarrow \ \& \ \pdel(n) \not \in A \ \& \ w=1 \ ]
$$
Using that $\pdel$ is partial \pii-recursive one can verify that $P$ is a \pii \ set.

From $(*)$ it follows that
\[
\scat{A} = \set{\pdel(n)}{(n,0) \in P \
\& \ (n,1) \not \in P}.
\]
Since $\scat{A}$ is non-empty there exists some $n$ such that $(n,0) \in P$ and $(n,1) \not \in P$. Let $n_0$ be the least such integer.
Finally we define the function
\[
f: \om \to \om: f(x) = \begin{cases}
                                      \pdel(n), & \ \textrm{if} \ (n,0) \in P \
\& \ (n,1) \not \in P\\
                                      \pdel(n_0), & \ \textrm{else}.
                                      \end{cases}
\]
It is clear that $f$ enumerates \scat{A}. Moreover by taking a \pii \ set $R^\pdel \subseteq \om \times \om$ which computes $\pdel$ on its domain and using the \pii-completeness of \W, we have that $f$ is \W-recursive.

Assertion (3) is a corollary of Theorem \ref{theorem CB decomposition of Spector-Gandy spaces}.
\end{proof}

We single out one of the arguments used in the preceding proof, that we will use often in the sequel.

\begin{remark}
\label{remark scattered part consists of del points}\normalfont
The scattered part of a \sig \ set consists of \del \ points.
\end{remark}

\section{\sc The spaces \spat{T}}
\label{section spaces Nt}

In this section we define the scheme $T \mapsto \spat{T}$ that we mentioned in the Introduction and we prove the basic properties about it. We also give some preliminary results about \del-isomorphisms between recursively presented metric spaces.

\subsection{Definition and properties}
We consider every finite sequence $u$ as a partial function on an initial
segment of \om \ to \om. For all $\alpha \in \ca{N}$ and $i, n \in \om$ we write $u(i) \neq \alpha(n)$, if either $ i < \lh(u)$ and
$u(i) \neq \alpha(n)$, or $i \geq \lh(u)$ -and so $u(i)$ is
undefined.

\begin{definition}
\label{definition of spaces Nt}\normalfont
For every tree $T$ on \om \ we define the space
$(\spat{T},d^T)$ as follows
\[
\spat{T} = T \cup [T]
\]
and
\[
d^T(x,y) = (\textrm{least} \ n[x(n) \neq y(n)] + 1)^{-1}
\]
for $x,y \in \spat{T}$. It is easy to verify that $d^T$ is a metric on $\spat{T}$.
\end{definition}

There are various ways to view the spaces \spat{T}. One way is to view them as $G_\delta$ subsets of the plane \cf Figure \ref{fig:space-nt}.\footnote{The idea of drawing a tree on the plane seems promising for connecting recursion theory with topology. There are some interesting recent results regarding computability and connectedness in compact $\Pi^0_1$ subsets of the plane \cite{kihara_incomputability_simply_connected_planar_continua}, in which the notion of a \emph{dendrite} (a tree-like compact connected set) is a key tool.}

Another perhaps more useful way is to view these spaces as subsets of the Baire space. For this we need to identify every finite sequence $u$ with the total
function, which agrees with $u$ on all $i < \lh(u)$ and takes the constant value $-1$ from \lh(u) and on. For an infinite sequence of
integers $x$ we adopt the notation $x+1$ for the sequence
\[
(x(0)+1, \dots,x(n-1)+1,\dots),
\]
so that (under the preceding convention about finite sequences) we
have that
\[
u+1 = (u(0)+1,\dots,u(\lh(u)-1)+1,0,0,\dots)
\]
for all $u \in \omseq$.

\begin{lemma}
\label{lemma the space Nt is embedded in the Baire space}
Suppose that $T$ is a tree on \om. We consider the shifted tree
\[
T^{+1} = \set{(u(0)+1,\dots,u(\lh(u)-1)+1)}{u \in T}
\]
and the tree
\[
S^T = T^{+1} \cup \set{v \ \hat{} \ 0^{(n)}}{v \in T^{+1}, \n}.
\]
Then the function $\rho^T: \spat{T} \inj \ca{N}$ defined by
\[
\rho^T(x) = (x(0)+1,\dots,x(n-1)+1,\dots)
\]
is an isometry between $(\spat{T},d^T)$ and $([S^T], d_{\ca{N}})$, where $d_\ca{N}$ is the usual metric on the Baire space restricted accordingly.\smallskip

Therefore the space \spat{T} is topologically isomorphic to a closed subset of the Baire space and in particular it is Polish.
\end{lemma}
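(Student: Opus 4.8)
The plan is to check, essentially by unwinding definitions, that $\rho^T$ is a well-defined isometry from $(\spat{T},d^T)$ \emph{onto} $([S^T],d_\ca{N})$, and then to invoke the standard fact that the body of a tree is closed in $\ca{N}$.

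First I would fix the bookkeeping conventions. Every $x \in \spat{T}$ is viewed as a total function $\om \to \om \cup \{-1\}$: a member of $[T]$ takes only values in $\om$, while a member $u \in T$ takes the values $u(0),\dots,u(\lh(u)-1) \in \om$ and then the constant value $-1$. Hence $x(n)+1 \in \om$ for all $n$, so $\rho^T(x) \in \ca{N}$ is well defined, and the key numerical remark is that $\{n : x(n) \neq y(n)\} = \{n : \rho^T(x)(n) \neq \rho^T(y)(n)\}$ for all $x,y \in \spat{T}$. Taking least elements and comparing the definitions of $d^T$ and of the Baire metric gives $d_\ca{N}(\rho^T(x),\rho^T(y)) = d^T(x,y)$; in particular $\rho^T$ is injective.

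Next I would verify $\rho^T[\spat{T}] \subseteq [S^T]$ by cases. If $x \in [T]$ then $x \upharpoonright n \in T$ for all $n$, hence $\rho^T(x) \upharpoonright n = (x(0)+1,\dots,x(n-1)+1) \in T^{+1} \subseteq S^T$ for all $n$, so $\rho^T(x) \in [S^T]$. If $x = u \in T$ with $\lh(u) = \ell$, then for $n \le \ell$ the initial segment $\rho^T(x)\upharpoonright n$ lies in $T^{+1}$ (initial segments of $u$ lie in $T$), while for $n > \ell$ we have $\rho^T(x)\upharpoonright n = (u(0)+1,\dots,u(\ell-1)+1) \ \hat{}\ 0^{(n-\ell)}$, which lies in $S^T$ by the second clause of the definition of $S^T$; so again $\rho^T(x) \in [S^T]$. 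Surjectivity is the one substantive point, and here I would exploit that every entry of a sequence in $T^{+1}$ is $\ge 1$, so that a sequence in $S^T$ is a $T^{+1}$-sequence followed by a (possibly empty) terminal block of zeros. Given $\beta \in [S^T]$: if $\beta(n) \ge 1$ for every $n$, then necessarily $\beta\upharpoonright n \in T^{+1}$ for all $n$ (the trailing block of zeros is empty), so $(\beta(n)-1)_{\n} \in [T]$ and $\rho^T$ sends it to $\beta$. Otherwise, letting $m$ be least with $\beta(m) = 0$: since each $\beta \upharpoonright k$ is in $S^T$ with its zeros confined to a terminal block, the zero already present in position $m$ forces $\beta(n) = 0$ for all $n \ge m$ and $\beta \upharpoonright m \in T^{+1}$; then $u := (\beta(0)-1,\dots,\beta(m-1)-1) \in T$ and $\rho^T(u) = \beta$. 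Thus $\rho^T$ is an isometric bijection of $\spat{T}$ onto $[S^T]$.

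Finally, $[S^T]$ is the body of a tree, hence closed in $\ca{N}$: its complement is $\bigcup_n \{\beta : \beta \upharpoonright n \notin S^T\}$, which is open. Since an isometry is in particular a homeomorphism onto its image, $\spat{T}$ is homeomorphic to the closed subspace $[S^T]$ of $\ca{N}$, and closed subspaces of Polish spaces are Polish, so $(\spat{T},d^T)$ is Polish. I expect the only genuinely non-routine step to be the surjectivity argument — specifically the observation that members of $T^{+1}$ have all entries $\ge 1$, which yields the dichotomy ``a branch of $S^T$ either stays positive forever or drops to a permanent tail of zeros'' that identifies the branches of $S^T$ with $[T] \cup T$; everything else is careful bookkeeping with the finite-sequence-as-function convention.
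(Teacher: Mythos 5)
Your proof is correct, and it fills in exactly the routine verification the paper omits (the paper states that the proof is straightforward and leaves it out): direct checking that $\rho^T$ preserves least points of difference, the case analysis showing $\rho^T[\spat{T}] = [S^T]$ via the observation that members of $T^{+1}$ have all entries $\geq 1$, and the standard fact that bodies of trees are closed. Nothing is missing; the surjectivity dichotomy you isolate is indeed the only point requiring any care.
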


The proof of the preceding lemma is straightforward and we omit it. Although viewing the members of $\spat{T}$ as infinite sequences may sometimes be more convenient, we point out that we will not alter the definition of $T$, \ie unless stated otherwise in all of our statements and
proofs the points of $T$ are formally finite sequences.

A tree $T$ is \emph{recursive} if the set of codes $\set{s}{\dec{s} \in T}$ is a recursive subset of \om, where $\dec{\cdot}$ is the decoding function defined in the Introduction. The following result describes the connection between recursive trees and $\Pi^0_1$ subsets of \ca{N}.

\begin{lemma}[Well-known \cf 4A.1 \cite{yiannis_dst}]
\label{lemma pi-zero-one sets recursive tree}
A set $P \subseteq \ca{N}$ is in $\Pi^0_1$ exactly when there exists a recursive tree $T$ on \om \ such that $P = [T]$.
\end{lemma}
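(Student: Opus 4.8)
The plan is to prove the two implications of the equivalence separately. The direction ``recursive tree $\Rightarrow$ $\Pi^0_1$ body'' is routine, and all the content is in the converse.

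For the easy direction, if $T$ is recursive then $\set{s}{\dec{s}\in T}$ is a recursive subset of $\om$, and since $(\alpha,n)\mapsto\barr{\alpha}(n)$ is a recursive function from $\ca{N}\times\om$ to $\om$, the relation $R(\alpha,n)\iff\barr{\alpha}(n)\in\set{s}{\dec{s}\in T}$ is recursive; in particular $\neg R$ is $\Sigma^0_1$. Then $\alpha\in[T]\iff(\forall n)R(\alpha,n)\iff\neg(\exists n)\neg R(\alpha,n)$, and closure of $\Sigma^0_1$ under $\exists^\om$ gives $[T]\in\Pi^0_1$.

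For the converse, let $P\subseteq\ca{N}$ be $\Pi^0_1$, so $\ca{N}\setminus P$ is semirecursive, say $\ca{N}\setminus P=\bigcup_s N(\ca{N},f(s))$ with $f$ recursive. Since in the Baire space every basic neighborhood is determined by a finite initial segment whose code in $\Seq$ can be computed recursively from $f(s)$, I would first pass to a recursively enumerable set $W\subseteq\Seq$ with
\[
\alpha\notin P\iff(\exists m)[\barr{\alpha}(m)\in W],
\]
and then fix a recursive $S\subseteq\om\times\om$ with $W=\set{s}{(\exists t)S(s,t)}$. The key step is to build the tree with a ``time bound'' woven into its levels: set
\[
T=\set{u\in\omseq}{(\forall m\le\lh(u))\,(\forall t\le\lh(u))\,\neg S(\barr{u}(m),t)},
\]
where for $m\le\lh(u)$ we write $\barr{u}(m)$ for the code $\langle u(0),\dots,u(m-1)\rangle$ of the length-$m$ initial segment of $u$. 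Then: $T$ is closed under initial segments, since passing to a shorter sequence only shrinks the ranges of $m$ and $t$; $T$ is recursive, since $\dec{s}\in T$ is obtained by applying bounded number quantifiers to the recursive relation $S$ together with the recursive operations $s\mapsto\lh(\dec{s})$ and ``code of the length-$m$ initial segment of $\dec{s}$''; and $[T]=P$, because
\[
\alpha\in[T]\iff(\forall n)(\forall m\le n)(\forall t\le n)\,\neg S(\barr{\alpha}(m),t)\iff(\forall m)(\forall t)\,\neg S(\barr{\alpha}(m),t),
\]
where the second equivalence uses $n=\max(m,t)$, and the last formula is exactly the condition $\alpha\in P$.

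I expect the only real obstacle to be this recursiveness point: the naive tree $\set{u\in\omseq}{(\forall m\le\lh(u))[\barr{u}(m)\notin W]}$ is in general merely $\Pi^0_1$, not recursive, because $W$ is only recursively enumerable. Inserting the bounded ``time'' quantifier $t\le\lh(u)$ --- equivalently, dovetailing the enumeration of $W$ into successive levels of the tree --- is precisely what makes $T$ recursive while leaving $[T]$ unchanged.
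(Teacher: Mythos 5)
Your proof is correct, and it is exactly the standard argument behind this lemma: the paper itself gives no proof, citing it as well-known (4A.1 in \cite{yiannis_dst}), and the classical proof there proceeds just as you do, reducing the semirecursive complement to the form $(\exists m)[\barr{\alpha}(m)\in W]$ and then dovetailing the enumeration of $W$ into the levels of the tree (your bounded ``time'' quantifier) to make the tree recursive without changing its body. So there is nothing to add; the recursiveness subtlety you flag at the end is indeed the only real point, and you handle it correctly.
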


We state the following fundamental theorem of Kleene.

\begin{theorem}[Kleene \cf \cite{kleene_hierarchies_number_theoretic_predicates} and 4D.10 \cite{yiannis_dst}]
\label{theorem Kleene there exists a Kleene tree}
There exists a recursive tree $T$ which has non-empty body but no $\alpha \in \del$ is a member of $[T]$.
\end{theorem}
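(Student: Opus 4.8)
The plan is to obtain the tree indirectly, via Lemma \ref{lemma pi-zero-one sets recursive tree}: it suffices to exhibit a \emph{nonempty} $\Pi^0_1$ subset $\tilde{P}$ of \ca{N} \ that contains no \del \ point, for then $\tilde{P} = [T]$ for a recursive tree $T$ on \om, and this $T$ is as required. The starting observation is that the set
\[
C = \set{\alpha \in \ca{N}}{\alpha \in \del}
\]
of \del \ points of the Baire space is \pii \ (Theorem \ref{theorem alltogether}-(\ref{theorem 4D.14})) and \emph{countable}: by the Parametrization Theorem for points in \del \ (Theorem \ref{theorem alltogether}-(\ref{theorem 4D.2 and theorem 4D.15})) we have $C = \set{\pdel(i)}{i \in \Dom(\pdel)}$, the image of a subset of \om. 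Since \ca{N} \ is uncountable, $\ca{N} \setminus C$ is therefore a nonempty \sig \ set.

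The next step is to ``unfold'' this \sig \ set into a $\Pi^0_1$ set in a product space. By the very definition of \sig \ there is a $\Pi^0_1$ set $P \subseteq \ca{N} \times \ca{N}$ with $\ca{N} \setminus C = \exists^{\ca{N}}P$. I fix a recursive homeomorphism $h : \ca{N} \times \ca{N} \to \ca{N}$ with recursive inverse (e.g.\ the interleaving map) and set $\tilde{P} = h[P] \subseteq \ca{N}$. Since $h^{-1}$ is recursive, $\tilde{P}$ is $\Pi^0_1$, and $\tilde{P} \neq \emptyset$ because $\ca{N} \setminus C \neq \emptyset$. The crux is that $\tilde{P}$ has no \del \ member: if $\gamma \in \tilde{P}$ and $\gamma \in \del$, write $h^{-1}(\gamma) = (\alpha,\beta)$, so $P(\alpha,\beta)$ holds and hence $\alpha \in \ca{N} \setminus C$, \ie $\alpha \notin \del$. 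But $\gamma \mapsto \alpha$ (namely $h^{-1}$ followed by the first projection) is recursive, so part (1) of Lemma \ref{lemma the inverse function is del and same hyperdegree} gives $\alpha \in \del(\gamma) = \del$, a contradiction. Applying Lemma \ref{lemma pi-zero-one sets recursive tree} to $\tilde{P}$ produces the desired recursive tree $T$ with $[T] = \tilde{P}$, which is nonempty and has no \del \ branch.

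I do not expect a real obstacle here; the content of the argument is the single remark that the complement of the countable \pii \ set of \del \ points is \sig, hence (by the definition of \sig) the projection of a $\Pi^0_1$ set, combined with the fact that a recursive projection cannot manufacture \del \ points out of nothing — so a \del \ branch of the unfolded tree would have a coordinate that is \del \ yet lies outside the set of all \del \ points. The only routine verifications are that a recursive homeomorphism preserves $\Pi^0_1$-ness, that there are just countably many \del \ reals while \ca{N} \ is uncountable, and the correspondence between $\Pi^0_1$ subsets of \ca{N} \ and recursive trees in Lemma \ref{lemma pi-zero-one sets recursive tree}. (An alternative — and historically Kleene's original — route is a direct diagonalization using the recursion theorem to force every branch of $T$ to differ from each $\pdel(i)$; the argument above seems cleaner given the tools already assembled in the excerpt.)
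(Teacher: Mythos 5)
Your proof is correct. The paper itself gives no proof of this theorem (it is quoted as well known, with references to Kleene and to 4D.10 of \cite{yiannis_dst}), and your argument --- pass to the nonempty \sig \ set $\ca{N} \setminus \del$, unfold it via the \sig \ normal form into a nonempty $\Pi^0_1$ set (equivalently the body of a recursive tree), and observe that a \del \ branch would recursively project to a \del \ point lying outside \del \ --- is precisely the standard argument of the cited 4D.10, so it matches the intended proof.
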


\begin{figure}[t]
\begin{picture}(400,130)(0,-15)
\put (50,80){\vector(-1,-1){20}}
\put (50,80){\vector(0,-1){21}}
\put (50,80){\vector(1,-1){21}}
\put (50,80){\vector(2,-1){40}}
\put (50,80){\vector(3,-1){60}}
\put (120,60){$\dots$}
\put (30,60){\vector(-2,-3){13}}
\put (50,59.5){\vector(3,-4){15}}
\put (65,40){\vector(-3,-4){15}}
\put (50.5,21){\vector(0,-1){20}}
\put(49,-12){$\vdots$}
\qbezier(120,99)(160,120)(198,101)
\put (200,99){\vector(1,-1){1}}

\put (235,80){\tiny{$\bullet$}}
\put (210,55){\tiny{$\bullet$}}
\put (240,55){\tiny{$\bullet$}}
\put (265,55){\tiny{$\bullet$}}
\put (285,55){\tiny{$\bullet$}}
\put (300,55){\tiny{$\bullet$}}
\put (310,55){$\dots$}
\put (200,35){\tiny{$\bullet$}}
\put (250,35){\tiny{$\bullet$}}
\put (245,20){\tiny{$\bullet$}}
\put (243,12){\tiny{$\bullet$}}
\put (243,-17.5){\tiny{$\bullet$}}
\put (190,-15){\line(1,0){140}}
\put (235,-8){$\vdots$}
\qbezier[20](238,82)(235,80)(242,57)
\qbezier[17](242,57)(246,55)(252,37)
\qbezier[20](252,37)(254,35)(246,20)
\qbezier[10] (247,22)(245,20)(245,10)
\qbezier[20](245,12)(243,-8)(246,-16)
\put (75,100){$T$}
\put (260,100){\spat{T}}
\end{picture}
\caption{The space \spat{T} as a subset of the plane.}
\label{fig:space-nt}
\end{figure}
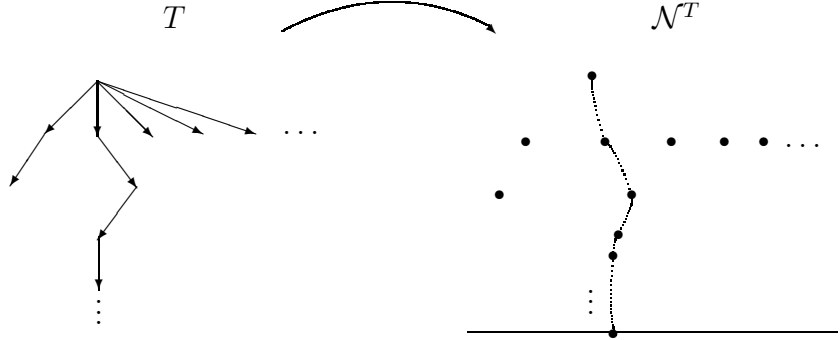

Now we summarize the basic properties of the spaces \spat{T}.

\begin{theorem}
\label{theorem properties of Nt}

For every tree $T$ on \om \ the space \spat{T} satisfies the
following properties.

\tu{(1)} Every point of $T$ is an isolated point of \spat{T}. Moreover
\[
\scat{\spat{T}} = \scat{[T]} \cup T,
\]
where $\scat{X}$ stands for the scattered part of $X$.

\tu{(2)} For any $u \in T$ consider the set $N(T,u) = \set{x \in
\spat{T}}{u \sqsubseteq x}$. The family
\[
\set{N(T,u)}{u \in T} \cup \set{\{u\}}{u \in T}
\]
forms a basis for the topology of \spat{T}. In particular the tree
$T$ is dense in \spat{T}. Moreover the body $[T]$ is a closed and nowhere dense subset of $\spat{T}$.

\tu{(3)} The function $\dense^T: \om \to \spat{T}:$
\[
r_s =
\begin{cases}
\dec{s}, & \ \textrm{if} \ s \in \Seq \ \& \ \dec{s} \in T,\\
\emptyset, & \ \textrm{else},
\end{cases}
\]
is a $T$-recursive presentation of $(\spat{T},d^T)$. In particular if $T$ is a recursive tree the space $(\spat{T},d^T,\dense^T)$ is recursively presented and is isometric to a $\Pi^0_1$ subset of \ca{N}.

\tu{(4)} If $T$ is recursive then $T$ is a $\Sigma^0_1$ subset of
\spat{T} and so $[T]$ is a $\Pi^0_1$ subset of \spat{T}.
\end{theorem}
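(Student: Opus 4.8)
The plan is to derive all four items from two elementary estimates for the metric $d^T$, the only delicate point being to keep the convention ``$u(i)\neq\alpha(n)$ holds whenever $i\geq\lh(u)$'' straight so that the inequalities point the right way. \emph{First estimate:} if $u\in T$ with $\lh(u)=m$ and $y\in\spat{T}$ with $y\neq u$, then the least $n$ with $u(n)\neq y(n)$ is at most $m$ — it equals $m$ when $y$ properly extends $u$, and is strictly below $m$ when $y$ is a proper initial segment of $u$ or is incompatible with $u$ — so $d^T(u,y)\geq(m+1)^{-1}$, whence the open ball of radius $(m+1)^{-1}$ about $u$ is $\{u\}$. \emph{Second estimate:} for infinite $x\in\spat{T}$ and any $N$, the sequence $u:=x\upharpoonright N$ lies in $T$, we have $x\in N(T,u)$, and every $y\in N(T,u)$ agrees with $x$ on the first $N$ coordinates, so $d^T(x,y)\leq(N+1)^{-1}$.

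For item (2): the first estimate shows each $\{u\}$ is open, and combined with it one checks that the ball of radius $(\lh(u)+1)^{-1}$ about any point of $N(T,u)$ stays inside $N(T,u)$, so each $N(T,u)$ is open. That the displayed family is a basis follows from the two estimates: a given open $W\ni x$ contains $\{x\}$ when $x$ is finite, and contains $N(T,x\upharpoonright N)$ for sufficiently large $N$ when $x$ is infinite. Since every nonempty basic set meets $T$ (it contains the relevant $u\in T$), $T$ is dense. The complement of $[T]$ in $\spat{T}$ is $\bigcup_{u\in T}\{u\}$, which is open, so $[T]$ is closed; and since every nonempty open set contains a basic set, hence a point of $T\subseteq\spat{T}\setminus[T]$, the closed set $[T]$ has empty interior and is therefore nowhere dense.

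For item (1): each $u\in T$ is isolated by the first estimate, hence lies in $\scat{\spat{T}}$, so $T\subseteq\scat{\spat{T}}$. For $x\in[T]$ the key observation is that $T$, being a set of finite sequences, is countable: for any open $V\ni x$ the set $V\cap\spat{T}=(V\cap T)\cup(V\cap[T])$ is countable if and only if $V\cap[T]$ is, whence $x\in\scat{\spat{T}}$ if and only if $x\in\scat{[T]}$. Since $\spat{T}$ is the disjoint union of $T$ and $[T]$, this gives $\scat{\spat{T}}=T\cup\scat{[T]}$.

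For item (3): $\set{r_s}{s\in\nat}=T$ (for $u\in T$ take $s=\langle u\rangle$), which is dense by (2), and $d^T(r_i,r_j)$ is a computation on two finite sequences that is uniformly computable from $i,j$ once one can decide ``$s\in\Seq$'' and ``$\dec{s}\in T$'', the latter being an oracle call to $T$; so $P_{<},P_{\leq}$ are $T$-recursive and $\dense^T$ is a $T$-recursive presentation, which is recursive when $T$ is recursive. The isometry with a $\Pi^0_1$ subset of $\ca{N}$ then comes from Lemma~\ref{lemma the space Nt is embedded in the Baire space} and Lemma~\ref{lemma pi-zero-one sets recursive tree}, after noting that the tree $S^T$ there is recursive when $T$ is (strip trailing zeros, then subtract one from each entry). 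For item (4): since $\{u\}$ equals the ball $\set{x\in\spat{T}}{d^T(x,u)<(\lh(u)+1)^{-1}}$ for $u\in T$, one has
\[
x\in T\iff(\exists s)[\,s\in\Seq\ \&\ \dec{s}\in T\ \&\ d^T(x,\dec{s})<(\lh(\dec{s})+1)^{-1}\,],
\]
whose matrix is semirecursive in $(x,s)$ when $T$ is recursive, so $T$ is $\Sigma^0_1$ and $[T]=\spat{T}\setminus T$ is $\Pi^0_1$. The main obstacle is not conceptual: it is simply the careful handling of the metric in the two estimates under the finite/infinite-sequence convention, after which everything is routine bookkeeping.
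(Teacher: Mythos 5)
Your proof is correct and takes essentially the same route as the paper: the paper declares (1)--(3) immediate from the definitions and proves (4) via exactly the equivalence you display, using that $u$ is the unique point of \spat{T} in the ball about it of radius smaller than $(\lh(u)+1)^{-1}$. Your two metric estimates simply supply the routine details the paper leaves unstated.
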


\begin{proof} Assertions (1)-(3) are immediate from the definitions. Regarding (4) we notice that
\begin{align*}
x \in T \iff (\exists s)[s \in \Seq \ \& \ \dec{s} \in T \ \& \ d^T(\dense^T(s),x) <
\frac{1}{\lh(s)+1}]
\end{align*}
since $u$ is the unique point in the ball about it with radius smaller than $1/(\lh(u)+1)$.
\end{proof}

The body $[T]$ is open in \spat{T} exactly when it is the empty set. This is because $T$ is dense in \spat{T} and $T \cap [T] = \emptyset$. Therefore the upper bound given in (4) of the preceding theorem is the best possible in general.

\begin{lemma}
\label{lemma transition between nhbds Nt and N}
For all recursive trees $T$ there are recursive relations $R^T \subseteq \spat{T} \times \omega$ and $Q^T
\subseteq \ca{N} \times \omega$ such that
\begin{align*}
              x \in N(\spat{T},s) &\iff Q^T(x,s)\\
              y \in N(\ca{N},t)     &\iff R^T(y,t)
\end{align*}
for all $(x,s), (y,t) \in [T] \times \om$.
\end{lemma}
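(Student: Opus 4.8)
The plan is to exploit the fact that on $[T]$ the metric $d^T$ agrees with the restriction of the usual metric of $\ca N$, so that after intersecting with $[T]$ both neighbourhood systems consist of clopen sets of a very simple shape — the whole space, the empty set, or a cylinder $\set{z}{v\sqsubseteq z}$ — and in each case the relevant combinatorial data (which alternative occurs, and the finite sequence $v$) can be extracted recursively from the index. The lemma will then reduce to the observation that ``$v\sqsubseteq z$'' is a recursive relation of $z$, uniformly in a code for the finite sequence $v$, in each of the two ambient spaces; and this in turn uses that $T$, hence the presentation $\dense^T$ of $\spat{T}$, is recursive.

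To build $Q^T$, I would first analyse the balls of $\spat{T}$. Given $s$, put $p=\dense^T((s)_0)\in T\cup\{\empt\}$ and $q=(s)_1\cdot((s)_2+1)^{-1}$, both recursive in $s$. If $q>1$ then $N(\spat{T},s)=\spat{T}$. If $0<q\le 1$, let $N_0$ be the least $j$ with $(j+1)^{-1}<q$ (again recursive in $s$); then for $x\in[T]$ one has $d^T(x,p)<q$ precisely when $x$ and $p$ agree on their first $N_0$ coordinates, which is impossible if $\lh(p)<N_0$ (in particular if $p=\empt$), and otherwise means $v_s\sqsubseteq x$ for the finite sequence $v_s=p\upharpoonright N_0$, whose length and values are recursive in $s$. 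Hence for $x\in[T]$,
\[
x\in N(\spat{T},s)\iff A(s)\ \vee\ \bigl[\,\neg A(s)\ \&\ \neg B(s)\ \&\ v_s\sqsubseteq x\,\bigr],
\]
where $A(s)$ says $q>1$ and $B(s)$ says $q\le 1$ and $\lh(p)<N_0$; both $A$ and $B$ are recursive subsets of $\om$. Since ``$e$ is the code of a finite sequence $v$ and $v\sqsubseteq x$'' is a recursive relation of $(x,e)\in\ca N\times\om$, the displayed right-hand side defines a recursive $Q^T\subseteq\ca N\times\om$ with the required property.

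For $R^T$ I would argue symmetrically. From $t$ one reads off recursively whether $N(\ca N,t)$ is all of $\ca N$ or is empty, and otherwise a finite sequence $w_t$ with $N(\ca N,t)=\set{\alpha}{w_t\sqsubseteq\alpha}$, so that for $y\in[T]$, $y\in N(\ca N,t)$ iff the appropriate alternative holds. It then remains to see that ``$w\sqsubseteq y$'' is a recursive relation of $(y,w)\in\spat{T}\times\om$ as $y$ ranges over $[T]$. For this, note that for $y\in[T]$ one has $w\sqsubseteq y$ iff $w\in T$ and $y\in N(T,w)$; that $T$ is recursive; that $N(T,w)$ equals $N(\spat{T},h(w))$ for a recursive function $h$ (take the code whose centre is $w$ — legitimate exactly because $w\in T$ — and whose radius lies in $((\lh w+1)^{-1},(\lh w)^{-1}]$, the case $w=\empt$ being $N(T,\empt)=\spat{T}$, coded by a radius exceeding $1$); and, finally, that $\set{(x,s)}{x\in N(\spat{T},s)}$ is a recursive subset of $\spat{T}\times\om$. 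This last point holds because the balls of $\spat{T}$ are clopen with a recursively computable ``radius of decision'': both the relation and its complement are semirecursive, since $x\notin N(\spat{T},s)$ iff $x$ differs from $\dense^T((s)_0)$ at some coordinate below the threshold $N_0$, a finite and recursively checkable disjunction (``differs'' being automatic at coordinates beyond $\lh(\dense^T((s)_0))$).

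The argument is essentially bookkeeping, and the one place where care is genuinely needed is the degenerate cases: one must make sure that when $N(\spat{T},s)\cap[T]$, respectively $N(\ca N,t)\cap[T]$, is empty, or is all of $[T]$, this is signalled recursively by the index and is not misreported by the cylinder clause — the dangerous situations being that the extracted centre is shorter than the decision threshold, or is the root $\empt$. The only input beyond pure combinatorics is the (standard) fact that neighbourhood membership in $\spat{T}$ and in $\ca N$ is recursive and not merely semirecursive, which rests on recursiveness of $T$ and on the ultrametric nature of these spaces.
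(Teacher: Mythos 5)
Your argument is correct and is precisely the ``standard computations'' route that the paper indicates for this lemma (the paper omits the details, remarking that one can either do such computations or observe that the isometry $\rho^T$ of Lemma \ref{lemma the space Nt is embedded in the Baire space} and its partial inverse are recursive). The only loose end is the degenerate radius-zero case, where your threshold $N_0$ is undefined and the ball is empty; this is trivially absorbed into the recursive case distinction on the index that you already set up, so it does not affect correctness.
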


One can verify the preceding lemma by standard computations or by observing that the function $\rho^T$ in Lemma \ref{lemma the space Nt is embedded in the Baire space} is recursive and its inverse (as a partial function) is recursive. We omit the details.

\begin{theorem}
\label{theorem points of Nt}
For every recursive tree $T$ and for every $x \in [T]$ we have that
\[
x \ \textrm{is a $\Gamma$-point of \spat{T}} \iff x \ \textrm{is a
$\Gamma$-point of \ca{N}},
\]
where $\Gamma$ is any one of the pointclasses $\Sigma^0_n$, $\Sigma^1_n$ and $\Pi^1_n$ for $n \geq 1$.
\end{theorem}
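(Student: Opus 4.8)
The plan is to read the statement off from the neighborhood-transfer Lemma~\ref{lemma transition between nhbds Nt and N} combined with the closure properties of the pointclasses involved. Recall that a point $z$ of a recursively presented metric space $\ca{Z}$ is a $\Gamma$-point precisely when $\{s\in\om : z\in N(\ca{Z},s)\}\in\Gamma$. Fix $x\in[T]$. Lemma~\ref{lemma transition between nhbds Nt and N} gives recursive relations $Q^T\subseteq\ca{N}\times\om$ and $R^T\subseteq\spat{T}\times\om$ with
\[
\{s : x\in N(\spat{T},s)\}=\{s : Q^T(x,s)\}
\quad\text{and}\quad
\{t : x\in N(\ca{N},t)\}=\{t : R^T(x,t)\},
\]
so the theorem will follow once we prove the auxiliary claim: \emph{if $\ca{Z}$ is recursively presented, $P\subseteq\om\times\ca{Z}$ is semirecursive, $z\in\ca{Z}$ is a $\Gamma$-point and $\Gamma$ is one of the listed pointclasses, then $\{n : P(n,z)\}\in\Gamma$}. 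Indeed, applying this with $P$ the (semirecursive) transpose of $Q^T$ and $\ca{Z}=\ca{N}$ gives the implication ``$\Gamma$-point of $\ca{N}$ $\Rightarrow$ $\Gamma$-point of $\spat{T}$'', and applying it with the transpose of $R^T$ and $\ca{Z}=\spat{T}$ gives the converse.

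To prove the auxiliary claim, write $P=\bigcup_m N(\om\times\ca{Z},f(m))$ for a recursive $f$, and use the standard fact that, since finite products of recursively presented spaces are recursively presented, each basic neighborhood of $\om\times\ca{Z}$ is, uniformly recursively in its code $j$, of the form $E_j\times N(\ca{Z},e_j)$ with $E_j\subseteq\om$ recursive. Hence
\[
P(n,z)\iff(\exists m)\bigl[\,n\in E_{f(m)}\ \wedge\ z\in N(\ca{Z},e_{f(m)})\,\bigr].
\]
The relation $(n,m)\mapsto n\in E_{f(m)}$ is recursive; the relation $m\mapsto z\in N(\ca{Z},e_{f(m)})$ is a recursive substitution into the set $\{s : z\in N(\ca{Z},s)\}$, which is in $\Gamma$, and is therefore itself in $\Gamma$; so their conjunction is in $\Gamma$, and applying $\exists^\om$ over $m$ yields $\{n : P(n,z)\}\in\Gamma$. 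This uses only closure of $\Gamma$ under recursive substitution, finite intersections, and the number quantifier $\exists^\om$; these hold for $\Sigma^0_n$ with $n\geq1$, for $\Sigma^1_n$ and for $\Pi^1_n$ (see \cite{yiannis_dst}, 3C and 4C), and it is exactly the failure of $\exists^\om$-closure that obstructs the argument for the classes $\Pi^0_n$, which explains their absence from the statement.

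I do not expect a genuine obstacle here: the real content has been isolated into Lemma~\ref{lemma transition between nhbds Nt and N}, whose proof in turn rests only on the recursiveness of the isometry $\rho^T$ of Lemma~\ref{lemma the space Nt is embedded in the Baire space} and of its partial inverse. The only point needing a little care is the bookkeeping in the auxiliary claim --- keeping track of which quantifier is applied where, so as not to inadvertently require $\forall^\om$- or $\exists^\ca{N}$-closure --- but once that is done the two symmetric applications above close the argument.
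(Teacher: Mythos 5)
Your proposal is correct and follows essentially the paper's own route: both proofs reduce the theorem to Lemma \ref{lemma transition between nhbds Nt and N} together with the normal form for semirecursive subsets of a product with $\om$ (your auxiliary claim is exactly the step the paper gets by citing 3C.5 of \cite{yiannis_dst}, which you re-derive by hand via the product-neighborhood decomposition) and the closure of the listed pointclasses under recursive substitution, conjunction and $\exists^\om$. The transposition and the two symmetric applications of the claim are just a repackaging of the paper's argument with $R^T,R^\ast$ and $Q^T,Q^\ast$.
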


\begin{proof}
Consider the recursive relations $R^T \subseteq \spat{T} \times \omega$ and $Q^T
\subseteq \ca{N} \times \omega$ in Lemma \ref{lemma transition between nhbds Nt and N}. From 3C.5 in \cite{yiannis_dst} there are semirecursive $R^\ast, Q^\ast \subseteq \om^2$ such that
\begin{align*}
R^T(x,t) \iff& (\exists s)[x \in N(\spat{T},s) \ \& \ R^\ast(t,s)]
\end{align*}
for all $(x,t) \in \spat{T} \times \omega$ and
\begin{align*}
Q^T(x,t) \iff& (\exists s)[x \in N(\ca{N},s) \ \& \ Q^\ast(t,s)]
\end{align*}
for all $(x,t) \in \ca{N} \times \omega$. For any $x \in \ca{N}$ we have that
\begin{align*}
x \in N(\ca{N},t) \iff& R^T(x,t)\\
                  \iff& (\exists s)[x \in N(\spat{T},s) \ \& \ R^\ast(t,s)],
\end{align*}
which shows that if $x \in [T]$ is a $\Gamma$-point of \spat{T} then $x$ is a $\Gamma$-point of \ca{N}.

For the converse direction we use the relations $Q^T$ and $Q^\ast$ from above.
\end{proof}

\begin{theorem}
\label{theorem pointclasses of Nt}
Suppose that $T$ is a recursive tree and that $\Gamma$ is one of the
pointclasses $\Pi^0_1$, $\Sigma^0_{n+1}$, $\Pi^0_{n+1}$, $\Sigma^1_{n+1}$ and
$\Pi^1_{n+1}$, \tu{\n}. For every recursively presented metric space
\ca{Z} a set $A \subseteq [T] \times \ca{Z}$ is in $\Gamma$ as a subset of $\spat{T} \times \ca{Z}$ exactly when there
exists $B \subseteq \ca{N} \times \ca{Z}$ in $\Gamma$ such that $A = B \cap ([T] \times \ca{Z})$. In particular we have
that
\[
A \in \Gamma(\spat{T} \times \ca{Z}) \iff A \in \Gamma(\ca{N}
\times \ca{Z})
\]
for every $A \subseteq [T] \times \ca{Z}$.
\end{theorem}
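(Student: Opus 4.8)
The plan is to reduce the statement to the two ``transition'' equivalences already isolated in the proof of Theorem \ref{theorem points of Nt}. Namely, combining Lemma \ref{lemma transition between nhbds Nt and N} with 3C.5 in \cite{yiannis_dst}, there are semirecursive relations $R^\ast, Q^\ast \subseteq \om^2$ such that
\begin{align*}
x \in N(\ca{N},t) &\iff (\exists s)[x \in N(\spat{T},s) \ \& \ R^\ast(t,s)],\\
x \in N(\spat{T},t) &\iff (\exists s)[x \in N(\ca{N},s) \ \& \ Q^\ast(t,s)]
\end{align*}
for all $(x,t) \in [T] \times \om$. Call a pointclass $\Gamma$ \emph{adequate} if for every recursively presented metric space $\ca{Z}$ and every $A \subseteq [T] \times \ca{Z}$, the set $A$ is of the form $C \cap ([T] \times \ca{Z})$ for some $C \in \Gamma(\spat{T} \times \ca{Z})$ if and only if it is of the form $B \cap ([T] \times \ca{Z})$ for some $B \in \Gamma(\ca{N} \times \ca{Z})$. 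First I would prove, by induction following the recursive definition of the Kleene pointclasses, that each of $\Sigma^0_1$, $\Sigma^0_n$, $\Pi^0_n$, $\Sigma^1_n$, $\Pi^1_n$ (for $n\geq1$) is adequate.

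For the base case $\Gamma = \Sigma^0_1$: a semirecursive $C \subseteq \spat{T} \times \ca{Z}$ is a recursive union of basic product neighborhoods $N(\spat{T},a) \times N(\ca{Z},b)$, and replacing in this union each occurrence of the condition ``$x \in N(\spat{T},a)$'' by the equivalent-on-$[T]$ condition ``$(\exists s)[x \in N(\ca{N},s) \ \& \ Q^\ast(a,s)]$'' produces, by closure of $\Sigma^0_1$ under $\&$ and $\exists^\om$, a semirecursive $B \subseteq \ca{N} \times \ca{Z}$ with $B \cap ([T] \times \ca{Z}) = C \cap ([T] \times \ca{Z})$; the converse direction is symmetric via $R^\ast$. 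For the inductive steps I would check that adequacy is preserved under $\neg$, under $\exists^\om$, and under $\exists^\ca{N}$. For $\neg$ one uses that, since every set in play is contained in $[T] \times \ca{Z}$, complementing inside $[T] \times \ca{Z}$ corresponds to ambient complementation: if $A = C \cap ([T] \times \ca{Z})$ then $([T] \times \ca{Z}) \setminus A = (\neg C) \cap ([T] \times \ca{Z})$, so one applies the inductive hypothesis to $([T] \times \ca{Z}) \setminus A$. For $\exists^\om$ and $\exists^\ca{N}$ one uses that the projection of a set (along $\om$, resp.\ along $\ca{N}$) commutes with intersecting by $[T] \times \ca{Z} \times \ca{Y}$, and applies the inductive hypothesis with $\ca{Z} \times \ca{Y}$ in place of $\ca{Z}$, which is again recursively presented; this reaches all of the pointclasses in the list.

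Finally I would derive the theorem from adequacy. If $A \in \Gamma(\spat{T} \times \ca{Z})$ and $A \subseteq [T] \times \ca{Z}$, then $A = A \cap ([T] \times \ca{Z})$ displays $A$ in the required form, so adequacy hands us a $\Gamma$ set $B \subseteq \ca{N} \times \ca{Z}$ with $A = B \cap ([T] \times \ca{Z})$. Conversely, given such a $B$, adequacy hands us a $C \in \Gamma(\spat{T} \times \ca{Z})$ with $A = C \cap ([T] \times \ca{Z})$; since $T$ is recursive, $[T]$ is a $\Pi^0_1$ subset of $\spat{T}$ by Theorem \ref{theorem properties of Nt}(4), so $[T] \times \ca{Z}$ is $\Pi^0_1$ in $\spat{T} \times \ca{Z}$, and because each $\Gamma$ in the list contains $\Pi^0_1$ and is closed under finite intersections, $A = C \cap ([T] \times \ca{Z}) \in \Gamma(\spat{T} \times \ca{Z})$. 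For the concluding ``in particular'' one argues identically, this time using that $[T]$ is a $\Pi^0_1$ subset of $\ca{N}$ (Lemma \ref{lemma pi-zero-one sets recursive tree}) to push a set of the form $B \cap ([T] \times \ca{Z})$ with $B \in \Gamma(\ca{N} \times \ca{Z})$ back into $\Gamma(\ca{N} \times \ca{Z})$.

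I expect the only genuine difficulty to be bookkeeping in the inductive step: verifying at the level of relations, not merely of pointclasses, that ``intersect with $[T] \times \ca{Z}$'' commutes with $\neg$, $\exists^\om$ and $\exists^\ca{N}$, and keeping track of the fact that the last step really does use $\Gamma \neq \Sigma^0_1$ — which is exactly why $\Sigma^0_1$ is missing from the list, as $[T]$ is not open in $\spat{T}$ unless $[T] = \emptyset$.
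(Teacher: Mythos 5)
Your proposal is correct and follows essentially the same route as the paper: the same transition relations of Lemma \ref{lemma transition between nhbds Nt and N} (fed through 3C.5) translate the bottom of the hierarchy, and the remaining pointclasses are reached by the same induction through $\neg$, $\exists^\om$ and $\exists^{\ca{N}}$, with the fact that $[T]$ is $\Pi^0_1$ in \spat{T} (respectively in \ca{N}) doing the final work. The only difference is organizational: you run the base case at the level of $\Sigma^0_1$ traces and postpone intersecting with $[T]\times\ca{Z}$ to a last step (which also makes explicit why $\Sigma^0_1$ is excluded), whereas the paper starts directly with $\Pi^0_1$ and leaves the induction implicit.
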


\begin{proof}
We show first the assertion for $\Pi^0_1$. For simplicity we show the non-parametrized version, \ie we omit the space
\ca{Z}. Let $A \subseteq [T]$ be a $\Pi^0_1$ subset of \spat{T}. The complement of $A$ in \spat{T} is a $\Sigma^0_1$ and
so there exists a semirecursive $P^\ast \subseteq \omega$ such that
\[
x \in \spat{T} \setminus A \iff (\exists s)[x \in N(\spat{T},s) \ \& \ P^\ast(s)]
\]
for all $x \in \spat{T}$. Consider the recursive relations $R^T \subseteq \spat{T} \times \omega$ and $Q^T \subseteq
\ca{N} \times \omega$ in Lemma \ref{lemma transition between nhbds Nt and N}. We have that
\begin{align*}
x \in A \iff& x \in [T] \ \& \ x \in A\\
        \iff& x \in [T] \ \& \ (\forall s)[x \not \in N(\spat{T},s) \ \vee \ \neg P^\ast(s)]\\
        \iff& x \in [T] \ \& \ (\forall s)[\neg Q^T(x,s) \ \vee \ \neg P^\ast(s)]
\end{align*}
for all $x \in \ca{N}$. Therefore if we take
\[
B(x) \iff (\forall s)[\neg Q^T(x,s) \ \vee \ \neg P^\ast(s)]
\]
we have that $B$ is $\Pi^0_1$ and $A = B \cap [T]$.

Conversely if $B \subseteq \ca{N}$ is $\Pi^0_1$ there exists a semirecursive $Q^\ast \subseteq \om$ such that
\[
\neg B(x) \iff (\exists t)[x \in N(\ca{N},t) \ \& \ Q^\ast(t)]
\]
for all $x \in \ca{N}$ and all $t \in \omega$. Hence we have that
\begin{align*}
x \in [T] \ \& \ B(x) \iff& x \in [T] \ \& \ (\forall t)[x \not \in N(\ca{N},t) \ \vee \ \neg Q^\ast(t)]\\
                      \iff& x \in [T] \ \& \ (\forall t)[\neg R^T(x,t) \ \vee \ \neg Q^\ast(t)]
\end{align*}
for all $x \in \ca{N}$ and all $t \in \omega$. This shows that the set $B\cap [T]$ is a $\Pi^0_1$ subset of $\spat{T} \times \omega$.

The assertion for the other pointclasses follows using induction on $n$.
\end{proof}

\begin{remark}
\label{remark optimal theorem pointclasses of Nt}\normalfont
Theorem \ref{theorem pointclasses of Nt} is not true for $\Gamma = \Sigma^0_1$. To see this let $T$ be \omseq, so that $[T] = \ca{N}$. According to the remarks preceding Lemma \ref{lemma transition between nhbds Nt and N} the body $[T]$  is not even an open subset of \spat{T}, while clearly $[T]$ is a $\Sigma^0_1$ subset of \ca{N}.
\end{remark}

\subsubsection*{A counterexample on the Cantor-Bendixson decomposition.} With the help of the spaces \spat{T} we can give a counterexample which is related to Theorem \ref{theorem kreisel scattered part}.

\begin{theorem}
\label{theorem closed sig set}
There is a closed set $B \subseteq \ca{N}$ which is \sig \ and
not \pii \ such that the scattered part $\scat{B}$ is not \pii.
\end{theorem}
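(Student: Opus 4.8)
The plan is to apply the assignment $T\mapsto\spat{T}$ of Definition \ref{definition of spaces Nt} to a tree that is \emph{not} recursive. Both that definition and Lemma \ref{lemma the space Nt is embedded in the Baire space} make sense for an arbitrary tree, so a tree which is $\sig$ but not $\pii$ should produce a closed subset of $\ca{N}$ that is $\sig$, is not $\pii$, and whose scattered part is all of it. Concretely, I fix $S=\om\setminus\W$. Since Spector's $\W$ is $\pii$ and not $\sig$, the set $S$ is $\sig$, and it is not $\pii$ (otherwise $\W$ would be the complement of a $\pii$ set, hence $\sig$). Let $T$ be the tree on $\om$ whose members are the empty sequence $\empt$ and all one-term sequences $(n)$ with $n\in S$; then $[T]=\empt$, and the set of codes of $T$ is $\sig$ but not $\pii$, via the recursive map $n\mapsto$ code of $(n)$. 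By Lemma \ref{lemma the space Nt is embedded in the Baire space} the isometry $\rho^T$ identifies $\spat{T}$ with the closed set $B=[S^T]\subseteq\ca{N}$; since $[T]=\empt$ one has explicitly
\[
B=\set{u+1}{u\in T}=\{(0,0,\dots)\}\cup\set{(n+1,0,0,\dots)}{n\in S},
\]
a uniformly discrete, hence closed, countable subset of $\ca{N}$. (If one prefers an uncountable example whose perfect kernel is nonempty, one may instead attach a recursive copy of $2^{<\om}$ below each node $(n)$ with $n\in S$; the verifications below go through with only cosmetic changes, using Theorem \ref{theorem properties of Nt}\tu{(1)}.)

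Then I verify the three requirements on $B$. For $\alpha\in\ca{N}$ one has $\alpha\in B$ precisely when $\alpha$ is the zero sequence, or $\alpha(0)\geq 1$, $\alpha(0)-1\in S$ and $\alpha(k)=0$ for all $k\geq 1$; the clause ``$\alpha(0)-1\in S$'' is $\sig$ and everything else is arithmetical, so $B$ is $\sig$. Next, $n\mapsto(n+1,0,0,\dots)$ is a recursive function $\om\to\ca{N}$ and $n\in S$ if and only if $(n+1,0,0,\dots)\in B$, so if $B$ were $\pii$ then $S$ would be $\pii$, a contradiction; hence $B$ is not $\pii$. Finally, $B$ is countable, so for each $x\in B$ the open set $\ca{N}$ already witnesses that $x\in\scat{B}$, whence $\scat{B}=B$. (Equivalently, Theorem \ref{theorem properties of Nt}\tu{(1)} gives $\scat{\spat{T}}=\scat{[T]}\cup T=\spat{T}$, which transports to $B$ through the homeomorphism $\rho^T$.) Since $B$ is not $\pii$, neither is $\scat{B}$, and we are done.

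I do not expect a real obstacle here: the only step carrying any content is the lightface computation that $B\in\sig$, and it works precisely because $T$ was chosen to be $\sig$ in the lightface sense, not merely a Borel or boldface analytic tree. The single idea of the proof is the observation that the $\spat{\cdot}$ machinery of Section \ref{section spaces Nt}, so far applied only to recursive trees, already produces $\sig$ non-$\pii$ examples the moment one allows $\sig$ trees.
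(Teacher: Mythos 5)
Your construction is correct: the set $B=\{(0,0,\dots)\}\cup\set{(n+1,0,0,\dots)}{n\in\om\setminus\W}$ is the body of the shifted tree, hence closed (indeed uniformly discrete), it is \sig \ because $\om\setminus\W$ is, it is not \pii \ by recursive substitution along $n\mapsto(n+1,0,0,\dots)$, and since it is countable $\scat{B}=B$, so the scattered part is not \pii. This literally proves the statement, and it also suffices for the purpose the paper puts it to (optimality of the second assertion of Theorem \ref{theorem kreisel scattered part}(1)). The difference from the paper's proof is one of substance in the example rather than of machinery: the paper takes the tree of \emph{all} finite sequences with entries in a set $Q\in\sig\setminus\pii$, so that $[T]$ is a nonempty perfect set; the resulting $B$ has nonempty perfect kernel and its scattered part is a \emph{proper} subset of $B$ (the eventually-zero points), and the non-\pii-ness of $\scat{B}$ is read off by showing $n\in Q$ iff $(n+1,0,0,\dots)\in\scat{B}$. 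In your primary example the scattered part coincides with $B$, so the assertion ``$\scat{B}$ is not \pii'' degenerates to ``$B$ is not \pii''; this is formally allowed by the statement but exhibits nothing about the scattered-part operation beyond that. Your parenthetical variant, attaching a copy of $2^{<\om}$ below each node $(n)$ with $n\in S$, removes this degeneracy and is then essentially the paper's construction (with binary branching in place of $Q$-branching); if you want the example to carry the intended counterexample content against Lorenzen--Kreisel, that is the version to present.
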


\begin{proof}
Let $Q$ be a \sig \ subset of \om \ which is not \pii. Define
$$
u \in T \iff (\forall i < \lh(u))[u(i) \in Q].
$$
Clearly $T$ is a tree and moreover the set $[T]$ is a \sig \ perfect
subset of \ca{N}.

We consider the space \spat{T} and let $B$ be the set $\rho^T[\spat{T}]$, where $\rho^T: \spat{T} \inj \ca{N}$ is the embedding in Lemma \ref{lemma the space Nt is embedded in the Baire space}.

Clearly $B$ is a closed subset of \ca{N}.  Also $B$ is a \sig \ set since
\begin{align*}
\alpha \in B \iff& \big\{(\exists n)(\forall m<n)[\alpha(n) = 0 \ \& \
\alpha(m) >0 \ \& \ \alpha(m)-1 \in Q ] \big\}
\\
& \ \vee \big\{(\forall n)[\alpha(n) >0 \ \& \ \alpha(n)-1 \ \in Q]\big\}
\end{align*}
Finally we claim that
$$
n \in Q \iff (\forall \beta)[ \ [\beta(0) = n+1 \ \& \ (\forall m
\geq 1) \beta(m) = 0] \ \ \longrightarrow \ \beta \in \scat{B} \ ]
$$
for all \n. If we prove this equivalence we are done since if \scat{B} was
\pii \ then so would be $Q$.

To prove the preceding equivalence let us assume
first that $n$ is in $Q$ and that $\beta$ is as above. It follows that $(n,0,0,\dots) \in T$ and that $$\beta = (n+1,0,0,\dots) = \rho^T(n,0,0,\dots) \in B.$$
Also the set
\[
\set{\gamma \in \ca{N}}{\gamma(0) = \beta(0) \ \& \
\gamma(1) = \beta(1)} \cap B
\]
is easily the singleton $\{\beta\}$, and thus $\beta$ is an isolated point of $B$. In particular $\beta \in \scat{B}$. This settles the left-to-right-hand direction. For the converse direction clearly $\beta
= (n+1,0,0,\dots)$ is in $\scat{B}$, so in particular it
is a member of $B = \rho^T[\spat{T}]$. Hence $(n) \in T$ and so $n \in Q$.
\end{proof}

\subsubsection*{The space of trees}
In the coming sections we will need to view trees as points in a given space. Following \cite{kechris_classical_dst} (Exercise 4.32) we define the set
\[
\Tr = \set{T}{T \ \textrm{is a tree on \om}}.
\]
By identifying \omseq \ with \Seq \ and a tree with its characteristic function we can view \Tr \ as a closed subset of $2^\Seq$ with the product topology. In particular \Tr \ is a compact Polish space. It is not difficult to see that there exists a compatible metric $d$ on \Tr \ such that the space $(\Tr,d)$ admits a recursive presentation. We fix such a metric and a recursive presentation for the remaining of this article.

We consider the relations $R \subseteq \Tr$, $P \subseteq \om \times \Tr$, Q$ \subseteq \ca{N} \times \Tr$ defined by
\begin{align*}
R(T) &\iff T \ \textrm{is a recursive tree},\\
P(s,T) &\iff \dec{s} \in T,\\
Q(\alpha,T) &\iff \alpha \in [T],
\end{align*}
It is easy to verify that the preceding relations are $\Sigma^0_3$, $\Delta^0_1$ and $\Pi^0_1$ respectively.

\subsubsection*{Sums and products of trees} Finite products and sums of spaces of the form \spat{T} can be simulated by natural operations on trees. We just need to take care of the recursive aspect.

The \emph{sum} of the trees $T$ and $S$ is the tree $T \oplus S$ which satisfies
\begin{align*}
\cn{(n)}{u} \in T \oplus S \iff& \hspace*{7mm}[n = \langle 0,k \rangle \ \& \ \cn{(k)}{u} \in T]\\
                               & \ \vee \ [n = \langle 1,k\rangle  \ \& \ \cn{(k)}{u} \in S].
\end{align*}
for all $n, u$.

The product of trees is a bit more elaborate. We fix a recursive isomorphism
\[
\pairint{ \ }{\ }: \om \cup \{-1\} \times \om \cup \{-1\} \bij \om \cup \{-1\}
\]
with $\pairint{-1}{-1} = -1$, so that the numbers \pairint{n}{-1}, \pairint{-1}{n}, \pairint{m}{n} are naturals for all $m,n \in \om$. For finite sequences $u$ and $v$ in $\om$ of equal length $n$ we define
\[
\prodt{u}{v} = (\pairint{u(0)}{v(0)},\dots,\pairint{u(n-1)}{v(n-1)}).
\]
If $n = \lh(u) < \lh(v)$ we define
\begin{align*}
\prodt{u}{v} = (\pairint{u(0)}{v(0)},\dots,& \pairint{u(n-1)}{v(n-1)},\\ &\pairint{-1}{v(n)},\dots,\pairint{-1}{v(\lh(v)-1)}).
\end{align*}
(If the natural $n$ in the preceding cases is $0$ by $\prodt{u}{v}$ we mean the empty sequence.) Similarly we define $\prodt{u}{v}$ if $\lh(u) > \lh(v)$. It is clear that \prodt{u}{v} is a finite sequence of naturals and that the function $\prodt{}{}$ -viewed as function from $\Seq^2$ to \Seq \ through the codes of sequences- is a recursive injection. Moreover the inverse of \prodt{}{} viewed as a partial function is recursive as well.

We now define the \emph{product} \prodt{T}{S} of the trees $T$ and $S$ by
\begin{align*}
w \in \prodt{T}{S} \iff& w = \empt \ \vee \ [w = \prodt{u}{v} \ \& \ u \in T \ \& \ v \in S].
\end{align*}
In other words given $(u,v) \in T \times S$, we add $-1$'s to one of $u$, $v$ (if needed) so that the two sequences obtain the same length, and then we encode them in a single sequence using $\pairint{ \ }{\ }$, see Figure \ref{fig:producttrees}.
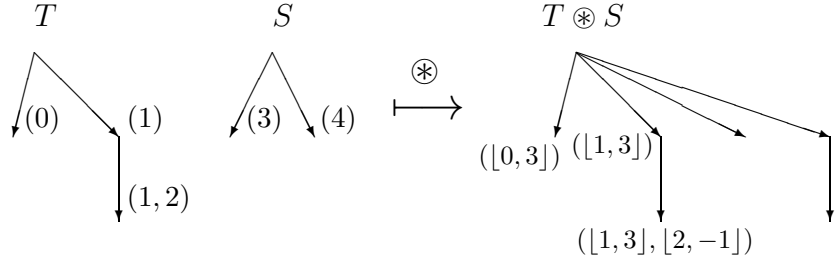
\begin{figure}[t]
\begin{picture}(400,90)(0,30)
\put(30,110){$T$}
\put(30,100){\vector(-1,-4){8}}
\put(26,72){\small{$(0)$}}
\put(30,100){\vector(1,-1){32}}
\put(65,72){\small{$(1)$}}
\put(62,68){\vector(0,-1){32}}
\put(65,42){\small{$(1,2)$}}
\put(120,110){$S$}
\put(120,100){\vector(-1,-2){16}}
\put(110,72){\small{$(3)$}}
\put(120,100){\vector(1,-2){16}}
\put(138,72){\small{$(4)$}}
\put(165,75){\Large{$\longmapsto$}}
\put(172,89){\large{$\prodt{}{}$}}
\put(222,110){$\prodt{T}{S}$}
\put(235,100){\vector(-1,-4){8}}
\put(198,58){\Small{$(\pairint{0}{3})$}}
\put(235,100){\vector(1,-1){32}}
\put(233,62){\Small{$(\pairint{1}{3})$}}
\put(267,68){\vector(0,-1){32}}
\put(235,26){\Small{$(\pairint{1}{3},\pairint{2}{-1})$}}
\put(235,100){\vector(2,-1){64}}
\put(235,100){\vector(3,-1){96}}
\put(331,68){\vector(0,-1){32}}
\end{picture}
\caption{The product of trees under \prodt{}{}.}
\label{fig:producttrees}
\end{figure}
It is clear that \prodt{T}{S} is a tree and if $w = \prodt{u}{v}$ is an initial segment of $w' = \prodt{u'}{v'}$ then $u$ and $v$ are initial segments (not necessarily proper) of $u'$ and $v'$ respectively. The converse fails, one can see this by taking the sequences $u = (1)$, $u' = (1,2)$, $v = v' = (4,5)$.

The operations
\begin{align*}
&\oplus: \Tr^2 \to \Tr: (T,S) \mapsto T \oplus S\\
&\prodt{}{}: \Tr^2 \to \Tr: (T,S) \mapsto \prodt{T}{S}
\end{align*}
are recursive, so that if $T$ and $S$ are recursive trees then $T \oplus S$ and $\prodt{T}{S}$ are recursive trees as well.

For infinite sequences $x,y: \om \to \om \cup \{-1\}$ we define
\[
\prodt{x}{y} = (\pairint{x(0)}{y(0)},\dots,\pairint{x(n)}{y(n)},\dots).
\]
Since the function $\pairint{\ }{\ }$ is bijective every $z: \om \to \om \cup \{-1\}$ has the form \prodt{x}{y} for some $x,y: \om \to \om \cup \{-1\}$.

\begin{lemma}
\label{lemma sum and product of trees}
For all recursive trees $T$ and $S$ we have that
\begin{align*}
\spat{T} \oplus \spat{S} &\simeq_{{\rm rec}} \spat{T \oplus S}\\
\spat{T} \times \spat{S} &\simeq_{{\rm rec}} \spat{\prodt{T}{S}}
\end{align*}
where $\simeq_{{\rm rec}}$ stands for the existence of a recursive isomorphism between the involved spaces.
\end{lemma}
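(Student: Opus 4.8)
The plan is to produce, in each case, an explicit bijection that is recursive together with its inverse, hence a recursive isomorphism. I will use throughout that $\oplus$ and $\prodt{}{}$ are recursive operations on $\Tr$ (just established), so that $T\oplus S$ and $\prodt{T}{S}$ are recursive trees and $\spat{T\oplus S}$, $\spat{\prodt{T}{S}}$ are recursively presented, and that for a recursive tree $R$ the neighbourhood system of $\spat{R}$ is recursive and transfers to neighbourhoods of the related spaces by computations in the spirit of Lemma~\ref{lemma transition between nhbds Nt and N}, using that the relevant codings and their inverses are recursive.

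\emph{The product.} I would regard every point of a space $\spat{R}$ as an infinite sequence over $\om\cup\{-1\}$, a finite $u\in R$ being identified, as at the start of the section, with the function equal to $u$ below $\lh u$ and constantly $-1$ afterwards; I give $\spat{T}\times\spat{S}$ a compatible product metric, say the maximum of the two. Then I set
\[
\Phi\colon\spat{T}\times\spat{S}\to\spat{\prodt{T}{S}},\qquad \Phi(x,y)=\prodt{x}{y}=\bigl(\pairint{x(i)}{y(i)}\bigr)_{i\in\om}.
\]
There are four points to check. First, $\Phi(x,y)$ is a point of $\spat{\prodt{T}{S}}$: unwinding the definition of $\prodt{T}{S}$, a nonempty finite member of it is exactly a $\prodt{u}{v}$ with $u\in T$, $v\in S$, and a short case analysis of the initial segments of $\prodt{x}{y}$ (the pad-by-$-1$ convention makes finite and infinite factors interact correctly with the definition of $\prodt{T}{S}$) shows more generally that $\prodt{x}{y}$ is a point of $\spat{\prodt{T}{S}}$ exactly when $x$ is a point of $\spat{T}$ and $y$ of $\spat{S}$. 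Second, $\Phi$ is a bijection: since $\pairint{}{}$ is a bijection of $(\om\cup\{-1\})^2$ onto $\om\cup\{-1\}$, every point of $\spat{\prodt{T}{S}}$ is $\prodt{x}{y}$ for a unique pair $(x,y)$, necessarily a pair of genuine points by the first step. Third, $\Phi$ is an isometry for the maximum metric, because the least coordinate at which $\prodt{x}{y}$ and $\prodt{x'}{y'}$ differ is the smaller of the least-disagreement coordinates of $x,x'$ and of $y,y'$. Fourth, $\Phi$ and $\Phi^{-1}$ are recursive: a basic neighbourhood of $\spat{\prodt{T}{S}}$ pulls back under $\Phi$, via the recursive $\pairint{}{}$ and its recursive inverse, to a basic neighbourhood of $\spat{T}\times\spat{S}$ (or a singleton), uniformly in its code, so the defining relations are semirecursive. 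This gives $\spat{T}\times\spat{S}\simeq_{\rm rec}\spat{\prodt{T}{S}}$.

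\emph{The sum.} Here the natural map reindexes the leading coordinate. From the definition of $T\oplus S$ and a branch analysis as above, every point of $\spat{T\oplus S}$ is $\empt$, or a $z$ with $z(0)=\langle 0,k\rangle$ — and these correspond bijectively to the \emph{nonempty} points $x$ of $\spat{T}$ via $z\leftrightarrow(k,z(1),z(2),\dots)$ — or a $z$ with $z(0)=\langle 1,k\rangle$, corresponding likewise to the nonempty points of $\spat{S}$. So, when $T$ and $S$ are both nonempty,
\[
h(0,\empt)=\empt,\qquad h(0,x)=\cn{(\langle 0,x(0)\rangle)}{(x(1),x(2),\dots)},\qquad h(1,y)=\cn{(\langle 1,y(0)\rangle)}{(y(1),y(2),\dots)}
\]
is a recursive bijection with recursive inverse from $(\spat{T}\oplus\spat{S})\setminus\{(1,\empt)\}$ onto $\spat{T\oplus S}$ (recursiveness by the usual translation of neighbourhood systems). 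It remains to dispose of the one leftover isolated point $(1,\empt)$; I would absorb it by a shift along a recursive, infinite, closed discrete set $A=\{a_0,a_1,\dots\}$ of isolated points of $\spat{T\oplus S}$ — for instance the children of some node of $T\oplus S$ with infinitely many children — setting $h'(1,\empt)=a_0$, $h'(h^{-1}(a_n))=a_{n+1}$, and $h'=h$ off these points, and checking $h'$ is a recursive isomorphism. (If one of $T,S$ is empty the unreindexed map already works.)

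\emph{Main obstacle.} The product case should cost nothing beyond the routine combinatorial description of $\prodt{T}{S}$ and its body. The delicate step is the last one above: because the definition of $T\oplus S$ folds the ``which summand'' bit into the leading coordinate, the roots of $\spat{T}$ and of $\spat{S}$ both have to be sent to the single root of $\spat{T\oplus S}$, so one is left with exactly one spare isolated point, and discarding it recursively forces one to exhibit the set $A$ effectively. This step also needs a mild nondegeneracy on $T$ and $S$ — equivalently, that $\spat{T\oplus S}$ admits a recursive homeomorphism onto itself together with one extra isolated point; literally as quantified the equivalence fails when, e.g., $T$ and $S$ are both finite and nonempty, since then $\spat{T}\oplus\spat{S}$ and $\spat{T\oplus S}$ are finite of different cardinalities. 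I would spend the most care here, confirming that a suitable $A$ is available in every case that actually occurs in the sequel.
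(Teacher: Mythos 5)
Your treatment of the product is exactly the paper's: the paper also passes to padded sequences, takes $f(x,y)=\prodt{x}{y}$, notes it is a recursive injection with recursive inverse, and proves surjectivity by the same case analysis (finite versus infinite, with the three subcases for infinite points), so that half of your proposal coincides with the published proof.

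The sum is where the comparison is interesting, because the paper disposes of it with the single sentence that ``the assertion about $\oplus$ is clear'', whereas you have put your finger on the one genuine subtlety: the definition of $T\oplus S$ merges the two roots, so the natural re-indexing is a bijection of $(\spat{T}\oplus\spat{S})\setminus\{(1,\empt)\}$ onto $\spat{T\oplus S}$ and one isolated point is left over. Your remark that the statement as literally quantified fails for finite nonempty trees (the two spaces then have cardinalities $|T|+|S|$ and $|T|+|S|-1$) is correct, so the lemma needs at least that caveat, which the paper's ``clear'' glosses over. Your repair, absorbing the spare point by a recursive shift along the children of a node with infinitely many children, is sound (that set of children is recursive, uniformly separated, closed and discrete, and the shift and its inverse are recursive), and it does cover every case the paper actually uses: an infinite finitely branching recursive tree has an arithmetical, hence \del, leftmost branch, so Kleene trees, and therefore Spector-Gandy trees and their sums and products, always have a node with infinitely many children. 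What your write-up leaves unsettled (as does the paper) is the residual case of infinite, finitely branching recursive trees: there your set $A$ does not exist, and the alternative of shifting along a chain of nodes $u_0\sqsubset u_1\sqsubset\dots$ requires that chain to be recursive, which would make $\cup_k u_k$ a recursive branch --- and such trees need not have one. So for that class the $\oplus$ half is not established by your argument (nor by the paper's); you should either supply a separate argument there, or state the sum clause for trees possessing an infinitely branching node (equivalently, up to one isolated point), which is all that the rest of the paper requires.
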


\begin{proof}
The assertion about $\oplus$ is clear so we deal with \prodt{}{}. For the needs of this proof we view a typical member of $\spat{T}$ as an infinite sequence by identifying every member of $T$ with a sequence which is eventually $-1$. We define the function
\[
f:\spat{T} \times \spat{S} \to \spat{\prodt{T}{S}}: f(x,y) = \prodt{x}{y}.
\]
It is not hard to check that the function $f$ is well-defined, \ie $f(x,y) \in \spat{\prodt{T}{S}}$ for $x \in \spat{T}$ and $y \in \spat{S}$, and that $f$ is a recursive injection. Moreover the inverse $f^{-1}$ is recursive as well.

It remains to show that $f$ is surjective. Suppose that $z = \prodt{x}{y} = f(x,y)$ is member of $\spat{\prodt{T}{S}}$. We will show that $x \in \spat{T}$ and $y \in \spat{S}$. Since $z$ belongs to $\spat{\prodt{T}{S}}$ there are only two possibilities: (A) $z$ is eventually $-1$ or (B) $z(n) \geq 0$ for all \n.

In case (A) it follows that $x$ and $y$ are eventually $-1$ as well. Let $n$ be the least such that $z(n)=-1$ so that $z \upharpoonright n \in \prodt{T}{S}$. Clearly $$z \upharpoonright n = \prodt{x \upharpoonright n}{y \upharpoonright n}$$ and $x(m)=y(m)=-1$ for all $m \geq n$. We consider the largest initial segments $u$ and $v$ of $x \upharpoonright n$ and $y \upharpoonright n$ respectively such that $x(u(i)), y(v(j)) \geq 0$ for all $i < \lh(u)$ and all $j < \lh(v)$. We notice that at least one of $u$ and $v$ has length $n$, for otherwise $n$ would not be the least natural $k$ with $z(k) = -1$. Therefore $z \upharpoonright n = \prodt{u}{v}$ and since $z \upharpoonright n \in \prodt{T}{S}$ we have from the injectiveness of \prodt{}{} that $u \in T$ and $v \in S$. From the choice of $u$ and $v$ it is also clear that $x(m) = y(k) = -1$ for all $m \geq \lh(u)$ and all $k \geq \lh(v)$. Hence $x \in \spat{T}$ and $y \in \spat{S}$.

In case (B) $z$ is a member of $[\prodt{T}{S}]$ and exactly one of the following subcases applies: (B1) $x(n), y(n) \geq 0$ for all $n$, (B2) $x$ is eventually $-1$ and $y(n) \geq 0$ for all $n$, (B3) $x(n) \geq 0$ for all $n$ and $y$ is eventually $-1$. In (B1) we have that $(x,y) \in [T] \times [S]$, in (B2) there exists an $n$ such that $x \upharpoonright n \in T$ and $x(m) = -1$ for all $m \geq n$ and $y \in [S]$, and in (B3) $x \in [T]$ and there exists some $n$ such that $y \upharpoonright n \in S$ and $y(m) = -1$ for all $m \geq n$. In any case we have that $(x,y) \in \spat{T} \times \spat{S}$.
\end{proof}

\subsection{Elementary facts about the classes of \del-isomorphism} We conclude this section with a preliminary investigation on \del-injections in recursively presented metric spaces and their connection with the spaces \spat{T}.

\begin{definition}
\label{definition of dleq}\normalfont
We introduce the relations $\dleq$, $\dequal$ and $\dless$,
\begin{align*}
\ca{X} \dleq \ca{Y} \iff& \textrm{there exists a \del-injection} \ f: \ca{X} \inj \ca{Y}\\
\ca{X} \dequal \ca{Y} \iff& \textrm{there exists a \del-bijection} \ f: \ca{X} \bij \ca{Y}\\
\ca{X} \dless \ca{Y} \iff& \ca{X} \dleq \ca{Y} \ \& \ \ca{X} \not \dleq \ca{Y}
\end{align*}
where $\ca{X}$ and $\ca{Y}$ are recursively presented metric spaces.
\end{definition}
The usual proof of the Schr\"{o}der-Bernstein Theorem is effective (see the proof of 3E.7 in \cite{yiannis_dst}) so that
\[
\ca{X} \dequal \ca{Y} \ \ \eq \ \ \ca{X} \dleq \ca{Y} \ \& \
\ca{Y} \dleq \ca{X}.
\]
The class of \del-isomorphism of recursively presented metric spaces which are countable sets is characterized easily. First let us recall the following.

\begin{lemma}[\cf \cite{gregoriades_moschovakis_notes_on_edst}]
\label{lemma enumeration by a del sequence}
Every countably infinite \del \ subset of a recursively presented metric space can be enumerated by a \del-recursive injective sequence.
\end{lemma}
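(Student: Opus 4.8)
The plan is to transfer the problem to a countably infinite $\Pi^0_1$ subset of the Baire space and then run an effective Cantor--Bendixson analysis there. First I would use the Characterization of $\del$ sets (Theorem \ref{theorem alltogether}(\ref{theorem 4D.9})) to write $A = \pi[B]$, where $B \subseteq \ca{N}$ is a $\Pi^0_1$ set and $\pi : \ca{N} \to \ca{X}$ is recursive and injective on $B$. Since $\pi$ is injective on $B$ and $\pi[B] = A$ is countably infinite, so is $B$. It then suffices to produce a \del-recursive injective sequence $(\beta_n)_{\n}$ enumerating $B$: the sequence $(\pi(\beta_n))_{\n}$ is \del-recursive (the composition of a recursive map after a \del-recursive one), injective (since $\pi$ is injective on $B \supseteq \set{\beta_n}{\n}$), and enumerates $\pi[B] = A$. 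Thus I may assume $\ca{X} = \ca{N}$ and $A = B = [S]$ for a recursive tree $S$ (Lemma \ref{lemma pi-zero-one sets recursive tree}), with $[S]$ countably infinite.

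The second step is an effective Cantor--Bendixson decomposition of $[S]$. By the Effective Perfect Set Theorem (Theorem \ref{theorem alltogether}(\ref{theorem 4F.1})) every member of $[S]$, hence of each iterated Cantor--Bendixson derivative $[S]^{(\alpha)}$, lies in $\del$, because $[S]$ is a countable $\sig$ set and so has no perfect subset; in particular the perfect kernel of $[S]$ is empty, so $[S] = [S]^{(0)} \supseteq [S]^{(1)} \supseteq \cdots$ stabilizes at $\emptyset$ at some ordinal $\xi$. The crucial point — a refinement of Kreisel's analysis in Theorem \ref{theorem kreisel scattered part} that uses that $[S]$ is $\Pi^0_1$, not merely $\sig$ — is that $\xi$ is a \emph{recursive} ordinal and that, along a fixed recursive well ordering of type $\xi$, the passage from a stage to the closed set $[S]^{(\alpha)}$ is uniformly \del. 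Granting this, at each stage $\alpha < \xi$ the layer $[S]^{(\alpha)} \setminus [S]^{(\alpha+1)}$ consists exactly of the isolated points of the \del \ closed set $[S]^{(\alpha)}$, which form a family enumerable by a \del-recursive sequence uniformly in a \del-code of $[S]^{(\alpha)}$; amalgamating these layer-by-layer enumerations along the recursive well ordering of $\xi$ yields a \del-recursive sequence listing every member of $[S]$, and deleting repetitions (possible since equality of the \del \ points occurring in the list is decided by a \del \ relation) makes it injective.

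The main obstacle is precisely the crucial point of the previous paragraph: that the Cantor--Bendixson rank $\xi$ of a countable $\Pi^0_1$ class is a recursive ordinal and that the transfinite sequence of its derivatives is \del-uniform along a recursive well ordering of $\xi$. This is where the hypothesis ``$A$ is \del'' — equivalently ``$B=[S]$ is $\Pi^0_1$'' — is genuinely used: for a closed set that is only \sig \ Kreisel's argument (Theorem \ref{theorem kreisel scattered part}(2)) gives no better than a \W-recursive enumeration of the scattered part, and the improvement to \del \ requires combining the boundedness theorem with the Effective Perfect Set Theorem applied to the subtrees $S_u$ of $S$, while keeping careful track of the uniformities.
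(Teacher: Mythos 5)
You reduce correctly, via Theorem \ref{theorem alltogether}-(\ref{theorem 4D.9}) and Lemma \ref{lemma pi-zero-one sets recursive tree}, to enumerating a countably infinite $\Pi^0_1$ set $[S] \subseteq \ca{N}$, and the closing step is harmless (for a \del-recursive $f$ the relation $f(n)=f(m)$ is \del, so repetitions can be removed by \del-recursion). The problem is that the step you yourself call ``the crucial point'' is a genuine gap, and it is essentially the whole content of the lemma: you must prove (i) that the Cantor--Bendixson rank of a countable $\Pi^0_1$ subset of \ca{N} is strictly below \ck, and (ii) that the derivatives $[S]^{(\alpha)}$, together with the stage-by-stage enumerations of their isolated points, can be produced \del-uniformly along a recursive well-ordering, so that the amalgamated assignment of points to pairs (stage, index) is a single \del-recursive function. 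Neither is proved, and neither is routine. For (i), the appeal to ``boundedness plus the Effective Perfect Set Theorem applied to the subtrees $S_u$'' is not yet an argument: to invoke the Covering Lemma you must first exhibit \sig/\pii \ comparison relations for the Cantor--Bendixson ranks of the points of $[S]$, and producing these is exactly the nontrivial effective analysis. For (ii), note that the derivative of a \del \ closed set is in general only \pii; to see that each $[S]^{(\alpha)}$ is \del \ you need, at every stage, that all points of $[S]$ are \del \ (Theorem \ref{theorem alltogether}-(\ref{theorem 4F.1})) together with the Theorem on Restricted Quantification \ref{theorem alltogether}-(\ref{theorem 4D.3}), and at limit stages the naive intersection is again only \pii \ unless uniform \del-codes have been carried along; maintaining such codes is an effective transfinite recursion that cannot even be set up before (i) has supplied the recursive well-ordering along which to recurse. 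So, as written, the proof assumes its hardest part.

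For comparison, the paper does not prove this lemma; it is quoted from \cite{gregoriades_moschovakis_notes_on_edst}, and the standard argument avoids the Cantor--Bendixson hierarchy altogether. Since $A$ is countable and \del, every $x \in A$ is a \del \ point; one works with the \pii-recursive parametrization $\pdel$ of the \del \ points (Theorem \ref{theorem alltogether}-(\ref{theorem 4D.2 and theorem 4D.15})) and the \pii \ set of codes \set{n}{\pdel(n) \downarrow \ \& \ \pdel(n) \in A}, and the real work is a boundedness-type argument producing a \del \ set $D$ of codes with $\pdel[D] \supseteq A$; on such a $D$ the map $\pdel$ is \del-recursive (its graph has both \sig \ and \pii \ descriptions there, by restricted quantification and the fact that its values are \del \ points), so intersecting $D$ with $\pdel^{-1}[A]$ and enumerating the resulting infinite \del \ subset of \om \ gives the desired injective \del \ enumeration. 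Observe that your granted claim (i) is at least as strong as the boundedness input of this argument, so even if your route can be completed it is a detour through heavier machinery whose key estimates are precisely what you have left unproved; I would recommend replacing the derivative analysis by the code-boundedness argument.
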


\begin{theorem}
\label{theorem countable space}
Every infinite countable recursively presented metric space is \del-isomorphic to \om.
\end{theorem}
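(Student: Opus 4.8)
The plan is to produce directly a $\del$-bijection of $\om$ onto $\ca{X}$, rather than to route through the effective Schr\"oder--Bernstein theorem. The key initial observation is that $\ca{X}$, regarded as a subset of the recursively presented metric space $\ca{X}$ itself, is a $\del$ set --- in fact it is $\Delta^0_1$, being clopen --- and it is countably infinite by hypothesis. Hence Lemma \ref{lemma enumeration by a del sequence} applies and yields a $\del$-recursive injective sequence $(x_n)_{\n}$ with $\set{x_n}{\n} = \ca{X}$.

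Next I would consider the function $f : \om \to \ca{X}$ defined by $f(n) = x_n$. By the choice of the sequence $(x_n)_{\n}$ this $f$ is $\del$-recursive (a $\del$-recursive sequence is exactly a $\del$-recursive function on $\om$), injective, and surjective. It remains only to see that its inverse is $\del$-recursive; but this is precisely Lemma \ref{lemma the inverse function is del and same hyperdegree}(3) applied with the $\sig$ (indeed $\del$) set $A = \om$: since $f$ is injective on $\om$, the partial inverse $f^{-1}$ is $\del$-recursive on $f[\om] = \ca{X}$, and since $f$ is onto, $f^{-1}$ is total. Therefore $f$ is a $\del$-isomorphism and $\ca{X} \dequal \om$.

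There is essentially no remaining obstacle once Lemma \ref{lemma enumeration by a del sequence} is in hand: all of the genuine content --- producing a hyperarithmetic injective enumeration of a countable hyperarithmetic set, which here must in particular pick up the possibly non-recursive limit points of $\ca{X}$ that need not occur in a given recursive presentation --- is packaged there. The only two points requiring any care are the bookkeeping remarks above: that the whole space counts as a $\del$ subset of itself, and that surjectivity of a $\del$-injection upgrades it to a $\del$-isomorphism via part (3) of the inverse-function lemma rather than needing a separate argument for $f^{-1}$.
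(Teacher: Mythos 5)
Your proof is correct and follows essentially the same route as the paper: apply Lemma \ref{lemma enumeration by a del sequence} to the space itself (a countably infinite \del\ subset of itself) to get a \del-recursive injective enumeration, and note that the resulting \del-bijection $n \mapsto x_n$ is a \del-isomorphism via Lemma \ref{lemma the inverse function is del and same hyperdegree}(3). The only cosmetic difference is that the paper first invokes the Effective Perfect Set Theorem to observe that $\ca{X}$ consists of \del\ points before citing the enumeration lemma, a step your argument absorbs into the hypothesis of that lemma.
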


\begin{proof}
Suppose that \ca{X} is an infinite countable recursively presented metric
space. Since \ca{X} is trivially a \sig \ subset of \ca{X} from
the Effective Perfect Set Theorem \ref{theorem alltogether}-(\ref{theorem 4F.1}) \ca{X} consists of \del \
points. From Lemma \ref{lemma enumeration by a del sequence} there exists
a \del-recursive injective sequence $(x_n)_{\n}$ such that $\ca{X} = \set{x_n}{\n}$. It is clear then that the function $n \mapsto x_n$ is a \del-bijection.
\end{proof}

\begin{theorem}
\label{theorem all between om and N}

For every infinite recursively presented metric space \ca{X} we have that
\[
\om \dleq \ca{X} \dleq \ca{N}.
\]
\end{theorem}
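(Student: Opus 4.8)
The plan is to prove the two inequalities separately; the upper bound $\ca{X}\dleq\ca{N}$ is essentially immediate, while $\om\dleq\ca{X}$ calls for a short construction.

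For $\ca{X}\dleq\ca{N}$ I would simply invoke Theorem~\ref{theorem alltogether}-(\ref{theorem 3E.6}), which directly provides a \del-injection $f:\ca{X}\inj\ca{N}$, namely the inverse of the recursive surjection $\pi:\ca{N}\surj\ca{X}$ restricted to the $\Pi^0_1$ set on which $\pi$ is injective. This uses nothing beyond recursive presentability of $\ca{X}$.

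For $\om\dleq\ca{X}$ I would fix a recursive presentation $(x_n)_{\n}$ of $\ca{X}$ and build a recursive injective sequence inside $\ca{X}$ by thinning it out. Two preliminary observations do all the work. First, $\set{x_n}{\n}$ is infinite: a finite subset of a metric space is closed, so a finite dense subset would make $\ca{X}$ finite, contrary to hypothesis. Second, the relation ``$x_i\neq x_j$'' is recursive in $(i,j)$, since $x_i=x_j\iff d(x_i,x_j)\le 0\iff P_\le(i,j,0,0)$ and $P_\le$ is recursive by the definition of a recursive presentation. With these in hand, define $(k_n)_{\n}$ by primitive recursion: $k_0=0$, and $k_{n+1}$ is the least $k$ with $x_k\neq x_{k_i}$ for all $i\le n$. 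The $\mu$-search always terminates by the first observation (the finite set $\{x_{k_0},\dots,x_{k_n}\}$ cannot exhaust the infinite set $\set{x_k}{k}$), and its matrix is recursive by the second, so $n\mapsto k_n$ is a total recursive function. Then $g:\om\to\ca{X}$ with $g(n)=x_{k_n}$ is injective by construction, and recursive because $g(n)\in N(\ca{X},s)$ reduces, via the recursive relation $P_<$, to a recursive condition on $(n,s)$. Hence $g$ is a recursive, in particular a \del, injection, and $\om\dleq\ca{X}$. (One could instead argue by cases: if $\ca{X}$ is countable then Theorem~\ref{theorem countable space} already gives $\om\dequal\ca{X}$, and if $\ca{X}$ is uncountable then, being a \sig\ subset of itself with a point not in \del, it contains a perfect set by the Effective Perfect Set Theorem~\ref{theorem alltogether}-(\ref{theorem 4F.1}), into which $\om$ embeds recursively; but the direct thinning argument avoids the dichotomy.)

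I do not anticipate a genuine obstacle. The one place that wants a moment's care is verifying that the thinning procedure is really recursive, which comes down to the second observation above: distinctness of presentation points is decidable precisely because $P_\le$ evaluated at the arguments $0,0$ is recursive. The rest is routine bookkeeping with the recursive presentation.
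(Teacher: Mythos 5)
Your proposal is correct. The upper bound $\ca{X}\dleq\ca{N}$ is handled exactly as in the paper, by quoting Theorem \ref{theorem alltogether}-(\ref{theorem 3E.6}) and Lemma \ref{lemma the inverse function is del and same hyperdegree} (indeed the statement of (\ref{theorem 3E.6}) already supplies the \del-injection, so nothing more is needed there). For the lower bound $\om\dleq\ca{X}$ you take a genuinely different and more elementary route: the paper starts from a \del-recursive sequence dense in \ca{X} and appeals to Lemma \ref{lemma enumeration by a del sequence} to delete repetitions, obtaining a \del-injection, whereas you thin the recursive presentation directly, using the observation that equality of presentation points is decidable because $x_i=x_j$ is equivalent to $P_\le(i,j,0,0)$, and that $\set{x_n}{\n}$ must be infinite since a finite dense subset of a metric space is closed and would force \ca{X} to be finite. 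Your $\mu$-recursion argument is sound, it avoids invoking the enumeration lemma, and it even yields a \emph{recursive} (not merely \del) injection $\om\inj\ca{X}$, which is slightly stronger than what the theorem asks; the paper's route is shorter given that Lemma \ref{lemma enumeration by a del sequence} is already on the books and generalizes to arbitrary countably infinite \del\ sets rather than just the presentation points. Your parenthetical alternative via Theorem \ref{theorem countable space} and the Effective Perfect Set Theorem would also work but is, as you say, unnecessary.
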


\begin{proof}
Suppose that \ca{X} is an infinite recursively presented metric space. Fix a
\del- sequence $(x_n)_{\n}$ in \ca{X} (for example the recursive
presentation with respect to some compatible metric). From Lemma \ref{lemma enumeration by a del sequence} we may assume that we
have deleted repetitions so that the function $n \mapsto x_n$ is
injective. The latter witnesses that $\om \dleq \ca{X}$.

Regarding the second inequality recall from Theorem \ref{theorem alltogether}-(\ref{theorem 3E.6}) that there exists a
recursive surjection $\pi: \ca{N} \to \ca{X}$ and a $\Pi^0_1$ set
$A \subseteq \ca{N}$ such that $\pi \upharpoonright A$ is
injective and $\pi[A] = \ca{X}$. From Lemma \ref{lemma the inverse function is del and same hyperdegree} it follows that
the inverse function $\pi^{-1}: \ca{X} \to A$ is \del-recursive and hence $\ca{X} \dleq
\ca{N}$.
\end{proof}

A key property of the spaces \spat{T} is that they characterize all recursively presented metric spaces up to \del-isomorphism.

\begin{theorem}

\label{theorem Nt del isomorphism}

Every recursively presented metric space \ca{X} is \del-isomorphic with a space
of the form \spat{T} for a recursive $T$.\smallskip

In fact one can choose a \del-isomorphism $f: \spat{T} \bij \ca{X}$ such that $f$ agrees with a recursive function $\pi: \ca{N} \to \ca{X}$ on $[T]$.
\end{theorem}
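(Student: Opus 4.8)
The plan is to sidestep any surgery on the $\Pi^0_1$ set produced by Theorem~\ref{theorem alltogether}-(\ref{theorem 3E.6}) and instead read the tree $T$ off directly from the Characterization of \del\ sets, Theorem~\ref{theorem alltogether}-(\ref{theorem 4D.9}).

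First I would dispose of the degenerate cases. If \ca{X} is finite, say with $k$ points, it is a $k$-point discrete space; take $T=\{\empt\}\cup\set{(i)}{i<k-1}$, so that $[T]=\empt$ and $\spat{T}$ is the $k$-point discrete space, and let $f$ be any bijection — the requirement on $[T]$ is then vacuous. If \ca{X} is countably infinite, then $\ca{X}\dequal\om$ by Theorem~\ref{theorem countable space}; take $T=\{\empt\}\cup\set{(n)}{\n}$, so $[T]=\empt$ and $\spat{T}$ is a countably infinite discrete space, \del-isomorphic to \om\ and hence to \ca{X}, again with the condition on $[T]$ holding vacuously. So from now on \ca{X} is uncountable.

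Next I would fix a countably infinite \del\ subset $D\subsetneq\ca{X}$ — for instance the range of a recursive presentation of \ca{X}, which is a $\Sigma^0_2$ (hence \del) set, is countably infinite since it is dense in the infinite space \ca{X}, and is a proper subset since \ca{X} is uncountable. Then $\ca{X}\setminus D$ is \del\ (the intersection of the \del\ set \ca{X} with the complement of the \del\ set $D$) and non-empty, so by Theorem~\ref{theorem alltogether}-(\ref{theorem 4D.9}) there are a $\Pi^0_1$ set $B\subseteq\ca{N}$ and a recursive function $\pi:\ca{N}\to\ca{X}$ with $\pi$ injective on $B$ and $\pi[B]=\ca{X}\setminus D$. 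By Lemma~\ref{lemma pi-zero-one sets recursive tree}, $B=[T]$ for a recursive tree $T$, and since $B\neq\empt$ the tree $T$ is countably infinite; moreover $T$ is a $\Sigma^0_1$ — in particular \del — subset of the recursively presented space \spat{T} by Theorem~\ref{theorem properties of Nt}-(4). I would then define $f:\spat{T}\to\ca{X}$ by $f\upharpoonright[T]=\pi\upharpoonright[T]$ and by letting $f\upharpoonright T$ be any \del-bijection of $T$ onto $D$ (both are countably infinite \del\ subsets of recursively presented spaces, so such a bijection is obtained from Lemma~\ref{lemma enumeration by a del sequence}). Since $\pi[[T]]=\ca{X}\setminus D$ and $D$ partition \ca{X}, the map $f$ is a bijection, and on $[T]$ it agrees with the recursive function $\pi:\ca{N}\to\ca{X}$, which yields the ``in fact'' clause.

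What remains — and this is the only laborious part — is to check that $f$ and $f^{-1}$ are \del-recursive. The idea is the routine combination of \del-recursive maps over the two partitions $\spat{T}=[T]\sqcup T$ (into a $\Pi^0_1$ and a $\Sigma^0_1$ set, Theorem~\ref{theorem properties of Nt}-(4)) and $\ca{X}=(\ca{X}\setminus D)\sqcup D$ (into two \del\ sets): on $[T]$, $f$ coincides with $\pi$, and on $\ca{X}\setminus D=\pi[[T]]$, $f^{-1}$ coincides with $(\pi\upharpoonright[T])^{-1}$, which is partial \del-recursive by Lemma~\ref{lemma the inverse function is del and same hyperdegree}-(3) since $\pi$ is injective on the \sig\ (indeed $\Pi^0_1$) set $[T]$; on the discrete pieces both $f$ and $f^{-1}$ are \del-recursive by the choice of the \del-bijection. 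The one point needing care is that Theorem~\ref{theorem pointclasses of Nt} fails for $\Sigma^0_1$ (Remark~\ref{remark optimal theorem pointclasses of Nt}), so in translating neighborhood conditions between $[T]\subseteq\spat{T}$ and $[T]\subseteq\ca{N}$ one should use the explicit recursive relations $R^T$, $Q^T$ of Lemma~\ref{lemma transition between nhbds Nt and N} rather than that theorem; with those in hand, $R^f$ and $R^{f^{-1}}$ come out \del. So the real obstacle is not technical but the conceptual step of applying Theorem~\ref{theorem alltogether}-(\ref{theorem 4D.9}) to the co-countable \del\ set $\ca{X}\setminus D$: this hands us a $\Pi^0_1$ set on which a recursive map is already injective with the desired image, whereas the $\Pi^0_1$ set coming from Theorem~\ref{theorem alltogether}-(\ref{theorem 3E.6}) can fail to contain a recursive branch, so one cannot recursively carve a countably infinite closed piece out of it.
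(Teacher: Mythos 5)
Your proposal is correct and follows essentially the same route as the paper: dispose of the countable case via Theorem \ref{theorem countable space}, and in the uncountable case remove a countably infinite \del\ set $D$, apply Theorem \ref{theorem alltogether}-(\ref{theorem 4D.9}) (with Lemma \ref{lemma pi-zero-one sets recursive tree}) to the co-countable \del\ set $\ca{X}\setminus D$ to get a recursive tree $T$ and recursive $\pi$ injective on $[T]$ with $\pi[[T]]=\ca{X}\setminus D$, and then define $f$ piecewise, sending $T$ onto $D$ by a \del-bijection and using $\pi$ on $[T]$. The only differences are cosmetic: you treat the finite case explicitly and spell out the \del-recursiveness checks (via Lemmas \ref{lemma transition between nhbds Nt and N} and \ref{lemma the inverse function is del and same hyperdegree}) that the paper leaves as ``clear''.
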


\begin{proof}
If \ca{X} is infinite countable then from Theorem \ref{theorem
countable space} \ca{X} is \del-isomorphic to \om, which is easily
recursively isomorphic to a space of the form \spat{T}.

So assume that \ca{X} is uncountable. Pick an infinite
\del-recursive sequence $(x_n)_{\n}$ in \ca{X}. The set $C = \ca{X} \setminus
\set{x_n}{\n}$ is \del. From Theorem \ref{theorem alltogether}-(\ref{theorem 4D.9}) there exists a $\Pi^0_1$ set $A
\subseteq \ca{N}$ and a recursive function $\pi: \ca{N} \to
\ca{X}$ which is injective on $A$ and $\pi[A] = C$. From Lemma \ref{lemma pi-zero-one sets recursive tree} there
exists a recursive tree $T$ on \om \ such that $A = [T]$. Clearly $T$ is infinite for otherwise $C$ would be empty
which is not the case since \ca{X} is uncountable. Give $T$ a
\del-recursive injective enumeration $(t_n)_{\n}$ and define the
function $f: \spat{T} \to \ca{X}$ as follows
\[
f(z) = \begin{cases} x_n, & \ \textrm{if} \ z = t_n\\
                    \pi(z), & \ \textrm{if} \ z \in [T].
\end{cases}
\]
It is clear that the function $f$ is \del-recursive. Since $C \cap
\set{x_n}{\n} = \emptyset$ and the functions $n \mapsto x_n$ and
$\pi \upharpoonright [T]$ are injective we have that $f$ is
injective as well. Moreover since $C \cup \set{x_n}{\n} = \ca{X}$
and $\pi[[T]] = C$ it is clear that $f$ is surjective. Therefore
the function $f$ is a \del-bijection between \spat{T} and \ca{X}.
\end{proof}\smallskip

The following simple lemma allows us to pass from \del-injections on $[T]$ to $[S]$ to \del-injections between the corresponding spaces. We will see that the converse is not true in general, however it is true in the category of spaces that we will focus on.

\begin{lemma}
\label{lemma bodyT less than bodyS implies Nt less than Ns}
Suppose that $T$ and $S$ are recursive trees with non-empty body and assume that there exists a \del-recursive
function $\pi: \ca{N} \to \ca{N}$ which is injective on $[T]$ and $\pi[[T]] \subseteq [S]$. Then there exists a \del-injection $f: \spat{T} \inj \spat{S}$.
\end{lemma}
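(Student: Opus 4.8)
The plan is to define $f$ along the partition $\spat{T} = T \cup [T]$ of \spat{T} \ into its isolated tree-points and its body, letting $f$ coincide with $\pi$ on $[T]$ and with a recursive injection of $T$ into $S$ on the tree part. Since $[T] \neq \empt$ by hypothesis---and hence also $[S] \neq \empt$---both recursive trees $T$ and $S$ are infinite, so each admits a recursive injective enumeration, say $(t_n)_{\n}$ of $T$ and $(s_n)_{\n}$ of $S$. I would then set
\[
f : \spat{T} \to \spat{S}, \qquad f(t_n) = s_n \ (\n), \qquad f(x) = \pi(x) \ (x \in [T]).
\]
This is well defined because $T \cap [T] = \empt$ and $\pi[[T]] \subseteq [S] \subseteq \spat{S}$, and it is injective because $n \mapsto s_n$ is injective, $\pi \upharpoonright [T]$ is injective, and the two halves send $T$ and $[T]$ into the disjoint sets $S$ and $[S]$ respectively.

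The substance is to check that $f$ is \del-recursive, \ie that the relation $R^f(z,\sigma) \iff f(z) \in N(\spat{S},\sigma)$ is in \del \ as a subset of $\spat{T} \times \om$. I would split it as
\[
R^f(z,\sigma) \iff \big[ (\exists n)(z = t_n \ \& \ s_n \in N(\spat{S},\sigma)) \big] \ \vee \ \big[ z \in [T] \ \& \ \pi(z) \in N(\spat{S},\sigma) \big].
\]
For the first disjunct, each $t_n$ is the unique point of \spat{T} \ in the ball of radius $1/(\lh(t_n)+1)$ about it (Theorem \ref{theorem properties of Nt}), so, since $n \mapsto t_n$ is recursive, ``$z = t_n$'' takes the form ``$z \in N(\spat{T},g(n))$'' for a recursive $g$; moreover ``$s_n \in N(\spat{S},\sigma)$'' is recursive in $(n,\sigma)$ because $(s_n)_{\n}$ is a recursive sequence of points of the recursively presented space \spat{S}. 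Hence the first disjunct is $\Sigma^0_1$ on $\spat{T} \times \om$ (and, conveniently, false whenever $z \notin T$).

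For the second disjunct I would route everything through the Baire space. By Lemma \ref{lemma transition between nhbds Nt and N} for $S$ there is a recursive $Q^S \subseteq \ca{N} \times \om$ with $\beta \in N(\spat{S},\sigma) \iff Q^S(\beta,\sigma)$ for all $\beta \in [S]$; since $\pi(z) \in [S]$ whenever $z \in [T]$, the second disjunct equals
\[
([T] \times \om) \cap \{(z,\sigma) \in \ca{N} \times \om : Q^S(\pi(z),\sigma)\},
\]
viewed inside $[T] \times \om \subseteq \ca{N} \times \om$. The set $\{(z,\sigma) : Q^S(\pi(z),\sigma)\}$ is in \del \ as a subset of $\ca{N} \times \om$, being the preimage of the recursive set $Q^S$ under the \del-recursive map $(z,\sigma) \mapsto (\pi(z),\sigma)$, and intersecting it with the $\Pi^0_1$ set $[T] \times \om$ keeps it in \del. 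Thus the second disjunct is in \del \ as a subset of $\ca{N} \times \om$ and is contained in $[T] \times \om$, so by Theorem \ref{theorem pointclasses of Nt} (applied to $\Sigma^1_1$ and to $\Pi^1_1$, whose intersection is \del) it is in \del \ as a subset of $\spat{T} \times \om$ as well. Taking the union of the two disjuncts gives $R^f \in \del$, so $f$ is the desired \del-injection.

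The main obstacle is precisely this middle step: the body $[T]$ lies literally inside \ca{N}, whereas a general point of \spat{T} \ does not, and the neighborhood systems of \spat{T}, \spat{S} \ and \ca{N} \ are mutually different, so one must move carefully among the three spaces. Lemma \ref{lemma transition between nhbds Nt and N} and Theorem \ref{theorem pointclasses of Nt} are exactly the tools that legitimize this transfer; the rest is routine bookkeeping.
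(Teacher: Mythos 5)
Your construction is exactly the paper's: send the tree part of \spat{T} to $S$ by a recursive injection of finite sequences and let $f$ agree with $\pi$ on $[T]$, the two halves landing in the disjoint pieces $S$ and $[S]$. The only difference is that you spell out the \del-recursiveness check (via Lemma \ref{lemma transition between nhbds Nt and N} and Theorem \ref{theorem pointclasses of Nt}), which the paper leaves as routine; your verification is correct.
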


\begin{proof}
Consider the sets
\[
I_T = \set{t}{\dec{t} \in T} \quad I_S = \set{s}{\dec{s} \in S}.
\]
Clearly $I_T$ and $I_S$ are recursive and infinite. There exist a recursive
function $\tau: \om \to \om$ which is injective on $I_T$ and $\tau[I_T] = I_S$. Define
\[
f: \spat{T} \to \spat{S}: f(x) = \begin{cases}
                                     \dec{\tau(t)}, & \ \textrm{if} \ x=\dec{t} \in T\\
                                     \pi(x), & \ \textrm{if} \ x \in [T].
                                     \end{cases}
\]
Then the function $f$ is a \del-injection.
\end{proof}

\section{\sc Kleene spaces}

\label{section Kleene spaces}

We introduce a category of uncountable spaces of the form \spat{T}, the \emph{Kleene spaces}, which are all different from the Baire space under \dequal. We will show that their class is closed downwards under \dleq \ (in uncountable spaces), they form antichains and have no maximal or minimal elements under \dleq. We will also give analogous results in the language of recursive \emph{pseudo-well-orderings} and investigate some of the properties of hyperdegrees in these spaces.

\subsection{Definition and basic properties} Let us say a few words about the idea behind Kleene spaces. Suppose that $T$ is a recursive tree whose body is non-empty and has no hyperarithmetical members, such a tree exists from Kleene's Theorem \ref{theorem Kleene there exists a Kleene tree}. It is clear that there is no \del \ function on \ca{N} to $[T]$, let alone a \del-bijection. Moreover from the Effective Perfect Set Theorem \ref{theorem alltogether}-(\ref{theorem 4F.1}) the set $[T]$ is uncountable. It looks as if $[T]$ with the metric of the Baire space were a good candidate for an uncountable space which is not \del-isomorphic to \ca{N}. However the absence of hyperarithmetical points inside $[T]$ is also the reason that this metric space does not admit a recursive presentation. The idea is to add to $[T]$ a recursive presentation, and do so in such a way, that still no \del-injection on \ca{N} inside the new space can exist. This is the motivation behind the definition of the spaces \spat{T}.

\begin{definition}
\label{definition of a Kleene space}
\normalfont
A recursive tree $T$ on \om \ is a \emph{Kleene tree} if the body $[T]$ is non-empty and does not contain
\del-members. A space of the form \spat{T} is a \emph{Kleene space} if $T$ is a Kleene tree.
\end{definition}
Notice that a Kleene space is an uncountable set. Also it is clear that the sum and product of two Kleene trees is a Kleene tree. It follows from Lemma \ref{lemma sum and product of trees} that the finite sum or product of Kleene spaces is recursively isomorphic to a Kleene space.

\begin{theorem}
\label{theorem counterexample to del with Baire (Kleene space)}
For every Kleene tree $T$ we have that
\[
\del \cap \spat{T} = \scat{\spat{T}} = \spat{T}_{\rm iso} = T,
\]
where $\spat{T}_{\rm iso}$ and \scat{\spat{T}} is the set of all isolated points and the scattered part of \spat{T} respectively.\smallskip

In particular the set $\del \cap \spat{T}$ is semirecursive and so there does not exist a \del-injection $f: \ca{N} \inj
\spat{T}$. It follows that no Kleene space is \del-isomorphic to the Baire space.
\end{theorem}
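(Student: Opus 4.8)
\emph{Overview.} The plan is to establish the three equalities with the explicitly identified set $T$ by routing everything through Theorems~\ref{theorem properties of Nt} and~\ref{theorem points of Nt} and Remark~\ref{remark scattered part consists of del points}, and then to derive the non‑existence of a \del-injection $\ca{N}\inj\spat{T}$ from the fact that $\del\cap\spat{T}=T$ is semirecursive.

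\emph{The equalities.} First I would handle the topological part. By Theorem~\ref{theorem properties of Nt}\tu{(1)} every point of $T$ is isolated in \spat{T}, so $T\subseteq\spat{T}_{\rm iso}$; and an isolated point always lies in the scattered part (its singleton is an open set meeting the space in a finite, hence countable, set), so $\spat{T}_{\rm iso}\subseteq\scat{\spat{T}}$. By Theorem~\ref{theorem properties of Nt}\tu{(1)} again $\scat{\spat{T}}=\scat{[T]}\cup T$, so it suffices to show $\scat{[T]}=\empt$: but $[T]$ is a $\Pi^0_1$, hence \sig, subset of \ca{N}, so by Remark~\ref{remark scattered part consists of del points} its scattered part consists of \del \ points of \ca{N}, and a Kleene tree has none in $[T]$. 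Hence $T\subseteq\spat{T}_{\rm iso}\subseteq\scat{\spat{T}}=T$. For the \del-points, each $u\in T$ is a recursive point of \spat{T}, being a value of the recursive presentation $\dense^T$ of Theorem~\ref{theorem properties of Nt}\tu{(3)}, so $T\subseteq\del\cap\spat{T}$; conversely if $x\in[T]$ were a \del \ point of \spat{T} then, being simultaneously a \sig- and a \pii-point of \spat{T}, it would by Theorem~\ref{theorem points of Nt} be a \sig- and \pii-point of \ca{N}, i.e.\ a \del \ member of $[T]$, contradicting that $T$ is a Kleene tree. Thus $\del\cap\spat{T}\subseteq\spat{T}\setminus[T]=T$, and all four sets coincide with $T$.

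\emph{Non‑existence of the \del-injection.} By Theorem~\ref{theorem properties of Nt}\tu{(4)} the set $T=\del\cap\spat{T}$ is semirecursive, in particular \del. Suppose $f:\ca{N}\inj\spat{T}$ is a \del-injection. By Lemma~\ref{lemma the inverse function is del and same hyperdegree} (parts (1) and (2), the latter applied with $A=\ca{N}$) we have $f(\alpha)\heq\alpha$ for every $\alpha\in\ca{N}$, hence $f(\alpha)\in\del$ if and only if $\alpha\in\del$; equivalently
\[
f^{-1}[\,\del\cap\spat{T}\,]=\del\cap\ca{N}.
\]
Since $\del\cap\spat{T}$ is \del \ and $f$ is \del-recursive, the left side is \del, so $\del\cap\ca{N}$ would be a \del \ subset of \ca{N}. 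This is impossible: if it were \del \ then, being countably infinite, it could be enumerated by a \del-recursive injective sequence $(\alpha_n)_{\n}$ by Lemma~\ref{lemma enumeration by a del sequence}, and the point $\beta$ with $\beta(n)=\alpha_n(n)+1$ would itself be a \del \ point of \ca{N}, so $\beta=\alpha_m$ for some $m$ — absurd, as $\beta(m)\neq\alpha_m(m)$. (One may instead cite the well‑known fact, \cf \cite{yiannis_dst}, that the set of \del \ points of \ca{N} is \pii \ but not \del, so that Theorem~\ref{theorem alltogether}-(\ref{theorem 4D.14}) is optimal.) Hence no such $f$ exists; and since the inverse of a \del-bijection $\spat{T}\bij\ca{N}$ would be a \del-injection $\ca{N}\inj\spat{T}$, no Kleene space is \del-isomorphic to the Baire space.

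\emph{Main obstacle.} The chain of equalities is essentially bookkeeping with the quoted results; the real content is the last paragraph. The crucial point — easy to state but the place where one must be careful — is that a \del-injection pulls the semirecursive set $\del\cap\spat{T}$ back to \emph{exactly} $\del\cap\ca{N}$, which uses both directions of Lemma~\ref{lemma the inverse function is del and same hyperdegree}, together with the fact that $\del\cap\ca{N}$ is not \del; the short diagonal argument via Lemma~\ref{lemma enumeration by a del sequence} is the cleanest self‑contained way to see the latter.
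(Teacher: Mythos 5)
Your proof is correct and follows essentially the same route as the paper's: the chain of inclusions via Theorem \ref{theorem properties of Nt}, Remark \ref{remark scattered part consists of del points} and Theorem \ref{theorem points of Nt}, then the pullback of the semirecursive set $T=\del\cap\spat{T}$ under a hypothetical \del-injection using both directions of Lemma \ref{lemma the inverse function is del and same hyperdegree}, contradicting the fact that $\del\cap\ca{N}$ is not \del. The only (harmless) deviations are bookkeeping ones: you get $\scat{\spat{T}}=T$ by showing $\scat{[T]}=\empt$ rather than via the inclusion $\scat{\spat{T}}\subseteq\del\cap\spat{T}$, and you replace the paper's citation of the Lower Classification of \del \ (4D.16 in \cite{yiannis_dst}) with a self-contained diagonal argument through Lemma \ref{lemma enumeration by a del sequence}, which is fine.
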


\begin{proof}
The inclusions $T \subseteq \spat{T}_{\rm iso} \subseteq \scat{\spat{T}} \subseteq \del \cap \spat{T}$ are clear from Remark \ref{remark scattered part consists of del points} and Theorem \ref{theorem properties of Nt} -and hold for every recursive tree. Suppose that $x \in \spat{T}$ is in \del. Then $x$ cannot be a member of $[T]$, for otherwise we would have from Theorem \ref{theorem points of Nt} that $x$ is a \del \ point of \ca{N} belonging to $[T]$, contradicting that $T$ is a Kleene tree. Hence $x \in T$ and the equalities are proved.

If there were a \del-injection $f: \ca{N} \inj \spat{T}$ then (using the injectiveness of $f$ and Lemma \ref{lemma the inverse function is del and same hyperdegree}) we would have that
\[
\alpha \in \del \iff f(\alpha) \in T
\]
for all $\alpha \in \ca{N}$. Since $T$ is a $\Sigma^0_1$ subset of \spat{T} it would follow that the set of all \del \ points of \ca{N} would be \del, contradicting the Lower Classification of \del \ \cf 4D.16 \cite{yiannis_dst}.
\end{proof}

It is perhaps surprising that there are recursively presented metric spaces other than the countable ones, whose set of \del-points is \del \ and
in fact semirecursive, \ie the Lower Classification of \del \ is violated in the worst possible way.\footnote{The set of all \del-points of an uncountable space cannot be closed since it is countable dense.} We prove that these spaces do not go far from Kleene spaces.

\begin{theorem}
\label{theorem characterization the space is bad}
An uncountable recursively presented metric space \ca{X} is \del-isomorphic
to a Kleene space if and only if $\del \cap \ca{X}$ is \del.
\end{theorem}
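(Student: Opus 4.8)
The plan is to prove both directions. The forward direction is essentially Theorem \ref{theorem counterexample to del with Baire (Kleene space)} combined with Lemma \ref{lemma the inverse function is del and same hyperdegree}: if $g : \ca{X} \bij \spat{T}$ is a \del-isomorphism with $T$ a Kleene tree, then from the said theorem $\del \cap \spat{T} = T$, which is a $\Sigma^0_1$ hence \del \ subset of \spat{T}. Since $g$ is a \del-isomorphism, a point $x \in \ca{X}$ is in \del \ iff $g(x)$ is in \del \ (by part (1) and part (2) of Lemma \ref{lemma the inverse function is del and same hyperdegree}, applied to $g$ and to $g^{-1}$), so $\del \cap \ca{X} = g^{-1}[T]$, and $g^{-1}[T]$ is \del \ because $g^{-1}$ is \del-recursive and $T$ is \del. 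This gives the ``only if'' part.

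For the converse, assume \ca{X} is uncountable and $\del \cap \ca{X}$ is \del. By Theorem \ref{theorem Nt del isomorphism} fix a recursive tree $S$ and a \del-isomorphism $f : \spat{S} \bij \ca{X}$ which agrees on $[S]$ with a recursive $\pi : \ca{N} \to \ca{X}$. Then $\del \cap \spat{S} = f^{-1}[\del \cap \ca{X}]$ is \del \ (same argument as above, now with $f$). So it suffices to prove the statement for \ca{X} of the form \spat{S} with $S$ a recursive tree having non-empty body (the body is non-empty because \spat{S} is uncountable). First I would observe, using Theorem \ref{theorem points of Nt}, that a point $x \in [S]$ is in \del \ as a point of \spat{S} iff it is in \del \ as a point of \ca{N}; combining this with Theorem \ref{theorem properties of Nt}(1) (every point of $S$ is isolated, hence in \del) we get that $\del \cap [S] = \del \cap_{\ca{N}} [S]$ (the \del-points of \ca{N} lying in $[S]$), so that $\del \cap \spat{S} = S \cup (\del \cap_{\ca{N}}[S])$.

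Now the key step: I claim $S$ can be pruned to a Kleene tree $T$ with $\spat{T} \dequal \spat{S}$. Since $\del \cap \spat{S}$ is \del \ and $S$ is a $\Sigma^0_1$ (hence \del) subset of \spat{S} by Theorem \ref{theorem properties of Nt}(4), the set $B := (\del \cap \spat{S}) \setminus S = \del \cap_{\ca{N}}[S]$ is a \del \ subset of \spat{S}, and by Theorem \ref{theorem pointclasses of Nt} it is a \del \ subset of \ca{N}. If $B$ were uncountable it would contain a perfect set, but every perfect $\tboldsymbol{\Sigma}^1_1$ set in $[S]$ contains a \del-coded perfect subtree and hence a point hyperarithmetical in a \del \ parameter, i.e.\ a \del \ point — wait, more carefully: $B$ is a \del \ set consisting entirely of \del \ points, and a \del \ set all of whose points are \del \ must be countable (by the Effective Perfect Set Theorem \ref{theorem alltogether}-(\ref{theorem 4F.1}), if it were uncountable it would contain a perfect set, and a perfect set has non-\del \ members since it is in particular a \del \ set with more than the \del \ points). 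Hence $B$ is countable. So $[S] = B \cup ([S] \setminus B)$ where $B$ is countable and consists of the \del-points of $[S]$. The set $[S] \setminus B$ is the set of non-\del \ points of $[S]$; it is uncountable (as $[S]$ is) and I want a recursive subtree $T \subseteq S$ with $[T] = [S] \setminus B$ up to something manageable, and $[T]$ containing no \del \ point. The obstacle is that $[S] \setminus B$ need not be the body of a recursive tree. Instead I would argue as follows: since $B$ is countable and consists of \del \ points, and \spat{S} is uncountable, there exists $u \in S$ with $[S_u]$ uncountable and $[S_u] \cap B$ — hmm, this still may not vanish. The cleaner route: let $T$ be a recursive tree whose body is a \del \ (indeed $\Pi^0_1$) subset of $[S]$ with no \del \ members and still uncountable; such a subtree exists because the \del \ set $[S]\setminus B$ of non-\del \ points of $[S]$ is $\tboldsymbol{\Sigma}^1_1$ and uncountable, and one extracts inside it, by the usual Gandy–Harrington / perfect-set construction relativized to nothing, a $\Pi^0_1$ (in \ca{N}) subset with no \del \ members — concretely, take a Kleene tree $T_0$ (Theorem \ref{theorem Kleene there exists a Kleene tree}) and find a recursive order-isomorphic embedding of $T_0$ into $S$ below a suitable node, using that $[S]$ is uncountable so $S$ has nodes with infinitely many incomparable extensions cofinally. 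Then $T$ is a Kleene tree, $[T] \subseteq [S]$, and $[S] \setminus [T]$ is again a \del \ set; if it is countable it consists of \del \ points and then \spat{S} is recursively isomorphic to \spat{T} by a bijection matching the (recursive, infinite) index sets of $S$ and $T$ together with the countable \del \ remainder and $f$ agreeing with $\pi$ on $[T]$ — this is exactly the argument in the proof of Theorem \ref{theorem Nt del isomorphism} again, applied with $C = [S] \setminus [T]$ adjoined to the discrete part. Hence $\spat{S} \dequal \spat{T}$ with $T$ a Kleene tree, and since $f : \spat{S} \bij \ca{X}$, also $\ca{X} \dequal \spat{T}$, completing the proof.

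The step I expect to be the main obstacle is arranging that $[S]$ decomposes, up to a countable \del \ set of \del \ points, as the body of a recursive \emph{Kleene} subtree; the point is that the non-\del \ part of $[S]$ is uncountable and $\tboldsymbol{\Sigma}^1_1$ but not obviously the body of a recursive tree, so one must instead carve out a recursive Kleene subtree inside it and then check that the discarded part is a countable \del \ set — which is automatic since it is a \del \ set consisting only of \del \ points (Theorem \ref{theorem alltogether}-(\ref{theorem 4F.1}) plus Theorem \ref{theorem pointclasses of Nt}) — and finally invoke the bijection-patching argument of Theorem \ref{theorem Nt del isomorphism}.
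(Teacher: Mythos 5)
Your forward direction is fine and matches the paper. The converse, however, has a genuine gap at exactly the step you flagged. After reducing to $\ca{X} = \spat{S}$ and writing $\del \cap \spat{S} = S \cup B$ with $B = \del \cap [S]$ countable, you propose to carve a recursive Kleene tree $T$ with $[T] \subseteq [S]$ (e.g.\ by embedding a Kleene tree below a node of $S$) and then assert that the discarded part $[S] \setminus [T]$ is ``automatically'' a countable \del \ set because it ``consists only of \del \ points.'' That last claim is unjustified: $[S] \setminus [T]$ consists of \del \ points only if $[T] \supseteq [S] \setminus B$, i.e.\ only if $[T]$ captures \emph{every} non-\del \ branch of $S$, and nothing in your construction arranges this. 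Worse, no such $T$ need exist: you would need a $\Pi^0_1$ subset of $[S]$ containing no \del \ points yet co-countable in $[S]$, and if some \del \ branch of $S$ is a limit of non-\del \ branches (which the hypothesis does not exclude), then any closed set avoiding that branch misses a whole basic neighborhood of it in $[S]$, which may contain uncountably many points. So the set $[S]\setminus B$ is in general not closed, and the ``prune $S$ to a Kleene subtree with countable leftover'' strategy cannot be carried out as stated; your patching step (which is correct in itself) never gets a legitimate input.

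The paper avoids this obstruction by not asking the tree's body to sit inside the space at all. It applies the Characterization of \del \ sets, Theorem \ref{theorem alltogether}-(\ref{theorem 4D.9}), directly to the \del \ set $C = \ca{X} \setminus (\del \cap \ca{X})$ of non-\del \ points: this yields a recursive tree $T$ and a \emph{recursive} $\pi : \ca{N} \to \ca{X}$ injective on $[T]$ with $\pi[[T]] = C$. Since $\pi$ is recursive and $C$ has no \del \ members, $[T]$ has no \del \ members either, so $T$ is automatically a Kleene tree; the leftover $D = \del \cap \ca{X}$ is exactly the countable \del \ set you enumerate and match with the nodes of $T$, giving the \del-bijection $f : \spat{T} \bij \ca{X}$. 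In short, the key missing idea is that the Kleene tree is obtained as a $\Pi^0_1$ \emph{preimage} mapping onto the non-\del \ part, not as a subtree or closed subset of it; your detour through $\spat{S}$ is harmless but unnecessary once this is done. If you replace your carving step by an application of Theorem \ref{theorem alltogether}-(\ref{theorem 4D.9}) to $[S] \setminus B$ (or to $C$ in $\ca{X}$ directly), the rest of your argument goes through.
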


\begin{proof}
The right-to-left-hand direction has been proved in Theorem
\ref{theorem counterexample to del with Baire (Kleene space)}. For the inverse direction let \ca{X}
be such that the set $\del \cap \ca{X}$ of \del-points of \ca{X} is in \del. Set $D = \del \cap \ca{X}$ and $C = \ca{X}
\setminus D$. Notice that $D$ is countably infinite and $C$ is uncountable because \ca{X} is uncountable. Since $D$ is a
countable \del \ set there exists an injective \del-recursive
sequence $(x_n)_{\n}$ which enumerates $D$.

Now we deal with $C$. Using Lemma \ref{lemma pi-zero-one sets recursive tree} and Theorem \ref{theorem alltogether}-(\ref{theorem 4D.9}) we obtain a recursive tree $T$ on \om \ and a recursive function $\pi : \ca{N} \to \ca{X}$ which is
injective on $[T]$ and $\pi[[T]] = C$. From its definition the set
$C$ has no \del \ members; hence $[T]$ has no \del \ members as well. Since $C \neq \emptyset$ we have that $T$ is a
Kleene tree. We define
\[
f: \spat{T} \to \ca{X}: f(y) = \begin{cases}
                                  x_n, & \ \textrm{if} \ y = \dec{n} \in T\\
                                  \pi(y), & \ \textrm{if} \ y \in [T].
                                 \end{cases}
\]
Since $T$ is a semirecursive subset of \spat{T} it is clear that the function $f$ is \del-recursive. Moreover $f$ is bijective
from the properties of $\pi$ and $(x_n)_{\n}$.
\end{proof}

One interesting consequence of the preceding theorem is that the class of Kleene spaces is closed from below under \dleq \ in uncountable spaces.

\begin{corollary}
\label{corollary Kleene spaces are closed from below}
If \ca{Y} is a Kleene space, \ca{X} is uncountable and $\ca{X} \dleq \ca{Y}$ then $\ca{X}$ is \del-isomorphic to a Kleene space.
\end{corollary}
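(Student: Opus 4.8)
The plan is to reduce the corollary to the characterization obtained in Theorem \ref{theorem characterization the space is bad}: since $\ca{X}$ is uncountable, it is \del-isomorphic to a Kleene space precisely when the set $\del \cap \ca{X}$ of its hyperarithmetical points is itself a \del \ set. Thus everything comes down to checking that $\del \cap \ca{X}$ is \del.

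First I would record that, writing $\ca{Y} = \spat{S}$ with $S$ a Kleene tree, Theorem \ref{theorem counterexample to del with Baire (Kleene space)} gives $\del \cap \ca{Y} = S$, so $\del \cap \ca{Y}$ is semirecursive and in particular in \del. Next, fix a \del-injection $f \colon \ca{X} \inj \ca{Y}$ witnessing $\ca{X} \dleq \ca{Y}$. Since $\ca{X}$ is a nonempty \sig \ subset of itself on which $f$ is injective, Lemma \ref{lemma the inverse function is del and same hyperdegree}(2) yields $f(x) \heq x$ for every $x \in \ca{X}$; in particular $x \in \del \iff f(x) \in \del$. Hence
\[
\del \cap \ca{X} = \set{x \in \ca{X}}{f(x) \in \del \cap \ca{Y}} = f^{-1}[\,\del \cap \ca{Y}\,].
\]
Because $f$ is \del-recursive its graph is \del \ (3E.5 in \cite{yiannis_dst}), so expressing $\del \cap \ca{Y}$ both as a \sig \ and as a \pii \ set and inserting the graph of $f$ produces a \sig \ and a \pii \ definition of $f^{-1}[\,\del \cap \ca{Y}\,]$; therefore $\del \cap \ca{X}$ is \del. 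Applying Theorem \ref{theorem characterization the space is bad} to the uncountable space $\ca{X}$ finishes the argument.

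There is essentially no obstacle here: the only point needing a word of justification is the closure of \del \ under \del-recursive substitution used in the last step, which is standard (one may also quote the substitution property of \del-recursive functions directly). The corollary is a quick consequence of the already established Theorems \ref{theorem counterexample to del with Baire (Kleene space)} and \ref{theorem characterization the space is bad}, together with the fact that a \del-injection preserves hyperdegrees on its domain.
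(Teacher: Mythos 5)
Your argument is correct and follows essentially the same route as the paper: both use the equivalence $x \in \del \iff f(x) \in \del$ coming from Lemma \ref{lemma the inverse function is del and same hyperdegree}, the fact that $\del \cap \ca{Y}$ is semirecursive (Theorem \ref{theorem counterexample to del with Baire (Kleene space)}), and then Theorem \ref{theorem characterization the space is bad}. Your extra remarks on why $f^{-1}[\del \cap \ca{Y}]$ is \del \ merely spell out the substitution step the paper leaves implicit.
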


\begin{proof}
Suppose that $f: \ca{X} \inj \ca{Y}$ is a \del \ injection. We have that
\[
x \in \del \iff f(x) \in \del
\]
for all $x \in \ca{X}$. Since \ca{Y} is a Kleene space the set of its \del \ points is semirecursive. It follows from
the preceding equivalence that the set of \del \ points of $\ca{X}$ is \del \ and since \ca{X} is uncountable from Theorem \ref{theorem
characterization the space is bad} we have that \ca{X} is \del \ isomorphic to a Kleene space.
\end{proof}

We now show that the converse of Lemma \ref{lemma bodyT less than bodyS implies Nt less than Ns} is true if the ``smaller" tree is a
Kleene tree, \ie one can pass from \del \ injections $\pi: \spat{T} \inj \spat{S}$ to \del-injections on $[T]$ to $[S]$, where $T$ is a Kleene tree.

\begin{lemma}
\label{lemma converse of lemma bodyT less than bodyS in Kleene trees}
Suppose that $T$ and $S$ are recursive trees and $$\pi: \spat{T} \inj
\spat{S}$$ is a \del-injection. If $T$ is a Kleene tree then $\pi[[T]] \subseteq [S]$ and so there exists a \del-recursive function $$f: \ca{N} \to \ca{N}$$ which is injective on $[T]$ and carries $[T]$ inside $[S]$.
\end{lemma}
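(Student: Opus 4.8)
The plan is to prove the two assertions in turn, exploiting repeatedly that a Kleene body contains no \del-point of \ca{N}, whereas a recursive tree, viewed inside its own space, consists entirely of \del-points.

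\emph{The inclusion $\pi[[T]]\subseteq[S]$.} Fix $y\in[T]$. Since $T$ is a Kleene tree, $y$ is not a \del-point of \ca{N}, hence by Theorem \ref{theorem points of Nt} not a \del-point of \spat{T}; equivalently $y\notin\del\cap\spat{T}=T$ by Theorem \ref{theorem counterexample to del with Baire (Kleene space)}. The set $[T]$ is a non-empty \sig\ subset of \spat{T} (it is even closed, by Theorem \ref{theorem properties of Nt}) on which $\pi$ is injective, so Lemma \ref{lemma the inverse function is del and same hyperdegree}(2) yields $\pi(y)\heq y$; in particular $\pi(y)$ is not a \del-point of \spat{S}. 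On the other hand, as recorded in the proof of Theorem \ref{theorem counterexample to del with Baire (Kleene space)}, the inclusion $S\subseteq\del\cap\spat{S}$ holds for every recursive tree $S$, so every member of $S$ is a \del-point of \spat{S}. Hence $\pi(y)\notin S$, and since $\spat{S}=S\cup[S]$ is a disjoint union, $\pi(y)\in[S]$.

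\emph{Construction of $f$.} Identify the body $[S]\subseteq\spat{S}$ with the corresponding subset of \ca{N} (the same infinite sequences). Define the partial function $g:\ca{N}\rightharpoonup\ca{N}$ with domain $[T]$ by $g(y)=\pi(y)$; by the first part this does map $[T]$ into $[S]\subseteq\ca{N}$, and it is injective there because $\pi$ is. To see that $g$ is \del-recursive on $[T]$, take the recursive relation $R^S\subseteq\spat{S}\times\om$ from Lemma \ref{lemma transition between nhbds Nt and N} with $z\in N(\ca{N},t)\iff R^S(z,t)$ for $(z,t)\in[S]\times\om$. For $y\in[T]$ we then have $g(y)\in N(\ca{N},s)\iff R^S(\pi(y),s)$, and the relation $\{(y,s)\in\spat{T}\times\om : R^S(\pi(y),s)\}$ is \del\ in $\spat{T}\times\om$, being the preimage of the recursive relation $R^S$ under the \del-recursive map $(y,s)\mapsto(\pi(y),s)$; by Theorem \ref{theorem pointclasses of Nt} (applied to \sig\ and \pii) its trace on $[T]\times\om$ is then \del\ in $\ca{N}\times\om$. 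Finally, since $[T]$ is a $\Pi^0_1$ subset of \ca{N} (Lemma \ref{lemma pi-zero-one sets recursive tree}), extend $g$ to a total function $f:\ca{N}\to\ca{N}$ by setting $f(y)=(0,0,\dots)$ for $y\notin[T]$: the relation $f(y)\in N(\ca{N},s)$ is the disjoint union of the \del\ relation above and the $\Sigma^0_1$ relation $y\notin[T]\ \&\ (0,0,\dots)\in N(\ca{N},s)$, whence \del. Thus $f$ is \del-recursive, injective on $[T]$, and $f[[T]]=\pi[[T]]\subseteq[S]$, as required.

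The argument is mostly routine; the only places that call for attention are the two transfers between the effective structure of the spaces \spat{T}, \spat{S} and that of \ca{N} on their bodies (both supplied by Lemma \ref{lemma transition between nhbds Nt and N} and Theorem \ref{theorem pointclasses of Nt}), and the fact that one need \emph{not} send the complement of $[T]$ into $[S]$ — which would be the genuine obstacle, since $[S]$ may itself be a Kleene body and thus contain no \del-point — because extending $g$ by an arbitrary recursive point off the $\Pi^0_1$ set $[T]$ preserves \del-recursiveness. Combined with Lemma \ref{lemma bodyT less than bodyS implies Nt less than Ns}, this gives, for a Kleene tree $T$, the equivalence of $\spat{T}\dleq\spat{S}$ with the existence of a \del-recursive $f:\ca{N}\to\ca{N}$ that is injective on $[T]$ and satisfies $f[[T]]\subseteq[S]$.
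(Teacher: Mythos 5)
Your proof is correct and follows essentially the same route as the paper: the inclusion $\pi[[T]]\subseteq[S]$ via hyperdegree preservation under the injective \del-recursive $\pi$ (Lemma \ref{lemma the inverse function is del and same hyperdegree}) against the fact that members of $S$ are \del-points while $[T]$ has none, and then extending $\pi\upharpoonright[T]$ to \ca{N} by a fixed recursive point. The only difference is that you spell out the routine \del-recursiveness check for the extension, which the paper leaves implicit.
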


\begin{proof}
Let $x \in \spat{T}$ be such that $\pi(x) \in S$. Then $\pi(x)$ is a \del \ point of \spat{S}. Since $\pi$ is
\del-recursive and injective we have that $x \in \del(\pi(x))$ and so $x \in \del$. Since $T$ is a Kleene tree it
follows that $x \in T$.

Regarding the function $f$ we just extend the function $\pi \upharpoonright [T]$ to the Baire space by assigning to points outside of $[T]$ a fixed recursive point.
\end{proof}

Given a recursive infinite tree $T$ it is fairly simple to construct a \sig \ subset of $T$ which is not \pii. Hence
using Theorem \ref{theorem Nt del isomorphism} we have that $\sig \upharpoonright \ca{X} \neq \pii \upharpoonright
\ca{X}$ for all recursively presented metric spaces. Although the next result is not directly related to Kleene spaces the idea of the proof of Theorem \ref{theorem characterization the space is bad} enables us to find a $\sig \setminus \pii$ subset of \spat{T} which is in fact a subset of the body of $T$.

\begin{theorem}
\label{theorem sig not in pii subset of bodyT}

For every recursive tree $T$ with uncountable body there exists a set $P \subseteq [T]$ which is a $\sig
\setminus \pii$ subset of \ca{N}.\smallskip

It follows that every uncountable \del \ subset of a recursively presented metric space contains a $\sig \setminus \pii$ set.
\end{theorem}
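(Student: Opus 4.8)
The plan is to reduce the statement to a dichotomy about the non‑hyperarithmetical part of $[T]$. Set
\[
C \;=\; [T] \setminus \{\alpha \in \ca{N} : \alpha \in \del\}.
\]
Since $[T]$ is $\Pi^0_1$ and, by Theorem \ref{theorem alltogether}-(\ref{theorem 4D.14}), the set of $\del$ points of $\ca{N}$ is $\pii$, the set $C$ is $\sig$. Moreover $C$ is uncountable, hence non‑empty, because $[T]$ is uncountable while there are only countably many $\del$ points (Theorem \ref{theorem alltogether}-(\ref{theorem 4D.2 and theorem 4D.15})); and by construction $C$ has no $\del$ members. In the first case, when $C$ is not a $\del$ set, one simply takes $P := C$: being $\sig$ but not $\del$, the set $C$ cannot be $\pii$, so $C$ is already a $\sig \setminus \pii$ subset of $[T]$.

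The substantial case is the one where $C$ is $\del$, and here I would follow the idea of the proof of Theorem \ref{theorem characterization the space is bad} and represent $C$ by means of a Kleene tree. By Theorem \ref{theorem alltogether}-(\ref{theorem 4D.9}) together with Lemma \ref{lemma pi-zero-one sets recursive tree} there are a recursive tree $S$ and a recursive function $\pi : \ca{N} \to \ca{N}$ which is injective on $[S]$ and satisfies $\pi[[S]] = C$. Since $C$ is non‑empty, so is $[S]$; and since $\pi$ is injective on the $\sig$ set $[S]$, Lemma \ref{lemma the inverse function is del and same hyperdegree}-(2) yields $\beta \heq \pi(\beta)$ for every $\beta \in [S]$, so the absence of $\del$ members from $C = \pi[[S]]$ forces the absence of $\del$ members from $[S]$. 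Thus $S$ is a Kleene tree.

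The heart of the argument — the step where one must avoid the (hopeless) temptation to transport a known $\sig \setminus \pii$ set \emph{into} $[S]$ by some map — is to exhibit a $\sig \setminus \pii$ subset of $[S]$ directly, and for this I would use the leftmost branch. Let $\ell$ be the $\leq_{\rm lex}$‑least element of the non‑empty closed set $[S]$. Because $\ell \in [S]$ and $S$ is a Kleene tree, $\ell \notin \del$, so the singleton $\{\ell\}$ is not a $\del$ set; yet $\{\ell\}$ is $\pii$, since
\[
\alpha = \ell \;\Longleftrightarrow\; \alpha \in [S] \ \& \ (\forall \beta)[\beta \in [S] \longrightarrow \alpha \leq_{\rm lex} \beta],
\]
a conjunction of a $\Pi^0_1$ condition with a $\pii$ condition. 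Being $\pii$ but not $\del$, the set $\{\ell\}$ is not $\sig$. Consequently $P_S := [S] \setminus \{\ell\}$ is $\sig$ (it is the intersection of the $\del$ set $[S]$ with the $\sig$ set $\ca{N} \setminus \{\ell\}$), and it is not $\pii$: otherwise $\{\ell\} = [S] \setminus P_S$ would be the intersection of the $\del$ set $[S]$ with the $\sig$ set $\ca{N} \setminus P_S$, hence $\sig$, which is absurd.

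It remains to transport $P_S$ into $[T]$ and to deduce the last assertion. I would put $P := \pi[P_S] \subseteq \pi[[S]] = C \subseteq [T]$. As the image of a $\sig$ set under a recursive function, $P$ is $\sig$; and since $\pi$ is injective on $[S]$ with $P_S \subseteq [S]$, one has $P_S = \{\beta \in [S] : \pi(\beta) \in P\}$, so if $P$ were $\pii$ then $P_S$ would be $\pii$ as well — impossible. Hence $P$ is as required. Finally, for the closing statement: given an uncountable $\del$ set $B$ in a recursively presented metric space, write $B = \rho[[R]]$ with $R$ recursive and $\rho$ recursive and injective on $[R]$ (again Theorem \ref{theorem alltogether}-(\ref{theorem 4D.9}) and Lemma \ref{lemma pi-zero-one sets recursive tree}); then $[R]$ is uncountable, the main statement provides $P_R \subseteq [R]$ in $\sig \setminus \pii$, and pushing $P_R$ forward along $\rho$ exactly as above produces a $\sig \setminus \pii$ subset of $B$.
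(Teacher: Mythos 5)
Your proof is correct and follows essentially the same route as the paper's: the same dichotomy on whether $\del \cap [T]$ is \del, the same use of Theorem \ref{theorem alltogether}-(\ref{theorem 4D.9}) to represent the non-hyperarithmetical part as $\pi[[S]]$ for a Kleene tree $S$, and the same leftmost-branch singleton which is \pii\ but not \sig. The only divergence is the final transfer back to $[T]$: the paper shows directly, via the Theorem on Restricted Quantification, that the image singleton $\{\pi(\alpha_L)\}$ is \pii\ and takes $P = [T]\setminus\{\pi(\alpha_L)\}$, whereas you form $[S]\setminus\{\ell\}$ and push it forward, verifying non-\pii-ness by pulling back along $\pi$ --- an equally valid way to finish.
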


\begin{proof}
The second assertion follows from the fact that every \del \ set is the injective recursive image of a $\Pi^0_1$ subset of the Baire space (\cf Theorem \ref{theorem alltogether}- (\ref{theorem 4D.2 and theorem 4D.15})), so we prove the first assertion.

Let $D$ be the set $\del \cap [T]$. Clearly $D$ is a \pii \ subset of \ca{N}. If $D$ is not \del \ then we can take $P = [T] \setminus D$, so assume that $D$ is \del. From Theorem \ref{theorem alltogether}- (\ref{theorem 4D.2 and theorem 4D.15}) there exists a $\Pi^0_1$ set $A \subseteq \ca{N}$ and a recursive function $\pi: \ca{N} \to \ca{N}$ which is injective on $A$ and $\pi[A] = [T] \setminus D$. We pick a recursive tree $S$ such that $[S] = A$.

Since $[T]$ is uncountable the set $[T] \setminus D$ is non-empty and so $[S]$ is non-empty as well. Moreover $[S]$ does not contain \del \ members for otherwise the recursive function $\pi$ would produce a \del \ point of $[T]$ outside of $D$, contradicting its definition. Hence $S$ is a Kleene tree.

Let $\alpha_L$ be the leftmost infinite branch of $S$. We verify
that the singleton $\{\alpha_L\}$ is a \pii \ subset
of $\ca{N}$. Indeed
\begin{align*}
\alpha \in \{\alpha_L\} \iff& \alpha \in [S] \ \& \ (\forall \beta)\big \{[\beta \in [S] \ \& \ \beta
\neq \alpha ] \longrightarrow\\
                            &\hspace*{27mm} (\exists n)[\alpha \upharpoonright n = \beta \upharpoonright n \ \& \
\alpha(n) < \beta(n) \big \}
\end{align*}
for all $\alpha \in \ca{N}$. So $\{\alpha_L\}$ is a \pii \ subset of \ca{N}.

The point $\pi(\alpha_L) \in [T]$ is not in \del \ since $\pi$ is recursive and $\alpha_L$ is not a \del \ point. Using that $\pi$ is injective on $[S]$ we have that $\alpha \in \del(\pi(\alpha))$ for all $\alpha \in [S]$, since
\[
\beta = \alpha \iff \beta \in [S] \ \& \ \pi(\beta) = \pi(\alpha)
\]
for all $\beta \in [S]$. From this it follows that $\{\pi(\alpha_L)\}$ is a \pii \ subset of \ca{N} because
\[
x \in \{\pi(\alpha_L)\} \iff x \in [T] \ \& \ (\exists \beta \in \del(x))[\beta=\alpha_L \ \& \ \pi(\beta) =x],
\]
for all $x \in \ca{N}$. Finally $\{\pi(\alpha_L)\}$ is not a \sig \ subset of \ca{N} for otherwise $\pi(\alpha_L)$ would be a \del \ point of \ca{N}. Therefore we may take $P = [T] \setminus \{\alpha_L\}$ and we are done.
\end{proof}

\subsection{Expanding the toolbox} We give a brief review of the results of recursion theory that we will need in the sequel. Recall Spector's \W, which is defined in the Introduction.

\begin{theorem}[Kleene Basis Theorem \cf \cite{kleene_arithmetical_predicates_and_function_quantifiers}]
\label{theorem Kleene's Basis Theorem}
Every non-empty \sig \ subset of a recursively presented metric space contains a member which is recursive in Spector's \W.
\end{theorem}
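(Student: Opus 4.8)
This is the classical \emph{Kleene Basis Theorem}, and the plan is to reduce the statement to the Baire space and then produce the required point as the leftmost branch of a suitable recursive tree, computed with oracle \W.

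First I would dispose of the reduction to \ca{N}. Given a non-empty \sig \ set $A \subseteq \ca{X}$, I take the recursive surjection $\pi : \ca{N} \surj \ca{X}$ of Theorem \ref{theorem alltogether}-(\ref{theorem 3E.6}) and pass to the non-empty \sig \ set $B := \pi^{-1}[A] \subseteq \ca{N}$. If I manage to find some $\beta \in B$ which is recursive in \W, then $\pi(\beta) \in A$, and since $\pi$ is recursive the point $\pi(\beta)$ is a recursive-in-$\beta$ point, hence recursive in \W. So it is enough to prove the theorem for $\ca{X} = \ca{N}$.

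For the Baire space I would use the normal form $\sig = \exists^\ca{N}\Pi^0_1$ (every \sig \ subset of \ca{N} is the projection of a closed subset of $\ca{N} \times \ca{N}$) to write $B = p_1[F]$ with $F \subseteq \ca{N} \times \ca{N}$ a $\Pi^0_1$ set and $p_1$ the projection on the first coordinate. After fixing a recursive homeomorphism $\ca{N} \cong \ca{N} \times \ca{N}$ and invoking Lemma \ref{lemma pi-zero-one sets recursive tree}, this yields a recursive tree $T$ on \om \ and a recursive function $q : \ca{N} \to \ca{N}$ with $q[[T]] = B$; and $[T] \neq \empt$ since $B \neq \empt$. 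I then build the leftmost branch $\gamma_L$ of $T$ by recursion on initial segments: start with $w_0 = \empt$ (so $T_{w_0} = T$ is ill-founded), and given $w_n$ with $T_{w_n}$ ill-founded let $w_{n+1} = \cn{w_n}{(a)}$ for the least natural $a$ with $T_{\cn{w_n}{(a)}}$ ill-founded --- such an $a$ exists because any member of $[T_{w_n}]$ extends some $\cn{w_n}{(a)}$. Setting $\gamma_L = \bigcup_n w_n$ I obtain $\gamma_L \in [T]$, hence $q(\gamma_L) \in B$.

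The hard part, and the only real content, is checking that this construction runs with oracle \W. Since $T$ is recursive, $T_w$ is a recursive tree uniformly in (the code of) $w$, and the Kleene--Brouwer ordering $(T_w,\kbleq)$ is a recursive linear ordering uniformly in $w$ which is a well-ordering exactly when $T_w$ is well-founded. Hence, by the $s$-$m$-$n$ theorem, I can fix a recursive function $g$ with $g(\langle w \rangle) \in \lo$ for all finite sequences $w$ and
\[
T_w \ \textrm{ill-founded} \ \iff \ g(\langle w \rangle) \notin \W .
\]
As the complement of \W \ is recursive in the characteristic function of \W, every question ``is $T_{\cn{w}{(a)}}$ ill-founded?'' that occurs in the construction is decided recursively in \W, and the unbounded search for the least suitable $a$ terminates at each stage; therefore $\gamma_L$, and with it $q(\gamma_L) \in B$, is recursive in \W. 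Everything outside this last reduction is routine --- the passage through \ca{N}, the normal-form presentation, and the standard leftmost-branch recursion --- so I expect that reduction of ill-foundedness of the recursive subtrees $T_w$ to non-membership in \W \ to be the only point that needs to be made with care.
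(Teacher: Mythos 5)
Your proof is correct. The paper states the Kleene Basis Theorem without proof (it is cited as a classical result of Kleene), so there is no internal argument to compare with; what you give is the standard proof: reduce to \ca{N} via the recursive surjection of Theorem \ref{theorem alltogether}-(\ref{theorem 3E.6}), present the non-empty \sig \ set as $q[[T]]$ for a recursive tree $T$ and a recursive $q$, and compute the leftmost branch of $T$ recursively in \W. The only point with real content --- the uniform recursive reduction, via the Kleene--Brouwer ordering and the $s$-$m$-$n$ theorem, of the question ``$T_w$ is ill-founded'' to non-membership of $g(\langle w\rangle)$ in \W \ --- is handled correctly, and the termination of the search at each stage and the closure of ``recursive in \W'' under application of recursive functions complete the argument.
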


\begin{theorem}[Spector \cf \cite{spector_recursive_well-orderings}]
\label{theorem spector two hyperdegrees}
For every \pii \ set $P \subseteq \om$ either $P \in \del$ or $P \heq \W$.
\end{theorem}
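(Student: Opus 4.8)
\textit{Proof proposal.} The plan is to prove both alternatives of the dichotomy from a single boundedness argument, after first noting that one reduction is automatic. Since \W \ is \pii-complete for subsets of \om, there is a recursive $f$ with $n \in P \iff f(n) \in \W$, so $P$ is recursive in the characteristic function of \W \ and hence $P \hleq \W$ holds with no hypothesis on $P$. Moreover, using the standard \pii \ normal form together with the Kleene--Brouwer ordering discussed above, we may replace $f$ by a uniformly recursive assignment $n \mapsto T_n$ of recursive trees on \om \ such that $n \in P \iff [T_n] = \empt$; as recorded in the Introduction, $(T_n,\kbleq)$ is a linear ordering for every $n$ and it is a well-ordering exactly when $n \in P$.

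Now put $\Lambda = \sup \set{\mathrm{ot}(T_n,\kbleq)}{n \in P} \leq \ck$ (the order type of a recursive well-ordering is a recursive ordinal), and split on whether $\Lambda < \ck$. If $\Lambda < \ck$, fix $d \in \W$ with $|d| \geq \Lambda$. For $n \in P$ the well-ordering $(T_n,\kbleq)$ has order type $\leq \Lambda \leq |d|$, so it admits a strictly increasing map into $\leq_d$; for $n \notin P$ the linear ordering $(T_n,\kbleq)$ has an infinite descending chain, so no strictly increasing map into the well-ordering $\leq_d$ exists. Hence $n \in P$ iff there exists (a code for) a strictly increasing map of $(T_n,\kbleq)$ into $\leq_d$, which is a \sig \ condition; since $P$ is also \pii, we conclude $P \in \del$.

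If $\Lambda = \ck$, I claim $\W \in \del(P)$, which together with $P \hleq \W$ gives $P \heq \W$. Now \W \ is \pii, hence \pii(P). For the \sig(P) \ side I claim
\[
e \in \W \iff e \in \lo \ \& \ (\exists n)\big[ n \in P \ \& \ (\exists \text{ strictly increasing } g : \leq_e \to (T_n,\kbleq)) \big],
\]
the right-hand side being \sig(P). For ``$\Leftarrow$'': such a $g$ embeds $\leq_e$ into a well-ordering (because $n \in P$), so $\leq_e$ has no infinite descending chain, i.e. $e \in \W$. For ``$\Rightarrow$'': if $e \in \W$ then $|e| < \ck = \Lambda$, so by definition of $\Lambda$ there is $n \in P$ with $\mathrm{ot}(T_n,\kbleq) \geq |e|$, and mapping the $\xi$-th element of $\leq_e$ to the $\xi$-th element of $(T_n,\kbleq)$ yields the required $g$. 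Thus $\W \in \sig(P) \cap \pii(P) = \del(P)$, as claimed.

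\textit{Main obstacle.} The crux is the boundedness step, i.e. converting ``the order types $\mathrm{ot}(T_n,\kbleq)$ for $n \in P$ are bounded by a recursive ordinal'' into a \sig \ definition of $P$. The clean device, used above, is the elementary fact that a countable linear ordering admits a strictly increasing map into a fixed countable well-ordering $\leq_d$ if and only if it is itself a well-ordering of order type at most $|d|$, combined with the observation that the existence of such a map is a \sig \ condition in the relevant parameters. Producing the uniformly recursive tree assignment $n \mapsto T_n$ with the stated properties is routine (the \pii \ normal form, cf. Lemma \ref{lemma pi-zero-one sets recursive tree}), and the relativized Suslin--Kleene equality $\sig(P) \cap \pii(P) = \del(P)$ is immediate from the definition of the relativized pointclasses.
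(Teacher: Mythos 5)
Your argument is correct. A point of comparison: the paper does not actually prove this statement---it is quoted as a known theorem of Spector and then used as a black box (e.g.\ for Lemma \ref{lemma leftmost branch is del or has hyperdegree W} and Theorem \ref{theorem every recursive pseudo wo has a recursive pseudo wo from below})---so your proposal supplies a self-contained proof rather than an alternative to one in the text; what you wrote is essentially the classical boundedness dichotomy carried out by hand. The half $P \hleq \W$ is indeed automatic from \pii-completeness; writing $n \in P$ as ``$(T_n,\kbleq)$ is well-ordered'' and splitting on $\Lambda = \sup\set{\textrm{order type of } (T_n,\kbleq)}{n \in P}$ is exhaustive. When $\Lambda < \ck$, embeddability into a fixed $\leq_d$ with $d \in \W$ and order type at least $\Lambda$ gives a \sig\ definition of $P$ (such a $d$ exists because $\sup\set{|\leq_e|}{e \in \W} = \ck$, a fact the paper itself records), so $P \in \sig \cap \pii = \del$ by definition of \del. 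When $\Lambda = \ck$, your displayed equivalence shows $\W \in \sig(P)$, and since $\W$ is \pii\ (hence $\pii(P)$) you get $\W \hleq P$ under the paper's identification of subsets of \om\ with their characteristic functions, so $P \heq \W$. The auxiliary facts you lean on---the uniformly recursive trees $T_n$ from the \pii\ normal form, the arithmetical nature of the ``order-preserving map'' matrices so that the function quantifier yields \sig\ (respectively $\sig(P)$) conditions, and the elementary fact that a linear order maps strictly increasingly into a well-order exactly when it is itself a well-order of no larger type---are all standard and are used correctly, so I see no gap.
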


\begin{lemma}[\cf \cite{friedman_harvey_borel_sets_and_hyperdegrees}, \cite{jockush_soare_encodability_of_kleenes_O} and \cite{sacks_higher_recursion_theory} Ex. 1.8 Ch. III]
\label{lemma leftmost branch is del or has hyperdegree W}
The leftmost infinite branch of a recursive tree is either \del \ or has the same hyperdegree as $\W$.
\end{lemma}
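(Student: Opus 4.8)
The plan is to establish the dichotomy in two steps: first that $\alpha_L\hleq\W$ holds for the leftmost branch of \emph{every} recursive tree $S$ with $[S]\neq\empt$, and then that $\alpha_L\notin\del$ forces $\W\hleq\alpha_L$, so that in the second alternative $\alpha_L\heq\W$.

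For the first step, observe that the relation $E(u)\iff [S_u]\neq\empt$, with $u$ ranging over finite sequences, is \sig: indeed $[S_u]\neq\empt$ holds iff $u\in S$ and $(\exists\alpha)[u\sqsubseteq\alpha \ \& \ \alpha\in[S]]$. By the \pii-completeness of \W \ every \sig \ subset of \om \ is recursive in \W, so $E$ is recursive in \W. Since $\alpha_L$ is obtained by the recursion
\[
\alpha_L(n)=\textrm{the least }k\textrm{ with }E(\cn{(\alpha_L\upharpoonright n)}{(k)}),
\]
and each search terminates because $\alpha_L\upharpoonright n$ extends inside $[S]$, we conclude $\alpha_L\tleq\W$, hence $\alpha_L\hleq\W$.

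For the second step I would first isolate the elementary fact that $\alpha_L\upharpoonright k$ is the lexicographically least sequence of length $k$ whose $S$-subtree is ill-founded (a short induction on $k$ using $[S_v]\subseteq[S_{v\upharpoonright i}]$). Hence for every $n$ and every $j<\alpha_L(n)$ the tree $S_{\cn{(\alpha_L\upharpoonright n)}{(j)}}$ is well-founded; let $\mu_0$ be the supremum, over all such pairs $(n,j)$, of the order type of the well-ordering $(S_{\cn{(\alpha_L\upharpoonright n)}{(j)}},\kbleq)$. The heart of the matter is the claim that $\alpha_L\notin\del$ implies $\mu_0=\ck$. Granting it: the codes $w_{n,j}$ of the recursive orderings $(S_{\cn{(\alpha_L\upharpoonright n)}{(j)}},\kbleq)$ form a family uniformly recursive in $\alpha_L$, each $w_{n,j}$ lying in \W, with order types cofinal in \ck; therefore, for $e\in\lo$, we have $e\in\W$ iff $\leq_e$ order-embeds into $\leq_{w_{n,j}}$ for some pair $(n,j)$ (left to right by cofinality of the order types, right to left because an embedding into a well-ordering witnesses well-foundedness). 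This exhibits \W \ as a $\sig(\alpha_L)$ set, and since \W \ is \pii \ (hence in $\pii(\alpha_L)$) we get $\W\in\del(\alpha_L)$, \ie $\W\hleq\alpha_L$; with the first step this gives $\alpha_L\heq\W$.

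The claim is where I expect the real work. Suppose $\mu_0<\ck$ and put $\mu=\mu_0+\omega<\ck$; fix recursive well-orderings $R_\mu$ and $R_{\mu+1}$ of order types $\mu$ and $\mu+1$. Consider
\[
D=\set{u}{(S_u,\kbleq)\ \textrm{order-embeds into}\ R_\mu},
\]
which is manifestly \sig. But $D$ also equals the set of $u$ for which $(S_u,\kbleq)$ is well-founded and $R_{\mu+1}$ does \emph{not} order-embed into it --- because for a well-ordering, embeddability into $R_\mu$ is equivalent to having order type at most $\mu$ --- and this second description is \pii, so $D\in\del$. Now for $k<\alpha_L(n)$ the tree $S_{\cn{(\alpha_L\upharpoonright n)}{(k)}}$ is well-founded of order type $\le\mu_0<\mu$ (or a finite chain, if $\cn{(\alpha_L\upharpoonright n)}{(k)}\notin S$), so it lies in $D$; while for $k=\alpha_L(n)$ it is ill-founded and hence embeds into no well-ordering. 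Thus $\alpha_L(n)$ is the least $k$ with $\cn{(\alpha_L\upharpoonright n)}{(k)}\notin D$, so $\alpha_L$ is recursive in the \del \ set $D$, hence itself \del, contrary to hypothesis. The only delicate points are the agreement of the two descriptions of $D$ (a fact about embeddings of well-orderings) and the closure of \del \ under Turing reducibility.
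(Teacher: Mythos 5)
Your proof is correct, but it takes a genuinely different route from the paper's. The paper's argument is a two-line appeal to Spector's dichotomy (Theorem \ref{theorem spector two hyperdegrees}): consider the set $P$ of finite sequences of the given recursive tree which lie lexicographically on or to the left of $\alpha_L$; this is a \pii \ subset of \om \ (its definition does not mention $\alpha_L$: $u \in P$ iff $u$ is in the tree and every same-length sequence strictly to its left has empty body below it), it is recursive in $\alpha_L$, and $\alpha_L$ is arithmetical in $P$, so $\alpha_L \heq P$ and Spector's theorem ($P \in \del$ or $P \heq \W$) yields the statement at once. You avoid Spector's theorem entirely: you compute $\alpha_L$ from \W \ directly through the \sig \ predicate $[S_u] \neq \emptyset$, and for the converse you prove by hand the dichotomy that either the Kleene--Brouwer order types of the subtrees strictly to the left of $\alpha_L$ are bounded below \ck, in which case your embeddability set $D$ is \del \ and computes $\alpha_L$, or they are cofinal in \ck, in which case \W \ becomes $\sig(\alpha_L)$ and hence $\del(\alpha_L)$. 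This is in effect a re-derivation of the needed instance of Spector's dichotomy by the same boundedness circle of ideas that the paper uses elsewhere (compare the proofs of Theorem \ref{theorem Gandy length of well-founded part} and Lemma \ref{lemma well founded part is del characterization of}). The paper's route buys brevity at the price of invoking Spector's theorem as a black box; yours is longer but self-contained modulo the \pii-completeness of \W \ and elementary facts about embeddings of well-orderings, and it makes both reductions $\alpha_L \hleq \W$ and $\W \hleq \alpha_L$ explicit. The points you flag as delicate (the agreement of the two descriptions of $D$, and closure of \del \ points under Turing reducibility to a \del \ set) are standard and your uses of them are sound.
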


To see the latter we consider the set of all finite sequences in the given recursive tree which are $\leq$-lexicographically than its leftmost infinite branch and apply Spector's Theorem \ref{theorem spector two hyperdegrees}.

\subsubsection*{The Gandy Basis Theorem}

Kleene's Theorem \ref{theorem Kleene there exists a Kleene tree} admits a considerable generalization due to Gandy, which in turn
implies an important result on the theory of hyperdegrees.

\begin{theorem}[Gandy \cf \cite{gandy_on_a_problem_of_Kleenes}]
\label{theorem Gandy extension of Kleene}
There exists a $\Pi^0_1$ set $K \subseteq \ca{N} \times \ca{N}$ such that for all $\alpha \in \ca{N}$ the
$\alpha$-section $K_\alpha$ is non-empty and for all $\beta \in K_\alpha$ we have that $\beta \not \in \del(\alpha)$.
\end{theorem}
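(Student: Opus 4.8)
Below is the approach I would take.

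The plan is to realise $K$ as an ``unfolding'' of the \sig \ set whose $\alpha$-section consists of exactly the reals that are not hyperarithmetical in $\alpha$; this is the uniform, parametrised form of Kleene's Theorem \ref{theorem Kleene there exists a Kleene tree}. First I would consider the relation
\[
HYP = \set{(\alpha,\beta) \in \ca{N} \times \ca{N}}{\beta \in \del(\alpha)}
\]
and record the standard fact that $HYP$ is a \pii \ subset of $\ca{N} \times \ca{N}$: this is the parametrised form of the Upper Bound for the points in \del, Theorem \ref{theorem alltogether}-(\ref{theorem 4D.14}), and it can be seen concretely by applying the \pii-recursive parametrisation of the \del-points of Theorem \ref{theorem alltogether}-(\ref{theorem 4D.2 and theorem 4D.15}) uniformly in the parameter $\alpha$: this gives a \pii-recursive partial $D \colon \ca{N} \times \om \rightharpoonup \ca{N}$ with $\del(\alpha) \cap \ca{N} = \set{D(\alpha,i)}{i \in \om, \ D(\alpha,i)\downarrow}$ for every $\alpha$, and then $(\alpha,\beta) \in HYP \iff (\exists i)[D(\alpha,i)\downarrow \ \& \ D(\alpha,i) = \beta]$, which is \pii \ because the graph of $D$ is \pii. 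The same parametrisation shows $\del(\alpha) \cap \ca{N}$ is countable for every $\alpha$. Consequently $C = (\ca{N} \times \ca{N}) \setminus HYP$ is a \sig \ set whose $\alpha$-section $C_\alpha = \ca{N} \setminus \del(\alpha)$ is co-countable, in particular non-empty, for every $\alpha$.

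Next I would put $C$ into normal form. By the definition $\sig = \exists^\ca{N} \neg \Sigma^0_1 = \exists^\ca{N}\Pi^0_1$, there is a $\Pi^0_1$ set $P \subseteq \ca{N} \times \ca{N} \times \ca{N}$ with $C(\alpha,\beta) \iff (\exists \gamma)\, P(\alpha,\beta,\gamma)$ for all $\alpha,\beta \in \ca{N}$. I would then absorb the existential witness into the point: using the recursive surjection $\delta \mapsto ((\delta)_0,(\delta)_1)$ of $\ca{N}$ onto $\ca{N} \times \ca{N}$ from the Introduction, set
\[
K = \set{(\alpha,\delta) \in \ca{N} \times \ca{N}}{P(\alpha,(\delta)_0,(\delta)_1)}.
\]
Since $\delta \mapsto ((\delta)_0,(\delta)_1)$ is total and recursive, $K$ is the preimage of the $\Pi^0_1$ set $P$ under a recursive function and hence is itself a $\Pi^0_1$ subset of $\ca{N} \times \ca{N}$.

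It then remains to verify the two asserted properties. For non-emptiness of the sections: for a fixed $\alpha$, $K_\alpha \neq \emptyset$ iff $P(\alpha,\beta,\gamma)$ holds for some $\beta,\gamma$ (because $\delta \mapsto ((\delta)_0,(\delta)_1)$ is onto $\ca{N}\times\ca{N}$), iff $C_\alpha \neq \emptyset$, which we have already established. For the escape property: if $\delta \in K_\alpha$ then $(\exists\gamma)P(\alpha,(\delta)_0,\gamma)$, so $(\delta)_0 \in C_\alpha$, that is $(\delta)_0 \notin \del(\alpha)$; since $(\cdot)_0$ is recursive, were $\delta$ in $\del(\alpha)$ then $(\delta)_0$ would be hyperarithmetical in $\alpha$ as well (Lemma \ref{lemma the inverse function is del and same hyperdegree}-(1), relativised to the parameter $\alpha$), a contradiction. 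Hence $\delta \notin \del(\alpha)$ for every $\delta \in K_\alpha$, which is exactly what the theorem asks.

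The only step that is not mere bookkeeping is the very first one, namely that ``$\beta$ is hyperarithmetical in $\alpha$'' is a \pii \ relation of the pair $(\alpha,\beta)$ (equivalently, that the upper bound $\pii$ in Theorem \ref{theorem alltogether}-(\ref{theorem 4D.14}) holds uniformly in the parameter); everything afterwards is routine manipulation of the \sig \ normal form and of the pairing $\delta \mapsto ((\delta)_0,(\delta)_1)$, together with the trivial observation that adjoining a recursively-computed coordinate cannot turn a non-$\del(\alpha)$ point into a $\del(\alpha)$ point. I would therefore devote the bulk of the write-up to making that uniformity explicit, and I do not anticipate any further difficulty.
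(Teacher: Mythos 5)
Your proposal is correct and follows essentially the same route as the paper: write the relation $\beta \notin \del(\alpha)$ as a \sig\ set, unfold it to a $\Pi^0_1$ set $P(\alpha,\beta,\gamma)$, fold the witness into the point via $\delta \mapsto ((\delta)_0,(\delta)_1)$, and note that $(\delta)_0$ being recursive in $\delta$ forces $\delta \notin \del(\alpha)$. The only difference is that you spell out the (standard, uniform) \pii-ness of the relation $\beta \in \del(\alpha)$, which the paper simply takes for granted.
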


\begin{proof}
Consider the \sig \ set
\[
P(\alpha,\beta) \iff \beta \not \in \del(\alpha)
\]
and let $F \subseteq \ca{N}^3$ be $\Pi^0_1$ such that
\[
P(\alpha,\beta) \iff (\exists \gamma)F(\alpha,\beta,\gamma).
\]
Define
\[
K(\alpha,\beta^ \ast) \iff F(\alpha,(\beta^\ast)_0,(\beta^ \ast)_1).
\]
Clearly $K$ is a $\Pi^0_1$ set. Moreover for all $\alpha \in \ca{N}$ there exists $\beta \not \in \del(\alpha)$ and
hence there also exists $\gamma$ such that $F(\alpha,\beta,\gamma)$. Hence $K_\alpha \neq \emptyset$.

Finally if $\beta^\ast$ is in $K_\alpha$ we have in particular that $P(\alpha,(\beta^\ast)_0)$, \ie
$(\beta^\ast)_0 \not \in \del(\alpha)$. Since $(\beta^\ast)_0$ is recursive in $\beta^\ast$ it follows that
$\beta^\ast$ is not in $\del(\alpha)$ as well.
\end{proof}

Theorem \ref{theorem Gandy extension of Kleene} has the following interesting application.

\begin{theorem}[Gandy Basis Theorem, \cf \cite{gandy_on_a_problem_of_Kleenes}]
\label{theorem Gandy basis}
If \ca{X} is a recursively presented metric space and $A$ is a non-empty \sig \ subset of \ca{X} then there exists some $x \in A$
such that $x \hless \W$.
\end{theorem}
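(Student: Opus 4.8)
The plan is to deduce this from Gandy's Theorem~\ref{theorem Gandy extension of Kleene}, which is exactly the tool it is meant to apply. First I would pass to the Baire space: by Theorem~\ref{theorem alltogether}-(\ref{theorem 3E.6}) fix a recursive surjection $\pi: \ca{N}\surj\ca{X}$, so that $\pi^{-1}[A]$ is a non-empty $\sig$ subset of $\ca{N}$. It then suffices to produce some $\alpha\in\pi^{-1}[A]$ with $\alpha\hless\W$: putting $x=\pi(\alpha)\in A$ we get $x\in\del(\alpha)$ from Lemma~\ref{lemma the inverse function is del and same hyperdegree}-(1), hence $\del(x)\subseteq\del(\alpha)$, so $x\in\del(\alpha)\subseteq\del(\W)$ and, since $\W\notin\del(\alpha)\supseteq\del(x)$, also $\W\notin\del(x)$; that is, $x\hless\W$.

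To find such an $\alpha$ I would package points of $\pi^{-1}[A]$ together with witnesses from Gandy's set. Let $K\subseteq\ca{N}\times\ca{N}$ be the $\Pi^0_1$ set of Theorem~\ref{theorem Gandy extension of Kleene}, so that $K_\alpha\neq\emptyset$ and $\beta\in K_\alpha\Rightarrow\beta\notin\del(\alpha)$ for every $\alpha$, and consider
\[
A' = \set{\gamma\in\ca{N}}{(\gamma)_0\in\pi^{-1}[A] \ \& \ ((\gamma)_0,(\gamma)_1)\in K}.
\]
Since $\pi^{-1}[A]$ is $\sig$, $K$ is $\Pi^0_1$, and the projections $\gamma\mapsto(\gamma)_0$ and $\gamma\mapsto(\gamma)_1$ are recursive, the set $A'$ is $\sig$; and $A'$ is non-empty, because $\pi^{-1}[A]\neq\emptyset$ and the sections of $K$ are non-empty. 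By the Kleene Basis Theorem~\ref{theorem Kleene's Basis Theorem} choose $\gamma_0\in A'$ which is recursive in $\W$, and set $x_0=(\gamma_0)_0$ and $\beta_0=(\gamma_0)_1$.

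It remains to check that $x_0\hless\W$, which finishes the proof by the reduction of the first paragraph (taking $\alpha=x_0$). Both $x_0$ and $\beta_0$ are recursive in $\gamma_0$, hence recursive in $\W$, hence members of $\del(\W)$; in particular $x_0\hleq\W$. On the other hand $\gamma_0\in A'$ gives $(x_0,\beta_0)\in K$, so $\beta_0\notin\del(x_0)$ by the defining property of $K$. If $\W$ were a member of $\del(x_0)$, then $\del(\W)\subseteq\del(x_0)$ and hence $\beta_0\in\del(\W)\subseteq\del(x_0)$, a contradiction; therefore $\W\notin\del(x_0)$, and so $x_0\hless\W$. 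The only step that is not pure bookkeeping is the pairing trick of the second paragraph: a single $\W$-recursive member of $A'$ simultaneously bounds $x_0$ and its $K$-witness $\beta_0$ below $\W$, which is precisely what forbids $\W\in\del(x_0)$; the rest uses only the closure of $\sig$ under recursive substitution and conjunction together with the transitivity of $\hleq$.
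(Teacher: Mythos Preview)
Your proof is correct and follows essentially the same approach as the paper: pair members of the $\sig$ set with witnesses from Gandy's $K$, apply the Kleene Basis Theorem to the resulting $\sig$ set, and argue that $\W\in\del(x_0)$ would drag $\beta_0$ into $\del(x_0)$. The one minor difference is in the reduction to $\ca{N}$: the paper restricts to the $\Pi^0_1$ set $C$ on which $\pi$ is injective and establishes $\pi(\alpha)\heq\alpha$, whereas you observe that the weaker fact $\pi(\alpha)\hleq\alpha$ (immediate from recursiveness of $\pi$) already suffices to push $x\hless\W$ down from $\alpha\hless\W$; your version is slightly more economical here.
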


\begin{proof}[\textit{Proof} \tu{(Well-known).}]
We assume first that $\ca{X} = \ca{N}$. Let $K$ be as in the statement of Theorem \ref{theorem Gandy extension of
Kleene}, \ie $K$ is a $\Pi^0_1$ subset of $\ca{N} \times \ca{N}$, $K_\alpha \neq \emptyset$ and $K_\alpha \cap
\del(\alpha) = \emptyset$ for all $\alpha \in \ca{N}$.

Consider the set
\[
B(\alpha,\beta) \iff  \alpha \in A \ \& \ K(\alpha,\beta).
\]
Clearly $B$ is a non-empty \sig \ set and so from Kleene's Basis Theorem \ref{theorem Kleene's Basis Theorem} there exists a pair $(\alpha,\beta)$ in $B$ which is recursive in Spector's $\W$. In particular $\alpha \hleq
\W$. If $\W$ were in $\del(\alpha)$
then $\beta$ (being recursive in $\W$) would also be in $\del(\alpha)$, contradicting that $\beta \in K_\alpha$. Hence
$\alpha \hless \W$.

In the general case we apply Theorem \ref{theorem alltogether}-(\ref{theorem 3E.6}), \ie we consider a recursive surjection $\pi: \ca{N} \surj
\ca{X}$ and a $\Pi^0_1$ set $C \subseteq \ca{N}$ such that $\pi$ is injective on $C$ and $\pi[C] = \ca{X}$. The set
$\pi^{-1}[A] \cap C$ is a non-empty \sig \ subset of $\ca{N}$ and so there exists some $\alpha \in \pi^{-1}[A] \cap C$
such that $\alpha \hless \W$. The point $\pi(\alpha) \in A$ is clearly recursive in $\alpha$ since $\pi$ is a recursive
function.

Moreover using that $\pi$ is injective on $C$ we have that
\[
\beta = \alpha \iff \beta \in C \ \& \ \pi(\beta) = \pi(\alpha)
\]
which shows that the singleton $\{\alpha\}$ is $\sig(\pi(\alpha))$. Hence $\alpha \in \del(\pi(\alpha))$ and so
\[
\pi(\alpha) \heq \alpha \hless \W.
\]
This finishes the proof.
\end{proof}

\subsubsection*{Kreisel compactness} One of the most important tools for producing incomparable spaces is the following result of Kreisel.

\begin{theorem}[Kreisel compactness, \cf \cite{kreisel_compactness} and \cite{sacks_higher_recursion_theory} Theorem 5.1]
\label{theorem Kreisel compactness}
Suppose that \ca{X} is a recursively presented metric space and that $P \subseteq \om$ is \pii. Assume moreover that $D \subseteq \om \times \ca{X}$ is computed by a \sig \ set
on $P \times \ca{X}$, \ie there is a \sig \ set $R \subseteq \om \times \ca{X}$ such that for all $n \in P$
and all $x \in \ca{X}$ we have that $D(n,x) \iff R(n,x)$.\smallskip

If $D$ satisfies
\[
\cap_{n \in H} D_n \neq \emptyset
\]
for all \del \ sets $H \subseteq P$ then
\[
\cap_{n \in P} D_n \neq \emptyset.
\]
\end{theorem}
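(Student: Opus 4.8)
My plan is to argue by contradiction, after first replacing the hypothesis by a more tractable ``stage-by-stage'' form via the classical $\sig$-boundedness theorem for Spector's \W. By Theorem \ref{theorem alltogether}-(\ref{theorem 3E.6}) I may take \ca{X}\ to be a $\Pi^0_1$ subset of \ca{N}; since the statement only involves the sections of $D$ over $P$, where $D$ coincides with a \sig\ set $R$, nothing is lost. Using \pii-completeness of \W\ fix a recursive $g$ with $n\in P\iff g(n)\in\W$, and for each recursive ordinal $\lambda$ let $H_\lambda$ be the set of all $n$ with $g(n)\in\W$ and $\leq_{g(n)}$ of length $<\lambda$. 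Each $H_\lambda$ is a \del\ subset of $P$ and $P=\bigcup_\lambda H_\lambda$. By $\sig$-boundedness every \del\ set $H\subseteq P$ has $g[H]$ a \sig\ (hence bounded) subset of \W, so $H\subseteq H_\lambda$ for some recursive $\lambda$; and each $H_\lambda$ is itself \del. Hence the hypothesis is equivalent to ``$\bigcap_{n\in H_\lambda}D_n\neq\emptyset$ for every recursive ordinal $\lambda$'', and the conclusion becomes ``$\bigcap_\lambda\bigcap_{n\in H_\lambda}D_n\neq\emptyset$'': I must show that this decreasing \ck-chain of nonempty sets does not leak away at the top.

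\textbf{Reducing to a boundedness claim.} Suppose $\bigcap_{n\in P}D_n=\emptyset$. Since $D=R$ on $P\times\ca{X}$, for every $x$ there is $n\in P$ with $\neg R(n,x)$, and any such $n$ has $g(n)\in\W$; so $\theta(x):=\min\{\,\text{length of }\leq_{g(n)}\ :\ n\in P,\ \neg R(n,x)\,\}$ is a recursive ordinal. If $\sup_{x\in\ca{X}}\theta(x)<\ck$, I pick a recursive $\lambda$ above that supremum; then every $x$ admits $n\in H_\lambda$ with $\neg R(n,x)$, so $\bigcap_{n\in H_\lambda}D_n=\emptyset$, against the reformulated hypothesis. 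Thus everything reduces to showing $\sup_x\theta(x)<\ck$.

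\textbf{The boundedness claim, and the main obstacle.} A routine quantifier count does most of it: for $a,b\in\W$, ``$\leq_a$ is no longer than $\leq_b$'' (resp.\ ``is strictly shorter'') is equivalent to the existence of an order-embedding of $\leq_a$ into $\leq_b$ (resp.\ into a proper initial segment of $\leq_b$), the embedding ranging over \ca{N} --- a \sig\ condition; combined with $R$ being \sig\ and each ``$m\notin P$'' being \sig, the formula $(\forall m)\big[(m\in P\ \&\ \neg R(m,y))\rightarrow \leq_{g(n)}\text{ embeds (properly) into }\leq_{g(m)}\big]$ is a number-quantifier over a disjunction of \sig\ conditions, hence \pii, and so ``$\theta(x)\le\theta(y)$'' and ``$\theta(x)<\theta(y)$'' (the assertion of an $n$ with $n\in P$, $\neg R(n,x)$, and the above) are \pii. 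Therefore the prewellordering $\preceq$ of \ca{X}\ induced by $\theta$ has both itself and its complement \pii, i.e.\ $\preceq$ is \del, whence $\mathrm{rank}(\preceq)<\ck$ by $\sig$-boundedness for \del\ well-founded relations. The genuinely delicate step that remains is to upgrade ``$\mathrm{rank}(\preceq)<\ck$'' to ``$\sup_x\theta(x)<\ck$'': the range of $\theta$ might \emph{a priori} be cofinal in \ck\ despite having small order type, and the naive remedy --- describe the set of recursive ordinals realized as some $\theta(x)$ and bound it --- fails, since that set is an existential quantifier over the \emph{uncountable} \ca{X}\ applied to a \pii\ matrix, hence only $\Sigma^1_2$. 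This is exactly where the hypothesis that $D$ is \sig\ on $P\times\ca{X}$ (not merely Borel) must be used: writing $R(n,x)\iff(\exists\beta)F(n,x,\beta)$ with $F\in\Pi^0_1$, one witnesses ``$x$ is forced to high rank'' by a \emph{real} --- a putative simultaneous choice of the $\beta$'s for all $n\in P$ --- rather than by the point $x$ itself, and then applies $\sig$-boundedness to a \sig\ set of \W-codes. A clean way to organize all of this is to recast the proof as a $\Sigma_1$-Henkin (Barwise-style) construction of a point of $\bigcap_{n\in P}D_n$ together with its $R$-witnesses inside the least admissible set $L_\ck$, keeping the construction alive at each stage by the reformulated hypothesis and closing it off before stage \ck\ by $\sig$-boundedness. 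The main obstacle throughout is precisely this complexity bookkeeping: every object produced by the construction must remain hyperarithmetical, which forces one to argue about the countably many finite conditions and $R$-witnesses rather than about the uncountably many candidate points $x$.
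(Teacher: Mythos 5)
Your reduction is fine as far as it goes (the sets $H_\lambda$, and the observation that everything comes down to $\sup_{x \in \ca{X}}\theta(x) < \ck$), but the proposal stops exactly where the real work begins: that boundedness claim is never proved, and the partial argument you offer for it is incorrect. In your complexity count, the formula $(\forall m)\big[(m\in P \ \& \ \neg R(m,y))\rightarrow \ \leq_{g(n)} \textrm{ embeds into } \leq_{g(m)}\big]$ has a \pii\ hypothesis in the implication, so the matrix is a disjunction of \sig\ conditions, and since \sig\ is closed under number quantification the whole formula is \sig, not \pii\ as you assert; prefixing $(\exists n)[n\in P \ \& \ \neg R(n,x)\ \& \ \dots]$ then produces an existential number quantifier over a conjunction of a \pii\ and a \sig\ condition, which on its face is in neither class. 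So the claimed \del-ness of the prewellordering induced by $\theta$ is unjustified, and -- as you yourself point out -- even granting it, a bound on its rank does not bound $\sup_x\theta(x)$, since the range of $\theta$ could be cofinal in \ck. The closing appeal to a ``$\Sigma_1$-Henkin (Barwise-style) construction inside $L_{\ck}$'' is a plan, not a proof: none of the bookkeeping you identify as the main obstacle is actually carried out. So there is a genuine gap.

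The missing idea is a uniform, definable selection of a single witness $n$ for each $x$, and this is how the paper closes the argument in a few lines, with no ordinal analysis at all. Assuming $\cap_{n\in P}D_n=\emptyset$, put $Q(x,n)\iff n\in P \ \& \ \neg R(n,x)$; this is \pii\ and $\exists^\om Q=\ca{X}$. Uniformize $Q$ by a \pii\ set $Q^\ast$ (Kondo--Addison, Theorem \ref{theorem alltogether}-(\ref{theorem 4E.4})); $Q^\ast$ is the graph of a total function $f:\ca{X}\to\om$ with $f(x)\in P$ and $x\notin D_{f(x)}$. Since $n=f(x)\iff(\forall m)[Q^\ast(x,m)\rightarrow m=n]$, the graph of $f$ is also \sig, so $f$ is \del-recursive and $f[\ca{X}]$ is a \sig\ subset of $P$. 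By the Suslin--Kleene theorem (Theorem \ref{theorem alltogether}-(\ref{theorem 4B.11})) there is a \del\ set $H$ with $f[\ca{X}]\subseteq H\subseteq P$, and applying the hypothesis to $H$ gives some $x\in\cap_{n\in H}D_n$, in particular $x\in D_{f(x)}$, a contradiction. Note that this same selection step is exactly what your route needs: $\theta(x)$ is bounded by the length of $\leq_{g(f(x))}$, and $g[f[\ca{X}]]$ is a \sig\ subset of \W, so \sig-boundedness gives $\sup_x\theta(x)<\ck$ at once -- at which point the $H_\lambda$ scaffolding and the prewellordering analysis become unnecessary.
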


\begin{proof}[\textit{Proof} \tu{(Well-known).}]
Suppose towards a contradiction that $\cap_{n \in P} D_n = \emptyset$. This means that for all $x \in \ca{X}$
there is some $n \in P$ such that $x \not \in D_n$. Consider a \sig \ set $R \subseteq \om \times \ca{X}$,
which computes $D$ on $P \times \ca{X}$. Define $Q \subseteq \ca{X} \times \om$ as follows
\[
Q(x,n) \iff n \in P \ \& \ \neg D(n,x).
\]
It is clear that
\[
Q(x,n) \iff n \in P \ \& \ \neg R(n,x).
\]
Hence $Q$ is in \pii \ and $\exists^\om Q = \ca{X}$. We uniformize $Q$ by some set $Q^\ast$ in \pii. Then $Q^\ast$ is
the graph of a total function $f: \ca{X} \to \om$. Since $Q^\ast \subseteq Q$ we have that $f(x) \in P$ and
$x \not \in D_{f(x)}$ for all $x \in \ca{X}$.

The graph of $f$ is also a \sig \ set since
\[
n = f(x) \iff (\forall m)[Q^\ast(x,m) \ \longrightarrow m=n].
\]
Hence $f$ is \del-recursive. It follows that the set $f[\ca{X}]$ is a \sig \ subset of $P$. By separation we get a
\del \ set $H$ such that $f[\ca{X}] \subseteq H \subseteq P$. From our hypothesis there is some $x
\in \cap_{n \in H} D_n$. Since $H \supseteq f[\ca{X}]$ we have in particular that $x \in D_{f(x)}$,
contradicting the key property of $f$.
\end{proof}

\subsection{Chains and antichains in Kleene spaces under \dleq} Now we can proceed to results about \dleq \ in Kleene spaces.

\begin{theorem}

\label{theorem for every kleene space there is another one from below}

For every Kleene tree $T$, there is some initial segment $u$ of the leftmost
infinite branch of $T$ such that
\[
\spat{T_u} \dless \spat{T}.
\]
It follows that every Kleene space \ca{X} is the top of an infinite sequence
of Kleene spaces which is strictly decreasing under \dleq,
\[
\ca{X} \dgreat \ca{X}_1 \dgreat \ca{X}_2 \dgreat \dots.
\]
\end{theorem}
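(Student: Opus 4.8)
\noindent\emph{Proof proposal.} Write $\alpha_L$ for the leftmost infinite branch of $T$. The plan has an easy half and a hard half. For the easy half: for every initial segment $u$ of $\alpha_L$ the subtree $T_u$ is recursive, $\alpha_L \in [T_u] \subseteq [T]$, so $T_u$ is again a Kleene tree; and the identity map on $\ca{N}$ is injective on $[T_u]$ and sends $[T_u]$ into $[T]$, so Lemma \ref{lemma bodyT less than bodyS implies Nt less than Ns} gives $\spat{T_u} \dleq \spat{T}$. Hence it remains to produce one $u \sqsubseteq \alpha_L$ for which there is no \del-injection $\spat{T} \inj \spat{T_u}$. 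I would also record at the outset that, since $T$ is a Kleene tree, no member of $[T]$ lies in \del; in particular $\alpha_L \notin \del$, so by Lemma \ref{lemma leftmost branch is del or has hyperdegree W} we have $\alpha_L \heq \W$.

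For the hard half I would argue by contradiction, assuming $\spat{T} \dleq \spat{T_{\alpha_L \upharpoonright n}}$ for \emph{every} $n$. Since $T$ is a Kleene tree, Lemma \ref{lemma converse of lemma bodyT less than bodyS in Kleene trees} then yields, for each $n$, a \del-recursive $f_n : \ca{N} \to \ca{N}$ that is injective on $[T]$ and satisfies $f_n[[T]] \subseteq [T_{\alpha_L \upharpoonright n}]$. Apply the Gandy Basis Theorem \ref{theorem Gandy basis} to the non-empty \sig \ (in fact $\Pi^0_1$) set $[T]$ to fix $\beta_0 \in [T]$ with $\beta_0 \hless \W$. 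As $[T]$ is \sig \ and $f_n$ is \del-recursive and injective on it, Lemma \ref{lemma the inverse function is del and same hyperdegree}(2) gives $f_n(\beta_0) \heq \beta_0$, while $f_n(\beta_0) \in [T_{\alpha_L \upharpoonright n}]$ by construction. Consequently, for every initial segment $u$ of $\alpha_L$,
\[
[T_u] \cap \set{\gamma \in \ca{N}}{\gamma \heq \beta_0} \neq \emptyset .
\]

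The engine is Kreisel compactness, Theorem \ref{theorem Kreisel compactness}, but indexed not by $n \in \nat$ (which would trivialise it) but by the set $P = \set{s \in \Seq}{\dec{s} \sqsubseteq \alpha_L}$ of codes of initial segments of $\alpha_L$; this $P$ is \pii \ because $\alpha_L \heq \W$. The crucial combinatorial point is that every \del \ subset $H$ of $P$ is finite: an infinite \del \ $H \subseteq P$ would consist of codes of pairwise $\sqsubseteq$-comparable initial segments of $\alpha_L$ of unbounded length, whence $\alpha_L = \bigcup_{s \in H}\dec{s}$ would be computed hyperarithmetically from $H$, contradicting $\alpha_L \notin \del$. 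Therefore, for a \del \ $H \subseteq P$, letting $u^\ast$ be the longest sequence coded in $H$ we get $\bigcap_{s \in H}[T_{\dec{s}}] = [T_{u^\ast}]$, and the display above shows $[T_{u^\ast}] \cap \set{\gamma}{\gamma \heq \beta_0} \neq \emptyset$. So the hypothesis of Theorem \ref{theorem Kreisel compactness} holds for the relation whose $s$-section (for $s \in P$) is $[T_{\dec{s}}] \cap \set{\gamma}{\gamma \heq \beta_0}$, and its conclusion gives
\[
\emptyset \neq \bigcap_{s \in P}\bigl([T_{\dec{s}}] \cap \set{\gamma}{\gamma \heq \beta_0}\bigr) = \Bigl(\bigcap_{u \sqsubseteq \alpha_L}[T_u]\Bigr) \cap \set{\gamma}{\gamma \heq \beta_0} = \set{\alpha_L}{} \cap \set{\gamma}{\gamma \heq \beta_0},
\]
so $\alpha_L \heq \beta_0 \hless \W$, contradicting $\alpha_L \heq \W$. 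This forces $\spat{T} \not\dleq \spat{T_{\alpha_L \upharpoonright n}}$ for some $n$, i.e. $\spat{T_{\alpha_L \upharpoonright n}} \dless \spat{T}$.

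The step I expect to be the real obstacle is checking that the relation fed to Theorem \ref{theorem Kreisel compactness} has the right complexity: it must be computed by a \sig \ set on $P \times \ca{N}$ (after relativising the whole argument to $\beta_0$). Here ``$\dec{s} \sqsubseteq \gamma$'' is arithmetical in $(s,\gamma)$ and ``$\gamma \in [T]$'' is $\Pi^0_1$, but ``$\gamma \heq \beta_0$'' is a priori only \pii$(\beta_0)$, so one must replace it by a genuine \sig$(\beta_0)$ surrogate that still keeps both the finite-intersection hypothesis and the final contradiction intact. This is where one uses that each $f_n$ carries a \del-code and may be taken recursive in a fixed iterate of the jump, working with the \emph{witnesses} ``$\gamma = \rfn{e}^{\ca{N},\ca{N}}_{\del}(\beta_0)$ for some code $e$ of a map sending $[T]$ into $[T_{\dec{s}}]$'' in place of the bare hyperdegree condition, the indexing being arranged so that a \del-bounded family of $s$'s only admits boundedly complex such $e$'s. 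Once the base case is secured, the strictly decreasing sequence follows by iteration: given a Kleene space $\ca{X} = \spat{T}$ with leftmost branch $\alpha_L$, the base case yields $u_1 \sqsubseteq \alpha_L$ with $\spat{T_{u_1}} \dless \spat{T}$; lengthening $u_1$ along $\alpha_L$ if necessary preserves this (by transitivity of $\dleq$ together with $\spat{T} \not\dleq \spat{T_{u_1}}$), so we may take $u_1$ as long as we wish; then $T_{u_1}$ is again a Kleene tree with leftmost branch $\alpha_L$, and applying the base case to $\spat{T_{u_1}}$ gives $u_2 \sqsupseteq u_1$ with $(T_{u_1})_{u_2} = T_{u_2}$ and $\spat{T_{u_2}} \dless \spat{T_{u_1}}$; iterating produces $\ca{X} \dgreat \spat{T_{u_1}} \dgreat \spat{T_{u_2}} \dgreat \cdots$, all Kleene spaces.
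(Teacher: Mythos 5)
Your easy half (each $T_u$, $u \sqsubseteq \alpha_L$, is a Kleene tree with $\spat{T_u} \dleq \spat{T}$) and the final iteration are fine, and your overall strategy is the right one: assuming $\spat{T} \dleq \spat{T_u}$ for every $u \sqsubseteq \alpha_L$, a Gandy-basis point $\beta_0 \in [T]$ with $\beta_0 \hless \W$ forces $\del(\beta_0) \cap [T_u] \neq \emptyset$ for all such $u$, and this should clash with $\alpha_L \heq \W$. But the engine you propose for the clash, Kreisel compactness over the index set $P = \set{s \in \Seq}{\dec{s} \sqsubseteq \alpha_L}$, cannot be made to work: Theorem \ref{theorem Kreisel compactness} needs $P$ to be \pii, and your $P$ is provably not \pii. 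Indeed, if $P$ were \pii \ then the graph $\set{(n,m)}{\alpha_L(n)=m}$ would be \pii \ (quantify existentially over $s \in P$ with $\lh(\dec{s}) = n+1$ and $\dec{s}(n)=m$), hence its complement $\set{(n,m)}{(\exists m' \neq m)[\alpha_L(n)=m']}$ would be \pii \ as well, so the graph would be \del \ and $\alpha_L$ would be a \del \ point, contradicting that $T$ is a Kleene tree. The same argument relativized shows $P$ is not even $\pii(\beta_0)$ (that would give $\alpha_L \in \del(\beta_0)$ and hence $\W \hleq \beta_0$), so relativizing the compactness theorem to $\beta_0$ does not rescue the step, while relativizing to $\W$ or $\alpha_L$ destroys both the finiteness of the \del \ subsets of $P$ and the final contradiction. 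In addition, the second hypothesis of Theorem \ref{theorem Kreisel compactness} -- that the sections be computed by a \sig \ set on $P \times \ca{N}$ -- is exactly the point you yourself flag as the obstacle: the condition $\gamma \heq \beta_0$ is only $\pii(\beta_0)$, and the proposed surrogate via codes of \del-recursive maps is not carried out. So the hard half has a genuine gap.

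The paper reaches the same contradiction with no compactness at all, by a direct definability computation that you could splice into your argument with $\gamma = \beta_0$. Suppose $\del(\gamma) \cap [T_u] \neq \emptyset$ for every $u \sqsubseteq \alpha_L$, where $\gamma \in [T]$ and $\W \notin \del(\gamma)$. Then $\alpha = \alpha_L$ iff $\alpha \in [T]$ and $(\forall \beta \in \del(\gamma))[\beta \in [T] \rightarrow \alpha \leq_{\rm lex} \beta]$: if $\alpha \in [T]$ differs from $\alpha_L$ first at coordinate $n$, then $\alpha_L(n) < \alpha(n)$, and a $\del(\gamma)$-branch through $u = \alpha_L \upharpoonright (n+1)$ lies strictly to the left of $\alpha$, violating the right-hand side. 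By the Theorem on Restricted Quantification (Theorem \ref{theorem alltogether}-(\ref{theorem 4D.3})) the right-hand side is $\sig(\gamma)$, so $\{\alpha_L\}$ is a $\sig(\gamma)$ singleton, hence $\alpha_L \in \del(\gamma)$ and therefore $\W \in \del(\gamma)$, a contradiction. Thus some $u \sqsubseteq \alpha_L$ has $\del(\gamma) \cap [T_u] = \emptyset$, and then no \del-injection $\spat{T} \inj \spat{T_u}$ can exist, since it would have to send $\gamma$ (which is not \del, so cannot land in $T_u$) to a $\del(\gamma)$-point of $[T_u]$. With this replacement your proof goes through; as written, the compactness step fails.
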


\begin{proof}
Suppose that $T$ is a Kleene tree and that $\alpha_L$ is its leftmost infinite branch. From Lemma \ref{lemma leftmost branch is del or has hyperdegree W} $\alpha_L$ has the same hyperdegree as $\W$. We apply the Gandy Basis
Theorem \ref{theorem Gandy basis} to get some $\gamma \in [T]$ such that $\W \not \in \del(\gamma)$.

The key remark is that there is some $u \sqsubseteq \alpha_L$ such that $\del(\gamma) \cap
[T_u] = \emptyset$. To see this we need the following.

\emph{Claim.} If $\del(\gamma) \cap [T_u] \neq \emptyset$ for all $u \sqsubseteq \alpha_L$ then
\begin{align}
\label{equation theorem for every kleene space there is another one from below} \hspace*{10mm} \alpha \in \{\alpha_L\}
\iff \alpha \in
[T] \ \& \ (\forall \beta \in \del(\gamma))[\beta \in [T] \longrightarrow \alpha \leq_{\rm lex} \beta]
\end{align}
for all $\alpha \in \ca{N}$, where $\alpha \leq_{\rm lex} \beta$ means that either $\alpha = \beta$ or for the unique
$n$ for which $\alpha \upharpoonright n = \beta \upharpoonright n$ and $\alpha(n) \neq \beta(n)$ we have that $\alpha(n)
< \beta(n)$.

\emph{Proof of the Claim.} The left-to-right-hand of the equivalence is clear. For the inverse direction suppose towards a contradiction that $\alpha \upharpoonright n = \alpha_L \upharpoonright n$ and $\alpha_L(n) \neq \alpha(n)$ for some \n.
Since $\alpha \in [T]$ and
$\alpha_L$ is the leftmost infinite branch of $T$ we have that $\alpha_L(n) < \alpha(n)$. We take $u =
(\alpha_L(0),\dots,\alpha_L(n))$. From our hypothesis there exists some $\beta$ in $\del(\gamma)$ which belongs to $[T_u]$. The right-hand-side of (\ref{equation theorem for every kleene space there is another one from below}) implies that $\alpha \leq_{\rm lex} \beta$. But on the other hand $\beta \upharpoonright n = \alpha_L \upharpoonright n = \alpha \upharpoonright n$ and $\beta(n) = \alpha_L(n) < \alpha(n)$, \ie $\alpha \not \leq_{\rm lex} \beta$, see Figure \ref{fig:for every kleene space there is another one from below}. The latter yields a contradiction and the Claim has been proved.
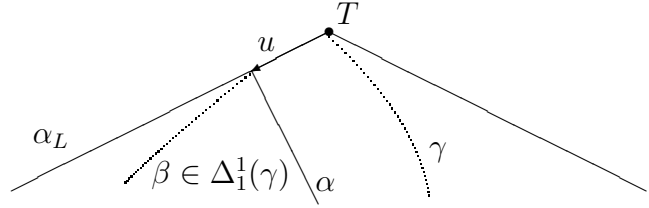
\begin{figure}[t]
\begin{picture}(300,85)(0,0)
\put(140,70){\tiny{$\bullet$}}
\put(142,72){\line(-2,-1){120}}
\put(142,72){\line(2,-1){120}}
\put(145,75){$T$}
\put(142,72){\vector(-2,-1){30}}
\put(115,65){$u$}
\put(113,57){\line(1,-2){25}}
\qbezier[40](142,70)(175,40)(180,10)
\qbezier[40](112,56)(80,30)(65,15)
\put(137,12){$\alpha$}
\put(180,25){$\gamma$}
\put(75,15){$\beta \in \del(\gamma)$}
\put(30,30){$\alpha_L$}
\end{picture}
\caption{Converging to $\alpha_L$ through $\del(\gamma)$.}
\label{fig:for every kleene space there is another one from below}
\end{figure}

If it were $\del(\gamma) \cap [T_u] \neq \emptyset$ for all $u \sqsubseteq \alpha_L$ from the equivalence (\ref{equation theorem for
every kleene space there is another one from below}) of the preceding Claim it would follow that the
singleton $\{\alpha_L\}$ is $\sig(\gamma)$ and so from 3E.16 in \cite{yiannis_dst} $\alpha_L$ would be a
$\del(\gamma)$ point. Hence we would have $\W \in \del(\gamma)$, which contradicts the choice of $\gamma$.

Therefore there exists some $u \sqsubseteq \alpha_L$ with $\del(\gamma) \cap [T_u] = \emptyset$. Clearly $\spat{T_u} \dleq \spat{T}$. If it were $\spat{T} \dleq
\spat{T_u}$ then there would be a $\del(\gamma)$ point inside $[T_u]$, which is a contradiction from the
choice of $u$.
\end{proof}

The idea of the preceding proof refines Lemma \ref{lemma leftmost branch is del or has hyperdegree W}
as follows.

\begin{corollary}
\label{corollary leftmost branch is del or has hyperdegree W refined}
Suppose that $T$ is a recursive tree with non-empty body and that $\alpha_L$ is its leftmost infinite branch. Then
exactly one of the following cases applies.
\begin{itemize}
\item[(a)] $\alpha_L$ is \del.
\item[(b)] $\alpha_L \heq \W$ and there exists some $u \sqsubseteq \alpha_L$ such that $T_u$ is a Kleene tree.
\end{itemize}
In the case of \tu{(b)} there exists $v \sqsubseteq \alpha_L$ such that $T_v$ is a Kleene tree and $\spat{T_v} \dless \spat{T}$.
\end{corollary}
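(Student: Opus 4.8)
The plan is to reduce everything to the already-proven Theorem~\ref{theorem for every kleene space there is another one from below}, via Lemma~\ref{lemma leftmost branch is del or has hyperdegree W}. By Lemma~\ref{lemma leftmost branch is del or has hyperdegree W}, $\alpha_L$ is either \del \ or satisfies $\alpha_L \heq \W$; since $\W$ is not a \del \ set these alternatives are mutually exclusive (if both held we would get $\W \in \del(\alpha_L) = \del$). So ``exactly one case applies'' is automatic, and it remains to assume $\alpha_L \notin \del$, hence $\alpha_L \heq \W$, and to produce inside case~(b) both an initial segment $u$ with $T_u$ a Kleene tree and an initial segment $v$ with $T_v$ a Kleene tree and $\spat{T_v} \dless \spat{T}$.

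\textbf{Step 1: extracting a Kleene subtree.} For every $u \sqsubseteq \alpha_L$ the tree $T_u$ is recursive (compatibility with a fixed finite sequence is decidable from a recursive $T$) and has non-empty body, since $\alpha_L \in [T_u]$; thus $T_u$ fails to be a Kleene tree exactly when $\del \cap [T_u] \neq \emptyset$. Suppose towards a contradiction that $\del \cap [T_u] \neq \emptyset$ for all $u \sqsubseteq \alpha_L$. I would then run verbatim the argument of the \emph{Claim} inside the proof of Theorem~\ref{theorem for every kleene space there is another one from below} --- which uses only that $\alpha_L$ is the leftmost branch --- with $\del(\gamma)$ replaced by plain $\del$, and also weaken the bound to $\del(\alpha)$ using $\del \subseteq \del(\alpha)$ in the right-to-left direction, obtaining
\[
\alpha = \alpha_L \iff \alpha \in [T] \ \& \ (\forall \beta \in \del(\alpha))[\beta \in [T] \longrightarrow \alpha \leq_{\rm lex} \beta]
\]
for all $\alpha \in \ca{N}$. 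The matrix $\beta \in [T] \longrightarrow \alpha \leq_{\rm lex} \beta$ is arithmetical, hence \sig, so by the dual form of the Theorem on Restricted Quantification (Theorem~\ref{theorem alltogether}-(\ref{theorem 4D.3})) the right-hand side is a \sig \ condition on $\alpha$. Thus $\{\alpha_L\}$ is \sig, and by 3E.16 in \cite{yiannis_dst} we get $\alpha_L \in \del$, a contradiction. Hence there is $u \sqsubseteq \alpha_L$ with $\del \cap [T_u] = \emptyset$, so $T_u$ is a Kleene tree; this establishes the two assertions in (b).

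\textbf{Step 2: the strict inequality.} I would now feed the Kleene tree $T_u$ into Theorem~\ref{theorem for every kleene space there is another one from below}. Every branch of $T_u$ is a branch of $T$ and $\alpha_L \in [T_u]$, so $\alpha_L$ is again the leftmost branch of $T_u$; the theorem therefore yields $w \sqsubseteq \alpha_L$ with $\spat{(T_u)_w} \dless \spat{T_u}$. Since $u$ and $w$ are both initial segments of $\alpha_L$ they are comparable, and a short check gives $(T_u)_w = T_v$, where $v$ is the longer of $u,w$; moreover $T_v$ is recursive with $\alpha_L \in [T_v]$ and $[T_v] \subseteq [T_u]$ contains no \del \ point, so $T_v$ is a Kleene tree. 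Finally $[T_v] \subseteq [T]$ gives $\spat{T_v} \dleq \spat{T}$ by Lemma~\ref{lemma bodyT less than bodyS implies Nt less than Ns} (apply it with the identity function), and likewise $\spat{T_u} \dleq \spat{T}$; if $\spat{T} \dleq \spat{T_v}$ held, composing \del-injections would give $\spat{T_u} \dleq \spat{T} \dleq \spat{T_v}$, contradicting $\spat{T_v} \dless \spat{T_u}$. Hence $\spat{T_v} \dless \spat{T}$, as required.

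The main obstacle is Step~1: getting the pointclass bookkeeping in the displayed equivalence exactly right --- in particular quantifying over $\del(\alpha)$ rather than over $\del$ so that Theorem~\ref{theorem alltogether}-(\ref{theorem 4D.3}) applies and produces a genuinely \sig \ singleton --- and checking that the right-to-left direction still goes through after this replacement, which it does precisely because $\del \subseteq \del(\alpha)$. Step~2 is then purely formal manipulation of initial segments of $\alpha_L$, transitivity of \dleq, and an appeal to Theorem~\ref{theorem for every kleene space there is another one from below} (together with Lemma~\ref{lemma the inverse function is del and same hyperdegree} hidden inside the composition of \del-injections).
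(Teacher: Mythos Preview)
Your proof is correct and follows essentially the same route as the paper's. The only cosmetic difference is in Step~1: the paper writes the quantifier as $(\forall\beta\in\del)$ rather than your $(\forall\beta\in\del(\alpha))$, but your version is arguably cleaner since it lets you invoke the dual of Theorem~\ref{theorem alltogether}-(\ref{theorem 4D.3}) verbatim; in Step~2 you simply fill in details the paper leaves implicit (in particular that the $w$ produced by Theorem~\ref{theorem for every kleene space there is another one from below} must properly extend $u$, else $(T_u)_w=T_u$ and $\spat{T_u}\dless\spat{T_u}$ would be absurd).
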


\begin{proof}
If for all $u \sqsubseteq \alpha_L$ we have that $\del \cap [T_u] \neq \emptyset$
then as in the proof of Theorem \ref{theorem for every kleene space there is another one from below} we obtain that
\[
\alpha \in \{\alpha_L\} \iff \alpha \in
[T] \ \& \ (\forall \beta \in \del)[\beta \in [T] \longrightarrow \alpha \leq_{\rm lex} \beta].
\]
The latter equivalence implies that $\{\alpha_L\} \in \sig$ and hence $\alpha_L \in \del$. So if $\alpha_L \not \in
\del$ (and therefore from Lemma \ref{lemma leftmost branch is del or has hyperdegree W} $\alpha_L \heq \W$) there
exists some $u \sqsubseteq \alpha_L$ such that $\del \cap [T_u] = \emptyset$. Clearly $T_u$ is a Kleene tree.

Finally if (b) is true we apply Theorem \ref{theorem for every kleene space there is another one from below} to $T_u$ in order to find an initial segment of its leftmost infinite branch, which is $\alpha_L$ since $u \sqsubseteq \alpha_L$, such that $\spat{T_v} \dless \spat{T_u} \dleq \spat{T}$.

\end{proof}

The proof of Theorem \ref{theorem for every kleene space there is another one from below} has another
interesting consequence.

\begin{corollary}
\label{corollary del of gamma is dense in a Kleene tree}
For every Kleene tree $T$ and every $\gamma \in \ca{N}$ the class $\del(\gamma)$ is dense in $[T]$ exactly when $\W
\in \del(\gamma)$.
\end{corollary}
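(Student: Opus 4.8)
The plan is to prove the two implications separately, in each case invoking machinery already in place. Recall first that, for a recursive tree $T$, the sets $[T_u] = \set{x \in [T]}{u \sqsubseteq x}$ with $u \in T$ and $[T_u] \neq \emptyset$ form a basis for the subspace topology of $[T] \subseteq \ca{N}$; thus ``$\del(\gamma)$ is dense in $[T]$'' amounts to saying that $\del(\gamma) \cap [T_u] \neq \emptyset$ for every $u \in T$ with $[T_u] \neq \emptyset$.

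For the implication $\W \in \del(\gamma) \Rightarrow \del(\gamma)$ dense in $[T]$, I would fix such a $u$, observe that $T_u$ is again a recursive tree so that $[T_u]$ is a non-empty \sig \ set, and apply the Kleene Basis Theorem \ref{theorem Kleene's Basis Theorem} to produce some $x \in [T_u]$ recursive in $\W$. Then $x \hleq \W \hleq \gamma$, hence $x \in \del(\gamma)$ by transitivity of $\hleq$, and density follows. (Alternatively one may use the leftmost branch of $T_u$ together with Lemma \ref{lemma leftmost branch is del or has hyperdegree W}, or appeal to the Gandy Basis Theorem \ref{theorem Gandy basis}.) This direction is routine.

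For the converse, assume $\del(\gamma)$ is dense in $[T]$ and let $\alpha_L$ be the leftmost infinite branch of $T$. Every $u \sqsubseteq \alpha_L$ lies in $T$ and satisfies $\alpha_L \in [T_u] \neq \emptyset$, so $\del(\gamma) \cap [T_u] \neq \emptyset$; that is, the hypothesis of the Claim inside the proof of Theorem \ref{theorem for every kleene space there is another one from below} is met -- and that proof uses no special property of the parameter. The Claim then yields
\[
\alpha \in \{\alpha_L\} \iff \alpha \in [T] \ \& \ (\forall \beta \in \del(\gamma))[\beta \in [T] \longrightarrow \alpha \leq_{\rm lex} \beta]
\]
for all $\alpha \in \ca{N}$. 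The matrix $\beta \in [T] \longrightarrow \alpha \leq_{\rm lex} \beta$ is arithmetical, hence \sig, so restricted quantification over $\del(\gamma)$ makes the displayed right-hand side a $\sig(\gamma)$ relation; thus $\{\alpha_L\}$ is a $\sig(\gamma)$ singleton and $\alpha_L \in \del(\gamma)$ by 3E.16 in \cite{yiannis_dst}. Finally, $T$ being a Kleene tree forces $\alpha_L \not\in \del$, so Lemma \ref{lemma leftmost branch is del or has hyperdegree W} gives $\alpha_L \heq \W$, and therefore $\W \in \del(\alpha_L) \subseteq \del(\gamma)$.

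The substantive content of the argument was already extracted in Theorem \ref{theorem for every kleene space there is another one from below} and its Claim, so there is no deep obstacle. The one step that calls for care is the pointclass computation in the converse: one must pass, by dualizing the Theorem on Restricted Quantification \ref{theorem alltogether}-(\ref{theorem 4D.3}) relativized to $\gamma$, from a universal bounded quantifier over $\del(\gamma)$ with a \sig \ matrix to a $\sig(\gamma)$ relation -- precisely what turns $\{\alpha_L\}$ into a $\sig(\gamma)$ set so that 3E.16 applies. Everything else -- the basis description of the topology of $[T]$, the verbatim reuse of the earlier Claim, and the transitivity of $\hleq$ -- is routine.
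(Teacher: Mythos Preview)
Your proof is correct and follows essentially the same route as the paper: the forward direction via the Kleene Basis Theorem is identical, and for the converse you invoke the Claim from the proof of Theorem \ref{theorem for every kleene space there is another one from below} together with Lemma \ref{lemma leftmost branch is del or has hyperdegree W}, exactly as the paper does (the paper phrases the converse contrapositively, but the content is the same).
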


\begin{proof}
Suppose that $\W \hleq \gamma$ and that $[T_u] \neq \emptyset$. From the Kleene Basis Theorem \ref{theorem Kleene's
Basis Theorem} there exists some $\alpha \in [T_u]$ which is recursive in $\W$. Therefore $\alpha \hleq \gamma$ and
$\del(\gamma) \cap [T_u] \neq \emptyset$. Conversely if $\W \not \in \del(\gamma)$ from the Claim in the proof of Theorem
\ref{theorem for every kleene space there is another one from below} there exists some $u \in [T]$ with $[T_u] \neq
\emptyset$ and $\del(\gamma) \cap [T_u] = \emptyset$.
\end{proof}

We now proceed to another type of results, where the key tool for proving them is Kreisel compactness (Theorem \ref{theorem Kreisel compactness}). The idea behind it, is to choose the set $D$ in the statement of the preceding theorem so that it involves trees as arguments -this is the reason that we are considering the space \Tr. The set $D$ should be chosen in such a way that on one  hand membership in $\cap_{i \in P}D_i$ implies the incomparability property that we want to achieve, and on the other hand it is relatively simple to show that $\cap_{i \in H}D_i$ is non empty for all \del \ sets $H \subseteq P$. This technique is demonstrated in the next lemma.

\begin{lemma}
\label{lemma for every kleene tree there exists an incomparable kleene tree}
For all Kleene trees $T_1,\dots,T_n$ there exists a Kleene tree $S$ and $\alpha_1,\dots,\alpha_n,\beta$ in $[T_1],\dots, [T_n]$ and $[S]$ respectively such that
\[
\del(\beta) \cap [T_k] = \emptyset \quad \textrm{and} \quad \gamma \not \heq \alpha_k
\]
for all $\gamma \in [S]$ and all $k=1,\dots,n$.\smallskip

In particular for all $k=1,\dots,n$ there is no \del-recursive function $f: \ca{N} \to \ca{N}$ which carries $[S]$ inside $[T_k]$ or carries $[T_k]$ inside $[S]$ and is injective on $[T_k]$.
\end{lemma}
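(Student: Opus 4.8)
The plan is to realize all the desired properties as a single application of Kreisel compactness (Theorem~\ref{theorem Kreisel compactness}) over the space of trees $\Tr$. First I would fix, for each $k \in \{1,\dots,n\}$, a point $\alpha_k \in [T_k]$ of hyperdegree exactly $\W$ — for instance the leftmost infinite branch of $T_k$, which by Lemma~\ref{lemma leftmost branch is del or has hyperdegree W} and the fact that $T_k$ is a Kleene tree must satisfy $\alpha_k \heq \W$. The candidate trees $S$ to build will be pruned subtrees of a fixed Kleene tree $T_0$; concretely I would take $T_0 = T_1 \oplus \cdots \oplus T_n$ (a Kleene tree, as noted after Definition~\ref{definition of a Kleene space}), so that $[T_0]$ has no \del-members and in fact every element of $[T_0]$ is \del-above some $\alpha_k$, which will help control the ``$\gamma \not\heq \alpha_k$'' requirements. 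The heart of the argument is choosing the right $\Pi^1_1$ index set $P$ and the right relation $D$.

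The set $P$ should enumerate (codes of) the ``relevant'' objects we must avoid: namely, a $\Pi^1_1$ set of pairs coding either a \del-code $i$ together with which $T_k$ it purports to land in (so that the branch $\{i\}$, if total and in $[T_k]$, must be excluded from the range-avoidance witness $\beta$), or a \del-code $i$ together with an index $k$ (so that, if $\{i\}$ is total and hyperequivalent to $\alpha_k$, no $\gamma \in [S]$ may equal it). The relation $D \subseteq P \times \ca{N}$, computed by a $\Sigma^1_1$ set on $P \times \ca{N}$ as Kreisel compactness requires, should say, roughly: ``$\beta^\ast = \langle \beta, S\rangle$ codes a point $\beta \in [T_0]$ and a recursive subtree $S$ of $T_0$ with $[S] \neq \emptyset$, such that $\beta$ avoids the \del-closure of $[T_k]$ at this stage and $[S]$ contains no point of the forbidden hyperdegree at this stage.'' One then checks: (i) any $\beta^\ast \in \bigcap_{n \in P} D_n$ simultaneously witnesses $\del(\beta) \cap [T_k] = \emptyset$ and $\gamma \not\heq \alpha_k$ for all $\gamma \in [S]$ and all $k$, and that the resulting $S$ is automatically a Kleene tree (it is a recursive subtree of the Kleene tree $T_0$ with non-empty body, hence has no \del-members); (ii) for every \del\ set $H \subseteq P$, $\bigcap_{n \in H} D_n \neq \emptyset$. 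For (ii) the point is that a \del\ set $H$ mentions only countably many \del-codes in a ``\del-uniform'' way, so the set of points of $[T_0]$ that must be avoided, together with the finitely many hyperdegrees $[\alpha_k]$, forms a \del-parametrized family of countably many hyperdegrees; since $[T_0]$ is a $\Sigma^1_1$ set with no \del-members, a Gandy Basis (Theorem~\ref{theorem Gandy basis}) / diagonalization argument produces a $\beta \in [T_0]$ of hyperdegree $< \W$ that avoids all of them and whose subtree $S = (T_0)_u$ along an appropriate initial segment $u \sqsubseteq \beta$ misses all forbidden hyperdegrees — exactly the pruning trick from the proof of Theorem~\ref{theorem for every kleene space there is another one from below}.

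Once $S$, $\beta$, $\alpha_1,\dots,\alpha_n$ are obtained from Kreisel compactness, the final ``In particular'' clause is immediate from Lemma~\ref{lemma the inverse function is del and same hyperdegree}: if a \del-recursive $f : \ca{N} \to \ca{N}$ carried $[S]$ inside $[T_k]$, then for $\gamma \in [S]$ we would get $f(\gamma) \in \del(\gamma)$, and since such an $f$ restricted to a $\Sigma^1_1$ set with the relevant injectivity gives $\gamma \in \del(f(\gamma))$, we would obtain a point of $[T_k]$ hyperequivalent to some $\gamma \in [S]$; but a basis/Gandy argument applied inside $[T_k]$ forces $\alpha_k$ (or at least \emph{some} point we have excluded) to reappear, contradicting $\gamma \not\heq \alpha_k$. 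Symmetrically, if $f$ carries $[T_k]$ into $[S]$ and is injective on $[T_k]$, then $\del(\alpha_k) \ni f(\alpha_k) \in [S]$, so $[S]$ contains a point hyperequivalent to $\alpha_k$, directly contradicting $\gamma \not\heq \alpha_k$ for all $\gamma \in [S]$; and a \del-recursive map sending $[T_k]$ into $[S]$ would also inject $[T_k]$'s worth of distinct hyperdegrees into $[S]$, but $\del(\beta)$ avoids $[T_k]$, again yielding a contradiction.

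The step I expect to be the main obstacle is item~(ii): verifying that $\bigcap_{n\in H} D_n \neq \emptyset$ for every \del\ set $H \subseteq P$. This requires packaging ``avoid a \del-parametrized countable set of hyperdegrees inside a $\Sigma^1_1$ set with no \del-members, and do so along an initial segment so the whole subtree is clean'' into a single basis-theorem application that is uniform enough to be quoted cleanly; the subtlety is that $D$ must be $\Sigma^1_1$-computed on $P \times \ca{N}$ (not merely $\Pi^1_1$), which constrains how the ``avoidance of $\del(\beta)$'' and ``$[S]$ has no point of a given hyperdegree'' conditions may be phrased — one has to exploit that on the $\Pi^1_1$ index set $P$ the relevant \del-codes behave like genuine \del-codes, so that ``$\{i\}$ is total and $\{i\} \in [T_k]$'' and ``$\{i\} \heq \alpha_k$'' become $\Sigma^1_1$ conditions there. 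Getting these syntactic complexities right, and correctly choosing the initial segment $u$ so that $(T_0)_u$ simultaneously avoids all the finitely-or-countably many forbidden hyperdegrees, is where the real work lies; everything else is bookkeeping with the tools already assembled.
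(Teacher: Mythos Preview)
Your plan contains a fatal error at the very first step: fixing $\alpha_k$ in advance as the leftmost infinite branch of $T_k$. As you yourself note via Lemma~\ref{lemma leftmost branch is del or has hyperdegree W}, this forces $\alpha_k \heq \W$ for every $k$. But then the requirement ``$\gamma \not\heq \alpha_k$ for all $\gamma \in [S]$'' becomes ``no $\gamma \in [S]$ has hyperdegree $\W$'', and this is impossible for \emph{any} Kleene tree $S$: the leftmost infinite branch of $S$ is itself a point of $[S]$ of hyperdegree exactly $\W$ (again by Lemma~\ref{lemma leftmost branch is del or has hyperdegree W}, since $[S]$ has no \del\ members). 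More generally, H.~Friedman's theorem cited in the paper shows that every hyperdegree $\geq_{\rm h} \W$ occurs in the body of every Kleene tree. So with your choice of $\alpha_k$ the conclusion is simply false, and no amount of compactness bookkeeping will rescue it.

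The paper's proof avoids this by \emph{not} fixing the $\alpha_k$ in advance. Instead it quantifies over $(\alpha,\beta,S)$ simultaneously in the Kreisel-compactness relation $D$: condition~(\ref{equation lemma for every kleene tree there exists an incomparable kleene tree 4}) reads $(\forall \gamma \in \del(\alpha))[\alpha \in \del(\gamma) \to \gamma \notin [S]]$, so that the hyperdegree to be excluded from $[S]$ is that of the as-yet-unknown $\alpha$, and the compactness argument itself produces an $\alpha \in [T]$ whose hyperdegree happens to miss $[S]$. The index set is the \pii\ set $I$ of \del-codes with a regular \pii-norm $\varphi$, and the $i$-th section only demands that $[S]$ avoid the finitely many \del\ points $\pdel(j)$ with $\varphi(j)\leq\varphi(i)$; this is what makes each $D_i$ nonempty (one takes $S$ with $[S]$ a \emph{singleton} $\{\beta\}$ for a suitably chosen $\beta\in\del$ --- not a subtree of any $T_k$). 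At the end, condition~(\ref{equation lemma for every kleene tree there exists an incomparable kleene tree 2}) ranging over all $i\in I$ is precisely what forces $S$ to be Kleene.

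A minor additional point: your argument for the first half of the ``In particular'' clause is overcomplicated. If a \del-recursive $f$ carries $[S]$ into $[T_k]$, then $f(\beta)\in\del(\beta)\cap[T_k]$ directly contradicts $\del(\beta)\cap[T_k]=\emptyset$; no injectivity or basis theorem is needed.
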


\begin{proof}
Let us first prove the result for just one Kleene tree $T_1 = T$. We will apply Kreisel compactness on some suitable chosen set $D$. Consider the \pii-recursive partial function $\pdel: \om \rightharpoonup \ca{N}$ which parametrizes the set of all points of \ca{N} in \del \ \cf Theorem \ref{theorem alltogether}-(\ref{theorem 4D.2 and theorem 4D.15}) and let $I$ be the set $\set{i \in \om}{\pdel(i) \downarrow}$. The set $I$ is \pii \ and not in \del, for otherwise the set of \del \ points of \ca{N} would be in \del.

We consider a regular $\pii$-norm $\ggf$ on $I$. There exist relations $\leq^\varphi_{\sig}$, $\leq^\varphi_{\pii}$ in \sig \ and \pii \ respectively such that
\[
[j \in I \ \& \ \varphi(j) \leq \varphi(i)] \iff j \leq^\varphi_{\sig} i \iff j \leq^\varphi_{\pii} i
\]
for all $i \in I$ and all $j \in \om$, \cf 4B \cite{yiannis_dst}.

Define the set $D \subseteq \om \times \ca{N}^2 \times \Tr$ by saying that $D(i,\alpha,\beta,S)$ holds exactly when
\begin{align}
\label{equation lemma for every kleene tree there exists an incomparable kleene tree 1} & \ \ \ \ \alpha \in [T] \ \& \ S \ \textrm{is a recursive tree} \ \& \ \beta \in [S]\\
\label{equation lemma for every kleene tree there exists an incomparable kleene tree 2} &\& \ (\forall \gamma \in \del)(\forall j \leq^\varphi_{\pii} i)[\gamma \in [S] \ \longrightarrow \ \gamma \neq \pdel(j)]\\
\label{equation lemma for every kleene tree there exists an incomparable kleene tree 3} &\& \ (\forall \delta \in \del(\beta))[\delta \not \in [T]]\\
\label{equation lemma for every kleene tree there exists an incomparable kleene tree 4} &\& \ (\forall \gamma \in \del(\alpha))[\alpha \in \del(\gamma) \ \longrightarrow \ \gamma \not \in [S]].
\end{align}
(Notice that it is only condition (\ref{equation lemma for every kleene tree there exists an incomparable kleene tree 2}) which depends on $i$.) Using the Theorem on Restricted Quantification \ref{theorem alltogether}-(\ref{theorem 4D.3}) and the fact that the relations $\alpha \in \del(\beta)$ and $\leq^\varphi_{\pii}$ are \pii \ it is easy to see that $D$ is a \sig \ set, so in particular $D$ is computed by a \sig \ set on $I \times \ca{N}^2 \times \Tr$. Moreover since the relation $\leq^\varphi_{\pii}$ is transitive we have that $D_i \subseteq D_j$ for all $j \leq^\varphi_{\pii} i$.

If $H$ is a \del \ subset of $I$ then from the Covering Lemma (4C.11 in \cite{yiannis_dst}) there exists some $\xi < \ck$ such that $H \subseteq \set{j}{\ggf(j) \leq \xi}$. Since the norm $\ggf$ is regular there exists some $i \in I$ such that $\ggf(i) = \xi$. Hence $\ggf(j) \leq \ggf(i)$ for all $j \in H$. It follows that $D_i \subseteq \cap_{j \in H}D_j$. So if we prove that $D_i \neq \emptyset$ for all $i \in I$ we will have that $\cap_{j \in H}D_j \neq \emptyset$ for all \del \ sets $H \subseteq I$ and therefore from Kreisel compactness (Theorem \ref{theorem Kreisel compactness}) it will follow that $\cap_{i \in I}D_i \neq \emptyset$.

Now we verify that $D_i \neq \emptyset$ for all $i \in I$. Fix some $i \in I$ and consider the set
\begin{align*}
C&=\set{\ep}{(\exists j \in I)[\varphi(j) \leq \varphi(i) \ \& \ \ep \ \textrm{is recursive in} \ \pdel(j)]}
\end{align*}
Since $i \in I$, using the relations $\leq^\varphi_{\sig}, \leq^\varphi_{\pii}$ from above and the fact that $\pdel$ is a \pii-recursive partial function, it is easy to verify that $C$ is a \del \ set. Moreover it is clear that $C \subseteq \del \cap \ca{N}$. The latter is not a \del \ set and so there exists some $\ep \in \del$ such that $\ep \not \in C$. The singleton $\{\ep\}$ is a \del \ set so from Theorem \ref{theorem alltogether}-(\ref{theorem 4D.9}) and Lemma \ref{lemma pi-zero-one sets recursive tree} there exists a recursive tree $S$ such that $[S]$ is a singleton, say $\{\beta\}$, and a recursive function $\pi: \ca{N} \to \ca{N}$ such that $\pi(\beta) = \ep$. We notice that $\beta \in \del$ and that $\ep$ is recursive in $\beta$.

We claim that $(\alpha,\beta,S) \in D_i$, where $\alpha$ is any infinite branch of $T$. Condition (\ref{equation lemma for every kleene tree there exists an incomparable kleene tree 1}) is clearly satisfied and since $\beta \in \del$ and $T$ is a Kleene tree condition (\ref{equation lemma for every kleene tree there exists an incomparable kleene tree 3}) is satisfied as well. Condition (\ref{equation lemma for every kleene tree there exists an incomparable kleene tree 4}) is also satisfied for if $\gamma$ has the same hyperdegree as $\alpha$ then $\gamma$ cannot be \del \ (as $\alpha \in [T]$ and $T$ is a Kleene tree). In particular $\gamma \not \in [S] = \{\beta\}$.

Now we show condition (\ref{equation lemma for every kleene tree there exists an incomparable kleene tree 2}). If $\beta$ were equal to $\pdel(j)$ for some $j \leq^\varphi_{\pii} i$ then $\ep$ being recursive in $\beta$ would be a member of $C$, a contradiction. Therefore $\beta \neq \pdel(j)$ for all $j \leq^\varphi_{\pii} i$. So if $\gamma$ is in $[S]$ then $\gamma = \beta$ and hence $\gamma \neq \pdel(j)$ for all $j \leq^\varphi_{\pii} i$. In particular condition (\ref{equation lemma for every kleene tree there exists an incomparable kleene tree 2}) is satisfied.

Consider now $(\alpha,\beta,S)$ in $\cap_{i \in I} D_i$. It is clear that $S$ is a recursive tree and that $\alpha$ and $\beta$ are infinite branches of $T$ and $S$ respectively. Moreover from condition (\ref{equation lemma for every kleene tree there exists an incomparable kleene tree 3}) we have that $\del(\beta) \cap [T] = \emptyset$ and if $\gamma$ has the same hyperdegree as $\alpha$ then from condition (\ref{equation lemma for every kleene tree there exists an incomparable kleene tree 4}) we have that $\gamma \not \in [S]$.

It remains to show that $S$ is a Kleene tree and in particular that $[S]$ does not contain \del \ members. Indeed if $\gamma \in \del$ and $\gamma = \pdel(i)$ for some $i \in I$ then since $(\alpha,\beta,S) \in D_i$ from condition (\ref{equation lemma for every kleene tree there exists an incomparable kleene tree 2}) it follows that $\gamma \not \in [S]$.

The result for a finite collection of Kleene trees $T_1,\dots,T_n$ is proved in a similar way: we define the set $D \subseteq \om \times \ca{N}^{n+1} \times \Tr$ by saying that $D(i,\alpha_1,\dots,\alpha_n,\beta,S)$ holds exactly when\newpage
\begin{align*}
 & \ \ \ (\forall k=1,\dots,n)[\alpha_k \in [T_k] \ \& \ S \ \textrm{is a recursive tree} \ \& \ \beta \in [S]]\\
 &\& \ (\forall \gamma \in \del)(\forall j \leq^\varphi_{\pii} i)[\gamma \in [S] \ \longrightarrow \ \gamma \neq \pdel(j)]\\
 &\& \ (\forall k=1,\dots,n)(\forall \delta \in \del(\beta))[\delta \not \in [T_k]]\\
 &\& \ (\forall k=1,\dots,n)(\forall \gamma \in \del(\alpha_k))[\alpha_k \in \del(\gamma) \ \longrightarrow \ \gamma \not \in [S]].
\end{align*}
Then all preceding steps go through.

To finish the proof suppose that $f: \ca{N} \to \ca{N}$ is a \del-recursive function. If $f$ was carrying $[S]$ inside some $[T_k]$ then $f(\beta)$ would be a $\del(\beta)$ member of $[T_k]$, a contradiction. Also if $f$ was carrying $[T_k]$ inside $[S]$ and $f$ were injective on $[T_k]$ then $\alpha_k$ would have the same hyperdegree as $f(\alpha_k) \in [S]$, which is again a contradiction.
\end{proof}

\begin{remark}
\label{remark on lemma for every kleene tree there exists an incomparable kleene tree}\normalfont
A similar version of the preceding lemma can be stated for an infinite sequence of Kleene trees $(T_n)_{\n}$ for which the relation
\[
P(t,n) \iff \dec{t} \in T_n
\]
is \del.
\end{remark}

\begin{theorem}
\label{theorem for every kleene tree there exists another one incomparable and incomparable Kleene spaces}

For every finite sequence $\ca{X}_1, \ca{X}_2, \dots, \ca{X}_n$ of Kleene spaces, there is a Kleene space \ca{Y} which is \dleq-incomparable with each $\ca{X}_i$.\smallskip

It follows that every Kleene space is the member of an infinite sequence of Kleene spaces which are pairwise incomparable under \dleq.
\end{theorem}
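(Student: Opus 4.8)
\bigskip

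The plan is to reduce the statement to the two facts already in hand: Lemma~\ref{lemma for every kleene tree there exists an incomparable kleene tree} (the Kreisel-compactness construction of an incomparable Kleene tree) and Lemma~\ref{lemma converse of lemma bodyT less than bodyS in Kleene trees} (which says that when the ``smaller'' tree is Kleene, a \del-injection between the spaces $\spat{\cdot}$ descends to a \del-recursive function on the Baire space). First I would write each $\ca{X}_i$ in the form $\spat{T_i}$ for a Kleene tree $T_i$, which is legitimate by Definition~\ref{definition of a Kleene space}. Then I would feed the finite family $T_1,\dots,T_n$ into Lemma~\ref{lemma for every kleene tree there exists an incomparable kleene tree}, obtaining a Kleene tree $S$ such that, for each $k=1,\dots,n$, there is no \del-recursive $f:\ca{N}\to\ca{N}$ carrying $[S]$ inside $[T_k]$, and no \del-recursive $f:\ca{N}\to\ca{N}$ that is injective on $[T_k]$ and carries $[T_k]$ inside $[S]$. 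My candidate is then $\ca{Y}=\spat{S}$, which is a Kleene space.

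Next I would check that $\spat{S}$ is \dleq-incomparable with each $\spat{T_k}$, invoking Lemma~\ref{lemma converse of lemma bodyT less than bodyS in Kleene trees} in each of the two directions with the trees in the appropriate roles. If there were a \del-injection $\spat{S}\inj\spat{T_k}$, then, since $S$ is a Kleene tree, that lemma would produce a \del-recursive $f:\ca{N}\to\ca{N}$ which is injective on $[S]$ and in particular carries $[S]$ inside $[T_k]$, contradicting the choice of $S$. Symmetrically, a \del-injection $\spat{T_k}\inj\spat{S}$ would, since $T_k$ is a Kleene tree, yield a \del-recursive $f:\ca{N}\to\ca{N}$ injective on $[T_k]$ and carrying $[T_k]$ inside $[S]$, again a contradiction. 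Hence neither $\spat{S}\dleq\spat{T_k}$ nor $\spat{T_k}\dleq\spat{S}$ holds for any $k$, which is the first assertion.

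For the second assertion I would argue by iteration: starting from the given Kleene space $\ca{X}_1$, and having already constructed pairwise \dleq-incomparable Kleene spaces $\ca{X}_1,\dots,\ca{X}_m$, I apply the first part of the theorem to this finite list to obtain a Kleene space $\ca{X}_{m+1}$ that is \dleq-incomparable with each of $\ca{X}_1,\dots,\ca{X}_m$. The resulting infinite sequence $\ca{X}_1,\ca{X}_2,\dots$ is then pairwise \dleq-incomparable and has the prescribed Kleene space as its first member.

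I do not expect a real obstacle here: all the genuine work has been carried out in the two cited lemmas, and in particular the delicate passage from a \del-injection between the spaces $\spat{\cdot}$ to a \del-recursive function on the Baire space lives entirely in Lemma~\ref{lemma converse of lemma bodyT less than bodyS in Kleene trees}, whose hypothesis (that the ``smaller'' tree be Kleene) is satisfied in \emph{both} directions precisely because $S$ and every $T_k$ are Kleene trees. The only thing to be careful about is to apply that lemma with $S$ playing the role of the smaller tree in one case and $T_k$ playing it in the other.
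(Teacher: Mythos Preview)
Your proposal is correct and follows exactly the same approach as the paper, which simply writes ``Apply Lemmas~\ref{lemma for every kleene tree there exists an incomparable kleene tree} and~\ref{lemma converse of lemma bodyT less than bodyS in Kleene trees}.'' Your write-up spells out the details of how those two lemmas combine (including the iteration for the second assertion), but the underlying argument is identical.
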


\begin{proof}
Apply Lemmas \ref{lemma for every kleene tree there exists an incomparable kleene tree} and \ref{lemma converse of lemma bodyT less than bodyS in Kleene trees}.
\end{proof}

Another consequence of Lemma \ref{lemma for every kleene tree there exists an incomparable kleene tree} is the existence of a strictly increasing infinite sequence of Kleene spaces under \dleq.

\begin{theorem}
\label{theorem strictly increasing Kleene spaces}
For every Kleene tree $T$ there exists a Kleene tree $K$ such that $\spat{T} \dless \spat{K}$.\smallskip

It follows that every Kleene space \ca{X} is the bottom of an infinite sequence
of Kleene spaces which is strictly increasing under \dleq,
\[
\ca{X} \dless \ca{X}_1 \dless \ca{X}_2 \dless \dots.
\]
\end{theorem}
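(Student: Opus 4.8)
The plan is to take $K$ to be the sum $T\oplus S$, where $S$ is a Kleene tree ``incomparable'' with $T$ as supplied by Lemma \ref{lemma for every kleene tree there exists an incomparable kleene tree}. So first I would apply that lemma to the single Kleene tree $T$ (the case $n=1$) to obtain a Kleene tree $S$ together with a point $\beta\in[S]$ for which $\del(\beta)\cap[T]=\emptyset$; by the ``in particular'' clause of that lemma there is moreover no \del-recursive function $\ca{N}\to\ca{N}$ carrying $[S]$ inside $[T]$. Then I set $K=T\oplus S$. Since the sum of two Kleene trees is again a Kleene tree (see the remarks after Definition \ref{definition of a Kleene space}), $K$ is a Kleene tree, and Lemma \ref{lemma sum and product of trees} gives $\spat{K}\simeq_{{\rm rec}}\spat{T}\oplus\spat{S}$.

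Next I would check the two halves of $\spat{T}\dless\spat{K}$. For $\spat{T}\dleq\spat{K}$ it is enough to compose the canonical (recursive) injection of the summand $\spat{T}$ into $\spat{T}\oplus\spat{S}$ with the recursive isomorphism $\spat{T}\oplus\spat{S}\simeq_{{\rm rec}}\spat{K}$. For $\spat{K}\not\dleq\spat{T}$ I would argue by contradiction: a \del-injection $g:\spat{K}\inj\spat{T}$, composed with $\spat{K}\simeq_{{\rm rec}}\spat{T}\oplus\spat{S}$ and then with the canonical recursive injection of the summand $\spat{S}$ into $\spat{T}\oplus\spat{S}$, would yield a \del-injection $h:\spat{S}\inj\spat{T}$. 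As $S$ is a Kleene tree, Lemma \ref{lemma converse of lemma bodyT less than bodyS in Kleene trees} (applied with the roles ``$T$''$:=S$, ``$S$''$:=T$) then gives both $h[[S]]\subseteq[T]$ and a \del-recursive function $\ca{N}\to\ca{N}$ that is injective on $[S]$ and carries $[S]$ inside $[T]$, contradicting the choice of $S$. Equivalently, from $h[[S]]\subseteq[T]$ together with Lemma \ref{lemma the inverse function is del and same hyperdegree}(1) and Theorem \ref{theorem points of Nt} one gets $h(\beta)\in[T]\cap\del(\beta)=\emptyset$, which is absurd. Hence $\spat{T}\dless\spat{K}$.

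For the final clause, every Kleene space equals $\spat{T_0}$ for some Kleene tree $T_0$ by definition, so iterating the construction just proved -- at stage $n$ pass from the Kleene tree $T_n$ to a Kleene tree $T_{n+1}$ with $\spat{T_n}\dless\spat{T_{n+1}}$ -- produces the required strictly increasing infinite chain $\spat{T_0}\dless\spat{T_1}\dless\spat{T_2}\dless\cdots$ of Kleene spaces, with $\ca{X}=\spat{T_0}$ at the bottom.

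The proof is short once Lemma \ref{lemma for every kleene tree there exists an incomparable kleene tree} is available; the only point that requires a little care is the bookkeeping with the recursive isomorphism $\spat{T\oplus S}\simeq_{{\rm rec}}\spat{T}\oplus\spat{S}$ and the two canonical recursive injections of the summands, which is what lets one conclude that a \del-injection out of $\spat{K}$ genuinely restricts to a \del-injection out of $\spat{S}$ (and that $\spat{T}$ \del-injects into $\spat{K}$). Everything else is an immediate appeal to Lemmas \ref{lemma for every kleene tree there exists an incomparable kleene tree}, \ref{lemma converse of lemma bodyT less than bodyS in Kleene trees} and \ref{lemma sum and product of trees}.
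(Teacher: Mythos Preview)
Your proposal is correct and follows essentially the same approach as the paper: obtain an incomparable Kleene tree $S$ via Lemma~\ref{lemma for every kleene tree there exists an incomparable kleene tree}, set $K=T\oplus S$, and derive a contradiction from $\spat{K}\dleq\spat{T}$ via Lemma~\ref{lemma converse of lemma bodyT less than bodyS in Kleene trees}. The only cosmetic difference is that the paper applies Lemma~\ref{lemma converse of lemma bodyT less than bodyS in Kleene trees} directly to the hypothetical injection $\spat{K}\inj\spat{T}$ (using that $K$ itself is Kleene) and then restricts the resulting map on $[K]$ to $[S]$, whereas you first restrict to $\spat{S}$ and then apply the lemma; both routes are equivalent.
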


\begin{proof}
Suppose $T$ is a Kleene tree. From Lemma \ref{lemma for every kleene tree there exists an incomparable kleene tree} there exists a Kleene tree $S$ such that no \del-recursive function carries $[S]$ inside $[T]$. We take $K$ to be the sum $T \oplus S$.

Then $K$ is a Kleene tree and clearly we have that $\spat{T} \dleq \spat{K}$. If it were $\spat{K} \dleq \spat{T}$ then from Lemma \ref{lemma converse of lemma bodyT less than bodyS in Kleene trees} there would be a \del-recursive function $f: \ca{N} \to \ca{N}$ carrying $[K]$ inside $[T]$. Therefore there would be a \del-recursive function carrying $[S]$ inside $[T]$, contradicting the choice of $S$.
\end{proof}

The authors of \cite{fokina_friedman_sy_toernquist_the_effective_theory_of_Borel_equivalence_relations} prove a strong incomparability result about Kleene trees using model theoretic methods and the more general \emph{compactness of Barwise}, \cf \cite{barwise_infinitary_logic_and_admissible_sets} and \cite{barwise_admissible_sets_and_structures} Chapter III Theorem 5.6.

\begin{theorem}[Fokina-S. Friedman-T\"ornquist, \cf \cite{fokina_friedman_sy_toernquist_the_effective_theory_of_Borel_equivalence_relations}]
\label{theorem Fokina-Friedman-Toernquist}
There exists a sequence of recursive trees $(T_i)_{\iin}$ and a sequence $(\alpha_i)_{\iin}$ in \ca{N} such that
$\alpha_i \in [T_i]$ and $\del(\alpha_i) \cap [T_j] = \emptyset$ for all $i \neq j$.\smallskip

In particular there does not exist a \del-recursive function $f: \ca{N} \to \ca{N}$ which carries $[T_i]$ inside
$[T_j]$ for all $i \neq j$.
\end{theorem}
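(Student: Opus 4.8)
The plan is to follow the approach of \cite{fokina_friedman_sy_toernquist_the_effective_theory_of_Borel_equivalence_relations} and extract the two sequences from a single model of an infinitary theory produced by the Barwise Compactness Theorem (\cite{barwise_infinitary_logic_and_admissible_sets}, \cite{barwise_admissible_sets_and_structures}) over the least admissible set $L_{\ck}$. The idea is to build an (ill-founded) model $M$ of a large enough fragment of ``$V=L$'' whose well-founded ordinals include $\ck$ and which contains an $\om$-sequence of reals $(\alpha_i)_{\iin}$ that are \emph{mutually $L$-generic}, in the sense that for each $i$ the real $\alpha_i$ is Cohen-generic over the inner model generated by the remaining $\alpha_j$'s computed up to the first admissible level above them. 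Once such $M$ and $(\alpha_i)_{\iin}$ are at hand, for each $i$ I would let $T_i$ be a recursive tree whose body $[T_i]$ is the $\Pi^0_1$ set of all reals coding, in some fixed local fashion, a ``generic object of the same type as $\alpha_i$'' (a filter for the forcing $\alpha_i$ is generic for, or a model-of-the-theory-like structure with distinguished real), so that $\alpha_i\in[T_i]$; since $\del(\alpha_j)=\ca{N}\cap L_{\ckr{\alpha_j}}[\alpha_j]$ by Spector--Gandy and $\alpha_i$ is generic over the hierarchy relative to $\alpha_j$ for $j\ne i$, no member of $L_{\ckr{\alpha_j}}[\alpha_j]$ can be such a generic object, i.e.\ $\del(\alpha_j)\cap[T_i]=\emptyset$ for all $i\ne j$, which is the assertion; the concluding remark follows since a \del-recursive $f$ carrying $[T_i]$ into $[T_j]$ would send $\alpha_i$ to a $\del(\alpha_i)$-point of $[T_j]$.

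Concretely, I would work in the countable language $L=\{\in\}\cup\{\dot\alpha_i:\iin\}$, with the $\dot\alpha_i$ names for the generic reals, and write down a theory $\Phi$ consisting of: enough set theory (``$V=L$'', extensionality, the axioms guaranteeing the ordinals below $\ck$ and that $\om_1$ is a cardinal), the $L_{\om_1\om}$-sentence pinning down the standard $\om$ and the well-founded part down to $\ck$, and for every $\iin$ a schema expressing ``$\dot\alpha_i$ meets every dense set that lies in the relativized $L$-hierarchy built from $\langle\dot\alpha_j:j\ne i\rangle$ below its first admissible level''. Each such requirement is an $L_{\om_1\om}$-sentence that is $\Sigma_1$ over $L_{\ck}$, so $\Phi$ is $\Sigma_1$-definable over $L_{\ck}$; and every $L_{\ck}$-finite (hence genuinely finite, as far as the genericity clauses are concerned) subtheory $\Phi_0\subseteq\Phi$ is satisfied in an actual countable transitive model of a suitable $L_\xi$ together with finitely many sufficiently mutually generic reals, which is produced by a finite back-and-forth meeting the finitely many dense sets named in $\Phi_0$. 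Barwise compactness then gives $M\models\Phi$; its reals $\dot\alpha_i^M$ are the desired $\alpha_i$, and reading off the condition trees as above yields recursive $T_i$.

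The main obstacle is the precise formulation of the genericity schema so that (a) it is genuinely $\Sigma_1$ over $L_{\ck}$ --- which forces one to be careful that the ``relativized $L$-hierarchy below the first admissible'' is described by bounded formulas and that the dense sets are quantified in a $\Sigma_1$ way --- and (b) it is strong enough that, together with the Spector--Gandy equation $\del(\alpha)=\ca{N}\cap L_{\ckr{\alpha}}[\alpha]$, the mutual genericity really rules out \emph{every} member of $[T_j]$ from $\del(\alpha_i)$, not merely $\alpha_j$ itself; this is exactly where genericity of $\alpha_i$ over the other coordinates, as opposed to mere non-\del-reducibility, is essential. I would regard the satisfiability of the finite subtheories as routine and the bookkeeping in (a)--(b) as the real work. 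An alternative, more self-contained route within the machinery of this paper would be to push the Kreisel-compactness argument (Theorem \ref{theorem Kreisel compactness}) of Lemma \ref{lemma for every kleene tree there exists an incomparable kleene tree} and Remark \ref{remark on lemma for every kleene tree there exists an incomparable kleene tree} to build all the trees simultaneously in $\Tr^\om\times\ca{N}^\om$, using the parametrization $\pdel$ of \del-points and a regular \pii-norm to stratify the conditions; there the analogous obstacle appears as the need, when verifying that each $D_i$ is non-empty, to produce a \del-indexed family of \emph{plainly recursive} pairwise-incomparable Kleene trees --- a \del-effective instance of the statement itself --- for which the easy ``\del-singleton'' device used in the proof of Lemma \ref{lemma for every kleene tree there exists an incomparable kleene tree} is no longer available.
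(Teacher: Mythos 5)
There is a genuine gap in your main route. Since the conclusion demands $\del(\alpha_i)\cap[T_j]=\emptyset$ for $i\neq j$, and $\del\subseteq\del(\alpha_i)$, every $[T_j]$ must contain no hyperarithmetical (in particular no recursive) points, so each $T_j$ is a Kleene tree; hence $[T_j]$ has empty interior in \ca{N}, and the set of finite sequences $u$ having no extension in $T_j$ is a \emph{recursive} dense set of Cohen conditions. Any real which meets all recursive dense sets --- in particular any real that is Cohen-generic over any level of the relativized $L$-hierarchy built from the other $\alpha_j$'s, since such a level contains every recursive set --- therefore avoids the body of every recursive tree without recursive branches, including its ``own'' tree. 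So the mutual Cohen genericity you posit is flatly incompatible with the requirement $\alpha_i\in[T_i]$. A related defect: ``coding a generic object of the same type as $\alpha_i$'' is nowhere near a $\Pi^0_1$ condition (meeting dense sets ranging over an admissible level relative to the other reals cannot be imposed by a recursive tree, and no parameters are allowed), so the trees $T_i$ are never actually defined. In the Fokina--Friedman--T\"ornquist argument the reals are not Cohen generics; they arise as $\Pi^0_1$ singletons, i.e.\ unique solutions of recursive conditions coming from a Harrington-style construction, and that is what makes recursive trees available.

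Your alternative sketch via Kreisel compactness is indeed the route this paper takes, but you mislocate the obstacle. To verify $D_i\neq\emptyset$ one does not need a \del-indexed family of recursive, pairwise \del-incomparable Kleene trees (which would be circular); the set $D$ is stratified along $\W$, and at stage $i\in\W$ one only needs witnesses that are incomparable up to the $|i|$-th Turing jump. This is exactly Harrington's Theorem \ref{theorem Harrington xi-incomparable}: for each $\xi<\ck$ it provides reals whose singletons are $\Pi^0_1$ (so the trees at that stage are recursive with singleton bodies, whose unique branches are \del\ and satisfy $\ckr{\alpha}=\ck$) and which are pairwise $\not\tleq$ each other's $\xi$-th jump. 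Including the clause $\ckr{\alpha_i}=\ck$ in $D$ is what allows the final step: if $(\alpha,T,\beta,S)\in\bigcap_{i\in\W}D_i$ and $\gamma\in\del(\alpha)$, then $\gamma\tleq\alpha^{\xi}$ for some $\xi<\ckr{\alpha}=\ck$, so some $i\in\W$ with $|i|=\xi$ excludes $\gamma$ from $[S]$. Without Harrington's theorem (or an equivalent jump-level incomparability result) neither your Barwise version nor the Kreisel version gets off the ground; with it, the compactness bookkeeping is the routine part, and Kreisel compactness (Theorem \ref{theorem Kreisel compactness}) lets one dispense with the model-theoretic apparatus entirely, as in the treatment following Lemma \ref{lemma for every kleene tree there exists an incomparable kleene tree}.
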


\emph{Remark.} The $T_i$'s in this theorem are Kleene trees. Moreover using Lemma \ref{lemma converse of lemma bodyT less than bodyS in Kleene trees} one can see that Theorem \ref{theorem Fokina-Friedman-Toernquist} answers the question about \dleq-incomparability. Despite the fact that we can prove all our results about the relation $\ca{X} \dleq \ca{Y}$ without the help of Theorem \ref{theorem Fokina-Friedman-Toernquist}, this result has offered to the author a new insight into the problems of this type and in particular has inspired the use of Kreisel compactness.

Let us illustrate this by giving a description of a proof for Theorem \ref{theorem Fokina-Friedman-Toernquist}.\footnote{Our proof differs from the original one only in the use of Kreisel in favor of Barwise compactness and the consequent elimination of the model-theoretic considerations. All other arguments are the same.} For this one would need to familiarize themselves with a \emph{system of notation} of recursive ordinals (see \cite{yiannis_kleenes_amazing_second_recursion_theorem} for an excellent presentation of the topic) such as Kleene's $\mathit{O}$ (\cf \cite{kleene_on_notation_for_ordinal_numbers}) and \emph{transfinite Turing jumps}. We will use the following deep result of Harrington.

\begin{theorem}[Harrington]
\label{theorem Harrington xi-incomparable}
For every $\xi < \ck$ there exists a sequence $(\alpha_i)_{i \in \om}$ in $2^\om$ such that $\{\alpha_i\}$ is $\Pi^0_1$ and $\alpha_i \not \tleq (\oplus_{i \neq j}\alpha_j)^\xi$ for all $i \in \om$.\footnote{Harrington proved this result in the handwritten notes \cite{harrington_mclaughlins_conjecture}. For a presentation of Harrington's proof the reader can refer to \cite{gerdes_harringtons_solution}. Simpson \cite{simpson_implicit_definability_in_arithmetic} has recently obtained another interesting proof of this result.}
\normalfont

(By $\alpha^\xi$ we mean the $\xi^{\rm th}$ Turing jump of $\alpha$ determined by a notation $a$ for $\xi$ in Kleene's notation system $\mathit{O}$ and as usual $\tleq$ stands for Turing reducibility.)
\end{theorem}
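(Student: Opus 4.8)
\medskip
\noindent\textbf{Proof strategy (after Harrington).} This is Harrington's theorem; a complete proof is in the notes \cite{harrington_mclaughlins_conjecture}, with a detailed exposition in \cite{gerdes_harringtons_solution} and a different argument in \cite{simpson_implicit_definability_in_arithmetic}, so we only indicate the line of attack. Fix a notation $a \in \mathit{O}$ for $\xi$; everything below is to be carried out uniformly in $a$, so $a$ plays the role of a natural-number parameter and does not affect lightface definability. The plan is to build a single real $\alpha \in 2^\om$ which is the unique path through a recursive tree $T$ -- so that $\{\alpha\}$ is $\Pi^0_1$ -- and to set $\alpha_i = (\alpha)_i$; then each $\{\alpha_i\}$ is again $\Pi^0_1$, being the image of the compact $\Pi^0_1$ singleton $\{\alpha\}$ under the recursive column map. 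It then suffices to meet, for all $i,e \in \om$, the requirement
\[
R_{i,e}\colon \quad \{e\}^{(\bigoplus_{j\neq i}(\alpha)_j)^{\xi}}(n) \neq (\alpha)_i(n) \ \ \textrm{for some } n ,
\]
since meeting all $R_{i,e}$ gives $\alpha_i \not\tleq (\bigoplus_{j\neq i}\alpha_j)^{\xi}$ for every $i$.

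One would construct $T$ by a finite-extension (forcing) argument, but carried out \emph{inside} $\Pi^0_1$ classes rather than in $2^\om$: a condition is a nonempty $\Pi^0_1$ subclass of $2^\om$ together with a finite amount of decided data, and at the stage devoted to $R_{i,e}$ one refines the current condition to a subcondition that either forces $\{e\}^{(\bigoplus_{j\neq i}(\cdot)_j)^{\xi}}(n)$ to diverge or forces it to a value differing from the (to-be-)decided value of $(\cdot)_i(n)$. Keeping the diameters of the conditions shrinking to $0$ and the sequence of conditions recursive, the intersection is a single point $\alpha$, the unique path of a recursive $T$. The non-trivial ingredient is that the $\xi$-th jump occurring in $R_{i,e}$ must be made available \emph{internally} to the construction -- Harrington achieves this by a transfinitely iterated coding along $a$, so that the ``questions'' asked during a refinement become decidable by a recursive (in $a$) transfinite recursion -- rather than by appealing externally to a $0^{(\xi)}$-oracle, which would leave $\alpha$ only a $\Pi^0_1(0^{(\xi)})$ singleton instead of a genuine $\Pi^0_1$ one.

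The main obstacle, as the previous paragraph suggests, is precisely the tension between these two demands: being a genuine (lightface) $\Pi^0_1$ singleton leaves essentially no room for Cohen-style genericity, while diagonalizing against $(\bigoplus_{j\neq i}\alpha_j)^{\xi}$ for an arbitrary $\xi < \ck$ asks for requirements of unbounded complexity, and moreover the jump in $R_{i,e}$ refers back to the columns of $\alpha$ itself, so the coding that tames it has to be set up via a fixed-point (recursion-theorem) argument. Pushing the transfinite recursion along $a \in \mathit{O}$ through the limit stages while keeping the final tree recursive uniformly in $a$, and while making the bound tight -- defeating the $\xi$-jump without overshooting -- is the technical heart of the argument, for which we refer to \cite{harrington_mclaughlins_conjecture} and \cite{gerdes_harringtons_solution}.
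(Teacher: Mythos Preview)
The paper does not prove this theorem at all: it is stated as Harrington's result and attributed to the handwritten notes \cite{harrington_mclaughlins_conjecture}, with pointers to \cite{gerdes_harringtons_solution} and \cite{simpson_implicit_definability_in_arithmetic} for proofs, and is then used as a black box in the discussion following Theorem~\ref{theorem Fokina-Friedman-Toernquist}. Your proposal goes strictly beyond what the paper offers by sketching the actual construction; the outline you give---building a single $\Pi^0_1$ singleton $\alpha$ whose columns are the $\alpha_i$, meeting diagonalization requirements $R_{i,e}$ by a forcing-style refinement of $\Pi^0_1$ conditions, and handling the $\xi$-jump internally via a transfinite recursion along the notation $a$ together with a recursion-theorem fixed point---is an accurate high-level description of Harrington's method as presented in \cite{gerdes_harringtons_solution}, and correctly identifies the central difficulty (keeping the tree lightface recursive while diagonalizing against jumps of unbounded recursive ordinal height that depend on $\alpha$ itself).
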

\enlargethispage*{5mm}
We first show the result for two trees and then we indicate the changes needed to get it for infinitely many.

We define the set $D \subseteq \om \times (\ca{N} \times \Tr)^2$ as follows
\begin{align*}
D(i,\alpha,T,\beta,S) \iff& i \in \W \ \& \ \textrm{$T$ and $S$ are recursive trees}\\
                          &\& \ \alpha \in [T] \ \& \ \beta \in [S]\\
                          &\& \ \ckr{\alpha} = \ckr{\beta} = \ck\\
                          &\& \ (\forall \gamma \in \del(\alpha))\big \{\gamma \tleq \alpha^{|i|} \ \longrightarrow \ \gamma \not \in [S]\big \}\\
                          &\& \ (\forall \gamma \in \del(\beta))\big \{\gamma \tleq \beta^{|i|} \ \longrightarrow \ \gamma \not \in [T]\big \},
\end{align*}
where $\ckr{\alpha}$ is the least non $\alpha$-recursive ordinal and $|i|$ the order type of $\leq_i$ for $i \in \W$.

One can verify that the set $D$ is computed by a \sig \ set on $\W \times (\ca{N} \times \Tr)^2$ and that in order to show that every section $\cap_{j \in H}D_j$ is non-empty (where $H \subseteq \W$ is \del) it is enough to show that $D_i$ is non-empty for all $i \in \W$.

To prove the latter we apply Theorem \ref{theorem Harrington xi-incomparable} for $\xi = |i|$ and we get some $\alpha$ and $\beta$ in $2^\om$ such that $\{\alpha\}, \{\beta\}$ are $\Pi^0_1$ and $\alpha \not \tleq \beta^{|i|}$, $\beta \not \tleq \alpha^{|i|}$. Hence there exist recursive trees $T$ and $S$ such that $[T] = \{\alpha\}$ and $[S] = \{\beta\}$. It is easy to verify that $\alpha$ and $\beta$ are in \del \ and so $\ckr{\alpha} = \ckr{\beta} = \ck$. Moreover for all $\gamma \in \ca{N}$ if $\gamma \in [S]$ we have that $\gamma = \beta$ and so $\gamma \not \tleq \alpha^{|i|}$, and if $\gamma \in [T]$ we have that $\gamma = \alpha$ and so $\gamma \not \tleq \beta^{|i|}$. Hence $D(i,\alpha,T,\beta,S)$ holds.

From Theorem \ref{theorem Kreisel compactness} there exists some $(\alpha,T,\beta,S)$ in $\cap_{i \in \W}D_i$. Clearly $T$ and $S$ are recursive trees and $\alpha, \beta$ are in $[T]$ and $[S]$ respectively. We now verify that $\del(\alpha) \cap [S] = \emptyset$. Suppose that $\gamma \in \del(\alpha)$. Then we have that $\gamma \tleq \alpha^\xi$ for some $\xi < \ckr{\alpha}$. Since $\ckr{\alpha} = \ck$ there exists an $i \in \W$ such that $\xi = |i|$. Since $(\alpha,T,\beta,S)$ is in $D_i$ and $\gamma \tleq \alpha^{|i|}$ it follows that $\gamma$ is not in $[S]$ and so $\del(\alpha) \cap [S] = \emptyset$. Similarly one shows that $\del(\beta) \cap [T] = \emptyset$.

In order to get the result for infinitely many trees we consider the recursively presented space $\Tr^\om$ and we define $D^\ast \subseteq \om \times \ca{N} \times \Tr^\om$ by\allowbreak
\begin{align*}
D^\ast(i,\alpha,(T_n)_{\n}) \iff& i \in \W\\
                          &\& \ (\forall n)[\textrm{the tree $T_n$ is recursive} \ \& \ (\alpha)_n \in [T_n]]\\
                          &\& \ (\forall n)[\ckr{(\alpha)_n}=\ck]\\
                          &\& \ (\forall n)(\forall m \neq n)(\forall \gamma \in \del((\alpha)_n))\\
                          &\hspace*{28mm}\big \{\gamma \tleq (\alpha)_n^{|i|} \ \longrightarrow \ \gamma \not \in [T_m]\big \}.
\end{align*}
The set $D^\ast$ is computed by a \sig \ set on $\om \times \ca{N} \times \Tr^\om$. By applying Theorem \ref{theorem Harrington xi-incomparable} one can verify that $D_i$ is non-empty for all $i \in \W$ and if $(\alpha,(T_n)_{\n})$ is in $\cap_{i \in \W}D_i$ then the sequences $((\alpha)_n)_{\n}$ and $(T_n)_{\n}$ satisfy the conclusion.

\smallskip

\subsection{Analogy with recursive pseudo-well-orderings} A recursive linear ordering $\preceq$ on a subset of \om \ is a \emph{pseudo-well-ordering} if it has an infinite strictly decreasing sequence but no such sequence is \del \ (\cf \cite{harrison_recursive_pseudo_wellorderings}). Notice that a recursive pseudo-well-ordering has necessarily a least element, for otherwise one could construct a recursive strictly decreasing sequence.

Kleene pointed out (\cf \cite{kleene_on_the_forms_of_the_predicates_in_the_theory_of_constructive_ordinals_II} pp.421--422) that Kleene trees are naturally connected to the class of recursive pseudo-well-orderings: the Kleene-Brouwer ordering on a Kleene tree is a pseudo-well-ordering (see also the first lines of the proof of Lemma \ref{lemma isomorphic orders give isomorphic Nts}). On the other hand if $\preceq$ is a recursive pseudo-well-ordering then it is clear the set
\[
T =\set{u \in \omseq}{u(\lh(u)-1) \prec \dots \prec u(0)}
\]
is a Kleene tree. By exploiting these ideas we can translate results of this section into the language of recursive pseudo-well-orderings.

A function $f: {\rm Field}(\preceq_1) \to {\rm Field}(\preceq_2)$ is an \emph{isomorphism} between the linear orderings $\preceq_1$ and $\preceq_2$ if it is bijective and
\[
x \preceq_1 y \iff f(x) \preceq_2 f(y)
\]
 for all $x,y \in {\rm Field}(\preceq_1)$. The function $f$ is an \emph{initial similarity} if it is an isomorphism between $\preceq_1$ and an initial segment of $\preceq_2$.

Suppose that $\preceq_1$ and $\preceq_2$ are recursive linear orderings on a (necessarily recursive) subset of \om. We say that $f: {\rm Field}(\preceq_1) \to {\rm Field}(\preceq_2)$ is a \emph{\del-isomorphism} if $f$ is an isomorphism and  \del-recursive. Similarly $f$ is a \emph{\del \ initial similarity} if $f$ is an initial similarity and \del-recursive. We say that $\preceq_1$ is \emph{strictly below} $\preceq_2$ \emph{under \dleq} \ if there exists a \del \ initial similarity $f: {\rm Field}(\preceq_1) \to {\rm Field}(\preceq_2)$ but there does not exist a \del \ initial similarity $g: {\rm Field}(\preceq_2) \to {\rm Field}(\preceq_1)$. We also say that two recursive orderings are \emph{\del-comparable} or that are \emph{comparable under \dleq} if there exists a function from the field of the one order to the field of the other which is a \del \ initial similarity.

Let us recall the following result of Gandy.

\begin{theorem}[Gandy \cf \cite{gandy_proof_of_mostowskis_conjecture}]
\label{theorem Gandy length of well-founded part}
For every recursive pseudo-well-ordering $\preceq$ with well-founded part $\WF(\preceq)$ we have that
\[
\sup\set{|\iniseg(n)|}{n \in \WF(\preceq)} = \ck,
\]
where $\iniseg(n)$ is the initial segment of $n$ with respect to $\preceq$ and $|\cdot|$ is the order type.\smallskip

In particular the well-founded part of a recursive pseudo-well-ordering is not a \del \ set.
\end{theorem}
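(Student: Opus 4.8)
The plan is to reduce everything to a single point: a bound on the order types occurring in the well-founded part would force that part to be a $\del$ set, and that is impossible for a pseudo-well-ordering. First note the trivial half of the equality. For $n \in \WF(\preceq)$ the relation $(\iniseg(n),\preceq)$ is a recursive well-ordering, so $|\iniseg(n)| < \ck$, and hence $\lambda := \sup\set{|\iniseg(n)|}{n \in \WF(\preceq)} \le \ck$. Also $\WF(\preceq)$ is always a $\pii$ subset of $\om$, since $n \in \WF(\preceq)$ says precisely that $n \in \Field(\preceq)$ and $(\iniseg(n),\preceq)$ has no infinite $\preceq$-descending sequence. So it suffices to rule out $\lambda < \ck$.

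Assume $\lambda < \ck$. Since $\ck$ is a limit ordinal we have $\lambda+1 < \ck$, so fix a recursive well-ordering $\prec^{*}$ on a recursive subset of $\om$ of order type $\lambda+1$. For $n \in \Field(\preceq)$: if $n \in \WF(\preceq)$, then $(\iniseg(n),\preceq)$ is a well-ordering of type $\le \lambda$, hence is order-isomorphic to a proper initial segment of $\prec^{*}$ and in particular order-embeds into $\prec^{*}$; conversely, any order-embedding of $(\iniseg(n),\preceq)$ into the well-founded relation $\prec^{*}$ forces $(\iniseg(n),\preceq)$ to be well-founded, so $n \in \WF(\preceq)$. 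Therefore
\[
n \in \WF(\preceq) \iff n \in \Field(\preceq) \ \& \ (\exists f \in \ca{N})[\,f \ \textrm{is an order-embedding of} \ (\iniseg(n),\preceq) \ \textrm{into} \ \prec^{*}\,],
\]
and the bracketed matrix is $\Pi^0_1$ in $(n,f)$ because $\preceq$ and $\prec^{*}$ are recursive. Hence $\WF(\preceq)$ is $\sig$, and being also $\pii$ it lies in $\del$.

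It remains to derive a contradiction from $\WF(\preceq) \in \del$; the same reasoning applied to the bare hypothesis $\WF(\preceq) \in \del$ also yields the concluding ``in particular'' assertion. Put $I = \Field(\preceq) \setminus \WF(\preceq)$. A pseudo-well-ordering has an infinite $\preceq$-descending sequence, and every term of it lies in $I$ (its tail witnesses that its initial segment is ill-founded), so $I \ne \emptyset$. Also $I$ has no $\preceq$-least element: a $\preceq$-least $m \in I$ would satisfy $\iniseg(m) \subseteq \WF(\preceq)$, hence $(\iniseg(m),\preceq)$ would be well-founded and $m \in \WF(\preceq)$, a contradiction. Since $\Field(\preceq)$ is recursive and $\WF(\preceq) \in \del$, the set $I$ is $\del$. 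Now set $m_0$ to be the numerically least element of $I$ and $m_{k+1}$ the numerically least element of $\set{m \in I}{m \prec m_k}$, which is non-empty since $I$ has no $\preceq$-least element; the map $k \mapsto m_k$ is defined by recursion from $\del$ predicates, so it is $\del$-recursive, and $(m_k)_{k \in \om}$ is an infinite strictly $\preceq$-descending sequence. This contradicts the hypothesis that $\preceq$ is a pseudo-well-ordering, and so $\lambda = \ck$.

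The only non-routine step is the equivalence in the displayed line: its content is that once the order types below elements of $\WF(\preceq)$ are bounded by a recursive ordinal, the $\pii$ condition ``$(\iniseg(n),\preceq)$ has no descending sequence'' can be traded for the $\sig$ condition ``$(\iniseg(n),\preceq)$ embeds into a fixed recursive well-ordering''. Everything after that is standard: a $\del$ set linearly ordered by a recursive ordering without a least element carries a $\del$-recursive descending sequence, which is exactly what a pseudo-well-ordering forbids.
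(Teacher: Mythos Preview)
Your proof is correct and follows essentially the same route as the paper's: both bound the order types by a fixed recursive well-ordering to rewrite $\WF(\preceq)$ as a $\sig$ set via the existence of an embedding, and then build a $\del$ descending sequence through the complement $I = \Field(\preceq)\setminus\WF(\preceq)$ by recursion. The only cosmetic differences are that the paper phrases the bound via an $e^\ast\in\W$ rather than fixing $\prec^\ast$ of type $\lambda+1$, and starts the descent at an arbitrary ill-founded point rather than the numerically least one.
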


\begin{proof}[\textit{Proof} \tu{(Well-known).}]
It is clear that $$\sup\set{|\iniseg(n)|}{n \in \WF(\preceq)} \leq \ck,$$ since $\preceq$ is recursive. Assume
towards a contradiction that the preceding supremum is less than $\ck$. Then there exists some $e^\ast \in \W$ such that $|\iniseg(n)| \leq |\leq_{e^\ast}|$ for all $n \in \WF(\preceq)$. It follows that
\begin{align*}
n \in \WF(\preceq) \iff& (\exists \gamma)[\gamma \ \textrm{embedds} \ \iniseg(n) \ \textrm{into} \ \mathrm{Field}(\leq_{e^\ast})]\\
                  \iff& (\exists \gamma)(\forall i,j)\big \{ [i \preceq n \longrightarrow (\exists t)[\rfn{e^\ast}(\langle i,t \rangle) = 0]]\\
                      & \hspace*{15mm} \ \& \ [i \preceq j \preceq n \longleftrightarrow \rfn{e^\ast}(\langle \gamma(i),\gamma(j) \rangle) = 0]\big \},
\end{align*}
for all \n. From the latter equivalence we have that $\WF(\preceq)$ is \sig, and since the well-founded part of a recursive linear ordering is $\pii$, we have that $\WF(\preceq) \in \del$.

Now suppose that $m_0$ is in the ill-founded part of $\preceq$, which is non-empty since $\preceq$ is a pseudo-well-ordering. Define recursively $\alpha: \om \to \om$ as follows
\begin{align*}
\alpha(0)   =& m_0\\
\alpha(n+1) =& \textrm{the least $m \not \in \WF(\preceq)$ such that} \ m \prec \alpha(n).
\end{align*}
It follows that $\alpha$ is a $\WF(\preceq)$-recursive function. Since $\WF(\preceq) \in \del$ we have that $\alpha \in \del$. Moreover it is clear that $\alpha(n+1) \prec \alpha(n)$ for all $n$, \ie \ $\preceq$ has a \del \ strictly decreasing sequence, a contradiction.
\end{proof}

It is worth putting down the following characterization of the well-founded part of a recursive linear ordering, which follows from the proofs of Theorem \ref{theorem Gandy length of well-founded part} and of \sig-boundedness.

\begin{lemma}[Well-known]
\label{lemma well founded part is del characterization of}
The well-founded part $\WF(\preceq)$ of a recursive linear ordering $\preceq \ \subseteq \om \times \om$ is \del \
exactly when
\[
\sup\set{|\iniseg(n)|}{n \in \WF(\preceq)} < \ck.
\]
\end{lemma}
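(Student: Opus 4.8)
The plan is to establish the two implications separately; both are essentially repackagings of ingredients already present in the excerpt.

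For the forward direction, suppose $\xi := \sup\set{|\iniseg(n)|}{n \in \WF(\preceq)} < \ck$. Then $\xi$ is a recursive ordinal, so there is some $e^\ast \in \W$ with $|\leq_{e^\ast}| \geq \xi$, hence $|\iniseg(n)| \leq |\leq_{e^\ast}|$ for every $n \in \WF(\preceq)$. Since $\preceq$ is recursive, the linear ordering $\iniseg(n)$ is recursive uniformly in $n$, and a linear ordering order-embeds into the well-ordering $\leq_{e^\ast}$ exactly when it is well-founded of order type $\leq |\leq_{e^\ast}|$. This yields the $\sig$ equivalence
\[
n \in \WF(\preceq) \iff (\exists \gamma)\,[\,\gamma \ \textrm{order-embeds} \ \iniseg(n) \ \textrm{into} \ \Field(\leq_{e^\ast})\,],
\]
exactly as in the first half of the proof of Theorem \ref{theorem Gandy length of well-founded part}. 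As the well-founded part of a recursive linear ordering is always \pii, it follows that $\WF(\preceq) \in \del$.

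For the converse, assume $\WF(\preceq) \in \del$, so in particular $\WF(\preceq)$ is \sig. Using that $\preceq$ is recursive together with the $s$-$m$-$n$ theorem, fix a recursive function $g : \om \to \om$ such that $\rfn{g(n)}$ is total and $\leq_{g(n)}$ is the linear ordering $\iniseg(n)$; thus $g(n) \in \lo$ for all $n$, $g(n) \in \W$ whenever $n \in \WF(\preceq)$, and then $|\leq_{g(n)}| = |\iniseg(n)|$. Hence $g[\WF(\preceq)]$ is a subset of \W, and it is \sig \ because $\WF(\preceq)$ is \sig \ and $g$ is recursive. By \sig-boundedness (the Boundedness Theorem for the \pii-norm $e \mapsto |\leq_e|$ on \W) there is an ordinal $\eta < \ck$ with $|\leq_e| \leq \eta$ for every $e \in g[\WF(\preceq)]$, that is, $|\iniseg(n)| \leq \eta$ for all $n \in \WF(\preceq)$. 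Therefore $\sup\set{|\iniseg(n)|}{n \in \WF(\preceq)} \leq \eta < \ck$.

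I do not anticipate a genuine obstacle here: the only points requiring a little care are the uniform coding in the converse direction -- that one can recursively produce codes $g(n)$ for the orderings $\iniseg(n)$ as elements of \lo, which is routine given recursiveness of $\preceq$ and of $\langle \cdot \rangle$ -- and the precise invocation of \sig-boundedness in the form ``a \sig \ subset of \W \ has bounded order types'', which is exactly the tool already used (under that name) in the proof of Theorem \ref{theorem Gandy length of well-founded part}.
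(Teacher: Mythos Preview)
Your proposal is correct and follows essentially the same approach as the paper. The only cosmetic difference is in the converse direction: you invoke \sig-boundedness as a black box applied to the \sig\ set $g[\WF(\preceq)] \subseteq \W$, whereas the paper argues the contrapositive and unrolls the proof of \sig-boundedness inline (showing that if the supremum were $\ck$ then $\W$ would be $\sig(\WF(\preceq))$, contradicting $\W \notin \sig$); the paper itself flags that the lemma ``follows from the proofs of Theorem~\ref{theorem Gandy length of well-founded part} and of \sig-boundedness,'' so the two arguments coincide.
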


\begin{proof}
By definition we have that $\sup\set{|\leq_e|}{e \in \W} = \ck$. Suppose that $\sup\set{|\iniseg(n)|}{n \in \WF(\preceq)} = \ck$. It is then easy to see that for all $e$ for which $\rfn{e}$ is total and $\leq_e$ is a linear ordering we have that
\begin{align*}
\W(e) \iff& (\exists \gamma)(\exists n)[n \in \WF(\preceq) \ \& \ \gamma \ \textrm{embedds} \ \mathrm{Field}(\leq_e) \ \textrm{into} \ \iniseg(n)]\\
     \iff& (\exists \gamma)(\exists n)[n \in \WF(\preceq)\\
         & \ \ \& \ (\forall i,j,t) \big \{ [(\rfn{e}(\langle i,t \rangle) = 0) \longrightarrow \gamma(i) \preceq n]\\
         &\hspace{2cm} \ \& \ [\rfn{e}(\langle i,j \rangle) = 0 \longleftrightarrow \gamma(i) \preceq \gamma(j)] \big \}].
\end{align*}
This shows that if $\sup\set{|\iniseg(n)|}{n \in \WF(\preceq)} = \ck$ then $\W$ is $\sig(\WF(\preceq))$ and since $\W$ is not in \sig \ the set $\WF(\preceq)$ cannot be in $\del$. Therefore we have proved the left-to-right-hand direction of the statement.

The converse is immediate from the proof of Theorem \ref{theorem Gandy length of well-founded part}, \ie if the given supremum is less than \ck, then from the first part of the preceding proof it follows that $\WF(\preceq)$ is \del.
\end{proof}

Harrison \cite{harrison_recursive_pseudo_wellorderings} proved that the order type of a recursive pseudo-well-ordering has the form $\ck + \Q \times \ck + \eta$ for some $\eta < \ck$. Hence for every two recursive pseudo-well-orderings there exists an initial similarity from the field of the one ordering to the field of the other.

\begin{theorem}
\label{theorem every recursive pseudo wo has a recursive pseudo wo from below}
For every recursive pseudo-well-ordering $\preceq \subseteq \om^2$ there exists some $a \in {\rm Field}(\preceq)$ not in the well-founded part $\WF(\preceq)$ of $\preceq$ such that there is no \del \ initial similarity from ${\rm Field}(\preceq)$ to ${\rm Field}(\preceq_b)$ for all $b \preceq a$, where $$\preceq_b = \set{(n,m)}{n \preceq m \prec b}.$$

In particular every recursive pseudo-well-ordering is the top of a strictly decreasing sequence of recursive pseudo-well-orderings under \dleq.
\end{theorem}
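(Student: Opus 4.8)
The plan is to transfer Theorem~\ref{theorem for every kleene space there is another one from below} about Kleene spaces to the order language, via the correspondence between recursive pseudo-well-orderings and Kleene trees recalled above. First I would attach to $\preceq$ the tree $T=\set{u\in\omseq}{u(\lh(u)-1)\prec\dots\prec u(0)}$ of finite strictly $\preceq$-descending sequences. As noted, $T$ is recursive; it is ill-founded because $\preceq$ has an infinite strictly decreasing sequence; and it has no \del\ branch, since a \del\ branch would be a \del\ infinite strictly $\preceq$-descending sequence, impossible for a pseudo-well-ordering. Hence $T$ is a Kleene tree, and Theorem~\ref{theorem for every kleene space there is another one from below} supplies an initial segment $u$ of the leftmost infinite branch $\alpha_L$ of $T$ with $\spat{T_u}\dless\spat{T}$. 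Since $u=\empt$ would give $\spat{T_u}=\spat{T}$, we have $\lh(u)\ge 1$; put $a=u(\lh(u)-1)=\alpha_L(\lh(u)-1)$. The tail $\alpha_L(\lh(u)-1)\succ\alpha_L(\lh(u))\succ\cdots$ is an infinite descending sequence starting at $a$, so $a\notin\WF(\preceq)$. This $a$ is the claimed witness.

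Next I would fix $b\preceq a$ and rule out a \del\ initial similarity $g\colon\Field(\preceq)\to\Field(\preceq_b)$. If $b\in\WF(\preceq)$ then $\Field(\preceq_b)=\set{m}{m\prec b}$ is well-ordered by $\preceq$, so an initial similarity would present $\preceq$ as isomorphic to an initial segment of a well-ordering, hence well-founded, contradicting that $\preceq$ is a pseudo-well-ordering; so no $g$ exists at all. If $b\notin\WF(\preceq)$, suppose towards a contradiction that such a $g$ exists. Since $g$ is an order isomorphism onto an initial segment of $\preceq_b$ it is injective, and it sends any infinite $\preceq$-descending sequence to an infinite $\preceq$-descending sequence all of whose entries lie in $\Field(\preceq_b)$, i.e. are $\prec b\preceq a$. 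Define $F\colon[T]\to\ca{N}$ by $F(\alpha)=\cn{u}{(g(\alpha(0)),g(\alpha(1)),\dots)}$. Using $a=u(\lh(u)-1)\succ g(\alpha(0))$ (because $g(\alpha(0))\prec b\preceq a$) and that $u$ is descending, one checks $F(\alpha)\in[T]$ and $u\sqsubseteq F(\alpha)$, so $F(\alpha)\in[T_u]$; moreover $F$ is \del-recursive (the fixed prefix $u$ followed by coordinatewise composition with the \del\ function $g$) and injective on $[T]$ because $g$ is injective. Since $[T]$ is $\Pi^0_1$, extending $F$ by a fixed recursive point on $\ca{N}\setminus[T]$ yields a \del-recursive $\pi\colon\ca{N}\to\ca{N}$ that is injective on $[T]$ with $\pi[[T]]\subseteq[T_u]$, so Lemma~\ref{lemma bodyT less than bodyS implies Nt less than Ns} gives $\spat{T}\dleq\spat{T_u}$, contradicting $\spat{T_u}\dless\spat{T}$. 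Hence no \del\ initial similarity $\preceq\to\preceq_b$ exists for any $b\preceq a$, which is the first assertion.

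For the ``in particular'' I would iterate. Start with $\preceq^{(0)}=\preceq$, let $a_0$ be the witness just produced, and set $\preceq^{(1)}:=\preceq^{(0)}_{a_0}$ in the notation of the statement. Then $\preceq^{(1)}$ is again a recursive pseudo-well-ordering: it is recursive; it has an infinite descending sequence (a tail of any infinite descending sequence from $a_0$, whose entries are all $\prec a_0$); and it has no \del\ one, since its descending sequences are among those of $\preceq^{(0)}$. The inclusion map is a recursive, hence \del, initial similarity $\preceq^{(1)}\to\preceq^{(0)}$, while by the first part (with $b=a_0\preceq a_0$) there is no \del\ initial similarity $\preceq^{(0)}\to\preceq^{(1)}$; thus $\preceq^{(1)}$ is strictly below $\preceq^{(0)}$ under \dleq. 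Repeating the construction with $\preceq^{(1)}$ in place of $\preceq^{(0)}$, and so on, produces a strictly decreasing sequence $\preceq^{(0)}\dgreat\preceq^{(1)}\dgreat\preceq^{(2)}\dgreat\cdots$ of recursive pseudo-well-orderings, so $\preceq$ is the top of one.

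\textbf{Main obstacle.} The delicate point is making the dictionary between the order-theoretic and tree-theoretic pictures precise; the key simplifications are that one needs only the \emph{easy} half of this dictionary (an initial similarity yields a \del-injection between the corresponding spaces $\spat{T}$), so Theorem~\ref{theorem for every kleene space there is another one from below} can be used in contrapositive form, and that $\preceq_a$ is literally an initial segment of $\preceq$, which supplies the downward \del-initial-similarity in the iteration for free and bypasses any appeal to Harrison's structure theorem.
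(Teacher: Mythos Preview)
Your argument is correct, but it proceeds differently from the paper's own proof. You reduce to Theorem~\ref{theorem for every kleene space there is another one from below} as a black box: you form the Kleene tree $T$ of finite $\preceq$-descending sequences, invoke that theorem to get $u\sqsubseteq\alpha_L$ with $\spat{T_u}\dless\spat{T}$, set $a=u(\lh(u)-1)$, and then observe that a \del\ initial similarity $\preceq\to\preceq_b$ (for $b\preceq a$) yields a \del-injection $[T]\to[T_u]$, contradicting the strict inequality. The paper instead works \emph{directly} in the order language: it applies the Gandy Basis Theorem to the $\Pi^0_1$ set $C=\set{\beta}{\beta(n+1)\prec\beta(n)}$ (which is your $[T]$) to get $\gamma\in C$ with $\W\notin\del(\gamma)$, and then argues that if every $a$ in the ill-founded part had some $\beta\in C\cap\del(\gamma)$ with $\beta(0)\preceq a$, then $\WF(\preceq)$ would be $\del(\gamma)$, forcing $\W\hleq\gamma$ via Spector's Theorem~\ref{theorem spector two hyperdegrees} and Gandy's Theorem~\ref{theorem Gandy length of well-founded part}. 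Thus the paper replays the Gandy-basis idea of Theorem~\ref{theorem for every kleene space there is another one from below} with $\WF(\preceq)$ playing the role that the leftmost branch $\alpha_L$ plays there (both have hyperdegree $\W$). Your route is more modular and reuses the Kleene-tree theorem cleanly; the paper's route is self-contained and avoids passing through the spaces $\spat{T}$ at all. Both are valid, and both ultimately rest on the same Gandy-basis mechanism.
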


\begin{proof}
We consider the set of strictly decreasing chains
\[
C = \set{\beta \in \ca{N}}{(\forall n)[\beta(n+1) \prec \beta(n)]}.
\]
Clearly $C$ is a non-empty $\Pi^0_1$ set. From the Gandy Basis Theorem there exists a $\gamma \in C$ such that $\W \not \in \del(\gamma)$. We claim that there is an $a \in {\rm Field}(\preceq) \setminus \WF(\preceq)$ such that for all $\beta \in C$ with $\beta(0) \preceq a$ it holds that $\beta \not \in \del(\gamma)$. Indeed if this were not the case then we would have that
\[
a \in \WF(\preceq) \iff (\forall \beta \in \del(\gamma))[\beta \in C \ \longrightarrow \ a \prec \beta(0)],
\]
for all $a \in {\rm Field}(\preceq)$. The preceding equivalence implies that the set $\WF(\preceq)$ is $\sig(\gamma)$. Since evidently $\WF(\preceq)$ is a \pii \ set it would follow that $\WF(\preceq) \in \del(\gamma)$.

On the other hand it follows from Theorem \ref{theorem Gandy length of well-founded part} that $\WF(\preceq)$ is not a \del \ set, so from Spector's Theorem \ref{theorem spector two hyperdegrees} $\W \heq \WF(\preceq)$ and so we would have that $\W \in \del(\gamma)$, contradicting the choice of $\gamma$.

Now we choose some $a \in {\rm Field}(\preceq) \setminus \WF(\preceq)$ such that $a \prec \beta(0)$ for all $\beta \in C \cap \del(\gamma)$. If there were a \del-isomorphism between $\preceq$ and $\preceq_b$ for some $b \preceq a$ then $\gamma \in C$ would be carried to some $\beta \in C \cap \del(\gamma)$ with $\beta(0) \prec b \preceq a$, a contradiction.
\end{proof}


Kreisel (\cf \cite{kreisel_some_axiomatic_results_second_order_arithmetic}) showed that there are recursive pseudo-well-orderings which are \del-incomparable. We will show how one can prove this result from Theorem \ref{theorem Fokina-Friedman-Toernquist}.

\begin{lemma}[Folklore?]
\label{lemma isomorphic orders give isomorphic Nts}
Suppose that $T$ and $S$ are recursive trees and that
$f: \om \to \om$ is a \del \ function such that
\begin{align*}
(\forall t)[\dec{t} \in T \ \Longrightarrow \ \dec{f(t)} \in S]
\end{align*}
and
\begin{align*}
\dec{s} \kbleq \dec{t} \ \iff \ \dec{f(s)} \kbleq \dec{f(t)}
\end{align*}
for all $\dec{s}$, $\dec{t} \in T$. Then there exists a \del \ function $$\pi: \ca{N} \to \ca{N}$$ which is injective on $[T]$ and $\pi[[T]] \subseteq [S]$. In particular we have that $\spat{T} \dleq \spat{S}$.
\end{lemma}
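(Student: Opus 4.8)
The plan is to turn the initial similarity $f$ into a \del-recursive map $\pi\colon\ca N\to\ca N$ which is injective on $[T]$ and carries $[T]$ into $[S]$, and then to quote Lemma~\ref{lemma bodyT less than bodyS implies Nt less than Ns}. First note that the hypotheses say exactly that $f$ is an order embedding of $(T,\kbleq)$ into $(S,\kbleq)$: since $\kbleq$ is a \emph{linear} ordering, the biconditional $\dec{s}\kbleq\dec{t}\iff\dec{f(s)}\kbleq\dec{f(t)}$ upgrades to $\dec{s}\kbless\dec{t}\iff\dec{f(s)}\kbless\dec{f(t)}$ for all $\dec{s},\dec{t}\in T$ (in particular $f$ is injective on $\set{t}{\dec{t}\in T}$). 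Fix $\alpha\in[T]$. The codes $\barr{\alpha}(0),\barr{\alpha}(1),\dots$ satisfy $\dec{\barr{\alpha}(n)}=\alpha\upharpoonright n\in T$, and $\dec{\barr{\alpha}(n)}$ is a proper initial segment of $\dec{\barr{\alpha}(n+1)}$, hence $\dec{\barr{\alpha}(n+1)}\kbless\dec{\barr{\alpha}(n)}$; applying $f$, the sequence $v^\alpha_n:=\dec{f(\barr{\alpha}(n))}$ is a strictly $\kbleq$-decreasing sequence of elements of $S$.

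Next I would invoke the standard fact underlying the equivalence between well-foundedness of a tree and well-orderedness of its Kleene--Brouwer ordering: a $\kbleq$-decreasing sequence $(v_n)$ in a tree converges coordinatewise to a branch of that tree. Concretely, comparing consecutive terms shows $v_n(0)$ is eventually nonincreasing, hence eventually constant; deleting the common first coordinate yields a $\kbleq$-decreasing sequence in the corresponding subtree, and one recurses on $k$; the resulting limit $\beta$ satisfies $\beta\upharpoonright k=v_n\upharpoonright k\in S$ for all large $n$, so $\beta\in[S]$. I would then define $\pi\colon\ca N\to\ca N$ by letting $\pi(\alpha)(k)$ be the value $c$ such that $\dec{f(\barr{\alpha}(n))}$ has, for all large $n$, length $>k$ with $k$-th entry $c$, if such a $c$ exists, and $\pi(\alpha)(k)=0$ otherwise. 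By the previous observation $\pi(\alpha)=\beta$ for every $\alpha\in[T]$, so $\pi[[T]]\subseteq[S]$. The condition defining $\pi(\alpha)(k)=c$ is arithmetical in $\alpha$ and in the graph of $f$, which is \del\ because $f$ is \del-recursive; since \del\ is closed under number quantification and complementation, $\pi$ is \del-recursive.

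The crux, and the step I expect to require genuine care, is the injectivity of $\pi$ on $[T]$. Let $\alpha\neq\alpha'$ in $[T]$, with least difference at $j$ and, say, $\alpha(j)<\alpha'(j)$. Then for all $n,m>j$ the nodes $\dec{\barr{\alpha}(n)}$ and $\dec{\barr{\alpha'}(m)}$ are incompatible with $\dec{\barr{\alpha}(n)}<_{\rm lex}\dec{\barr{\alpha'}(m)}$, hence $\dec{\barr{\alpha}(n)}\kbless\dec{\barr{\alpha'}(m)}$, and therefore $v^\alpha_n\kbless v^{\alpha'}_m$ for all large $n,m$. Suppose $\pi(\alpha)=\pi(\alpha')=\beta$. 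I first claim that every $v^\alpha_n$ with $n$ large is an initial segment of $\beta$: if not, let $p$ be the least position with $v^\alpha_n(p)\neq\beta(p)$; a later term $v^\alpha_{n'}$ that already agrees with $\beta$ through position $p$ has first difference with $v^\alpha_n$ exactly at $p$, so $v^\alpha_{n'}\kbless v^\alpha_n$ forces $\beta(p)<v^\alpha_n(p)$, whereas a sufficiently late $v^{\alpha'}_m$ (which also agrees with $\beta$ through $p$) has first difference with $v^\alpha_n$ at $p$, so $v^\alpha_n\kbless v^{\alpha'}_m$ forces $v^\alpha_n(p)<\beta(p)$ --- a contradiction. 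The same applies to the $v^{\alpha'}_m$. But a strictly $\kbleq$-decreasing sequence of initial segments of the fixed $\beta$ must have strictly increasing lengths, since an extension is $\kbleq$-smaller; hence $\lh(v^\alpha_n)\to\infty$ and $\lh(v^{\alpha'}_m)\to\infty$. Choosing $n$ large and then $m$ large enough that $\lh(v^{\alpha'}_m)>\lh(v^\alpha_n)$ makes $v^\alpha_n$ a proper initial segment of $v^{\alpha'}_m$, i.e.\ $v^\alpha_n\kbgreat v^{\alpha'}_m$, contradicting $v^\alpha_n\kbless v^{\alpha'}_m$. Hence $\pi(\alpha)\neq\pi(\alpha')$.

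With $\pi$ established, the conclusion $\spat{T}\dleq\spat{S}$ follows from Lemma~\ref{lemma bodyT less than bodyS implies Nt less than Ns} in the main case $[T]\neq\emptyset$ (note $[S]\supseteq\pi[[T]]\neq\emptyset$ then as well); the degenerate case $[T]=\emptyset$ is routine, since $\spat{T}=T$ is then countable and $\spat{S}$ has at least as many points as $T$ because $f$ is injective on $\set{t}{\dec{t}\in T}$. The main obstacle, as indicated, is the rigidity argument of the third paragraph showing that an order embedding of Kleene--Brouwer orderings cannot collapse two distinct branches.
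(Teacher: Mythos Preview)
Your proof is correct and follows the same overall strategy as the paper: send $\alpha\in[T]$ to the unique branch of $S$ obtained as the coordinatewise limit of the strictly $\kbleq$-decreasing sequence $\dec{f(\barr{\alpha}(n))}$, and then invoke Lemma~\ref{lemma bodyT less than bodyS implies Nt less than Ns}. The only notable difference is in the injectivity step. The paper records at the outset the auxiliary fact that the limit branch $\gamma$ of a $\kbleq$-decreasing sequence $(u_n)$ satisfies $\gamma\upharpoonright k \kbleq u_n\upharpoonright k$, and uses it to show directly that $\pi(\alpha)\upharpoonright(k+1)\kbless\pi(\beta)\upharpoonright(k+1)$ for a suitable $k$; your argument instead proceeds by contradiction, showing that if $\pi(\alpha)=\pi(\alpha')$ then both image sequences eventually consist of initial segments of the common limit, which forces a $\kbleq$-order reversal. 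Both arguments exploit the same combinatorics of $\kbleq$; yours is slightly more self-contained in that it does not isolate the restriction inequality as a separate fact. You also handle the degenerate case $[T]=\emptyset$ explicitly, which the paper passes over.
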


\begin{proof}
Let us recall that if $(u_n)_{\n}$ is a sequence in a tree $K$ which is strictly decreasing with respect to $\kbleq$ then for all $i$ there exists some $n_i$ such that $i < \lh(u_n)$ and $u_n(i)$ is constant for all $n \geq n_i$. This defines the infinite branch $\gamma \in [K]$ by
\begin{align*}
\gamma(i) = j \iff& \textrm{for all but finitely many $n$'s we have that} \ i < \lh(u_n)\\
                         & \textrm{and} \ u_n(i) = j.
\end{align*}
Moreover we have that $\gamma \upharpoonright k \kbleq u_n \upharpoonright k$ for all $n$ and all $k < \lh(u_n)$.

For every $\alpha \in [T]$ we define
\[
t^\alpha_n = \langle \alpha(0),\dots, \alpha(n-1) \rangle,
\]
so that $\alpha \upharpoonright n = \dec{t^\alpha_n}$ for all $n$. From our hypothesis it follows that
\[
\dec{f(t^\alpha_0)} \kbgreat \dec{f(t^\alpha_1)} \kbgreat \dots \kbgreat \dec{f(t^\alpha_n)} \kbgreat \dots.
\]
We define the function $\pi: [T] \to [S]$ by
\begin{align*}
\pi(\alpha)(i) = j \iff& \textrm{for all but finitely many $n$'s we have}\\
                         & i < \lh(\dec{f(t^\alpha_n)}) \ \textrm{and} \ \dec{f(t^\alpha_n)}(i) = j.
\end{align*}
We extend $\pi$ to \ca{N} by assigning the value $0$ to every $\alpha \not \in [T]$. Clearly the function $\pi$ is \del \ recursive and $\pi[[T]] \subseteq [S]$.

We now verify that $\pi$ is injective on $[T]$. Indeed let $\alpha, \beta \in [T]$ be such that $\alpha \upharpoonright n = \beta \upharpoonright n$ and $\alpha(n) < \beta(n)$ so that $\alpha \upharpoonright (n'+1) \kbless \beta \upharpoonright m$ for all $m > n' \geq n$. It follows that
\[
\dec{f(t^\alpha_{n'+1})} \kbless \dec{f(t^\beta_m)}
\]
for all $m > n' \geq n$. From this it is easy to see that for some big enough $n'$, the finite sequence \dec{f(t^\alpha_{n'+1})} is not an initial segment of $\pi(\beta)$ and that $\dec{f(t^\alpha_{n'+1})}(k) < \pi(\beta)(k)$ if $k < \lh(\dec{f(t^\alpha_{n'+1})})$ is the least such that $\dec{f(t^\alpha_{n'+1})} \upharpoonright (k+1) \neq \pi(\beta) \upharpoonright (k+1)$. Hence
\[
\pi(\alpha) \upharpoonright (k+1) \kbleq \dec{f(t^\alpha_{n'+1})} \upharpoonright (k+1) \kbless \pi(\beta) \upharpoonright (k+1)
\]
and so $\pi(\alpha) \neq \pi(\beta)$.
\end{proof}

From the preceding lemma it is clear that if $T$ and $S$ are as in Theorem \ref{theorem Fokina-Friedman-Toernquist} then the ordered spaces $(T,\kbleq), (S,\kbleq)$ are not \del-comparable.

Harrison \cite{harrison_recursive_pseudo_wellorderings} proved a considerable extension of Kreisel's result on \del-incomparable recursive pseudo-well-orderings. Using the methods of the proof of Lemma \ref{lemma for every kleene tree there exists an incomparable kleene tree} we can prove a similar assertion. Given a linear ordering $\preceq$ we say that two
strictly decreasing sequences $(x_n)_{\n}$, $(y_n)_{\n}$ are \emph{equivalent} if for all $n$ there exists $m$ such that $y_m \prec x_n$ and for all $n$ there exists $m$ such that $x_m \prec y_n$. Let us recall the set \lo \  of codes of countable recursive linear orderings defined in the Introduction.

\begin{theorem}[Compare with Theorem 1.9 in \cite{harrison_recursive_pseudo_wellorderings}]
\label{theorem similar to Harrison}
Suppose that $\ca{A}$ is the set of all $e \in \lo$ such that $\leq_e$ has exactly one equivalence class of strictly decreasing sequences and that $\ca{S} \subseteq \om$ is \sig \ with $\ca{A} \subseteq \ca{S} \subseteq \lo$. Then \ca{S} contains the code of a recursive pseudo-well-ordering.\smallskip

In fact for every recursive pseudo-well-ordering $\preceq$ there exists an $e \in \ca{S}$ such that $\leq_e$ is a pseudo-well-ordering and incomparable with $\preceq$ under \dleq.
\end{theorem}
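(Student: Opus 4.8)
The plan is to imitate the proof of Lemma~\ref{lemma for every kleene tree there exists an incomparable kleene tree}, with recursive linear orderings in the role of Kleene trees, their strictly decreasing sequences in the role of the bodies $[T]$, and \del\ initial similarities in the role of \del-recursive functions between the spaces \spat{T}. It suffices to prove the second assertion (the first then follows by applying it to any recursive pseudo-well-ordering), so fix a recursive pseudo-well-ordering $\preceq$. First I would record the only structural fact about $\preceq$ that is needed: by Harrison's theorem its order type is $\ck + \Q\times\ck + \eta$, so $\preceq$ is ill-founded and $\WF(\preceq)$ has order type $\ck$; hence every ill-founded initial segment of $\preceq$ contains $\WF(\preceq)$ and so has well-founded part of order type $\geq\ck$. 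Consequently, if $L$ is any ill-founded linear ordering whose well-founded part is \del\ (so of order type $<\ck$), then there is no initial similarity between $\preceq$ and $L$ in either direction.

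Next I would set up the Kreisel-compactness machinery as in Lemma~\ref{lemma for every kleene tree there exists an incomparable kleene tree}: take the \pii-recursive parametrization $\pdel$ of the \del-points of \ca{N}, put $I=\set{j}{\pdel(j)\downarrow}$, fix a regular \pii-norm $\varphi$ on $I$ with the associated relations $\leq^\varphi_\sig,\leq^\varphi_\pii$, and let $P_\preceq$ be the $\Pi^0_1$ set of strictly $\preceq$-decreasing sequences (nonempty and disjoint from \del\ since $\preceq$ is a pseudo-well-ordering). I would define $D\subseteq\om\times\om$ so that $D(i,e)$ asserts: $e\in\ca{S}$; $\leq_e$ is ill-founded; no $\pdel(j)$ with $\varphi(j)\leq\varphi(i)$ is a strictly $\leq_e$-decreasing sequence; and there is no \del\ initial similarity from $\Field(\preceq)$ to $\Field(\leq_e)$ and none in the opposite direction. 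Passing to $\cap_{i\in I}D_i$, the third clause over all $i\in I$ — using that every \del-point is some $\pdel(j)$ and taking $i=j$ — forces $\leq_e$ to have no \del\ decreasing sequence, so with the second clause $\leq_e$ is a pseudo-well-ordering lying in $\ca{S}$, while the last clause is exactly $\leq_e$-incomparability with $\preceq$ under \dleq. The pointclass bookkeeping — checking that $D$ is computed by a \sig\ set on $I\times\om$ (one uses the Theorem on Restricted Quantification, the regularity of $\varphi$, and the fact that ``there exists a \del\ initial similarity between $\preceq$ and $\leq_e$'' is, via $\pdel$, a \pii\ condition on $e$) and that $D_i\subseteq D_j$ whenever $\varphi(j)\leq\varphi(i)$ — is the routine part, copied from Lemma~\ref{lemma for every kleene tree there exists an incomparable kleene tree}. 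By the Covering Lemma every \del\ $H\subseteq I$ lies in $\set{j}{\varphi(j)\leq\varphi(i)}$ for some $i\in I$, so $D_i\subseteq\cap_{j\in H}D_j$, and by Kreisel compactness (Theorem~\ref{theorem Kreisel compactness}) it remains only to show $D_i\neq\emptyset$ for each $i\in I$.

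For the base step, given $i\in I$, I would put $C=\set{x}{(\exists j)[\varphi(j)\leq\varphi(i)\ \&\ x\hleq\pdel(j)]}$, a \del\ set properly contained in the non-\del\ set $\del\cap\ca{N}$, and pick $\ep\in\del\setminus C$. By Theorem~\ref{theorem alltogether}-(\ref{theorem 4D.9}) and Lemma~\ref{lemma pi-zero-one sets recursive tree} there are a recursive tree $S$ and a recursive $\pi\colon\ca{N}\to\ca{N}$ injective on $[S]$ with $\pi[[S]]=\{\ep\}$; then $[S]=\{\beta\}$ is a singleton, $\beta$ is a \del-point (Remark~\ref{remark scattered part consists of del points}) with $\ep\hleq\beta$, and $(S,\kbleq)$ is a recursive linear ordering; every strictly $\kbleq$-decreasing sequence of $S$ converges to the unique branch $\beta$, hence any two are equivalent, so $(S,\kbleq)$ has exactly one equivalence class of strictly decreasing sequences and a code $e$ of it lies in $\ca{A}\subseteq\ca{S}$. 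Moreover $(S,\kbleq)$ is ill-founded with well-founded part $S\setminus\set{\beta\upharpoonright n}{\n}$, which is \del; by the structural fact of the first paragraph there is no initial similarity between $(S,\kbleq)$ and $\preceq$ in either direction, so the last clause of $D_i$ holds. For the third clause, a \del\ strictly $\kbleq$-decreasing sequence $\delta$ of $S$ converges to $\beta$, so $\ep\hleq\beta\hleq\delta$; were $\delta=\pdel(j)$ with $\varphi(j)\leq\varphi(i)$ we would get $\ep\hleq\pdel(j)$, i.e. $\ep\in C$, a contradiction. Thus $(i,e)\in D_i$, and taking $e\in\cap_{i\in I}D_i$ finishes the argument.

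The step I expect to be the main obstacle is precisely the one the paper suppresses in the parallel situations: verifying that $D$ is genuinely computed by a \sig\ set on $I\times\om$ — in particular that the two incomparability clauses, which quantify over all \del\ initial similarities, land on the correct side of the pointclass hierarchy after substituting the \pii-recursive $\pdel$ — and, relatedly, the folklore analysis showing that every $\kbleq$-decreasing sequence of a recursive tree with a unique branch hyper-computes that branch and that any two such sequences are equivalent (which is also what places the witness code in $\ca{A}$). Both are handled exactly as in the proof of Lemma~\ref{lemma for every kleene tree there exists an incomparable kleene tree} and in the classical remark that the Kleene--Brouwer ordering of such a tree is a pseudo-well-ordering with a single decreasing class.
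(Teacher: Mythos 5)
Your overall architecture is sound and stays close to the paper's: the paper also reruns the Kreisel-compactness argument of Lemma \ref{lemma for every kleene tree there exists an incomparable kleene tree} with the sets of strictly decreasing sequences in place of bodies of trees, and its base-step witness is the same $(S,\kbleq)$ with $[S]=\{\beta\}$ a \del\ singleton, whose code lies in $\ca{A}$. Your twist of encoding incomparability directly by the \sig\ clause ``no \del\ initial similarity in either direction'' and verifying it at the base step via order types of well-founded parts (Harrison plus Lemma \ref{lemma well founded part is del characterization of}) is legitimate and in fact yields more than the paper checks there; the paper instead carries real parameters $\alpha\in C(\preceq)$, $\beta\in C(\leq_e)$ and hyperdegree clauses, and only at the end derives \dleq-incomparability from the fact that a \del\ initial similarity sends decreasing sequences to decreasing sequences of the same hyperdegree. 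However, there is a concrete gap in your base step: the set $C=\set{x}{(\exists j)[\varphi(j)\leq\varphi(i)\ \&\ x\hleq\pdel(j)]}$ is not properly contained in $\del\cap\ca{N}$; it equals it. Each $\pdel(j)$ is itself a \del\ point and $\hleq$ is transitive, so $x\hleq\pdel(j)$ already implies $x\in\del$, and conversely every $x\in\del$ satisfies $x\hleq\pdel(i)$. Hence there is no $\ep\in\del\setminus C$ and the construction of the witness for $D_i$ cannot even begin.

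This is precisely why the paper defines $C$ with a reducibility strictly weaker than $\hleq$: in Lemma \ref{lemma for every kleene tree there exists an incomparable kleene tree} it uses ``$\ep$ is recursive in $\pdel(j)$'', which keeps $C$ a \del\ (hence proper) subset of $\del\cap\ca{N}$. Note, though, that if you copy that choice verbatim your verification of the third clause needs revisiting: there you must reduce $\ep$ to an arbitrary \del\ decreasing sequence $\delta$ of $(S,\kbleq)$, and the limit branch $\beta$ is recovered from $\delta$ only by a $\Sigma^0_2(\delta)$ limit, so $\beta$ is arithmetical in $\delta$ but not in general recursive in it. The repair is to take $C=\set{x}{(\exists j)[\varphi(j)\leq\varphi(i)\ \&\ x\ \textrm{is arithmetical in}\ \pdel(j)]}$: this $C$ is still \del\ by the same bookkeeping (on $I$ the relevant facts about $\pdel(j)$ admit both \sig\ and \pii\ forms, and both pointclasses are closed under number quantification), it is still a proper subset of $\del\cap\ca{N}$, and then $\ep\tleq\beta$ together with $\beta$ arithmetical in $\delta=\pdel(j)$ gives the contradiction you want. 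With that correction the rest of your argument (pointclass computations for the clauses, monotonicity in $i$, the covering argument, and the endgame extracting a pseudo-well-ordering in $\ca{S}$ incomparable with $\preceq$) goes through.
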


\begin{proof}
Suppose that $\preceq$ is a recursive pseudo-well-ordering. We repeat the proof of Lemma \ref{lemma for every kleene tree there exists an incomparable kleene tree} but this time we replace the body of a tree with the set of all infinite decreasing sequences with respect to the given order. To be more specific for every linear ordering $\preceq^\ast$ on a subset of \om \ we consider the set of all infinitely decreasing sequences under $\preceq^\ast$,
\[
C(\preceq^\ast) = \set{\beta \in \ca{N}}{(\forall n)[\beta(n+1) \prec^\ast \beta(n)]},
\]
and let $I, \varphi, \leq^\varphi_{\sig}$ and $\leq^\varphi_{\pii}$ be as in the proof of Lemma \ref{lemma for every kleene tree there exists an incomparable kleene tree}.

We define the set $D^\ast \subseteq \om \times \ca{N}^2 \times \om$ by saying that $D^\ast(i,\alpha,\beta,e)$ holds exactly when
\begin{align*}
 & \ \ \ \ \alpha \in C(\preceq) \ \& \ \beta \in C(\leq_e) \ \& \ e \in \ca{S}\\
 &\& \ (\forall \gamma \in \del)(\forall j \leq^\varphi_{\pii} i)[\gamma \in C(\leq_e) \ \longrightarrow \ \gamma \neq \pdel(j)]\\
 &\& \ (\forall \delta \in \del(\beta))[\delta \not \in C(\preceq)]\\
 &\& \ (\forall \gamma \in \del(\alpha))[\alpha \in \del(\gamma) \ \longrightarrow \ \gamma \not \in C(\leq_e)].
\end{align*}
We notice that $D^\ast$ is \sig. The proof that $D^\ast_i$ is non-empty for all $i \in I$ is exactly as before, with the minor addition that $e$ is the code of $(S,\kbleq)$, which belongs to $\ca{A}$, since $S$ has exactly one infinite branch. Hence $e \in \ca{S}$.

Now we consider $(\alpha,\beta,e)$ in the intersection $\cap_{i \in I}D^\ast_i$. The order $\leq_e$ has a strictly decreasing sequence, namely $\beta$, but no such sequence is \del, hence $\leq_e$ is a pseudo-well-ordering. Also no strictly decreasing sequence in $\preceq$ is $\del(\beta)$ and no strictly decreasing sequence in $\leq_e$ has the same hyperdegree as $\alpha$.

We notice that every strict-order-preserving function $$f: {\rm Field}(\preceq_1) \to {\rm Field}(\preceq_2)$$ gives rise to the injective function
\[
g: C(\preceq_1) \to C(\preceq_2): g(x) = (f(x(n)))_{\n}.
\]
Moreover if $f$ is \del-recursive so is $g$ and therefore $g(x) \heq x$ for all $x \in C(\preceq_1)$.

So if the orders $\preceq$ and $\leq_e$ were \del-comparable then $C(\leq_e)$ or $C(\preceq)$ would have a member with the same hyperdegree as the one of $\alpha$ or $\beta$ respectively. In either case we have a contradiction.
\end{proof}

\begin{corollary}
\label{corollary increasing sequences of pseudo_well_orderings}
For every recursive pseudo-well-ordering $\preceq$ there exists a recursive pseudo-well-ordering $\preceq'$ which is strictly above $\preceq$ under \dleq.\smallskip

Therefore every recursive pseudo-well-ordering is the bottom of a strictly increasing sequence of recursive pseudo-well-orderings under \dleq.
\end{corollary}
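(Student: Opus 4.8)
The plan is to follow the proof of Theorem~\ref{theorem strictly increasing Kleene spaces} almost verbatim, with the tree sum $T\oplus S$ replaced by the \emph{ordered sum} of linear orderings and with Lemma~\ref{lemma for every kleene tree there exists an incomparable kleene tree} replaced by Theorem~\ref{theorem similar to Harrison}. Given a recursive pseudo-well-ordering $\preceq$, first apply Theorem~\ref{theorem similar to Harrison} with $\ca{S}=\lo$ --- this is allowed, since $\lo$ is $\Sigma^0_2$ (hence \sig) and trivially contains $\ca{A}$ --- to obtain some $e\in\lo$ for which $\leq_e$ is a recursive pseudo-well-ordering incomparable with $\preceq$ under \dleq. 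What I will really use is a little more than this, extracted from the \emph{proof} of that theorem: the triple $(\alpha,\beta,e)$ it produces in $\cap_{i\in I}D^\ast_i$ satisfies, among the defining clauses of $D^\ast$, that $\beta\in C(\leq_e)$ and $\del(\beta)\cap C(\preceq)=\emptyset$, i.e.\ no strictly $\preceq$-decreasing sequence is recursive in $\beta$. I fix this $\beta$.

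Then I would form $\preceq'$, the ordered sum of $\preceq$ followed by $\leq_e$: its field is $\set{\langle 0,n\rangle}{n\in\Field(\preceq)}\cup\set{\langle 1,m\rangle}{m\in\Field(\leq_e)}$, on the two pieces $\prec'$ copies $\prec$ and the strict part of $\leq_e$, and $\langle 0,n\rangle\prec'\langle 1,m\rangle$ for all $n,m$; since $\preceq$ and $\leq_e$ are recursive, $\preceq'$ is a recursive linear ordering on a recursive subset of \om. The steps are then: (i) $\preceq'$ is a pseudo-well-ordering --- it has an infinite $\prec'$-decreasing sequence, namely $k\mapsto\langle 1,\beta(k)\rangle$, whereas any \del \ $\prec'$-decreasing sequence, since the $\leq_e$-piece lies entirely $\prec'$-above the $\preceq$-piece, would stay in the $\leq_e$-piece along an initial segment of \om \ and hence, on a tail, be either a \del \ strictly $\leq_e$-decreasing or a \del \ $\prec$-decreasing sequence, both excluded; (ii) $\preceq\dleq\preceq'$, via the recursive (hence \del) map $n\mapsto\langle 0,n\rangle$, which is an isomorphism of $\preceq$ onto an initial segment of $\preceq'$; and (iii) there is no \del \ initial similarity $g:\Field(\preceq')\to\Field(\preceq)$. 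For (iii), given such a $g$, compose it with the recursive strictly order-preserving map $\kappa:m\mapsto\langle 1,m\rangle$ embedding $\leq_e$ into $\preceq'$; then $h=g\circ\kappa$ is a \del-recursive strictly order-preserving map $\Field(\leq_e)\to\Field(\preceq)$, and applying it coordinatewise to $\beta$ gives an element $(h(\beta(k)))_k$ of $C(\preceq)$ that is recursive in $\beta$ (by Lemma~\ref{lemma the inverse function is del and same hyperdegree}(1)), contradicting $\del(\beta)\cap C(\preceq)=\emptyset$. By (ii) and (iii), $\preceq'$ is strictly above $\preceq$ under \dleq; iterating the construction from $\preceq=\preceq_0$ produces $\preceq_0,\preceq_1,\preceq_2,\dots$ with each strictly below the next, which is the final assertion.

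The one subtle point is step (iii), which is also the reason I open up the proof of Theorem~\ref{theorem similar to Harrison} rather than merely citing its statement. Restricted to the $\leq_e$-piece, a \del \ initial similarity $g:\Field(\preceq')\to\Field(\preceq)$ is only an isomorphism of $\leq_e$ onto a \emph{convex} suborder of $\preceq$ (the part of the range of $g$ lying above $g$ of the $\preceq$-piece), which in general is not an initial segment; so the bare \dleq-incomparability of $\leq_e$ and $\preceq$ does not apply directly. The property that \emph{does} survive the restriction is being a \del-recursive strictly order-preserving map from $\Field(\leq_e)$ --- and this is exactly what the internal witness $\beta$, via the coordinatewise trick already used at the end of the proof of Theorem~\ref{theorem similar to Harrison}, is there to forbid. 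Everything else --- recursiveness of the ordered sum, the initial-segment bookkeeping in (i)--(ii), and the closure of \del-recursive functions under composition and substitution --- is routine.
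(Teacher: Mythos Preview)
Your proposal is correct and follows essentially the same route as the paper, which simply says to repeat the proof of Theorem~\ref{theorem strictly increasing Kleene spaces} using Theorem~\ref{theorem similar to Harrison} and sums of linear orderings in place of Lemma~\ref{lemma for every kleene tree there exists an incomparable kleene tree} and sums of trees. Your observation that the bare \dleq-incomparability from the \emph{statement} of Theorem~\ref{theorem similar to Harrison} is not quite enough for step~(iii)---because restricting an initial similarity $g:\Field(\preceq')\to\Field(\preceq)$ to the $\leq_e$-piece yields only a convex embedding, not an initial one---and that one must instead invoke the witness $\beta$ with $\del(\beta)\cap C(\preceq)=\emptyset$ extracted from its \emph{proof}, is exactly the right reading of what the paper's terse reference is pointing at; this parallels how Theorem~\ref{theorem strictly increasing Kleene spaces} uses the ``no \del-recursive function carries $[S]$ into $[T]$'' clause of Lemma~\ref{lemma for every kleene tree there exists an incomparable kleene tree} rather than mere \dleq-incomparability of the spaces.
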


\begin{proof}
The same as the proof of Theorem \ref{theorem strictly increasing Kleene spaces}. Here one uses Theorem \ref{theorem similar to Harrison} and sums of linear orderings rather than Lemma \ref{lemma for every kleene tree there exists an incomparable kleene tree} and sums of trees.
\end{proof}

\subsection{Incomparable hyperdegrees in Kleene spaces} Given a recursively presented metric space \ca{X} we say that $x, y \in \ca{X}$ have \emph{incomparable hyperdegree} or they are \emph{hyperarithmeticaly incomparable} if $x \not \hleq y$ and $y \not \hleq x$. The existence of $x,y \in \ca{C}$ with incomparable hyperdegree was proved by Spector \cite{spector_incomparable_hyperdegrees}. Let us illustrate the methodology. One defines the set $P \subseteq \ca{C} \times \ca{C}$ by
\[
P(\alpha,\beta) \iff \alpha \in \del(\beta).
\]
Clearly every section $P^\beta$ is countable and thus it is meager. Since $P$ is coanalytic and thus has the property of Baire, we have from the Kuratowski-Ulam Theorem (\cf \cite{kechris_classical_dst} Theorem 8.41) that the set $P$ is also meager. It follows that the set
\[
Q(\alpha,\beta) \iff P(\alpha,\beta) \ \vee \ P(\beta,\alpha)
\]
is meager as well. Therefore there exist $(\alpha,\beta) \in \ca{C}$ which do not belong to $Q$, \ie they have incomparable hyperdegree.\footnote{The same method applies to Turing degrees as well.}

One can in fact prove that there are $\alpha, \beta \in \ca{C}$ of incomparable hyperdegree with $\alpha, \beta \hless \W$. To see this notice that the preceding set $Q$ is \pii \ and apply the Gandy Basis Theorem \ref{theorem Gandy basis}.

It is natural to ask if incomparable hyperdegrees occur in every uncountable recursively presented metric space \ca{X}. This is clear if \ca{X} is \del-isomorphic to the Baire space, since \del-injections preserve hyperdegrees (see Lemma \ref{lemma the inverse function is del and same hyperdegree}). The question in the general case is still open though, \cf Question \ref{question incomparable hyperdegrees everywhere}.

The reason that the preceding arguments do not go through in the general case, is because the given space may not be perfect. This allows the existence of countable non-meager sets. Consider for example a space of the form \spat{T}. Since $T$ consists of recursive members we have that $T \times T \subseteq Q$, where $Q$ is as above by replacing \ca{C} with \spat{T}. Also as we can see from Theorem \ref{theorem properties of Nt} the set $[T]$ is nowhere dense in \spat{T}. In particular the complement of $Q$ in $\spat{T} \times \spat{T}$ is meager and so we cannot infer that it is non-empty as we did in the case of $\ca{C}$.

One could still ask if it is possible to use measure theoretic arguments for some carefully chosen measure $\mu$ on $\spat{T}$. As we will see in Section \ref{section characterizations of the Baire space up to del-isomorphism} this is not possible, unless the space \spat{T} is \del-isomorphic to the Baire space, (see the comments following Corollary \ref{corollary Baire space and measure}).

Nevertheless we will show that incomparable hyperdegrees do exist in Kleene spaces, and therefore the general question is reduced to the existence of a \del-copy of a Kleene space inside the given uncountable recursively presented metric space. The latter is also an open problem, \cf Question \ref{question NT contains Kleene}. We note that our method provides another way of producing incomparable hyperdegrees.\footnote{The method of producing incomparable hyperdegrees through the Kuratowski-Ulam Theorem is by no means the only such method. Simpson for example, using results of \cite{gandy_sacks_a_minimal_hyperdegree}, proves the existence of a pair of hyperdegrees which among other things are incomparable (\cf \cite{simpson_minimal_covers_and_hyperdegrees} Theorem 3.1). This method however does not help us here as it requires the existence of perfect \del \ sets, which is not the case in Kleene spaces. Another method is Harrington's Theorem \ref{theorem Harrington xi-incomparable} together with Kreisel compactness as applied in the proof of Theorem \ref{theorem Fokina-Friedman-Toernquist}. But again this is not helpful, since Harrington's result refers to \del \ points and therefore it does not apply when we replace \ca{N} with a Kleene space.}

\begin{theorem}
\label{theorem incomparable hyperdegrees in Kleene spaces}
For every Kleene tree $T$ there exists an infinite countable set $C \subseteq [T]$ such that all distinct $\alpha, \beta \in C$ have incomparable hyperdegree. Moreover $C$ can be chosen so that $\alpha \hless \W$ for all $\alpha \in C$.
\end{theorem}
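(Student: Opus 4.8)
The plan is to realize the desired set in the form $C=\set{(\beta)_n}{\n}$ for a single $\beta\in\ca{N}$ lying in a suitable nonempty $\sig$ subset of $\ca{N}$, and then to extract such a $\beta$ with the extra property $\beta\hless\W$ from the Gandy Basis Theorem~\ref{theorem Gandy basis}. Concretely, I would set
\[
A=\set{\beta\in\ca{N}}{(\forall n)[(\beta)_n\in[T]]\ \&\ (\forall n)(\forall m)[n\neq m\longrightarrow(\beta)_n\notin\del((\beta)_m)]}
\]
and first check that $A$ is a $\sig$ set. The clause ``$(\forall n)[(\beta)_n\in[T]]$'' is $\Pi^0_1$ because $T$ is recursive; and since the relation $x\in\del(y)$ is $\pii$ (the fact about $\del(y)$ used in the proof of Lemma~\ref{lemma for every kleene tree there exists an incomparable kleene tree}), the relation ``$(\beta)_n\in\del((\beta)_m)$'' is $\pii$ in $(\beta,n,m)$, so the second clause is $\sig$, using that $\sig$ is closed under number quantification. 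Hence $A\in\sig$.

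The crux of the argument is that $A\neq\emptyset$, i.e.\ that $[T]$ contains an infinite family of pairwise hyperarithmetically incomparable reals; this is exactly the point at which the hypothesis that $T$ is a Kleene tree is used. Since $[T]$ is a nonempty $\sig$ (indeed $\Pi^0_1$) set none of whose members is $\del$, the Effective Perfect Set Theorem~\ref{theorem alltogether}-(\ref{theorem 4F.1}) yields a perfect set $E\subseteq[T]$. Then $E$ is a perfect Polish space, so countable subsets of $E$ are meager, and $E^\om$ is again a perfect Polish space. For each pair $i\neq j$ the set $\set{(\alpha_n)_{\n}\in E^\om}{\alpha_i\in\del(\alpha_j)}$ is the preimage, under the continuous open projection $E^\om\to E^2$ onto the $i$-th and $j$-th coordinates, of $C_0=\set{(\gamma,\delta)\in E^2}{\gamma\in\del(\delta)}$; every $\delta$-section of $C_0$ is the countable, hence meager, set $\set{\gamma\in E}{\gamma\hleq\delta}$, and $C_0$ has the property of Baire (it is $\pii$), so by the Kuratowski-Ulam Theorem $C_0$ is meager in $E^2$, and therefore its preimage is meager in $E^\om$. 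Taking the union over all $i\neq j$ gives a meager subset of $E^\om$, so by the Baire Category Theorem there is $(\alpha_n)_{\n}\in E^\om$ with $\alpha_i\notin\del(\alpha_j)$ whenever $i\neq j$; choosing $\beta\in\ca{N}$ with $(\beta)_n=\alpha_n$ for all $n$ gives $\beta\in A$. This is just Spector's Kuratowski-Ulam construction, recalled just before the statement of the theorem, now carried out in $E^\om$ instead of in $\ca{C}$.

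Finally, since $A$ is a nonempty $\sig$ set, the Gandy Basis Theorem~\ref{theorem Gandy basis} provides $\beta\in A$ with $\beta\hless\W$, and I would take $C=\set{(\beta)_n}{\n}$. Then $C\subseteq[T]$; the map $n\mapsto(\beta)_n$ is injective, since equal entries would violate the incomparability clause in the definition of $A$, so $C$ is infinite; for distinct $\gamma=(\beta)_n$ and $\delta=(\beta)_m$ in $C$ we have $n\neq m$, whence $\gamma\notin\del(\delta)$ and $\delta\notin\del(\gamma)$, i.e.\ $\gamma$ and $\delta$ have incomparable hyperdegree. Moreover each $(\beta)_n$ is recursive in $\beta$, so $(\beta)_n\hleq\beta\hleq\W$; and $\W\notin\del((\beta)_n)$, for otherwise $\W\hleq(\beta)_n\hleq\beta$ would contradict $\beta\hless\W$. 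Thus $(\beta)_n\hless\W$ for every $n$, as required. The only step that is not essentially routine is the non-emptiness of $A$, and even that is a transcription of the classical construction of incomparable hyperdegrees into the perfect subset of $[T]$ supplied by the Effective Perfect Set Theorem.
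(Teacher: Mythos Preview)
Your proof is correct. Both you and the paper reduce the problem to finding a point in the nonempty \sig\ set
\[
L^\ast=\set{\beta}{(\forall i\neq j)[(\beta)_i\in[T]\ \&\ (\beta)_i\notin\del((\beta)_j)]}
\]
and then invoke the Gandy Basis Theorem; the difference lies in how nonemptiness is established. The paper introduces the auxiliary \sig\ set $L=\set{\alpha\in[T]}{(\forall\gamma\in\del(\alpha))[\gamma\in[T]\rightarrow\alpha\leq_{\rm lex}\gamma]}$, observes that any two $\alpha<_{\rm lex}\beta$ in $L$ automatically satisfy $\alpha\notin\del(\beta)$, passes to a perfect $[S]\subseteq L$, and then builds the antichain by a greedy cardinality argument, picking each $\alpha_{k+1}$ to the right of $\alpha_k$ and outside the countable set $\del(\alpha_0,\dots,\alpha_k)$. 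You instead take a perfect $E\subseteq[T]$ directly from the Effective Perfect Set Theorem and run Spector's Kuratowski--Ulam argument inside the perfect Polish space $E^\om$. This neatly sidesteps the obstacle the paper raises just before the theorem (that category arguments fail in $\spat{T}$ because $[T]$ is nowhere dense there): by working in $E$ rather than in $\spat{T}$, the category argument goes through without modification. The paper's route, on the other hand, yields a genuinely new mechanism for producing incomparable hyperdegrees (the set $L$), which it advertises as such; your route shows that the classical mechanism already suffices once one restricts to the perfect kernel.
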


\begin{proof}
We define $L \subseteq \ca{N}$ as follows
\[
L(\alpha) \iff \alpha \in [T] \ \& \ (\forall \gamma \in \del(\alpha))[\gamma \in [T] \ \longrightarrow \ \alpha \leq_{\rm lex} \gamma].
\]
Clearly the leftmost infinite branch of $T$ belongs to $L$, so in particular $L \neq \emptyset$. It is also clear that $L$ is a \sig \ set and that it contains no \del \ members, since $T$ is Kleene tree. Therefore from the Effective Perfect Set Theorem $L$ contains a non-empty perfect subset $Q$. We have $Q = [S]$, where $S$ is a copy of the complete binary tree.

We notice that for all $\alpha, \beta \in L$ with $\alpha <_{\rm lex} \beta$ we have $\alpha \not \in \del(\beta)$. Let $\alpha_R$ be the rightmost infinite branch of $S$. Clearly for all $\alpha \in [S] \setminus \{\alpha_R\}$ the set
\[
R(\alpha) = \set{\beta \in [S]}{\alpha <_{\rm lex} \beta}
\]
is uncountable.

Now pick some $\alpha_0 \in [S] \setminus \{\alpha_R\}$. Since the set $\del(\alpha_0) \cap \ca{N}$ is countable there exists some $\alpha_1 \in R(\alpha_0)$ such that $\alpha_1 \not \in \del(\alpha_0)$ and $\alpha_1 \neq \alpha_R$. Since $\alpha_1$ is a member of $R(\alpha_0)$ from the preceding remarks we have $\alpha_0 \not \in \del(\alpha_1)$, and so $\alpha_0, \alpha_1$ have incomparable hyperdegree.

The set $\del(\alpha_0, \alpha_1) \cap \ca{N}$ is countable, so there exists some $\alpha_2 \in R(\alpha_1)$ such that $\alpha_2 \not \in \del(\alpha_0,\alpha_1)$ and $\alpha \neq \alpha_R$. Since $\alpha_0 <_{\rm lex} \alpha_1 <_{\rm lex} \alpha_2$ and all are members of $L$ it follows that $\alpha_i \not \in \del(\alpha_2)$ for $i=0,1$. Proceeding inductively we construct the required sequence $(\alpha_i)_{\iin}$ in $[S] \subseteq [T]$.

To get the result with hyperdegrees strictly below the hyperdegree of $\W$ we consider the non-empty \sig \ set
\[
L^\ast = \set{\alpha \in \ca{N}}{(\forall i \neq j)[\alpha_i \in [T] \ \& \ \alpha_i \not \in \del(\alpha_j) \ \& \ \alpha_j \not \in \del(\alpha_i)]}
\]
and we apply the Gandy Basis Theorem \ref{theorem Gandy basis}.
\end{proof}

\begin{corollary}
\label{corollary incomparable hyperdegrees}
Every recursively presented metric space, which contains a \del-isomorphic copy of a Kleene space, contains also a countable set, whose members are pairwise hyperarithmetically incomparable and their hyperdegrees are strictly below the hyperdegree of $\W$.
\end{corollary}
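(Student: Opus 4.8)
The plan is to read ``$\ca{X}$ contains a \del-isomorphic copy of a Kleene space'' as the statement that there are a Kleene tree $T$ and a \del-injection $g : \spat{T} \inj \ca{X}$; indeed, by Lemma \ref{lemma the inverse function is del and same hyperdegree}-(3), any such \del-injection automatically has a partial \del-recursive inverse on $g[\spat{T}]$ (since $\spat{T}$ is trivially a non-empty \sig \ subset of itself), so the two formulations coincide. Granting this, the whole space $\spat{T}$ is a non-empty \sig \ subset of itself on which $g$ is injective, so Lemma \ref{lemma the inverse function is del and same hyperdegree}-(2) yields $g(x) \heq x$ for every $x \in \spat{T}$; that is, $g$ preserves hyperdegrees.

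Next I would invoke Theorem \ref{theorem incomparable hyperdegrees in Kleene spaces} for the Kleene tree $T$ to obtain an infinite countable set $C \subseteq [T] \subseteq \spat{T}$ such that any two distinct members of $C$ have incomparable hyperdegree and $\alpha \hless \W$ for all $\alpha \in C$. I then set $C' = g[C] \subseteq \ca{X}$. Since $g$ is injective and $C$ is infinite, $C'$ is an infinite countable subset of $\ca{X}$.

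It remains to check that $C'$ has the desired properties, which is exactly where the hyperdegree-preservation of $g$ is used. If $g(\alpha), g(\beta) \in C'$ are distinct (so $\alpha \neq \beta$) and, say, $g(\alpha) \hleq g(\beta)$, then from $\alpha \heq g(\alpha)$ and $\beta \heq g(\beta)$ one gets $\alpha \hleq \beta$, contradicting that $\alpha$ and $\beta$ have incomparable hyperdegree; hence the members of $C'$ are pairwise hyperarithmetically incomparable. Finally, for each $\alpha \in C$ we have $g(\alpha) \heq \alpha$, so $g(\alpha) \hleq \alpha \hleq \W$; and if $\W \hleq g(\alpha)$ held then $\W \hleq \alpha$ would follow, contradicting $\alpha \hless \W$. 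Thus $g(\alpha) \hless \W$ for all $\alpha \in C$, and $C'$ witnesses the corollary. I do not expect any genuine obstacle here beyond careful bookkeeping of hyperdegrees: the substantive content is entirely in Theorem \ref{theorem incomparable hyperdegrees in Kleene spaces}, together with the fact (Lemma \ref{lemma the inverse function is del and same hyperdegree}) that a \del-injection restricted to a \sig \ domain preserves hyperdegree.
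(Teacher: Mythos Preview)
Your proposal is correct and matches the paper's intended argument: the corollary is stated without proof immediately after Theorem \ref{theorem incomparable hyperdegrees in Kleene spaces}, and your write-up is precisely the straightforward derivation the paper leaves implicit, namely applying that theorem inside the Kleene space and pushing the resulting set forward via the \del-injection, which preserves hyperdegrees by Lemma \ref{lemma the inverse function is del and same hyperdegree}.
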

Simpson \cite{simpson_minimal_covers_and_hyperdegrees} proved the existence a non-empty \sig \ set whose members have minimal hyperdegree (see comments following Question \ref{question hyperdegrees perfect set} in Section \ref{section questions open problems} for the definition). From this it follows that there exists a non-empty perfect set, whose members are pairwise hyperarithmetically incomparable. As we will see in Section \ref{section questions open problems} minimal hyperdegrees do not have to occur in Kleene spaces (see Lemma \ref{lemma minimal hyperdegree does not always occur}), but it would nevertheless be interesting to see if there exists a perfect set $P \subseteq [T]$, where $T$ is Kleene tree, whose members are pairwise hyperarithmetically incomparable, \cf Question \ref{question hyperdegrees perfect set}.

\section{\sc Characterizations of \ca{N} up to \del \ isomorphism}
\label{section characterizations of the Baire space up to del-isomorphism}

In this section we give a characterization of the condition $\spat{T} \dequal \ca{N}$ in terms of the intrinsic properties of $T$. Our arguments imply another interesting characterization of the Baire space as the unique up to \del-isomorphism recursively presented metric space which admits a Borel measure satisfying some reasonable definability properties. We also give a characterization of being \del-isomorphic to the Baire space in terms of perfect sets.

\subsection{Copies of the complete binary tree} We have seen in Theorem \ref{theorem all between om and N} that $\ca{X} \dleq \ca{N}$ for all recursively presented metric spaces \ca{X}. Since the relation $\dleq$ is antisymmetric and $\ca{C} \dequal \ca{N}$ in order to prove that $\ca{X}$ is \del-isomorphic to \ca{N} it is enough to prove that $\ca{C} \dleq \ca{X}$.

\begin{definition}
\label{definition of del-embedding between trees}\normalfont
Suppose that $S$ and $T$ are trees on \om. We say that the function $f: \om \to \om$ is an
\emph{embedding of $S$ into $T$} if the following conditions hold.
\begin{itemize}
\item[(a)] For all $\dec{t} \in T$ we have that $\dec{f(t)} \in S$.
\item[(b)] If $\dec{t_0}$ and $\dec{t_1}$ are members of $T$ and $\dec{t_1}$ is a proper extension of $\dec{t_0}$ then
$\dec{f(t_1)}$ is a proper extension of $\dec{f(t_0)}$.
\item[(c)] If $\dec{t_0}$ and $\dec{t_1}$ are incompatible members of $T$ then $\dec{f(t_0)}$ and $\dec{f(t_1)}$ are
incompatible.
\end{itemize}
We say that $f$ is a \emph{\del-embedding} if it is an embedding and \del-recursive and that $T$ contains a
\emph{\del \ copy of $S$} or that $S$ is \emph{\del-embedded} into $T$ if there exists a \del-embedding of $S$ into
$T$.
\end{definition}

Every embedding $f$ from $S$ into $T$ gives rise to the continuous injective function
\[
\barr{f}: [S] \to [T]: \barr{f}(\alpha) = \cup_{\n} f(\langle \alpha(0), \dots, \alpha(n-1)\rangle).
\]
If $S$ and $T$ are recursive and $f$ is \del-recursive then $\barr{f}$ is extended to a \del-recursive function $\pi:
\ca{N} \to \ca{N}$. It follows from Lemma \ref{lemma bodyT less than bodyS implies Nt less than Ns} that if $T$ contains
a \del \ copy of $S$ then $\spat{S} \dleq \spat{T}$ for all recursive trees $S$ and $T$. The converse fails in general. To see this consider a Kleene tree $T$ and take $S = T \cup \set{0^{(n)}}{\n}$. This also shows that the converse of Lemma \ref{lemma bodyT less than bodyS implies Nt less than Ns} fails in general as well.

We will prove a partial converse to the previous assertion. To be more specific we will show that if $\spat{2^{< \om}}
\dleq \spat{T}$ then $T$ contains a \del \ copy of $2^{< \om}$. Since $\spat{2^{< \om}}$ is \del \ isomorphic to
$\ca{C}$ the condition  $\spat{2^{< \om}} \dleq \spat{T}$ is equivalent to $\ca{N} \dequal \spat{T}$. Therefore our result provides a characterization of the relation $\ca{N} \dequal \spat{T}$ in terms of the intrinsic
properties of $T$.

We are going to use a measure-theoretic argument. Consider $2 = \{0,1\}$ with the measure $\nu(\{0\}) = \nu(\{1\}) = \frac{1}{2}$ and let $\mu$ be the product measure on the Borel subsets of $2^\om$. The measure $\mu$ is the natural Borel probability measure on $\ca{C}$. It is easy to see that the measure $\mu$ is \emph{non-atomic}, \ie $\mu(\{\alpha\}) = 0$ for all $\alpha \in \ca{C}$. The key tool is the following result of Tanaka.

\begin{theorem}[Tanaka \cf \cite{tanaka_a_basis_result_for_pii_sets_of_positive_measure}]
\label{theorem Tanaka computing the complexity}
For every recursively presented metric space \ca{X} and every \sig \ set $P \subseteq \ca{X} \times \ca{C}$ the set $Q \subseteq \ca{X} \times \om^2$ defined by
\[
Q(x,k,m) \iff \mu(P_x) > m \cdot (k+1)^{-1}
\]
is \sig.\smallskip

In particular if the preceding $P$ is \del \ then the set $Q$ is \del \ as well.
\end{theorem}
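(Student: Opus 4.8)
The plan is to reduce the statement to the standard fact that the measure of a $\tboldsymbol{\Sigma}^1_1$ set is computable relative to the parameter, made effective by using a $\Pi^0_1$ presentation of the Cantor space and the tree representation of $\sig$ sets. First I would reduce to the case $\ca{X}=\ca{N}$. By Theorem \ref{theorem alltogether}-(\ref{theorem 3E.6}) fix a recursive surjection $\pi:\ca{N}\surj\ca{X}$ together with a $\Pi^0_1$ set $A\subseteq\ca{N}$ on which $\pi$ is injective with $\pi[A]=\ca{X}$, and a \del-injection $\pi^{-1}:\ca{X}\to A$. Then $P_x=(\pi\times{\rm id})^{-1}[P]_{\pi^{-1}(x)}\cdot$ needs a little care; the cleaner route is to pull $P$ back along $\pi\times{\rm id}_{\ca{C}}$ to a \sig\ set $\tilde P\subseteq\ca{N}\times\ca{C}$ and note $\mu(P_x)=\mu(\tilde P_{\alpha})$ for any $\alpha$ with $\pi(\alpha)=x$. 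Since we may take $\alpha=\pi^{-1}(x)$, and $\pi^{-1}$ is \del-recursive, the relation $Q(x,k,m)$ becomes a \del-substitution instance of the corresponding relation $\tilde Q(\alpha,k,m)$ for $\tilde P$, so it suffices to prove the theorem over $\ca{N}$.

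Now, over $\ca{N}$, given a \sig\ set $P\subseteq\ca{N}\times\ca{C}$, use the tree representation of $\sig$ sets: there is a recursive map assigning to each pair $(\alpha,\varepsilon)$ a tree on $\om$ so that $(\alpha,\varepsilon)\in\ca{C}$ lies in $P$ iff a certain associated tree is ill-founded; more convenient here is the \emph{normal form} $P(\alpha,\varepsilon)\iff(\exists\beta)R(\alpha,\varepsilon,\beta)$ with $R$ recursive (in fact $\Pi^0_1$). For each $\alpha$ and each finite binary string $s$, let $P^s_\alpha=\set{\varepsilon\in\ca{C}}{s\sqsubseteq\varepsilon\ \&\ \varepsilon\in P_\alpha}$. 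The key step is the approximation formula
\[
\mu(P_\alpha)>q\iff(\exists n)(\exists\ \textrm{finitely many strings}\ s_1,\dots,s_k\ \textrm{of length}\ n)\Big[\sum_{j}2^{-n}>q\ \&\ \bigwedge_j(\exists\beta)(\exists\varepsilon)[s_j\sqsubseteq\varepsilon\ \&\ R(\alpha,\varepsilon,\beta)]\Big],
\]
i.e. $\mu(P_\alpha)$ exceeds a rational $q$ exactly when one can cover a union of basic clopen sets of total measure $>q$, each of which meets $P_\alpha$. The nontrivial direction ($\Leftarrow$ gives only $\mu(P_\alpha)\ge q$ in the limit, so one needs to interpolate a strictly smaller rational; the $\Rightarrow$ direction uses regularity of $\mu$ from outside by compact sets, or simply that for a $\tboldsymbol{\Sigma}^1_1$ set of measure $>q$ one can find finitely many small cylinders each hitting it with total measure still $>q$). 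The point is that the right-hand side is an existential-number-quantifier over a \sig\ matrix (the inner clauses $(\exists\beta)(\exists\varepsilon)[\dots]$ are $\sig$ in $\alpha$ since $R$ is recursive), hence \sig\ in $(\alpha,q)$; unravelling $q=m(k+1)^{-1}$ gives that $Q$ is \sig.

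I expect the main obstacle to be getting the measure-approximation equivalence exactly right in the effective setting: one must be careful that "a cylinder meets $P_\alpha$" is genuinely \sig\ (which it is, being a projection of $P$) rather than something requiring $P$ to be closed, and that one approximates $\mu(P_\alpha)$ from \emph{below} by open covers of complements or, equivalently, from below by the measures of the unions of cylinders hitting $P_\alpha$ — here one uses that $P_\alpha$ is $\tboldsymbol{\Sigma}^1_1$, hence Lebesgue measurable, so inner and outer approximations agree, and that an analytic set of positive measure already has a basic cylinder in which it has measure proportionally close to full (a Lebesgue density argument). Once this quantitative lemma is in place, the conversion to the pointclass statement is routine bookkeeping with the recursive coding of finite sets of strings. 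The final sentence of the theorem, that $Q$ is \del\ when $P$ is \del, follows because then the complement $\neg P$ is also \sig, and $\mu(P_x)\le m(k+1)^{-1}\iff\mu((\neg P)_x)\ge 1-m(k+1)^{-1}$, which by the first part is again \sig; so $Q$ and its complement are both \sig, i.e. $Q\in\del$.
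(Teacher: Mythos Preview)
The paper does not prove this theorem; it is cited from Tanaka and used as a black box. Evaluating your proposal on its own merits, there is a genuine gap.

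Your central approximation formula is false. The right-hand side asserts the existence of cylinders of total measure $>q$ each of which \emph{meets} $P_\alpha$; but a cylinder meeting $P_\alpha$ says nothing about the measure of $P_\alpha$ inside it. For a concrete counterexample, let $P_\alpha$ be any countable dense $\Sigma^1_1$ subset of $\ca{C}$ (say the eventually-zero sequences): every basic cylinder meets $P_\alpha$, so your right-hand side holds for every $q<1$, yet $\mu(P_\alpha)=0$. The Lebesgue-density remark does not repair this; density concerns cylinders in which $P_\alpha$ has nearly full relative measure, not cylinders that merely intersect it. In effect you have written down an outer Jordan--content bound, which for sets without interior bears no relation to $\mu(P_\alpha)$.

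The actual argument uses inner approximation. By capacitability of analytic sets, $\mu(P_\alpha)>q$ iff there is a compact $K\subseteq P_\alpha$ with $\mu(K)>q$. The subtlety is that ``$K\subseteq P_\alpha$'' is prima facie $\Pi^1_1$ (a universal quantifier over a $\Sigma^1_1$ condition). One circumvents this by producing a continuous \emph{witness}: write $P=\exists^{\ca{N}}C$ with $C$ in $\Pi^0_1$; by measurable uniformization and Lusin's theorem there is a compact $K'$ with $\mu(K')>q$ and a continuous $g:\ca{C}\to\ca{N}$ such that $(x,\varepsilon,g(\varepsilon))\in C$ for all $\varepsilon\in K'$. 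Conversely, for any continuous $g$ the set $\{\varepsilon:(x,\varepsilon,g(\varepsilon))\in C\}$ is a $\Pi^0_1$ subset of $P_x$, and ``its measure exceeds $q$'' is $\Sigma^0_2$ in the code of $g$ and in $x$. Hence $\mu(P_x)>q$ is $\exists\gamma[\text{arithmetical}]$, i.e.\ $\Sigma^1_1$.

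Your reduction to $\ca{X}=\ca{N}$ is fine, and your argument for the $\Delta^1_1$ clause is essentially correct (the passage from strict to non-strict inequality is handled by closure of $\Sigma^1_1$ under countable intersection).
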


Sacks \cite{sacks_measure_theoretic_uniformity} and Tanaka \cite{tanaka_a_basis_result_for_pii_sets_of_positive_measure} have proved the similar version for the classes $\Sigma^0_n$. The corresponding category result for the classes $\Sigma^0_n$ and $\sig$ has been proved by Kechris \cite{kechris_measure_and_category_in_effective_descriptive_set_theory} (see also 4F.19 in \cite{yiannis_dst}).\footnote{Here it is worth noting the following related result of Tanaka \cite{tanaka_a_basis_result_for_pii_sets_of_positive_measure} - Sacks \cite{sacks_measure_theoretic_uniformity}: if \ca{X} is a recursively presented metric space, $\mu$ is a $\sigma$-finite measure on \ca{X} and $P \subseteq \ca{X}$ is \pii \ with $\mu(P) > 0$, then $P$ contains a \del \ member. The category version of this result has been proved by Thomason \cite{thomason_the_forcing_method_and_the_upper_lattice_of_hyperdegrees} - Hinman \cite{hinman_some_applications_of_forcing_to_hierarchy_problems_in_arithmetic}, see also 4F.20 \cite{yiannis_dst}. To illustrate the potential of these results, consider the well-known theorem of Feferman \cite{feferman_some_applications_of_the_notions_of_forcing_and_generic_sets} that there exists some $\alpha \in \del \cap \ca{C}$ such that $\{\alpha\}$ is not $\Pi^0_1$. Let $G \subseteq \om \times \ca{C}$ be $\Pi^0_1$ and universal for $\Pi^0_1 \upharpoonright \ca{C}$ and define the \del \ set
\[
P(e) \iff \mu(G_e) = 0.
\]
The set $M=\cup_{e \in P} G_e$ is also \del \ and has zero-measure. Therefore its complement contains a \del \ member $\alpha$. If it were $\{\alpha\} = G_e$ for some $e$, then $G_e$ would have zero-measure and so $e$ would be a member of $P$. In particular we would have $\alpha \in M$, a contradiction. So $\{\alpha\}$ is not $\Pi^0_1$ and we have proved the preceding result of Feferman.}

\begin{theorem}
\label{theorem characterization of isomorphic to the Baire space}
For every recursive tree $T$ on \om \ the space \spat{T} is \del \ isomorphic to the Baire space if and only if the complete binary tree $2^{< \om}$ is \del \ embedded into $T$.
\end{theorem}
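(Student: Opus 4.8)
The plan is to handle the two implications separately; the reverse one is essentially recorded already, and all the work is in the forward direction.

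\emph{Reverse implication.} Suppose there is a \del-embedding $f$ of $2^{<\om}$ into $T$. As noted after Definition~\ref{definition of del-embedding between trees}, $f$ induces a continuous injection $[2^{<\om}]\to[T]$ which extends to a \del-recursive $\pi\colon\ca N\to\ca N$ that is injective on $[2^{<\om}]$ and carries it into $[T]$, so Lemma~\ref{lemma bodyT less than bodyS implies Nt less than Ns} gives $\spat{2^{<\om}}\dleq\spat{T}$. Since $\ca N\dequal\ca C\dequal\spat{2^{<\om}}$ and $\spat T\dleq\ca N$ by Theorem~\ref{theorem all between om and N}, the antisymmetry of $\dleq$ (effective Schr\"oder--Bernstein) yields $\spat T\dequal\ca N$. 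So the content of the theorem is the forward direction.

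\emph{Forward implication: setup.} Assume $\spat T\dequal\ca N$. Then $\spat{2^{<\om}}\dleq\spat T$, hence $\ca C\dleq\spat T$; fix a \del-injection $g\colon\ca C\inj\spat T$. As $\spat T$ is uncountable, $[T]$ is uncountable and $T$ is infinite, so $\emptyset\in T$. Since $g$ is injective and $T$ is countable, $g^{-1}[T]$ is countable, hence $\mu$-null, where $\mu$ is the standard non-atomic Borel probability measure on $\ca C$; thus $g(\alpha)\in[T]$ for $\mu$-almost all $\alpha$. For $v\in T$ write $N(T,v)=\{x\in\spat T\colon v\sqsubseteq x\}$ and set $\mu^{*}(v)=\mu\bigl(g^{-1}[N(T,v)]\bigr)$; note $\mu^{*}(\emptyset)=1$. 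The set $\{(t,\alpha)\colon g(\alpha)\in N(T,\dec t)\}$ is \del\ in $\om\times\ca C$, because $g$ is \del-recursive and each $N(T,v)$ is $\Delta^0_1$ in $\spat T$ uniformly in $v\in T$ (a standard computation; \cf Lemma~\ref{lemma transition between nhbds Nt and N}), so by Tanaka's Theorem~\ref{theorem Tanaka computing the complexity} the relation ``$\mu^{*}(v)>q$'' is \del\ for every rational $q>0$. The combinatorial core is a branching lemma: \emph{for every $v\in T$ with $\mu^{*}(v)>0$ there are $w\sqsupseteq v$ in $T$ and two distinct immediate successors $\cn{w}{\langle j_0\rangle},\cn{w}{\langle j_1\rangle}$ of $w$ in $T$ with $\mu^{*}(\cn{w}{\langle j_i\rangle})>0$ for $i=0,1$}. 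Indeed $\mu^{*}(w')=\sum_{j\,:\,\cn{w'}{\langle j\rangle}\in T}\mu^{*}(\cn{w'}{\langle j\rangle})$ for every $w'\in T$ (the set of $\alpha$ with $g(\alpha)$ equal to the finite sequence $w'$ is at most a singleton, hence null); if the conclusion failed, then from $w_0=v$ we could recursively pick the unique immediate successor $w_{n+1}$ of $w_n$ in $T$ of positive $\mu^{*}$-measure, obtaining $\beta=\bigcup_n w_n\in[T]$ with $\mu(\{\alpha\colon g(\alpha)=\beta\})=\lim_n\mu^{*}(w_n)=\mu^{*}(v)>0$, contradicting that $g^{-1}(\{\beta\})$ has at most one element.

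\emph{Forward implication: construction.} I would then define $f\colon2^{<\om}\to T$ together with rationals $q_u>0$, by recursion on $|u|$, maintaining the invariant $\mu^{*}(f(u))>q_u$: put $f(\emptyset)=\emptyset$, $q_\emptyset=1/2$; given $f(u),q_u$, the branching lemma applied to $f(u)$ supplies $w\sqsupseteq f(u)$ in $T$, distinct successors $\cn{w}{\langle j_0\rangle},\cn{w}{\langle j_1\rangle}\in T$ with $j_0<j_1$, and a rational $q'>0$ with $\mu^{*}(\cn{w}{\langle j_i\rangle})>q'$ for $i=0,1$; taking the least such tuple $(w,j_0,j_1,q')$ under a fixed coding, set $f(\cn{u}{\langle 0\rangle})=\cn{w}{\langle j_0\rangle}$, $f(\cn{u}{\langle 1\rangle})=\cn{w}{\langle j_1\rangle}$, $q_{\cn{u}{\langle 0\rangle}}=q_{\cn{u}{\langle 1\rangle}}=q'$. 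Each $f(\cn{u}{\langle i\rangle})$ properly extends $f(u)$, the two siblings are incompatible, and the invariant persists; passing to longest common initial segments, these two facts make $f$ an embedding of $2^{<\om}$ into $T$ in the sense of Definition~\ref{definition of del-embedding between trees}. For \del-recursiveness, the selection predicate ``$(w,j_0,j_1,q')$ is the least tuple with $w\sqsupseteq v$, $w,\cn{w}{\langle j_0\rangle},\cn{w}{\langle j_1\rangle}\in T$, $j_0<j_1$, $q'>0$ rational and $\mu^{*}(\cn{w}{\langle j_i\rangle})>q'$ for $i=0,1$'' is \del\ (each conjunct is \del, and least-witness selection over $\om$ preserves \del), so the step function is \del-recursive; hence the graph of $u\mapsto(f(u),q_u)$ is \sig\ --- witnessed by finite computation sequences each of whose steps is verified by a \del\ relation --- and, being the graph of a total function, is \del. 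So $f$ is a \del-embedding of $2^{<\om}$ into $T$, as required.

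\emph{Main obstacle.} I expect the delicate point to be exactly this last effectivity issue: a naive greedy recursion using the predicate ``$\mu^{*}(v)>0$'' directly would only be \sig, not \del, and would not produce a \del-recursive $f$. Carrying explicit positive rational lower bounds $q_u$ along the recursion is what forces every measure comparison entering the step function into the \del-decidable form ``$\mu^{*}(v)>q$'' delivered by Tanaka's theorem, and the branching lemma is precisely what guarantees such rational witnesses exist at every stage.
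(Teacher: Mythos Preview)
Your proof is correct and follows the same measure-theoretic route as the paper: pull back the basic neighborhoods $V_u$ along a \del-injection $g\colon\ca C\inj\spat T$, use Tanaka's theorem to make the measure comparisons effective, prove a branching lemma by the ``unique-successor chain contradicts non-atomicity'' argument, and build the embedding by primitive recursion on $2^{<\om}$.

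The one point where you diverge from the paper is the device of carrying rational lower bounds $q_u$, which you introduce because you expect ``$\mu^*(v)>0$'' to be merely $\Sigma^1_1$. In fact this predicate is already \del: Tanaka gives that ``$\mu^*(v)>q$'' is \del\ for each rational $q>0$ (since the underlying set is \del), and then $\mu^*(v)>0\iff(\exists n)\,\mu^*(v)>(n+1)^{-1}$ is \del\ by closure of \del\ under number quantification. The paper exploits this directly: it forms the \del\ set $A=\{t:\dec t\in T\ \&\ \mu(P_{\dec t})>0\}$, selects for each $t\in A$ the lexicographically least pair of codes of incompatible proper extensions lying in $A$, and builds the embedding by primitive recursion from this \del\ selection function (invoking 7A.3 of \cite{yiannis_dst}). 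Your rational-witness bookkeeping is not wrong, just unnecessary --- the obstacle you anticipated does not arise.
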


\begin{proof}
If the complete binary tree is \del-embedded into $T$ then from the preceding remarks it follows that $\spat{T}$ is \del-isomorphic to \ca{N}, so we need to prove the converse. Assume that $\spat{T}$ is \del-isomorphic to \ca{N} and let $f: \ca{C} \inj \spat{T}$ be a \del-injection. For all $u \in T$ define
\[
V_u = \set{x \in \spat{T}}{u \sqsubseteq x} \quad \textrm{and} \quad P_u = f^{-1}[V_u].
\]
Clearly we have that
\[
V_u = \cup_{\cn{u}{(n)} \in T} V_{\cn{u}{(n)}} \cup \{u\},
\]
and by taking the inverse images
\begin{align}
\label{equation theorem  characterization of isomorphic to the Baire space A} P_{u} = \cup_{\cn{u}{(n)} \in T} P_{\cn{u}{(n)}} \cup f^{-1}[\{u\}].
\end{align}

The relation $Q \subseteq \om \times \ca{C}$ defined by
\[
Q(t,\alpha) \iff f(\alpha) \in V_{\dec{t}}
\]
(so that the $t$-section of $Q$ is the set $P_{\dec{t}}$) is \del \ since
\[
f(\alpha) \in V_{\dec{t}} \iff d^T(\dense^T(r_t),f(\alpha)) \leq (\lh(\dec{t})+1)^{-1}.
\]
Therefore from Theorem \ref{theorem Tanaka computing the complexity} the relation
\[
R(t) \iff \mu(Q_t) > 0 \iff \mu(P_{\dec{t}}) > 0
\]
is \del \ as well.

\emph{Claim.} For all $u \in T$ with $\mu(P_u) > 0$ there exist $v, w \in T$ incompatible proper extensions of $u$ such that $\mu(P_v), \mu(P_w) > 0$.

\emph{Proof of the Claim.} Suppose that $c=\mu(P_u) > 0$ for some $u \in T$ and assume towards a contradiction that the conclusion of the Claim fails at $u$. Clearly the conclusion fails at every extension of $u$ as well.

Since $f$ is injective and $\mu$ is non-atomic the set $f^{-1}[\{u\}]$ has zero measure. We notice that the quantification $\cn{u}{(n)} \in T$ in the equality (\ref{equation theorem  characterization of isomorphic to the Baire space A})  is not vacuous, \ie $u$ is not terminal in $T$, for otherwise $P_u$ would be the set $f^{-1}[\{u\}]$, which has zero measure contradicting our hypothesis about $P_u$. By applying the measure $\mu$ in (\ref{equation theorem  characterization of isomorphic to the Baire space A}) we have that
\[
\mu(P_u) \leq \sum_{\cn{u}{(n)} \in T} \mu(P_{\cn{u}{(n)}}).
\]
(In fact we have equality above since the $P_{\cn{u}{(n)}}$'s are pairwise disjoint from the injectiveness of $f$. We will only use the inequality though, for reasons that will become clear when we reach Theorem \ref{definition of reasonable measure}.) Using that $\mu(P_u) > 0$ there exists some $n_0$ such that $\cn{u}{(n_0)} \in T$ and $\mu(P_{\cn{u}{(n_0)}}) > 0$.

Since the conclusion fails at $u$ we have in particular that $\mu(P_{\cn{u}{(n)}}) = 0$ for all $n \neq n_0$ for which $\cn{u}{(n)} \in T$ -without of course excluding the possibility that no $n \neq n_0$ with $\cn{u}{(n)} \in T$ exists. In any case we have that
\[
\mu(P_u) \leq \sum_{\cn{u}{(n)} \in T} \mu(P_{\cn{u}{(n)}}) = \mu(P_{\cn{u}{(n_0)}}) \leq \mu(P_u),
\]
hence $\mu(P_{\cn{u}{(n_0)}}) = \mu(P_u) = c$.

Since the conclusion of the Claim fails at every extension of $u$ by proceeding inductively we construct an infinite sequence of natural numbers  $\gamma = (n_k)_{k \in \om}$ such that $\cn{u}{(n_0, \dots,n_k)} \in T$ and $\mu(P_{\cn{u}{(n_0, \dots,n_k)}}) = c$ for all $k$. Since $\cap_k V_{\cn{u}{\gamma \upharpoonright k}} = \{\cn{u}{\gamma}\}$ we have that $\cap_k P_{\cn{u}{\gamma \upharpoonright k}} = f^{-1}[\{\cn{u}{\gamma}\}]$. Using that $f$ is injective it follows that $\mu(\cap_k P_{\cn{u}{\gamma \upharpoonright k}}) = 0$. On the other hand since the sequence of sets $(P_{\cn{u}{\gamma \upharpoonright k}})_{k}$ is obviously decreasing we have that
\[
\mu(\cap_k P_{\cn{u}{\gamma \upharpoonright k}}) = \lim_k \mu(P_{\cn{u}{\gamma \upharpoonright k}}) = c > 0,
\]
a contradiction. This completes the proof of the Claim.

We define the set $A \subseteq \om$ by
\[
t \in A \iff \dec{t} \in T \ \& \ \mu(P_{\dec{t}}) > 0.
\]
The set $A$ is a non-empty \del \ subset of \om. From the Claim we have that
\begin{align*}
t \in A \Longrightarrow& (\exists s, p \in A)[\textrm{$\dec{s}, \dec{p}$ are incompatible proper}\\
                   &\hspace*{19mm}\textrm{extensions of $\dec{t}$ of equal length}]
\end{align*}
see Figure \ref{fig:binarytree}.
\begin{figure}[t]
\begin{picture}(300,160)(0,-5)
\put(100,135){$T$}
\put(139,135){\vdots}
\put(125,115){$u$}
\put(145,115){\Small{$\mu >0$}}
\thicklines
\put(140,130){\vector(0,-1){30}}
\thinlines
\put(140,100){\vector(0,-1){20}}
\put(140,80){\vector(0,-1){20}}
\put(140,60){\vector(0,-1){20}}
\thicklines
\put(140,130){\line(0,-1){90}}
\thinlines
\put(140,100){\vector(1,-1){20}}
\put(140,80){\vector(-1,-1){20}}
\thicklines
\put(140,40){\vector(-1,-1){30}}
\put(140,40){\vector(1,-1){30}}
\thinlines
\put(158,65){\vdots}
\put(165,67){\Small{$\mu = 0$}}
\put(118,45){\vdots}
\put(90,47){\Small{$\mu = 0$}}
\put(108,-5){\vdots}
\put(80,-3){\Small{$\mu >0 $}}
\put(115,25){$v$}
\put(168,-5){\vdots}
\put(175,-3){\Small{$\mu >0 $}}
\put(160,25){$w$}
\end{picture}
\caption{Building the complete binary tree inside $T$.}
\label{fig:binarytree}
\end{figure}
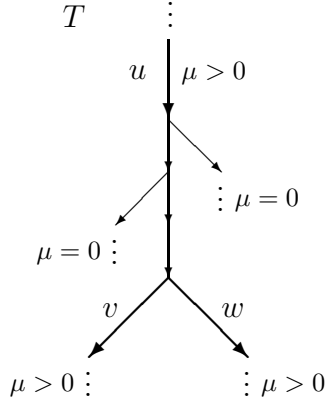

We define the function $f: A \to \om^2$ as follows
\begin{align*}
f(t) =& \ \textrm{the lexicographically least} \ (s,p) \in A^2 \ \textrm{such that \dec{s},
\dec{p} are}\\
      & \ \textrm{proper incompatible extensions of \dec{t} of equal length}.
\end{align*}
We extend $f$ to \om \ by defining $f(t) =(0,0)$ for $t \not \in A$. Since $A$ is \del \ it is not hard to verify that
the function $f$ is \del-recursive. Let $l, r: \om \to \om$ be the \del-recursive functions for which $f =
(l,r)$. We define the function $\varphi: 2^{<\om} \to \om$ by the following recursion
\begin{align*}
\varphi(\emptyset) &= 1 \ \textrm{(the code of the empty sequence)}\\
\varphi(\cn{u}{(0)}) &= l(\varphi(u))\\
\varphi(\cn{u}{(1)}) &= r(\varphi(u))
\end{align*}
and finally
\[
g: \om \to \om: g(t) = \begin{cases}
                       \varphi(\dec{t}), & \ \textrm{if} \ \dec{t} \in 2^{< \om}\\
                       0, & \ \textrm{otherwise}.
                       \end{cases}
\]
Using the closure of \del-recursive functions under primitive recursion (\cf 7A.3 \cite{yiannis_dst}) it is not hard to verify that $g$ is \del-recursive. Moreover it is clear from the definition that $g$ is an embedding of $2^{< \om}$ into $T$.
\end{proof}

The following can be considered as an effective analogue to the statement that every uncountable Polish space contains a homeomorphic copy of $\ca{C}$.

\begin{corollary}
\label{corollary del injection from C to X implies continuous del injection}
For all recursively presented metric spaces \ca{X} if there exists a \del-injection $f: \ca{C} \inj \ca{X}$ then there exists a continuous \del-injection $\tau: \ca{C} \inj \ca{X}$.
\end{corollary}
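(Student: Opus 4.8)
The plan is to reduce to a concrete space $\spat{T}$ and then to invoke the characterization of being \del-isomorphic to the Baire space in terms of \del-embeddings of the complete binary tree. First I would note that the hypothesis gives $\ca{C} \dleq \ca{X}$; since $\ca{X} \dleq \ca{N}$ by Theorem \ref{theorem all between om and N} and $\ca{C} \dequal \ca{N}$, the effective Schr\"{o}der--Bernstein theorem (\cf the proof of 3E.7 in \cite{yiannis_dst}) yields $\ca{X} \dequal \ca{N}$. By Theorem \ref{theorem Nt del isomorphism} I may then fix a recursive tree $T$ together with a \del-isomorphism $h : \spat{T} \bij \ca{X}$ that agrees on $[T]$ with a recursive function $\pi : \ca{N} \to \ca{X}$; in particular $\pi$ is injective on $[T]$, and $\spat{T} \dequal \ca{X} \dequal \ca{N}$.

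Since $\spat{T}$ is \del-isomorphic to the Baire space, Theorem \ref{theorem characterization of isomorphic to the Baire space} provides a \del-embedding $g : 2^{<\om} \to T$ of the complete binary tree into $T$. As recorded in the discussion following Definition \ref{definition of del-embedding between trees}, such a $g$ induces a continuous injection $\barr{g} : \ca{C} = [2^{<\om}] \to [T]$, and because $g$ is \del-recursive and both $2^{<\om}$ and $T$ are recursive, $\barr{g}$ is the restriction to $\ca{C}$ of a \del-recursive function on $\ca{N}$. (Note that the metric $d^T$ restricted to $[T]$ is the usual metric of $\ca{N}$ restricted to $[T]$, so continuity of $\barr{g}$ into $\spat{T}$ and into $\ca{N}$ amount to the same thing; alternatively one invokes Lemma \ref{lemma the space Nt is embedded in the Baire space}.)

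Finally I would set $\tau := \pi \circ \barr{g} : \ca{C} \to \ca{X}$; since $\barr{g}$ takes values in $[T]$ and $h$ agrees with $\pi$ there, this equals $h \circ \barr{g}$. The map $\tau$ is continuous, being a composition of the continuous maps $\barr{g}$ and $\pi$; it is \del-recursive, being the composition of a \del-recursive function with a recursive one; and it is injective, because $\barr{g}$ is injective with range contained in $[T]$ and $\pi$ is injective on $[T]$. Thus $\tau$ is a continuous \del-injection of $\ca{C}$ into $\ca{X}$, as desired.

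The proof is an assembly of earlier results and I do not expect a genuine obstacle. The only steps needing care are: (i) upgrading $\ca{C} \dleq \ca{X}$ to $\spat{T} \dequal \ca{N}$ so that Theorem \ref{theorem characterization of isomorphic to the Baire space} applies, which is exactly what makes an honest \del-embedding $2^{<\om} \to T$ available (rather than a mere inequality $\spat{2^{<\om}} \dleq \spat{T}$), and whose proof rests on Tanaka's measure-theoretic result; and (ii) using the refined form of Theorem \ref{theorem Nt del isomorphism} to guarantee that the \del-isomorphism $h$ restricts to a genuinely continuous map on $[T]$, which is what keeps $\tau$ continuous.
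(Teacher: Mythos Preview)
Your proposal is correct and follows essentially the same route as the paper: reduce to a space $\spat{T}$ via Theorem~\ref{theorem Nt del isomorphism} (using its refined form so that the isomorphism agrees with a recursive $\pi$ on $[T]$), apply Theorem~\ref{theorem characterization of isomorphic to the Baire space} to get a \del-embedding of $2^{<\om}$ into $T$, pass to the induced continuous \del-injection $\barr{g}:\ca{C}\to[T]$, and compose with $\pi$. The only cosmetic difference is that you spell out the Schr\"oder--Bernstein step to conclude $\spat{T}\dequal\ca{N}$ before invoking Theorem~\ref{theorem characterization of isomorphic to the Baire space}, whereas the paper simply notes that the hypothesis yields a \del-injection $\ca{C}\inj\spat{T}$ and applies the theorem directly.
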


\begin{proof}
If $\ca{X} = \spat{T}$ for some recursive tree $T$ then the conclusion is immediate from Theorem \ref{theorem characterization of isomorphic to the Baire space}, for if $f: \om \to \om$ is a \del \ embedding of $2^{<\om}$ into $T$ then the function $\barr{f}$ defined before, \ie
\[
\barr{f}: \ca{C} \to [T]: \barr{f}(\alpha) = \cup_{\n} f(\langle \alpha(0), \dots, \alpha(n-1)\rangle)
\]
is injective, continuous and \del-recursive as a function from $\ca{C}$ to \spat{T}.

If \ca{X} is arbitrary we go back to the proof of Theorem \ref{theorem Nt del isomorphism} and we notice that there is
a co-countable \del \ set $C \subseteq \ca{X}$, a recursive (and thus continuous) function $\pi: \ca{N} \to \ca{X}$ and
a recursive tree $T$ such that $\pi$ is injective on $[T]$, $\pi[[T]] = C$ and $\ca{X} \dequal \spat{T}$. From the hypothesis there exists a \del-injection from $\ca{C}$ into \spat{T}. Hence as above there exists a continuous \del-injection $\barr{f}: \ca{C} \to [T]$. The result follows by taking $\tau$ to be
$\pi \circ \barr{f}$.
\end{proof}

\subsection{A characterization in terms of the perfect kernel} In the discussion preceding Theorem \ref{theorem kreisel scattered part} we suggested that it is necessary for the perfect kernel of a recursively presented metric space to contain \del \ points in order for this space to be \del-isomorphic to \ca{N}. With the help of the preceding corollary we can prove this claim and in fact give a characterization of the Baire space up to \del-isomorphism.

\begin{theorem}
\label{theorem characterization of Baire space in terms of perfect sets}
For every recursively presented metric space the following are equivalent.\smallskip

\tu{(1)} \ca{X} is \del-isomorphic to \ca{N}.\smallskip

\tu{(2)} \ca{X} contains a compact perfect $\Pi^0_1$ set $K$ such that $\del \cap K$ is dense in $K$.\smallskip

\tu{(3)} \ca{X} contains a perfect $\Pi^0_1$ set $F$ such that $\del \cap F$ is dense in $F$.\smallskip

\tu{(4)} \ca{X} contains a \del \ set $A$ with no isolated points such that $\del \cap A$ is dense in $A$.
\end{theorem}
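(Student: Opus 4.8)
The plan is to prove the cycle $(1)\Rightarrow(2)\Rightarrow(3)\Rightarrow(4)\Rightarrow(1)$. Two of the steps are immediate: a compact perfect $\Pi^0_1$ set is in particular a perfect $\Pi^0_1$ set, which gives $(2)\Rightarrow(3)$; and a perfect $\Pi^0_1$ set has no isolated points and is \del, so taking $A=F$ gives $(3)\Rightarrow(4)$. All the work is in $(1)\Rightarrow(2)$ and, above all, in $(4)\Rightarrow(1)$.

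For $(1)\Rightarrow(2)$ I would use Theorem \ref{theorem Nt del isomorphism} to write $\ca{X}\dequal\spat{T}$ for a recursive tree $T$, with the \del-isomorphism agreeing with a recursive $\pi\colon\ca{N}\to\ca{X}$ on $[T]$; since $\spat{T}\dequal\ca{N}$, Theorem \ref{theorem characterization of isomorphic to the Baire space} yields a \del-embedding $f$ of $2^{<\om}$ into $T$, and the branches $\barr{f}(\cn{\sigma}{0^\om})$ ($\sigma\in 2^{<\om}$) form a recursively indexed, hence \del, family of \del-points dense in the perfect Cantor set $\barr{f}[\ca{C}]\subseteq[T]$. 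From this one extracts, with some care, a \emph{recursive} finitely branching perfect subtree $S$ of $T$ with $\del\cap[S]$ dense in $[S]$; then $[S]$ is a compact perfect $\Pi^0_1$ subset of $\spat{T}$ (Lemma \ref{lemma pi-zero-one sets recursive tree}, Theorem \ref{theorem pointclasses of Nt}), and $K:=\pi[[S]]$ is compact and perfect (since $\pi\restriction[T]$ is a continuous injection), is $\Pi^0_1$ (since $\pi\restriction[S]$ is recursive and uniformly continuous on the recursively bounded compact set $[S]$, so that $\ca{X}\setminus K$ is semirecursive), and has $\del\cap K$ dense in $K$ (continuous images of dense sets are dense, and $\pi$ preserves \del-points). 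Securing a \emph{recursive} $S$, rather than a merely \del \ one, is the delicate point of this direction.

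For $(4)\Rightarrow(1)$: by Theorem \ref{theorem all between om and N} we have $\ca{X}\dleq\ca{N}$, and since $\ca{N}\dequal\ca{C}$ and $\dleq$ satisfies effective Schr\"oder--Bernstein (the remark following Definition \ref{definition of dleq}), it suffices to produce a \del-injection $\ca{C}\inj\ca{X}$. I would build a \del-recursive Cantor scheme $u\mapsto N_u$ of basic neighbourhoods of $\ca{X}$, indexed by finite binary strings $u$, together with \del-codes $u\mapsto j_u$ of \del-points $a_u:=\pdel(j_u)\in A\cap N_u$, such that (writing $ui$ for the string $u$ followed by the digit $i$) $\overline{N_{ui}}\subseteq N_u$ for $i=0,1$, $N_{u0}\cap N_{u1}=\emptyset$, $\operatorname{diam}(N_u)\le 2^{-\lh(u)}$, and $a_{u0}=a_u$. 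The splitting step at $u$ uses \emph{both} hypotheses on $A$: as $a_u\in A$ is not isolated in $A$ there is $a'\in A\cap N_u$ with $a'\neq a_u$; choosing a basic $V$ with $a'\in V\subseteq N_u$ and $a_u\notin\overline{V}$ and invoking the density of $\del\cap A$ in $A$, there is a \del-point $b\in\del\cap A\cap V$ with some \del-code $j$; now $a_u$ and $b$ are two distinct \del-points of $A$ inside $N_u$, so we may pick disjoint basic $N_{u0}\ni a_u$ and $N_{u1}\ni b$ with closures in $N_u$ and sufficiently small diameter, and set $j_{u0}=j_u$, $j_{u1}=j$. The map $\tau\colon\ca{C}\to\ca{X}$ sending $\alpha$ to the unique point of $\bigcap_{n}\overline{N_{\alpha\restriction n}}$ — which exists and is unique by completeness, the $\overline{N_{\alpha\restriction n}}$ being nested closed sets of vanishing diameter — is then a continuous \del-injection, so $\ca{C}\dleq\ca{X}$ and hence $\ca{X}\dequal\ca{N}$.

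The main obstacle is making the scheme above genuinely \del-recursive: the requirements driving the recursion, e.g.\ ``$j\in\Dom(\pdel)$'' and ``$\pdel(j)\in A$'', are only \pii, not semirecursive, so no straightforward unbounded search is available. I would handle this exactly as in the proof of Lemma \ref{lemma for every kleene tree there exists an incomparable kleene tree}: fix a regular \pii-norm on $\Dom(\pdel)$ and, at each stage, select the \del-point whose $\pdel$-code has least norm among those meeting the (finitely many) requirements; the accompanying \sig \ and \pii \ presentations of the norm relation then make the graph of the scheme a \del \ set, so the scheme is \del-recursive by the usual criterion. Finally, it is worth stressing that the density clause on $\del\cap A$ is precisely what confines the construction to the realm of \del-points (which are named through $\pdel$): it cannot be dropped, since in a Kleene space $\spat{T}$ with $[T]$ perfect the set $A=[T]$ is a non-empty \del \ set without isolated points while $\spat{T}\not\dequal\ca{N}$ by Theorem \ref{theorem counterexample to del with Baire (Kleene space)}; without density the relevant branching condition ``$A\cap N\neq\emptyset$'' would be merely \sig, and no \del-recursive scheme could exist.
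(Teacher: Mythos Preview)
Your cycle and the trivial implications $(2)\Rightarrow(3)\Rightarrow(4)$ match the paper exactly. The substantive steps differ in approach, and both of yours are workable, but the paper's are shorter.

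For $(4)\Rightarrow(1)$ the paper does \emph{not} build a Cantor scheme. Instead it observes that the density hypothesis makes the predicate $R(s)\iff N(\ca{X},s)\cap A\neq\emptyset$ not merely \sig\ but \del, since by Restricted Quantification $R(s)\iff(\exists y\in\del)[y\in N(\ca{X},s)\cap A]$. From this, $\overline{A}$ is a \del\ perfect set, and the \del-Uniformization Criterion (Theorem~\ref{theorem alltogether}-(\ref{theorem 4D.4})) yields a \del-recursive sequence $(x_s)_s$ with $x_s\in N(\ca{X},s)\cap\overline{A}$ whenever this set is nonempty. That sequence is an $\ep$-recursive presentation of the perfect space $(\overline{A},d)$ for some $\ep\in\del$, so $\overline{A}$ is $\del(\ep)$-isomorphic to \ca{N}; composing with the inclusion $\overline{A}\hookrightarrow\ca{X}$ and absorbing the \del\ parameter $\ep$ gives $\ca{N}\dleq\ca{X}$. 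Your direct scheme with \pii-norm selection also works (the one-step selection uniformizes a \pii\ relation on $\om\times\om$, hence is \del, and closure of \del-recursive functions under primitive recursion handles the recursion), but the paper's route avoids the bookkeeping entirely by reducing to the known perfect case via relativization.

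For $(1)\Rightarrow(2)$ the paper simply invokes Corollary~\ref{corollary del injection from C to X implies continuous del injection} to get a homeomorphic \del-copy of \ca{C} inside \ca{X} and declares this to be the required $K$. Your more explicit plan---pass to $\spat{T}$, take the \del-embedding $f$ of $2^{<\om}$ into $T$ from Theorem~\ref{theorem characterization of isomorphic to the Baire space}, then extract a \emph{recursive} finitely branching perfect subtree $S$---is a reasonable way to secure the lightface $\Pi^0_1$ conclusion, and you are right that this last extraction is the delicate point; the paper does not spell it out. Your remark at the end, that the density clause is exactly what upgrades the relevant branching predicate from \sig\ to \del\ (and that Kleene spaces witness its necessity), is precisely the mechanism the paper exploits, though the paper uses it at the level of $R(s)$ rather than inside a scheme.
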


If $K, F$ and $A$ are as above then the sets $K,F$ and $\overline{A}$ are contained in the perfect kernel of \ca{X}. Nevertheless it is not true in general that the perfect kernel of \ca{X} is necessarily one of $K,F$ and $\overline{A}$ because it might not be a \del \ set, \cf Remark \ref{remark perfect kernel not in del and isomorphic to Baire space}.

\begin{proof}
If $\ca{X} \dequal \ca{N}$ then from Corollary \ref{corollary del injection from C to X implies continuous del injection} the space \ca{X} contains a homeomorphic \del-copy of \ca{C}, so we proved $(1) \Longrightarrow (2)$. The implications $(2) \Longrightarrow (3) \Longrightarrow (4)$ are clear.

Suppose now that \ca{X} contains a \del \ set $A$ with no isolated points and that $\del \cap A$ is dense in $A$. The relation
\[
R(s) \iff N(\ca{X},s) \cap A \neq \emptyset
\]
is \del, since it is evidently \sig \ and from the density of \del \ in $A$ we have that
\[
R(s) \iff (\exists y \in \del)[ y \in N(\ca{X},s) \cap A].
\]
The closure $\overline{A}$ of $A$ is a \del \ set, since
\[
x \in \overline{A} \iff (\forall s)[x \in N(\ca{X},s) \longrightarrow R(s)].
\]
Hence $\overline{A}$ is a closed \del \ set. Moreover $\overline{A}$ does not have isolated points, for otherwise $A$ would have isolated points as well. It is also clear that \del \ is dense in $\overline{A}$.

From the uniformization property of \pii \ and the fact that \del \ is dense in $\overline{A}$ there exists a \del-recursive sequence $(x_s)_{s \in \om}$ such that $x_s \in N(\ca{X},s) \cap \overline{A}$ for all $s$ for which $N(\ca{X},s) \cap A \neq \emptyset$. It follows that the sequence $(x_s)_{s \in \om}$ is an $\ep$-recursive presentation of the complete metric space $(\overline{A},d)$ for some $\ep$ in \del, where $d$ is the restriction of the metric of \ca{X} on $\overline{A}$.

Summing up $(\overline{A},d)$ is a perfect $\ep$-recursively presented metric space. It follows that $(\overline{A},d)$ is $\del(\ep)$-isomorphic to the Baire space. Let $f: \ca{N} \inj \overline{A}$ be a $\del(\ep)$-injection\footnote{Notice that we do not replace $\del(\ep)$ with \del \ although \ep \ is a \del \ parameter. The reason for this, is because we need to include all parameters with respect to which the spaces admit a recursive presentation.}. The identity function ${\rm id}: \overline{A} \inj \ca{X}$ is easily a $\del(\ep)$-injection, and so the composition ${\rm id} \circ f: \ca{N} \inj \ca{X}$ is a $\del(\ep)$-injection as well. Since $\ep \in \del$ and the spaces \ca{N} and \ca{X} are recursively presented it follows that ${\rm id} \circ f$ is a \del-injection and we are done.
\end{proof}

\subsection{A measure-theoretic characterization} In the proof of Theorem \ref{theorem characterization of isomorphic to the Baire space} the hypothesis about $f$ being injective was used only to infer that the inverse image of a singleton under $f$ has zero measure. Therefore the condition that $f$ is \emph{countable-to-one} (\ie the inverse image of a singleton is a countable set) or even more general \emph{measure-zero-to-one} (\ie the inverse image of a singleton has zero measure) suffices for the conclusion. Notice also that when applying our hypothesis $\ca{C} \dleq \spat{T}$ we did not use any particular intrinsic properties about $\ca{C}$ besides the fact that it is a recursively presented metric space which admits a finite Borel measure satisfying some certain definability properties.

\begin{definition}
\label{definition of reasonable measure}\normalfont
We say that a Borel measure $\mu$ on a recursively presented metric space \ca{X} is \emph{reasonable} if
\begin{itemize}
\item[(a)] $\mu(\ca{X}) \neq 0$,
\item[(b)] $\mu(\{x\}) = 0$ for all $x \in \ca{X}$ and
\item[(c)] for all \del \ sets $P \subseteq \om \times \ca{X}$ the set $R \subseteq \om$ defined by
\[
R(t) \iff \mu(P_t) > 0
\]
is \del.
\end{itemize}
\end{definition}

\begin{theorem}
\label{theorem characterization of isomorphic to the Baire space generalization of}
Suppose that $T$ is a recursive tree, \ca{Y} is a recursively presented metric space and that $\mu$ is a finite reasonable Borel measure on \ca{Y}. If there exists a \del-recursive function $f: \ca{Y} \to \spat{T}$ which is also measure zero-to-one, then $T$ contains a \del \ copy of $2^{<\om}$.
\end{theorem}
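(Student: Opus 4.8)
The plan is to follow, essentially line by line, the proof of Theorem \ref{theorem characterization of isomorphic to the Baire space}, replacing $\ca{C}$ and its canonical measure by \ca{Y} and $\mu$, replacing Tanaka's Theorem \ref{theorem Tanaka computing the complexity} by clause (c) of Definition \ref{definition of reasonable measure}, and replacing the (three) appeals to injectivity of $f$ by the weaker hypothesis that $f$ is measure zero-to-one. First, for each $u \in T$ I would set $V_u = \set{x \in \spat{T}}{u \sqsubseteq x}$ and $P_u = f^{-1}[V_u]$, so that $P_{\empt} = \ca{Y}$ and, exactly as in the original proof,
\[
P_u = \bigcup_{\cn{u}{(n)} \in T} P_{\cn{u}{(n)}} \cup f^{-1}[\{u\}].
\]
The set $\set{(t,y)}{f(y) \in V_{\dec{t}}} \subseteq \om \times \ca{Y}$ is \del, since $f$ is \del-recursive and membership in $V_{\dec{t}}$ is a recursive condition on $t$ and a neighborhood code of $f(y)$, exactly as in the corresponding computation in the proof of Theorem \ref{theorem characterization of isomorphic to the Baire space}; hence by clause (c) of Definition \ref{definition of reasonable measure} the relation $R(t) \iff \mu(P_{\dec{t}}) > 0$ is a \del \ subset of \om. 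This is the point at which Tanaka's theorem was used in the original argument.

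Next I would re-prove the same splitting Claim: for every $u \in T$ with $\mu(P_u) > 0$ there are incompatible proper extensions $v, w \in T$ of $u$ with $\mu(P_v), \mu(P_w) > 0$. The only modification to the original proof of the Claim is the remark that, since $f$ is measure zero-to-one, $\mu(f^{-1}[\{x\}]) = 0$ for every $x \in \spat{T}$; this is precisely what injectivity of $f$ together with non-atomicity of $\mu$ provided before. With this in hand nothing else changes: $u$ cannot be terminal in $T$ (else $P_u = f^{-1}[\{u\}]$ would be null), applying $\mu$ to the displayed union gives $\mu(P_u) \le \sum_{\cn{u}{(n)} \in T} \mu(P_{\cn{u}{(n)}})$, and if the Claim failed at $u$ — hence at every extension of $u$ — one recursively constructs $\gamma \in \ca{N}$ with $\mu(P_{\cn{u}{\gamma \upharpoonright k}}) = \mu(P_u) =: c > 0$ for all $k$; continuity of $\mu$ along the decreasing sequence $(P_{\cn{u}{\gamma \upharpoonright k}})_k$ then yields $\mu(f^{-1}[\{\cn{u}{\gamma}\}]) = \mu(\bigcap_k P_{\cn{u}{\gamma \upharpoonright k}}) = c > 0$, contradicting measure zero-to-one.

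Finally I would set $A = \set{t}{\dec{t} \in T \ \& \ \mu(P_{\dec{t}}) > 0}$; this is a \del \ subset of \om, since $\set{t}{\dec{t} \in T}$ is recursive and $R$ is \del, and it is non-empty because $\empt \in T$ and $\mu(P_{\empt}) = \mu(\ca{Y}) \neq 0$ by clause (a) of Definition \ref{definition of reasonable measure}. The Claim — together with the routine observation that one may extend the two branches to have equal length while remaining in $A$ — shows that every $t \in A$ has incompatible proper extensions of equal length whose codes lie in $A$; picking the lexicographically least such pair gives a \del-recursive $h = (l,r) : \om \to \om$, and the primitive recursion $\varphi(\empt) = 1$, $\varphi(\cn{u}{(0)}) = l(\varphi(u))$, $\varphi(\cn{u}{(1)}) = r(\varphi(u))$ produces, by closure of \del-recursive functions under primitive recursion (7A.3 in \cite{yiannis_dst}), a \del-recursive embedding of $2^{<\om}$ into $T$, \ie a \del \ copy of $2^{<\om}$ in $T$ in the sense of Definition \ref{definition of del-embedding between trees}. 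I do not expect a genuine obstacle: the only new content over Theorem \ref{theorem characterization of isomorphic to the Baire space} is the bookkeeping check that clauses (a) and (c) of reasonableness, together with the hypothesis that $f$ is measure zero-to-one, supply exactly the three facts — the complexity of $R$, the non-nullity of $P_{\empt}$, and the nullity of the preimages of singletons under $f$ — that the original proof drew from Tanaka's theorem, non-atomicity, and injectivity respectively.
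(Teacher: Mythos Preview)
Your proposal is correct and follows exactly the approach of the paper, whose proof of this theorem reads in full ``The same as the proof of Theorem \ref{theorem characterization of isomorphic to the Baire space}.'' You have simply unpacked the three substitutions the paper flags in the paragraph preceding Definition \ref{definition of reasonable measure}: clause (c) for Tanaka's theorem, clause (a) for $\mu(P_\empt)>0$, and measure-zero-to-one for the nullity of $f^{-1}[\{x\}]$.
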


\begin{proof}
The same as the proof of Theorem \ref{theorem characterization of isomorphic to the Baire space}.
\end{proof}

\begin{corollary}
\label{corollary measure of the bodyT is zero}
For every recursive tree $T$ exactly one of the following cases is true.\smallskip

\begin{itemize}
\item[(a)] $T$ contains a \del \ copy of $2^{<\om}$.
\item[(b)] \spat{T} does not admit a finite reasonable Borel measure.
\end{itemize}\smallskip

Moreover in the case of \tu{(b)} we have $\mu([T]) = 0$ for all finite reasonable Borel measures $\mu$ on \ca{N}.
\end{corollary}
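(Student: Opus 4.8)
The plan is to derive the dichotomy directly from the two characterization theorems of this subsection. First I would check that (a) and (b) cannot both hold. If $T$ contains a \del \ copy of $2^{< \om}$, then by Theorem \ref{theorem characterization of isomorphic to the Baire space} we have $\spat{T} \dequal \ca{N} \dequal \ca{C}$, so it suffices to produce a finite reasonable Borel measure on \spat{T}. For this I would start from the coin-flipping measure $\mu$ on \ca{C} introduced just before Theorem \ref{theorem Tanaka computing the complexity}: it is a probability measure, it is non-atomic, and clause (c) of reasonableness for it is precisely Tanaka's Theorem \ref{theorem Tanaka computing the complexity} (taking the auxiliary space to be \om \ and using that \del \ is closed under number quantification to pass from ``$\mu(P_t) > m(k+1)^{-1}$'' to ``$\mu(P_t) > 0$''). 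I would then transport $\mu$ along a \del-isomorphism $h : \ca{C} \bij \spat{T}$ by setting $\nu(B) = \mu(h^{-1}[B])$ for Borel $B \subseteq \spat{T}$; clauses (a) and (b) of reasonableness are immediate, and for clause (c), given a \del \ set $P \subseteq \om \times \spat{T}$, the set $\set{(t,\alpha)}{(t,h(\alpha)) \in P}$ is again \del \ (the preimage of a \del \ set under a \del-recursive function is \del) and its $t$-section equals $h^{-1}[P_t]$, so reasonableness of $\mu$ gives that $\set{t}{\nu(P_t) > 0}$ is \del. Hence (a) rules out (b).

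Second, I would show that one of (a), (b) always holds by proving the contrapositive of ``not (b) implies (a)''. If (b) fails, i.e.\ \spat{T} carries a finite reasonable Borel measure $\mu$, I would simply apply Theorem \ref{theorem characterization of isomorphic to the Baire space generalization of} with $\ca{Y} = \spat{T}$, which is recursively presented by Theorem \ref{theorem properties of Nt}, and with $f$ the identity on \spat{T}: the identity is recursive, hence \del-recursive, and it is measure-zero-to-one since $\mu(\{x\}) = 0$ for all $x$ by clause (b) of reasonableness. The theorem then yields a \del \ copy of $2^{< \om}$ inside $T$, which is (a). Combined with the previous paragraph this establishes that exactly one of (a), (b) holds.

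Finally, for the ``moreover'' clause I would argue by contradiction: assume (b) holds and yet $\mu([T]) > 0$ for some finite reasonable Borel measure $\mu$ on \ca{N}. Since $d^T$ restricted to $[T]$ is the Baire-space metric, $[T]$ is a closed subset of \ca{N} which is also a subset of \spat{T} carrying the same subspace topology from either side, so $\nu(B) := \mu(B \cap [T])$ is a well-defined finite Borel measure on \spat{T} with $\nu(\spat{T}) = \mu([T]) > 0$ and $\nu(\{x\}) \le \mu(\{x\}) = 0$. For clause (c), given a \del \ set $P \subseteq \om \times \spat{T}$, the set $P \cap (\om \times [T])$ is \del \ in $\om \times \spat{T}$ (as $[T]$ is $\Pi^0_1$ in \spat{T} by Theorem \ref{theorem properties of Nt}) and hence, by Theorem \ref{theorem pointclasses of Nt}, also \del \ as a subset of $\om \times \ca{N}$; since $\nu(P_t) = \mu\big((P \cap (\om \times [T]))_t\big)$ and $\mu$ is reasonable, $\set{t}{\nu(P_t) > 0}$ is \del. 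Thus $\nu$ is a finite reasonable Borel measure on \spat{T}, contradicting (b), so $\mu([T]) = 0$.

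The main obstacle I expect is the effective bookkeeping behind clause (c) of reasonableness for the transported measure in the first paragraph and the restricted measure in the last: these rest on the standard fact that preimages of \del \ sets under \del-recursive functions are \del, on Tanaka's Theorem \ref{theorem Tanaka computing the complexity} for the base case on \ca{C}, and on the transfer principle of Theorem \ref{theorem pointclasses of Nt} relating \del \ subsets of $\om \times [T]$ inside \spat{T} with \del \ subsets of $\om \times \ca{N}$. The purely measure-theoretic content (countable additivity, finiteness, non-atomicity) is routine.
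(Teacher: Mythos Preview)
Your proof is correct and the first two parts match the paper's argument essentially line for line: (a) and (b) are mutually exclusive because a \del-isomorphism with $\ca{C}$ lets you push forward the coin-flipping measure, and the failure of (b) gives (a) by applying Theorem~\ref{theorem characterization of isomorphic to the Baire space generalization of} to the identity on \spat{T}.

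For the ``moreover'' clause you take a slightly different route from the paper. You build the restricted measure $\nu(B)=\mu(B\cap[T])$ directly on \spat{T} and then invoke the dichotomy you just established to reach a contradiction; the effective content of clause~(c) for $\nu$ is handled by passing through Theorem~\ref{theorem pointclasses of Nt} to view $P\cap(\om\times[T])$ as a \del \ subset of $\om\times\ca{N}$. The paper instead keeps the restricted measure $\mu_T(B)=\mu(B\cap[T])$ on \ca{N} (where reasonableness is immediate, since $[T]$ is $\Pi^0_1$ in \ca{N}) and re-applies Theorem~\ref{theorem characterization of isomorphic to the Baire space generalization of} via the \del-recursive map $f:\ca{N}\to\spat{T}$ sending $\alpha\in[T]$ to itself and everything else to $\emptyset$; this $f$ is $\mu_T$-measure-zero-to-one because $\mu_T(\ca{N}\setminus[T])=0$. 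Your route is a touch more conceptual (you use the dichotomy rather than re-invoking the underlying theorem), at the cost of the extra transfer step through Theorem~\ref{theorem pointclasses of Nt}; the paper's route stays entirely inside \ca{N} until the last moment. Both are short and correct.
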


\begin{proof}
First notice that (a) and (b) are exclusive, for if $T$ contains a \del \ copy of $2^{<\om}$ then $\spat{T}$ is \del-isomorphic to $\ca{C}$ and so it admits a finite reasonable Borel measure. Now if (b) is not true we consider the identity function ${ \rm id}: \spat{T} \inj \spat{T}$ and we apply Theorem \ref{theorem characterization of isomorphic to the Baire space generalization of} to infer that $T$ contains a \del \ copy of $2^{< \om}$.

For the second assertion assume that (b) is true and that $\mu$ is a finite reasonable Borel measure on \ca{N}. Suppose towards a contradiction that $\mu([T]) > 0$. It follows then that the function $\mu_T: \textrm{Borel}(\ca{N}) \to \R: \mu_T(B) = \mu(B \cap [T])$ is a finite reasonable Borel measure on \ca{N}. The function
\[
f: \ca{N} \to \spat{T}: f(\alpha) = \begin{cases}
                                    \alpha, & \ \textrm{if} \ \alpha \in [T]\\
                                    \emptyset, & \textrm{otherwise}
                                    \end{cases}
\]
is \del-recursive and $\mu_T$-measure-zero-to-one. Therefore from Theorem \ref{theorem characterization of isomorphic to the Baire space generalization of} the tree $T$ contains a \del \ copy of $2^{<\om}$, \ie (a) is true in contradiction to our hypothesis.
\end{proof}

\begin{corollary}
\label{corollary Baire space and measure}
The Baire space is the unique up to \del-isomorphism recursively presented metric space which admits a finite reasonable Borel measure.
\end{corollary}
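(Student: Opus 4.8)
The plan is to obtain this statement as a packaging of Corollary~\ref{corollary measure of the bodyT is zero} together with Theorem~\ref{theorem Nt del isomorphism} and Theorem~\ref{theorem characterization of isomorphic to the Baire space}. There are two assertions to verify: that the Baire space \emph{does} admit a finite reasonable Borel measure, and that \emph{any} recursively presented metric space admitting one is \del-isomorphic to \ca{N}. The unifying auxiliary fact I would isolate first is that the property ``admits a finite reasonable Borel measure'' is invariant under \del-isomorphism: given a \del-isomorphism $h:\ca{X}\bij\ca{Y}$ and a finite reasonable Borel measure $\mu$ on \ca{X}, the pushforward $\nu(B)=\mu(h^{-1}[B])$ is a finite Borel measure on \ca{Y} (here $h^{-1}$ is \del-recursive, hence Borel, so preimages of Borel sets are Borel); clauses (a), (b) of Definition~\ref{definition of reasonable measure} transfer trivially, and for (c) one notes that for \del\ $P\subseteq\om\times\ca{Y}$ the relation $Q(t,x)\iff P(t,h(x))$ is \del\ (substitution of a \del-recursive function into a \del\ relation, \cf Lemma~\ref{lemma the inverse function is del and same hyperdegree} and the closure properties of \del), while $Q_t=h^{-1}[P_t]$ and hence $\nu(P_t)=\mu(Q_t)$, so reasonableness of $\mu$ yields reasonableness of $\nu$.

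For the existence half I would take the Cantor space \ca{C} with its standard product measure $\mu$. Conditions (a) and (b) are immediate ($\mu(\ca{C})=1$, and $\mu$ is non-atomic), and condition (c) is exactly the ``in particular'' clause of Tanaka's Theorem~\ref{theorem Tanaka computing the complexity}. Thus \ca{C} admits a finite reasonable Borel measure, and since $\ca{C}\dequal\ca{N}$, the invariance observation gives such a measure on \ca{N} itself. For the uniqueness half: let \ca{X} be recursively presented and carry a finite reasonable Borel measure. By Theorem~\ref{theorem Nt del isomorphism} there is a recursive tree $T$ with $\ca{X}\dequal\spat{T}$, and by invariance \spat{T} also carries a finite reasonable Borel measure; hence case (b) of Corollary~\ref{corollary measure of the bodyT is zero} fails, so $T$ contains a \del\ copy of $2^{<\om}$, and then Theorem~\ref{theorem characterization of isomorphic to the Baire space} gives $\spat{T}\dequal\ca{N}$, whence $\ca{X}\dequal\ca{N}$.

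I do not expect a genuine obstacle here, since essentially all the work has been done in Corollary~\ref{corollary measure of the bodyT is zero} (which already contains the measure-theoretic dichotomy) and in Theorem~\ref{theorem characterization of isomorphic to the Baire space}. The only point that needs a short explicit argument is the \del-isomorphism invariance of reasonableness, and even that is just the standard fact that \del\ is closed under substitution by \del-recursive functions plus the measure-theoretic identity $\nu(P_t)=\mu(h^{-1}[P_t])$; so the statement really is a corollary in the strict sense.
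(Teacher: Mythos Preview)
Your proposal is correct and follows essentially the same route as the paper: existence via the product measure on \ca{C} together with $\ca{C}\dequal\ca{N}$, and uniqueness by passing through Theorem~\ref{theorem Nt del isomorphism} to some \spat{T}, transferring the measure, and then invoking Corollary~\ref{corollary measure of the bodyT is zero} and Theorem~\ref{theorem characterization of isomorphic to the Baire space}. The only difference is that you spell out the \del-isomorphism invariance of reasonableness explicitly, whereas the paper treats it as evident.
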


\begin{proof}
Since the Baire space is \del-isomorphic to $\ca{C}$ it is clear that \ca{N} admits a finite reasonable Borel measure. Conversely assume that \ca{X} is a recursively presented metric space, which admits a finite reasonable Borel measure and let $T$ be a recursive tree such that $\ca{X} \dequal \spat{T}$. Then $\spat{T}$ admits a finite reasonable Borel measure and so from Corollary \ref{corollary measure of the bodyT is zero} the tree $T$ contains a \del \ copy of $2^{<\om}$. It follows that $\ca{X} \dequal \spat{T} \dequal \ca{N}$.
\end{proof}

Corollaries \ref{corollary measure of the bodyT is zero} and \ref{corollary Baire space and measure} leave us little hope that any measure-theoretic arguments would help us say something serious about recursively presented metric spaces which are not \del-isomorphic to the Baire space.

We conclude this paragraph with some remarks about isomorphisms on a higher level than that of \del. By standard relativization arguments one can see that for every $\ca{X}$ and every $\ep \in \ca{N}$ if there exists a $\del(\ep)$-injection $\pi: \ca{C} \inj \ca{X}$ then there exists a $\del(\ep)$-isomorphism between \ca{N} and \ca{X}. It turns out that for all recursively presented metric spaces \ca{X} which are uncountable sets there exists a $\del(\W)$-injection from \ca{C} to \ca{X}, so that every uncountable recursively presented metric space is $\del(\W)$-isomorphic to \ca{N}.\footnote{This result makes it perhaps tempting to assume that if \ca{X} is uncountable $\ep$-recursively presented with $\W \hleq \ep$, then \ca{X} is \del(\ep)-isomorphic to the Baire space, and so the problem of \del-isomorphism trivializes for this type of spaces \ca{X}. However the latter is not correct and one can see this by taking a \W-Kleene tree and repeating the proof of Theorem \ref{theorem counterexample to del with Baire (Kleene space)}. The correct relativized version is that every uncountable $\ep$-recursively presented metric space is $\del(\W^\ep)$-isomorphic to the Baire space. (For the definition of the relativized $\W^\ep$ refer to the proof of Theorem \ref{theorem strong form spector-gandy general}.)} In fact we will prove a mild strengthening of this result in Theorem \ref{theorem NT is del(gamma) isomorphic with gamma less than W}. This implies that every uncountable recursively presented metric space is $\Delta^1_2$-isomorphic to the Baire space. Therefore all results about subsets of the Baire space from the level of $\Delta^1_2$ and above are carried to subsets of uncountable recursively presented metric spaces.

\subsection{The tree of attempted embeddings} We introduce an operation on the space of trees which gives some interesting results about \del-isomorphisms.

\begin{definition}
\label{definition tree of attempts}\normalfont
We fix the following recursive enumeration of $2^{< \om}$, $$s_0 = \empt, s_1 = (0), s_2 = (1), s_3 = (0,0), s_4 = (0,1), s_5=(1,0)\dots$$ and we define the function $\att: \Tr \to \Tr$ by saying that $w \in \att{T}$ exactly when for all $n, m < \lh(w)$ we have that $w(n) \in \Seq$, $\dec{w(n)} \in T$ and the following implications are satisfied:
\begin{align*}
\mbox{$s_n$ is incompatible with $s_m$} &\Longrightarrow \\& \hspace*{-10mm}\mbox{\dec{w(n)} is incompatible with \dec{w(m)}}\\
\mbox{$s_n$ is proper initial segment of $s_m$} &\Longrightarrow \\& \hspace*{-30mm} \mbox{\dec{w(n)} is a proper initial segment of \dec{w(m)}}.
\end{align*}
for all $w \in \omseq$ and for all $T \in \Tr$.  In other words \att{T} encodes the set of all attempts to get an embedding of the complete binary tree into $T$. It is easy to see that the function $\att$ is recursive so that if $T$ is a recursive tree then \att{T} is recursive as well.
\end{definition}

\begin{remark}
\label{remark tree of attempts}\normalfont
Every $\alpha \in [\att{T}]$ gives rise to an $\alpha$-recursive injection from the Cantor space into \spat{T} which in fact takes all of its values inside $[T]$. Indeed suppose that $\alpha \in [\att{T}]$ and $\gamma \in \ca{C}$. Choose the unique $\gamma$-recursive sequence $(k^\gamma_n)_{\n}$ of naturals such that $\gamma \upharpoonright n = s_{k^\gamma_n}$ for all \n. It follows that $\dec{\alpha(k^\gamma_n)}$ is a proper initial segment of $\dec{\alpha(k^\gamma_{n'})}$ for all $n < n'$. Thus $\cup_{\n} \dec{\alpha(k^\gamma_n)} \in [T]$. Define $$\sigma: \ca{C} \to \spat{T}: \sigma(\gamma) = \cup_{\n} \dec{\alpha(k^\gamma_n)},$$ where $(k^\gamma_n)_{\n}$ is as above. Then the function $\sigma$ is $\alpha$-recursive and injective.

If $[T]$ is uncountable then $[\att{T}] \neq \emptyset$. To see this we notice that if $[T_u]$ is uncountable then it contains two distinct infinite branches which belong to its perfect kernel. So there exist incomparable extensions $v, w$ of $u$ such that $[T_v]$ and $[T_w]$ are uncountable. Summing up we conclude to the following.
\end{remark}

\begin{lemma}
\label{lemma attT is a Kleene tree}
If $T$ is a recursive tree such that \spat{T} is uncountable and not \del-isomorphic to \ca{N} then $\att{T}$ is a Kleene tree.
\end{lemma}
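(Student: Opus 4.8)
The plan is to verify the two defining properties of a Kleene tree for $\att{T}$: that its body is non-empty, and that its body contains no \del\ member. For non-emptiness, I would invoke the observation already recorded in Remark \ref{remark tree of attempts}: if $T$ is a recursive tree whose body is uncountable, then $[\att{T}] \neq \emptyset$. The hypothesis here is that \spat{T} is uncountable and not \del-isomorphic to \ca{N}; since \spat{T} $= T \cup [T]$ and $T$ is countable, uncountability of \spat{T} forces $[T]$ to be uncountable, so the hypothesis of the remark applies and $[\att{T}] \neq \emptyset$. (The argument in the remark is the standard perfect-kernel splitting: whenever $[T_u]$ is uncountable it has two distinct branches in its perfect kernel, hence incomparable extensions $v, w$ of $u$ with $[T_v], [T_w]$ uncountable, and iterating this along the tree of finite binary sequences produces a branch of $\att{T}$.)

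For the second property, suppose towards a contradiction that some $\alpha \in [\att{T}]$ is \del. By the first paragraph of Remark \ref{remark tree of attempts}, $\alpha$ gives rise to an $\alpha$-recursive injection $\sigma: \ca{C} \to \spat{T}$ whose values all lie in $[T]$. Since $\alpha$ is \del, the function $\sigma$ is \del-recursive (an $\alpha$-recursive function with a \del\ parameter is \del-recursive). Thus $\sigma: \ca{C} \inj \spat{T}$ is a \del-injection. But $\ca{C} \dequal \ca{N}$, so composing with a \del-isomorphism gives a \del-injection $\ca{N} \inj \spat{T}$, and since $\spat{T} \dleq \ca{N}$ by Theorem \ref{theorem all between om and N}, the Schröder–Bernstein argument (see the remark after Definition \ref{definition of dleq}) yields $\spat{T} \dequal \ca{N}$ — contradicting the hypothesis. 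Hence $[\att{T}]$ has no \del\ member.

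Finally, $\att{T}$ is recursive because $T$ is recursive and the operation $\att{\cdot}$ is recursive, as noted in Definition \ref{definition tree of attempts}. Combining the three points, $\att{T}$ is a recursive tree with non-empty body and no hyperarithmetical branch, i.e.\ a Kleene tree. I expect no genuine obstacle here: the content has essentially been front-loaded into Remark \ref{remark tree of attempts}, and the only things to check carefully are the bookkeeping around parameters (that an $\alpha$-recursive map with $\alpha \in \del$ is \del-recursive) and the reduction of the contradiction to the known facts $\ca{C} \dequal \ca{N}$, $\spat{T} \dleq \ca{N}$, and antisymmetry of $\dleq$.
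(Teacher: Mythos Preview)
Your proposal is correct and follows essentially the same route as the paper: the lemma is stated immediately after Remark~\ref{remark tree of attempts} with the phrase ``Summing up we conclude to the following,'' so the paper's proof \emph{is} precisely the two observations you invoke from that remark (non-emptiness of $[\att{T}]$ via perfect-kernel splitting, and the $\alpha$-recursive injection $\ca{C}\inj\spat{T}$ forcing $\spat{T}\dequal\ca{N}$ when $\alpha\in\del$). You have simply made explicit the bookkeeping that the paper leaves to the reader.
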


From the Kleene Basis Theorem and the preceding remarks it follows that every uncountable recursively presented metric space is \del(\W)-isomorphic to \ca{N}. Using the Gandy Basis Theorem we may improve upon the parameter.

\begin{theorem}
\label{theorem NT is del(gamma) isomorphic with gamma less than W}
Every uncountable recursively presented metric space is $\del(\gamma)$-isomorphic to \ca{N} for some $\gamma \hless \W$.
\end{theorem}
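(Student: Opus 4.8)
The plan is to reduce to spaces of the form \spat{T} via Theorem \ref{theorem Nt del isomorphism}, then run the tree of attempted embeddings through the Gandy Basis Theorem instead of the Kleene Basis Theorem, which is exactly the device that sharpens the parameter from \W \ to some $\gamma \hless \W$. First I would fix an uncountable recursively presented metric space \ca{X} and, by Theorem \ref{theorem Nt del isomorphism}, a recursive tree $T$ together with a \del-isomorphism $h: \spat{T} \bij \ca{X}$; in particular \spat{T} is uncountable. Since $\spat{T} = T \cup [T]$ as a set and $T$ is countable, the body $[T]$ is uncountable. Note that it is not necessary to split into the cases ``\spat{T} is / is not \del-isomorphic to \ca{N}'': the construction below works uniformly for every recursive $T$ with uncountable body.

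Next I would invoke Remark \ref{remark tree of attempts}: since $[T]$ is uncountable, the body $[\att{T}]$ is non-empty (the branch is built by repeatedly passing to a subtree whose body is still uncountable). Because $T$ is recursive, so is $\att{T}$ by Definition \ref{definition tree of attempts}, and hence $[\att{T}]$ is a non-empty $\Pi^0_1$, and in particular \sig, subset of \ca{N}. Applying the Gandy Basis Theorem \ref{theorem Gandy basis} to this set yields a point $\gamma \in [\att{T}]$ with $\gamma \hless \W$.

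Now Remark \ref{remark tree of attempts} again: the branch $\gamma$ produces a $\gamma$-recursive injection $\sigma: \ca{C} \to \spat{T}$ (in fact with all values inside $[T]$), and a $\gamma$-recursive function is $\del(\gamma)$-recursive, so $\sigma$ is a $\del(\gamma)$-injection. Composing with $h$ gives a $\del(\gamma)$-injection $h \circ \sigma: \ca{C} \inj \ca{X}$. By the standard relativization argument recorded after Corollary \ref{corollary Baire space and measure} --- which uses the relativized forms of $\ca{C} \dequal \ca{N}$ and of Theorem \ref{theorem all between om and N} together with the effective Schr\"{o}der-Bernstein theorem --- the existence of a $\del(\gamma)$-injection from \ca{C} into \ca{X} yields a $\del(\gamma)$-isomorphism between \ca{N} and \ca{X}. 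Since $\gamma \hless \W$, this is exactly the desired conclusion.

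The routine verifications are: that $\att{T}$ is recursive whenever $T$ is (immediate from Definition \ref{definition tree of attempts}); that $[\att{T}] \neq \emptyset$ whenever $[T]$ is uncountable (the branch-by-branch construction sketched in Remark \ref{remark tree of attempts}); and that $\Sigma^0_1(\gamma)$-recursiveness entails $\del(\gamma)$-recursiveness. I do not expect a genuine obstacle here, since the theorem is explicitly a mild strengthening of the \del(\W)-isomorphism result; the one point worth stressing is that replacing the Kleene Basis Theorem by the Gandy Basis Theorem is precisely what improves the parameter, and that this is legitimate because $[\att{T}]$ is a non-empty \sig \ set to which Gandy's theorem applies directly.
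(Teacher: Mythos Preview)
Your proposal is correct and follows essentially the same route as the paper: reduce to $\spat{T}$ via Theorem \ref{theorem Nt del isomorphism}, observe that $[\att{T}]$ is a non-empty $\Pi^0_1$ (hence \sig) subset of \ca{N} by Remark \ref{remark tree of attempts}, apply the Gandy Basis Theorem \ref{theorem Gandy basis} in place of the Kleene Basis Theorem to obtain $\gamma \in [\att{T}]$ with $\gamma \hless \W$, and then use the $\gamma$-recursive injection $\ca{C} \inj \spat{T}$ furnished by Remark \ref{remark tree of attempts} together with the relativized Schr\"{o}der--Bernstein argument recorded after Corollary \ref{corollary Baire space and measure}. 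The paper's proof is terser --- it simply identifies \ca{X} with $\spat{T}$ and invokes Gandy --- but the content is identical; you have merely made explicit the steps the paper leaves to the ``preceding remarks''.
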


\begin{proof}
Suppose that \ca{X} is a uncountable recursively presented metric space. From Theorem \ref{theorem Nt del isomorphism} we may assume that $\ca{X} = \spat{T}$ for some recursive tree $T$. From the Gandy Basis Theorem there exists some $\gamma \in [\att{T}]$ such that $\gamma \hless \W$.
\end{proof}

What is perhaps more interesting about the operation $\att{}$ is that under some conditions it provides a method for producing non-\del-isomorphic spaces.

\begin{theorem}
\label{theorem about tree of attempts}
Suppose that $T$ and $S$ are recursive trees and that $\gamma$ is in $[T] \setminus \del$. If the set $ \del(\gamma) \cap [S]$ is $\del(\gamma)$ then
\[
\spat{T} \not \dleq \spat{\att{S}}.
\]
In particular for every Kleene tree $T$ there exists some $u \in T$ with $[T_u] \neq \emptyset$ and
\[
\spat{T} \not \dleq \spat{\att{T_u}}.
\]
\end{theorem}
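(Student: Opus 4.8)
The plan is to argue by contradiction, exploiting the content of Remark \ref{remark tree of attempts}: any point $\alpha\in[\att{S}]$ encodes an $\alpha$-recursive injection $\sigma:\ca{C}\to\spat{S}$ all of whose values lie in $[S]$. If $\spat{T}\dleq\spat{\att{S}}$ held, the image of $\gamma$ would be forced into $[\att{S}]$, and the $\sigma$ attached to it would let us compute the $\del(\gamma)$-points of $\ca{C}$ from the set $\del(\gamma)\cap[S]$, which by hypothesis is $\del(\gamma)$; this contradicts the (relativized) Lower Classification of \del.

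Concretely, suppose $\pi:\spat{T}\inj\spat{\att{S}}$ is a \del-injection. Since the whole space $\spat{T}$ is \sig \ and $\pi$ is injective on it, Lemma \ref{lemma the inverse function is del and same hyperdegree} gives $\pi(\gamma)\heq\gamma$; as $\gamma\notin\del$ we get $\pi(\gamma)\notin\del$, and since every point of $\att{S}$ is a finite sequence, hence recursive, $\alpha:=\pi(\gamma)$ must lie in $[\att{S}]$. Thus $\del(\alpha)=\del(\gamma)$ as pointclasses. Let $\sigma:\ca{C}\to\spat{S}$ be the $\alpha$-recursive injection of Remark \ref{remark tree of attempts}, with $\sigma[\ca{C}]\subseteq[S]$; by Lemma \ref{lemma transition between nhbds Nt and N} it is also $\alpha$-recursive regarded as a map into $\ca{N}$. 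Put $D=\{x\in[S]:x\in\del(\gamma)\}$; by hypothesis, together with Theorems \ref{theorem points of Nt} and \ref{theorem pointclasses of Nt} (relativized), $D$ is a $\del(\alpha)$ subset of $\ca{N}$. Then $\sigma^{-1}[D]$ is a $\del(\alpha)$ subset of $\ca{C}$, and it equals $\del(\alpha)\cap\ca{C}$: if $\sigma(\delta)\in D\subseteq\del(\alpha)$, then since $\sigma$ is $\alpha$-recursive and injective, Lemma \ref{lemma the inverse function is del and same hyperdegree}(2) relativized to $\alpha$ gives $\delta\in\del(\alpha,\sigma(\delta))=\del(\alpha)$; conversely, if $\delta\in\del(\alpha)\cap\ca{C}$ then $\sigma(\delta)\in\del(\alpha,\delta)=\del(\alpha)$ and $\sigma(\delta)\in[S]$, so $\sigma(\delta)\in D$. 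Hence $\del(\alpha)\cap\ca{C}$ would be $\del(\alpha)$, contradicting the relativized Lower Classification of \del \ (4D.16 in \cite{yiannis_dst}; \cf the proof of Theorem \ref{theorem counterexample to del with Baire (Kleene space)}). This proves the first assertion.

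For the ``in particular'' clause, let $T$ be a Kleene tree. Applying the Gandy Basis Theorem \ref{theorem Gandy basis} to the nonempty $\Pi^0_1$ set $[T]$, fix $\gamma\in[T]$ with $\gamma\hless\W$; then $\W\notin\del(\gamma)$, and $\gamma\notin\del$ because $T$ is a Kleene tree. By Corollary \ref{corollary del of gamma is dense in a Kleene tree}, $\del(\gamma)$ is not dense in $[T]$, so there is $u\in T$ with $[T_u]\neq\emptyset$ and $\del(\gamma)\cap[T_u]=\emptyset$. The tree $T_u$ is recursive, and $\del(\gamma)\cap[T_u]=\emptyset$ is trivially $\del(\gamma)$; applying the first assertion with $S=T_u$ gives $\spat{T}\not\dleq\spat{\att{T_u}}$.

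The step requiring most care is the identity $\sigma^{-1}[D]=\del(\alpha)\cap\ca{C}$ together with the surrounding effectivity bookkeeping: that $D$ is genuinely a $\del(\alpha)$ subset of $\ca{N}$ (moving between the presentations of $\spat{S}$ and $\ca{N}$), that $\sigma$ is honestly $\alpha$-recursive into $\ca{N}$, and that the correct relativizations of Lemma \ref{lemma the inverse function is del and same hyperdegree} and of the Lower Classification of \del \ are the ones invoked; the rest of the argument is routine.
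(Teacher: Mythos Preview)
Your proof is correct and follows essentially the same approach as the paper's: assume a \del-injection $\pi$, force $\alpha=\pi(\gamma)$ into $[\att{S}]$ via the hyperdegree argument, use Remark \ref{remark tree of attempts} to obtain a $\del(\gamma)$-injection from $\ca{C}$ into $[S]$, and then derive a contradiction with the (relativized) Lower Classification of \del; the ``in particular'' clause is handled identically via the Gandy Basis Theorem and Corollary \ref{corollary del of gamma is dense in a Kleene tree}. The only cosmetic difference is in how the final contradiction is packaged: the paper observes that $\spat{S}$ becomes $\del(\gamma)$-isomorphic to $\ca{N}$, so $\del(\gamma)\cap\spat{S}=(\del(\gamma)\cap[S])\cup S$ cannot be $\del(\gamma)$, whereas you pull back along $\sigma$ and compute $\sigma^{-1}[D]=\del(\alpha)\cap\ca{C}$ directly---but this is the same argument unwound one step further.
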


\begin{proof}
Let $T$, $S$ and $\gamma \in [T]$ be as in the statement and assume towards a contradiction that there is a \del-injection $\pi: \spat{T} \to \spat{\att{S}}$. From Lemma \ref{lemma the inverse function is del and same hyperdegree} we have that $\pi(\gamma) \heq \gamma$. So if it were $\pi(\gamma) \in \att{S}$ then $\gamma$ would be \del, contradicting the hypothesis about $\gamma$. So $\pi(\gamma) \in [\att{S}]$.

Since $\pi(\gamma) \in \del(\gamma)$ from Remark \ref{remark tree of attempts} there exists a $\del(\gamma)$-injection $\tau: \ca{C} \to \spat{S}$. Therefore $\spat{S}$ is $\del(\gamma)$-isomorphic to the Baire space and the set $\del(\gamma) \cap \spat{S}$ is not $\del(\gamma)$. On the other hand $\del(\gamma) \cap \spat{S} = (\del(\gamma) \cap [S]) \cup S$ and from our hypothesis the set $\del(\gamma) \cap [S]$ is $\del(\gamma)$, a contradiction.

Regarding the second assertion we apply the Gandy Basis Theorem to find some $\gamma \in [T]$ with $\W \not \in \del(\gamma)$. From Corollary \ref{corollary del of gamma is dense in a Kleene tree} there exists some $u \in T$ such that $[T_u] \neq \emptyset$ and $\del(\gamma) \cap [T_u] = \emptyset$, so in particular the set $\del(\gamma) \cap [T_u]$ is $\del(\gamma)$. Finally $\gamma$ is not a \del \ point since $T$ is a Kleene tree.
\end{proof}

We have already seen in Theorem \ref{theorem for every kleene tree there exists another one incomparable and incomparable Kleene spaces} that for every Kleene tree $T$ there exists a Kleene tree $S$ such that the spaces \spat{T} and \spat{S} are \dleq-incomparable. Therefore the question is whether one can apply the preceding theorem to non-Kleene spaces. We refer to Section \ref{section questions open problems} for further discussion.

\section{\sc Spector-Gandy spaces}

\label{section SG spaces}

So far we have been rejecting the existence of \del \ injections $f: \ca{N} \inj \ca{X}$ on the grounds that the set of all \del \ points of \ca{X} is \del. Here we define another category of spaces, where the set of all \del \ points is not \del \ and still they are not \del-isomorphic to the Baire space.\footnote{So the plausible conjecture $\ca{X} \dequal \ca{N} \iff \del \cap \ca{X} \not \in \del$ is not correct.}

The idea behind these spaces can be traced back to a construction of Kreisel \cite{kreisel_cantor_bendixson} of a $\Pi^0_1$ subset of the reals whose perfect kernel is a $\sig \setminus \del$ set. Although our motivation was to turn the $\Pi^0_1$ set of Kreisel's example into a recursively presented metric space, it emerged that the construction yields a whole new category of spaces. The main tool is a theorem of Spector and Gandy, after which these spaces are named.

\subsection{The Spector-Gandy Theorem} Spector and independently Gandy showed that \pii \ subsets of \om \ can be fully described in terms of \del \ points $\alpha \in \ca{N}$. This result comes in two forms: the \emph{weak form} \cite{gandy_proof_of_mostowskis_conjecture} and the \emph{strong form} \cite{spector_hyperarithmetical_quantifiers}. The weak form can be extended to \pii \ subsets of recursively presented metric spaces \cf 4F.3  in \cite{yiannis_dst}. We will show that the latter is true for the strong form as well.

\begin{theorem}[Strong Form of the Spector-Gandy Theorem]
\label{theorem Spector-Gandy Theorem}
For every \pii \ set $P \subseteq \om$ there exists a recursive $R \subseteq \om^2$ such that
\begin{align*}
P(n) \iff& (\exists \alpha \in \del)(\forall t)R(n,\barr{\alpha}(t))\\
        \iff& (\exists! \alpha)(\forall t)R(n,\barr{\alpha}(t)).
\end{align*}
\end{theorem}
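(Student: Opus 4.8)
\emph{Overview.} I would reduce $P$ to tree normal form and then build, uniformly in $n$, a single recursive tree $U_n$ whose infinite branches code the \emph{Turing jump hierarchy along} $(T_n,\kbleq)$ — made into a genuinely closed condition by adjoining Skolem (halting‑time) data. The point will be that $U_n$ has exactly one branch when $T_n$ is well‑founded (and that branch is automatically \del, being a $\Pi^0_1$ singleton) and has no \del \ branch at all when $T_n$ is ill‑founded.

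\emph{Step 1 (normal form).} Since $P$ is \pii, the complement $\neg P$ is \sig, so $\neg P(n)\iff(\exists\alpha)[(n,\alpha)\in C]$ for a $\Pi^0_1$ set $C\subseteq\om\times\ca{N}$; by the (uniform form of the) proof of Lemma \ref{lemma pi-zero-one sets recursive tree} there is a recursive map $n\mapsto T_n$ from \om \ to recursive trees on \om \ with $[T_n]=C_n$, hence $P(n)\iff[T_n]=\emptyset\iff T_n$ is well‑founded $\iff(T_n,\kbleq)$ is a well‑ordering.

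\emph{Step 2 (the branches).} Fix a recursive bijective pairing that displays a real $\alpha$ as a family of columns $\big((\alpha)_u\big)_u$ indexed by codes $u$ of finite sequences, plus some further ``auxiliary'' coordinates. Let $U_n$ be the recursive tree of all finite sequences not yet violating any of the following requirements on $\alpha$: $(\alpha)_u$ is identically $0$ whenever $\dec{u}\notin T_n$; for $\dec{u}\in T_n$ the column $(\alpha)_u$ is the Turing jump of the effective join $Z_u:=\bigoplus_{\dec{v}\kbless \dec{u},\,\dec{v}\in T_n}(\alpha)_v$, and, for every $j$ with $(\alpha)_u(j)=1$, a designated auxiliary coordinate records the exact number of steps after which $\{j\}^{Z_u}(j)\!\downarrow$; all remaining auxiliary coordinates are $0$. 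Each clause is $\Pi^0_1$ in $\alpha$: $Z_u$ is recursive in $\alpha$ with bounded use, so once enough of $\alpha$ is read the recorded halting time makes ``$j\in(\alpha)_u$'' decidable, and ``$\{j\}^{Z_u}(j)$ never converges'' is outright $\Pi^0_1$. Hence there is a recursive $R\subseteq\om^2$, with $n\mapsto R(n,\cdot)$ recursive, such that $\alpha\in[U_n]\iff(\forall t)R(n,\barr{\alpha}(t))$.

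\emph{Step 3 (the two cases and the conclusion).} If $P(n)$, then $(T_n,\kbleq)$ is a well‑ordering, so the jump hierarchy along it exists and is unique by transfinite recursion, the halting‑time witnesses are thereby determined, and all remaining coordinates are pinned to $0$; thus $[U_n]$ is a singleton, and being a $\Pi^0_1$ singleton its unique member lies in \del. If $\neg P(n)$, choose $\gamma\in[T_n]$ and put $u_k=\barr{\gamma}(k)$, a $\kbless$‑descending chain in $T_n$; for any $\alpha\in[U_n]$ the clauses force $(\alpha)_{u_k}$ to compute $(\alpha)_{u_{k+1}}$ and its jump (a set is recursive in its jump, and $(\alpha)_{u_{k+1}}$ is a column of $Z_{u_k}$), so $(\alpha)_{u_0}$ computes every $(\alpha)_{u_k}$ together with arbitrarily high iterates of the jump, and then — by the classical higher‑recursion‑theoretic fact underlying Spector's Theorem \ref{theorem spector two hyperdegrees} (an infinite descending chain inside a jump hierarchy computes $\emptyset^{(\xi)}$ for every $\xi<\ck$) — $(\alpha)_{u_0}$, hence $\alpha$, is not \del. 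So $[U_n]\cap\del=\emptyset$, whence also $|[U_n]|\neq 1$ (a $\Pi^0_1$ singleton would be \del). Combining: $P(n)\Rightarrow(\exists!\,\alpha)(\forall t)R(n,\barr{\alpha}(t))\Rightarrow(\exists\alpha\in\del)(\forall t)R(n,\barr{\alpha}(t))\Rightarrow P(n)$, the last implication by the contrapositive of the ill‑founded case; this yields both displayed equivalences.

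\emph{Main obstacle.} The delicate point is Step 2: recasting ``$\alpha$ codes the jump hierarchy along $(T_n,\kbleq)$'' — a statement that is a priori merely arithmetical — as a \emph{closed} ($\Pi^0_1$) condition, which is exactly the role of the Skolem halting‑time data, while keeping it rigid enough that the well‑founded case produces a single branch. Once this is set up, the non‑hyperarithmeticity in the ill‑founded case is a standard fact (the same phenomenon that makes pseudo‑well‑orderings carry no \del \ descending chain), and Step 1 is routine.
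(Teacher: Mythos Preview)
Your argument is correct; it is the standard ``jump hierarchy'' (or $H$-set) proof of the strong Spector--Gandy theorem, and each step is sound: the Skolemized halting-time data really do make ``$(\alpha)_u$ is the jump of $\bigoplus_{v\kbless u}(\alpha)_v$'' into a $\Pi^0_1$ condition, the well-founded case gives a $\Pi^0_1$ singleton (hence a \del\ point), and the black-box fact you invoke in the ill-founded case --- that any column $(\alpha)_{u_0}$ sitting above an infinite $\kbless$-descending chain computes $\emptyset^{(\xi)}$ for every $\xi<\ck$ --- is a genuine theorem, provable by effective transfinite recursion along Kleene's $O$ using the descending chain to absorb successor jumps. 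Your citation of Theorem~\ref{theorem spector two hyperdegrees} for this is a bit loose (the relevant fact is closer to the Enderton--Putnam/Steel analysis of pseudo-hierarchies than to Spector's dichotomy), but the mathematics is right.

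The paper takes a genuinely different route. Rather than jump hierarchies, it fixes a single Kleene tree $K$ once and for all, and for $e\in\lo$ defines $S(e,\alpha,\beta)$ to say that $\beta$ embeds $\leq_e$ into an initial segment of $(K,\kbleq)$ while $(\alpha)_n$ encodes, for every $n\in\lo$, the unique \emph{strongly admissible} partial embedding of $\leq_n$ into $\leq_e$. The point is that when $e\notin\W$, any $\alpha$ satisfying $L(e,\alpha)$ would let one read off, uniformly, whether an arbitrary recursive linear order $\leq_{f(n)}$ embeds into a fixed well-founded initial segment of $\leq_e$ --- hence every \del\ subset of $\om$ would be $\Pi^0_2(\alpha)$, forcing $\alpha\notin\del$. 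What this buys the paper is an argument that uses only elementary properties of $\W$ and avoids Kleene's $O$, transfinite jump iterates, and the pseudo-hierarchy machinery entirely; the paper also stresses that this formulation relativizes to arbitrary recursively presented metric spaces (Theorem~\ref{theorem strong form spector-gandy general}) with essentially no extra work. Your approach, by contrast, is the one found in most textbook treatments: it is conceptually very clean, but it outsources the hard step to a result whose proof is itself a nontrivial transfinite induction.
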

The weak form of the Spector-Gandy Theorem states only the first of the preceding equivalences. It is usually this weak form which finds its way to the bibliography, see for example \cite{sacks_higher_recursion_theory}, \cite{chong_yu_recursion_theory}, \cite{yiannis_elementary_induction_on_abstract_structures} and \cite{yiannis_dst}, although in the latter the strong form is mentioned in the case of $\del$ sets. Spector's proof of the strong form uses some advanced considerations on Kleene's $\mathit{O}$ including some results of Markwald \cite{markwald_zur_theorie_der_konstruktiven_wohlordnungen}. Here we present a milder proof, which uses only elementary properties of \W. Moreover our proof can be carried out in all recursively presented metric spaces in a straightforward way.\footnote{It seems that Spector's proof can also be carried out in all recursively presented metric spaces but with a significantly more effort than carrying out our proof.} Let us establish the necessary terminology.

For every linear ordering $\preceq$ on a subset of \om \ we define the partial function $$\suc_{\preceq}: \om \to \om:$$
\begin{align*}
\suc_{\preceq}(n) \downarrow &\iff (\exists k)[n \preceq k \ \& \ (\forall m)[m \preceq n \vee k \preceq m]]\\
\suc_{\preceq}(n) \downarrow &\Longrightarrow \suc_{\preceq}(n) = \textrm{the unique $k$ as above},
\end{align*}
\ie $\suc_{\preceq}(n)$ \ is the $\preceq$-successor of $n$ whenever it exists.

Suppose that $\lin$ and $\preceq$ are two linear orderings on a subset of \om \ with least elements. We say that a partial function $f: \om \rightharpoonup \om$ is \emph{$(\lin,\preceq)$-admissible} if the following conditions hold:
\begin{align}
\label{equation admissible A} & \Gr(f) \subseteq \Field(\lin) \times \Field(\preceq)\\
\label{equation admissible B} & (\forall n \in \Dom(f))(\forall n')[n' \lin n \longrightarrow n'\in\Dom(f)]\\
\label{equation admissible C} & (\forall n,n'\in \Dom(f))[n' \lin n \longleftrightarrow f(n') \preceq f(n)]\\
\label{equation admissible D} & \textstyle (\forall n \in \Dom(f))[f(n) = \sup_{\preceq}\set{\suc_{\preceq}(f(n'))}{n' \slin n}].
\end{align}
By $\sup_{\preceq}\emptyset$ we mean the least element of $\preceq$. When the linear orderings are clear from the context we just say admissible instead of $(\lin,\preceq)$-admissible. An admissible function $f$ is \emph{strongly admissible} if for all $n \not \in \Dom(f)$ and for all $m$ the function with graph $\Gr(f) \cup \{(n,m)\}$ is not admissible.\footnote{It is possible for a strongly admissible function to admit an admissible extension. This is the reason why we prefer the term ``strongly" instead of ``maxinal". To see this consider the space $X = (\om,\leq) + (\om, \geq)$. Then the identity function from $X$ to $X$ as well as its restriction on $(\om,\leq)$ are both strongly admissible functions.}

It is clear that if $\lin$ and $\preceq$ are well-orderings and the order type of $\lin$ is less than or equal to the order type of $\preceq$ then the unique embedding of $\Field(\lin)$ into an initial segment of $\Field(\preceq)$ is the unique strongly $(\lin,\preceq)$-admissible function. Moreover the latter function is total and so it does not admit a proper admissible extension, \ie it is \emph{maximal admissible}. This fact is also true even if $\lin$ is not a well-ordering, as long as it has a least element.

\begin{lemma}
\label{lemma unique strongly admissible}
For every linear ordering $\lin$ with a least element and every well-ordering $\preceq$, there exists a unique strongly $(\lin,\preceq)$-admissible function. \tu{(}All orderings are assumed to be on subsets of \om.\tu{)}\smallskip

In fact the unique strongly $(\lin,\preceq)$-admissible function is also maximal admissible.
\end{lemma}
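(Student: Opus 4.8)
The plan is to construct the required function as the union of \emph{all} $(\lin,\preceq)$-admissible functions, and then read off uniqueness and maximality from one structural fact: any two admissible functions are compatible. First I would record two preliminary observations. If $f$ is admissible then by (\ref{equation admissible C}) the map $f$ is an order embedding of $(\Dom(f),\lin)$ into the well-ordering $\preceq$, so $(\Dom(f),\lin)$ is itself a well-ordering. Secondly, by (\ref{equation admissible B}) the domain of every admissible function is a $\lin$-initial segment, and the initial segments of a linear ordering are linearly ordered by inclusion (if $D_1\not\subseteq D_2$, any $x\in D_1\setminus D_2$ $\lin$-dominates all of $D_2$, forcing $D_2\subseteq D_1$); hence for admissible $f,g$ we may assume, say, $\Dom(f)\subseteq\Dom(g)$.

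Next I would prove that $f$ and $g$ then agree on $\Dom(f)$: otherwise, using the first observation, pick the $\lin$-least $n$ with $f(n)\neq g(n)$; every $n'\slin n$ lies in $\Dom(f)$ by (\ref{equation admissible B}) and satisfies $f(n')=g(n')$ by minimality, so (\ref{equation admissible D}) forces $f(n)=\sup_{\preceq}\set{\suc_{\preceq}(f(n'))}{n'\slin n}=\sup_{\preceq}\set{\suc_{\preceq}(g(n'))}{n'\slin n}=g(n)$, a contradiction. Thus all admissible functions are pairwise compatible, and $F:=\bigcup\set{f}{f \text{ is } (\lin,\preceq)\text{-admissible}}$ is a single partial function containing every admissible function. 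I would then check that $F$ is admissible: (\ref{equation admissible B}) holds because a union of $\lin$-initial segments is a $\lin$-initial segment; (\ref{equation admissible A}) and (\ref{equation admissible C}) are ``local'', since any one or two points of $\Dom(F)$ already lie in the domain of a single admissible $f\subseteq F$ (by comparability of domains) on which $F$ agrees with $f$; and (\ref{equation admissible D}) at $n$ refers only to the restriction of $F$ to $\set{n'}{n'\slin n}$, which by (\ref{equation admissible B}) lies in $\Dom(f)$ for any admissible $f$ with $n\in\Dom(f)$, so $F$ inherits it. Since every admissible function is $\subseteq F$, the function $F$ is maximal admissible; and if $n\notin\Dom(F)$ and $\Gr(F)\cup\{(n,m)\}$ were admissible, it would be an admissible function not contained in $F$ — impossible — so $F$ is strongly admissible.

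Finally, for uniqueness, suppose $F'$ is also strongly admissible. Being admissible, $F'\subseteq F$. If $\Dom(F')\subsetneq\Dom(F)$, then since $(\Dom(F),\lin)$ is a well-ordering (first observation applied to $F$) and $\Dom(F')$ is a proper $\lin$-initial segment of it, there is a $\lin$-least $n\in\Dom(F)\setminus\Dom(F')$; all $\lin$-predecessors of $n$ lie in $\Dom(F')$, where $F'$ and $F$ coincide, and a direct verification of (\ref{equation admissible A})--(\ref{equation admissible D}) shows $\Gr(F')\cup\{(n,F(n))\}$ is admissible, contradicting strong admissibility of $F'$. Hence $\Dom(F')=\Dom(F)$, and since they agree on the common domain, $F'=F$. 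The one delicate point in the whole argument is the bookkeeping for clause (\ref{equation admissible D}) when verifying admissibility of $F$ and of the one-point extension in the uniqueness step; but this is handled uniformly by the observation that (\ref{equation admissible D}) at $n$ depends only on the restriction of the function to $\set{n'}{n'\slin n}$, a set on which all admissible functions that are defined at $n$ must already agree.
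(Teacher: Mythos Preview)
Your proof is correct, and it takes a genuinely different route from the paper's.

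The paper argues by an explicit case analysis on order types: if the well-founded part $\WF(\lin)$ has order type at most that of $\preceq$, the function is the unique order-embedding of $\WF(\lin)$ onto an initial segment of $\preceq$; otherwise one truncates $\WF(\lin)$ at the appropriate point and takes the isomorphism onto all of $\Field(\preceq)$. In each case the paper checks that no proper extension can be admissible (because such an extension would have to be defined at a point of the ill-founded part, or would run out of room in $\preceq$), which gives maximality and hence strong admissibility; uniqueness is implicit in the explicit description of the domain together with the fact that condition~(\ref{equation admissible D}) determines the values.

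Your approach avoids the case split entirely: the compatibility lemma (any two admissible functions agree on the smaller domain, proved by transfinite induction using that admissible domains are well-ordered) lets you take $F$ to be the union of all admissible functions and read off maximality, strong admissibility, and uniqueness directly. This is cleaner and more uniform. What the paper's argument buys in exchange is an explicit identification of $\Dom(F)$ --- it is either $\WF(\lin)$ or a proper initial segment of it determined by the order type of $\preceq$ --- which is conceptually useful even if not strictly needed for the lemma as stated. Your argument recovers this implicitly (since $\Dom(F)$ is a well-ordered $\lin$-initial segment it must sit inside $\WF(\lin)$), but does not spell it out.
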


\begin{proof}
Let us take the case where the well-founded part $\WF(\lin)$ of \lin \ has order type less than or equal to the order type of $\preceq$, and let $$f: \WF(\lin) \inj \Field(\preceq)$$ be the unique embedding onto an initial segment of $\preceq$. We view $f$ as a partial function on $\Field(\lin)$. It is easy to verify that $f$ is admissible. (Notice that $\sup_\preceq A$ exists for every $\preceq$-bounded set $A$, since $\preceq$ is a well-ordering.)

Moreover if $f': \Field(\lin) \rightharpoonup \Field(\preceq)$ is a proper extension of $f$ then there exists some $n$ in the ill-founded part of $\lin$ which is mapped by $f'$ to the field of $\preceq$. Since $\preceq$ is a well-ordering, the function $f'$ cannot be defined on an initial segment of $\lin$ and preserve the strict orders at the same time. So $f'$ cannot satisfy both conditions (\ref{equation admissible B}) and (\ref{equation admissible C}) and in particular is not admissible.

Now let us take the case where the order type of $\WF(\lin)$ is greater than the order type of $\preceq$. Then there exists (a unique) $N \in \WF(\lin)$ such that the order type of $\iniseg_{\lin}(N)$ is equal to the order type of $\preceq$. As above we consider the unique isomorphism $f$ from $\iniseg_{\lin}(N)$ to $\Field(\preceq)$ and we view $f$ as a partial function on $\Field(\lin)$. The function $f$ is admissible and since it is surjective it does not admit a proper injective extension. In particular no proper extension $f'$ of $f$ preserves the strict orders and so
$f'$ cannot be admissible.
\end{proof}

\begin{remark}\normalfont
\label{remark maximal admissible function}
There might be infinitely many maximal -and hence strongly- admissible functions. Moreover it is possible that no \del \ strongly admissible function exists. To see this consider two recursive pseudo-well-orderings $\lin, \preceq$ of the same order type $\om_1^{\rm CK} +  \mathbb{Q} \times \om_1^{\rm CK}$, which are \del-incomparable, (Kreisel, Harrison). It is clear that every isomorphism $f: (\mathbb{Q},\leq_{\mathbb{Q}}) \bij (\mathbb{Q},\leq_{\mathbb{Q}})$ defines an isomorphism $f^ \ast$ between $\lin$ and $\preceq$, which also satisfies the property (\ref{equation admissible D}) of admissibility. Therefore $f^\ast$ is a maximal admissible function. It is also clear that $f^\ast$ determines $f$, so there are infinitely many isomorphisms between the two linear orderings.

We now prove that there cannot exist a \del \ strongly $(\lin,\preceq)$-admissible function. To see this assume that $g: \Field(\lin) \rightharpoonup \Field(\preceq)$ is strongly admissible. If $g$ is total then it cannot be \del \ since the orderings are \del-incomparable. If $g$ is surjective then $g$ cannot be \del \ as well, for otherwise from Lemma \ref{lemma the inverse function is del and same hyperdegree} the inverse $g^{-1}$ would be a total \del-embedding of $\preceq$ into \lin. Thus we may assume that $g$ is neither total nor surjective.

Now we claim that the complement of $\Dom(g)$ in $\Field(\lin)$ does not have a \lin-least element or that the complement of the image of $g$ in $\Field(\preceq)$ does not have a $\preceq$-least element. Indeed if this were not the case and $n \in \Field(\lin), m \in \Field(\preceq)$ are the $\lin,\preceq$-least such naturals then it is not hard to verify that
\[
\textstyle m = \sup_{\preceq} \set{\suc_{\preceq}(f(n'))}{n' \lin n}.
\]
(Notice that every point in the field of $\preceq$ has a successor.) Therefore the function with graph $\Gr(g) \cup \{(n,m)\}$ would be admissible, contradicting the fact that $g$ is strongly admissible.

If the complement of the domain of $g$ does not have a \lin-least element, since the latter is a $\del(g)$ set, one can find a $\del(g)$ strictly decreasing sequence in $\Field(\lin) \setminus \Dom(g)$. Using that $\lin$ is a recursive pseudo-well-ordering, $g$ cannot be \del. Similarly if the complement of the image of $g$ does not have a $\preceq$-least element, then there exists a $\del(g)$ strictly decreasing sequence in $\Field(\preceq)$, and so $g$ cannot be \del \ as well.
\end{remark}

\begin{proof}[\textit{Proof of Theorem \ref{theorem Spector-Gandy Theorem}.}]
Since \W \ is a complete \pii \ set it is enough to prove the claim for $P = \W$. Moreover it is enough to show the conclusion having an arithmetical set $S \subseteq \om \times \ca{N}$ instead of a $\Pi^0_1$ set $C$. One can see the latter by standard unfolding arguments or by using Theorem \ref{theorem alltogether}-(\ref{theorem 4D.9}) -see also the following proof.

We fix for the rest of the proof a Kleene tree $K$. We consider the Kleene-Brouwer ordering $\leq_{\rm KB}$ on $K$. From Theorem \ref{theorem Gandy length of well-founded part} the order type of the well-founded part of $K$ with respect to $\leq_{\rm KB}$ is $\ck$.

The idea is to define $S(e,\beta)$ so that $\beta$ encodes an embedding from $\leq_e$ onto an initial segment of $K$. Therefore when $\leq_e$ is a well-ordering then there exists a unique (and necessarily \del) embedding $\beta$ such that $S(e,\beta)$. However this will not be enough to give us the converse direction, for $\leq_e$ might be the ordering of $\iniseg(u)$ for some $u$ in the ill-founded part of $K$ and $\beta$ is just the encoding of the identity function, which is of course \del. We will overcome this by adding more information on $S$, namely some point $\alpha \in \ca{N}$ so that $(\alpha)_n$ encodes a partial embedding from $\leq_n$ into $\leq_e$ in such a way that when $\leq_e$ is a well-ordering then $(\alpha)_n$ is the unique such embedding -this is the idea behind the strongly admissible functions. Moreover when $\leq_e$ is not a well-ordering then $\alpha$ contains all the information of recursive well-orderings and thus cannot be \del. Now let us make all of these precise.

We define the relation $Q \subseteq \om \times \om \times \ca{N}$ by
\begin{align*}
Q(n,e,\alpha) \iff& \{n\}, \{e\} \ \textrm{are linear orderings with least elements}\\
                  &\textrm{and the set}\ \set{(i,j)}{\alpha(\langle i,j \rangle) = 1} \ \textrm{is the graph}\\
                  & \hspace{5mm}\textrm{of a strongly} \ (\leq_n,\leq_e)\textrm{-admissible function}\\
                  & \textrm{and} \ \alpha(t) = 0 \ \textrm{for all $t$ with $t \neq \langle i,j \rangle$ for all $i,j$}.
\end{align*}
It is not hard to verify that $Q$ is arithmetical. From Lemma \ref{lemma unique strongly admissible} we have that for all $e \in \W$ and for all linear orderings $\{n\}$ with least element there exists exactly one $\alpha \in \ca{N}$ such that $Q(n,e,\alpha)$.

Now we define the relation $L \subseteq \om \times \ca{N}$
\begin{align*}
L(e,\alpha) \iff& \ \ \ \ \{e\} \ \textrm{is a linear ordering with a least element}\\
                  &\& \ \big \{(\forall n)[\textrm{$\{n\}$ is a linear ordering with a least element}\\
                  & \hspace*{2cm} \textrm{and} \ Q(n,e,(\alpha)_n)] \ \textrm{or}\\
                  & \hspace*{5mm} [\{n\} \ \textrm{is not a linear ordering with a least element}\\
                  & \hspace*{2cm} \textrm{and} \ (\alpha)_n = 0]\ \big \}\\
                  &\& \ \ \alpha(t) = 0 \ \textrm{for all $t$ with $t \neq \langle n,s \rangle$ for all $n,s$}.
\end{align*}
Using the preceding comments it is clear that for all $e \in \W$ there exists exactly one $\alpha$ such that $L(e,\alpha)$.

We define $S \subseteq \om \times \ca{N}^2$ as follows
\begin{align*}
S(e,\alpha,\beta) \iff& \ \ \big \{ \{e\} \ \textrm{is a linear ordering and $\beta$ embeds} \ \leq_e \ \textrm{into}\\
                      & \hspace*{5mm} \textrm{an initial segment of} \ (K,\leq_{\rm KB}) \ \textrm{and $\beta$ is $0$ outside}\\
                      & \hspace*{5mm} \textrm{of the domain of} \ \leq_e \big \}\\
                      &\& \ \  L(e,\alpha)
\end{align*}
It is not hard to verify that $S$ is arithmetical. We claim that
\begin{align*}
e \in \W &\iff (\exists ! (\alpha,\beta))S(e,\alpha,\beta)\\
         &\iff (\exists (\alpha,\beta) \in \del)S(e,\alpha,\beta)
\end{align*}
for all $e \in \W$. Once we prove this we are done since $\ca{N} \times \ca{N}$ is recursively isomorphic to \ca{N}.

We prove these equivalences round-robin style. Suppose that $e$ is in $\W$. Since the order type of the well-founded part of $(K,\leq_{\rm KB})$ is equal to $\om_1^{\rm CK}$ there exists a unique $\beta$ which embeds $\leq_e$ into an initial segment of $(K,\leq_{\rm KB})$ and is $0$ outside of the domain of $\leq_e$. Moreover from the preceding remarks there exists exactly one $\alpha$ such that $L(e,\alpha)$. So $(\alpha,\beta)$ is the unique pair for which we have that $S(e,\alpha,\beta)$.

If $S(e,\alpha,\beta)$ for a unique pair $(\alpha,\beta)$ then, since $S$ is arithmetical, it is clear from the Effective Perfect Set Theorem \ref{theorem alltogether}-(\ref{theorem 4F.1}) that $(\alpha,\beta)$ is \del.

Now suppose that $S(e,\alpha,\beta)$ holds for some $(\alpha,\beta) \in \del$ and assume towards a contradiction that $e \not \in \W$. Since $\leq_e$ is isomorphic to an initial segment of $(K,\leq_{\rm KB})$ and is not a well-ordering, then the well-founded part of $\leq_e$ has the same order type as the one of $(K,\leq_{\rm KB})$, \ie $\om_1^{\rm CK}$.

Suppose that $A \subseteq \om$ is an arbitrary \del \ set. It is well known that there exists a recursive function $f: \om \to \om$ such that $\leq_{f(n)}$ is a linear ordering and that
\[
n \in A \iff f(n) \in \W
\]
for all \n. (This is true for all \pii \ subsets of \om, see the following proof or 4A.3 in \cite{yiannis_dst}.) Moreover it is easy to arrange that $\leq_{f(n)}$ has a least element, in fact this is implicit from the proof of this statement.

For every \n, since $\leq_{f(n)}$ is a linear ordering with a least element and $L(e,\alpha)$ it follows that the set
\[
\set{(i,j)}{(\alpha)_{f(n)}(\langle i,j\rangle) =1}
\]
is the graph of a strongly $(\leq_{f(n)},\leq_e)$-admissible function, which we denote by $\pi_n$.

Set $\xi = \sup\set{|\leq_{f(n)}|}{n \in A}$. Since $A$ is in \del \ we have that $\xi < \om_1^{\rm CK}$, and since the well-founded part of $\leq_e$ is $\om_1^{\rm CK}$ there exists some $k$ in the well-founded part of $\leq_e$ whose order type with respect to $\leq_e$ is $\xi$. We claim that
\begin{align*}
f(n) \in \W \iff (\forall i \in \Field(\leq_{f(n)}))[\pi_n(i) \downarrow \ \& \ \pi_n(i) \leq_e k],
\end{align*}
for all \n. To see this assume first that $f(n) \in \W$. Since $\leq_{f(n)}$ is a well-ordering the function $\pi_n$ is the unique embedding of $\leq_{f(n)}$ into an initial segment of $\leq_e$. In particular $\pi_n$ is defined on the whole $\Field(\leq_{f(n)})$. Moreover from the choice of $k$ we have that $\pi_n(i) \leq_e k$ for all $i \in \Field(\leq_{f(n)})$. Conversely suppose that $\pi_n(i)$ is defined and that $\pi_n(i) \leq_e k$ for all $i \in \Field(\leq_{f(n)})$. If $f(n)$ is not in $\W$ then there exists a sequence $(i_m)_{m \in \om}$ with $i_{m+1} <_{f(n)} i_m$ for all $m \in \om$. Since $\pi_n$ is $(\leq_{f(n)},\leq_e)$-admissible which is defined on the whole field of $\leq_{f(n)}$ it follows from property (\ref{equation admissible C}) that $\pi_n(i_{m+1}) <_e  \pi_n(i_m) \leq_e k$ for all $m \in \om$, contradicting that $k$ is in the well-founded part of $\leq_e$. Hence $f(n)$ is in $\W$ and the preceding equivalence is proved.

Summing up we have that
\begin{align*}
n \in A \iff& f(n) \in \W\\
        \iff& (\forall i \in \Field(\leq_{f(n)}))[\pi_n(i) \downarrow \ \& \ \pi_n(i) \leq_e k]\\
        \iff& (\forall i)\big \{[i \leq_{f(n)} i]  \longrightarrow (\exists j)[(\alpha)_{f(n)}(\langle i,j\rangle) = 1 \ \& \ j \leq_e k]  \big\},
\end{align*}
for all \n. This shows that $A$ is $\Pi^0_2(\alpha)$. Since $A$ is an arbitrary \del \ subset of $\om$ it follows that $\alpha$ is not in \del, a contradiction. Therefore $e$ is in $\W$ and we are done.
\end{proof}

\begin{theorem}[The strong form of the Spector-Gandy Theorem for Polish spaces]
\label{theorem strong form spector-gandy general}
For every recursively presented metric space \ca{X} and every \pii \ set $P \subseteq \ca{X}$ there exists a $\Pi^0_1$ set $C \subseteq \ca{X} \times \ca{N}$ such that
\begin{align*}
P(x) \iff& (\exists ! \alpha)C(x,\alpha)\\
       \iff& (\exists \alpha \in \del(x))C(x,\alpha),
\end{align*}
for all $x \in \ca{X}$.\footnote{It is worth pointing out the following analogy with classical theory. Lusin (see \cite{kechris_classical_dst} Theorem 18.11) has proved that the set of all \emph{unicity} points of a Borel set $B \subseteq \ca{X} \times \ca{Y}$, \ie the set $\set{x\in \ca{X}}{(\exists ! y)B(x,y)}$, is coanalytic. The converse is also true, \ie every coanalytic set is the set of all unicity points of a closed set, \cf \cite{kechris_classical_dst} Exercise 18.13. The strong form of the Spector-Gandy Theorem for Polish spaces gives a straightforward proof of the latter result and therefore it can be considered (at least in part) as its effective analogue.}
\end{theorem}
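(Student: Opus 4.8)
The plan is to reduce first to the case $\ca{X}=\ca{N}$ and then to run the proof of Theorem \ref{theorem Spector-Gandy Theorem} in a relativized form. For the reduction I would invoke Theorem \ref{theorem alltogether}-(\ref{theorem 3E.6}) to get a recursive surjection $\pi:\ca{N}\surj\ca{X}$, a $\Pi^0_1$ set $A\subseteq\ca{N}$ on which $\pi$ is injective with $\pi[A]=\ca{X}$, and the \del-injection $f=(\pi\upharpoonright A)^{-1}:\ca{X}\inj\ca{N}$. The set $\tilde P=\pi^{-1}[P]\cap A=f[P]$ is then \pii \ as a subset of $\ca{N}$ (it is the intersection of a $\Pi^0_1$ set with the preimage of a \pii \ set under a recursive map). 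Granting the theorem for $\ca{N}$, fix a $\Pi^0_1$ set $\tilde C\subseteq\ca{N}\times\ca{N}$ with $\tilde P(\beta)\iff(\exists!\gamma)\tilde C(\beta,\gamma)\iff(\exists\gamma\in\del(\beta))\tilde C(\beta,\gamma)$, and put $C(x,\langle\beta,\gamma\rangle)\iff\beta\in A\ \&\ \pi(\beta)=x\ \&\ \tilde C(\beta,\gamma)$. Since the graph of the recursive function $\pi$ is $\Pi^0_1$, the set $C$ is $\Pi^0_1$. For fixed $x$ the only possible value of $\beta$ is $\beta=f(x)$, so the uniqueness clause transfers at once; for the \del-clause one uses that $f(x)\in\del(x)$ and, by Lemma \ref{lemma the inverse function is del and same hyperdegree}, that $\del(f(x))=\del(x)$ because $f$ is an injective \del-recursive function. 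Hence $C$ witnesses the statement for $\ca{X}$, and it remains to treat a \pii \ set $\tilde P\subseteq\ca{N}$.

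For that I would reproduce the proof of Theorem \ref{theorem Spector-Gandy Theorem} with $\om$, $\W$, the fixed Kleene tree, and the recursive linear orderings $\leq_n$ all relativized to the free variable $\xi$. Write $\tilde P(\xi)\iff$ ``$T_\xi$ is well-founded'', where $\{(\xi,s):\dec{s}\in T_\xi\}$ is recursive; replacing $T_\xi$ by its ``product'' with the body of the $\xi$-recursive tree supplied by the $\xi$-relativized Theorem \ref{theorem Gandy extension of Kleene}, we may assume $T_\xi$ has no branch in $\del(\xi)$ when it is ill-founded, so that $(T_\xi,\kbleq)$ is either a $\xi$-recursive well-ordering of order type $<\ckr{\xi}$ or a $\xi$-recursive pseudo-well-ordering whose well-founded part has order type $\ckr{\xi}$. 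Fix a $\xi$-recursive $\xi$-Kleene tree $K_\xi$ (uniformly, by relativizing Theorem \ref{theorem Kleene there exists a Kleene tree}); the well-founded part of $(K_\xi,\kbleq)$ then has order type $\ckr{\xi}$ by the $\xi$-relativization of Theorem \ref{theorem Gandy length of well-founded part}. Now define, in exact analogy with the proof of Theorem \ref{theorem Spector-Gandy Theorem}, the arithmetical-in-$\xi$ relation $S(\xi,\alpha,\beta)$ asserting that $\beta$ embeds $(T_\xi,\kbleq)$ onto an initial segment of $(K_\xi,\kbleq)$ and that $\alpha$ codes, for every $n$, the strongly $(\leq^{[\xi]}_n,(T_\xi,\kbleq))$-admissible function, where $\leq^{[\xi]}_n$ is the $n$-th \emph{$\xi$-recursive} linear ordering with a least element. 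By Lemma \ref{lemma unique strongly admissible}, if $\tilde P(\xi)$ holds there is a unique such pair, and by the Effective Perfect Set Theorem \ref{theorem alltogether}-(\ref{theorem 4F.1}) (relativized to $\xi$, applied to the singleton $\sig(\xi)$ set $S(\xi,\cdot,\cdot)$) it lies in $\del(\xi)$; conversely, if $S(\xi,\alpha,\beta)$ holds and $\tilde P(\xi)$ fails, then — exactly as in the unrelativized argument, now using $\sig(\xi)$-boundedness — every set in $\del(\xi)$ is $\Sigma^0_2(\langle\alpha,\xi\rangle)$, which forces $\alpha\notin\del(\xi)$. This gives $\tilde P(\xi)\iff(\exists!(\alpha,\beta))S(\xi,\alpha,\beta)\iff(\exists(\alpha,\beta)\in\del(\xi))S(\xi,\alpha,\beta)$, and a standard unfolding of the arithmetical-in-$\xi$ matrix $S$ (padding the witness by the uniquely determined, recursive-in-$(\xi,\alpha,\beta)$ Skolem data for the number quantifiers) turns it into the required $\Pi^0_1$ set $C\subseteq\ca{N}\times\ca{N}$.

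The hard part will be the backward direction of the last equivalence, i.e.\ ruling out $\del(\xi)$-witnesses when $\xi\notin\tilde P$, and the subtle point is precisely the choice of $\leq^{[\xi]}_n$: if one kept the plain recursive orderings $\leq_n$ the argument would only show that every plain \del \ set is $\Sigma^0_2(\alpha,\xi)$, which is consistent with $\alpha\in\del(\xi)$, so no contradiction would ensue. Using the \emph{$\xi$-recursive} orderings forces any admissible $\alpha$ to compute, uniformly, the order types of all $\xi$-recursive well-orderings up to a point of $(T_\xi,\kbleq)$ of rank $\ckr{\xi}$, hence to decode every set of $\del(\xi)$ at a bounded level of the hyperarithmetic-in-$\xi$ hierarchy; this contradicts $\alpha\in\del(\xi)$ by the relativized Hierarchy Theorem. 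The other two adjustments — passing to a \emph{$\xi$-Kleene} tree $K_\xi$ instead of a fixed one, which is needed so that $(T_\xi,\kbleq)$ genuinely embeds into $(K_\xi,\kbleq)$ (a fixed $K$ fails because a \pii \ set may have instances $T_\xi$ of rank $\geq\ck$), and thinning $T_\xi$ via Theorem \ref{theorem Gandy extension of Kleene} so that its ill-founded part introduces no spurious $\del(\xi)$-branches — are exactly what makes the original proof go through verbatim in the relativized setting.
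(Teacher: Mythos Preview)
Your proposal is correct and follows essentially the same route as the paper: reduce to $\ca{N}$ via the recursive surjection $\pi$ and the $\Pi^0_1$ set $A$ from Theorem \ref{theorem alltogether}-(\ref{theorem 3E.6}), then relativize the proof of Theorem \ref{theorem Spector-Gandy Theorem} by replacing the fixed Kleene tree with the $\xi$-Kleene tree $K_\xi$ obtained from Theorem \ref{theorem Gandy extension of Kleene} and by using $\xi$-recursive linear orderings in the definition of $L$. The only difference is that your extra ``thinning'' of $T_\xi$ is unnecessary: once $\beta$ embeds $(T_\xi,\kbleq)$ as an initial segment of $(K_\xi,\kbleq)$ and $T_\xi$ is ill-founded, the well-founded part of $(T_\xi,\kbleq)$ is automatically forced to have order type $\ckr{\xi}$ (its image under $\beta$ must be exactly $\WF(K_\xi,\kbleq)$), so the backward direction goes through without assuming $T_\xi$ has no $\del(\xi)$ branches.
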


\begin{proof}
We first prove the result for the Baire space. For the needs of the proof we need to consider the relativized version of Spector's \W. For every $x \in \ca{N}$ we denote by $\W^x$ the set which is obtained by replacing the term ``recursive" with ``recursive in $x$" in the definition of \W, so that
\[
e \in \W^x \iff \leq_e \ \textrm{is an $x$-recursive well-ordering}.
\]
Now let $P$ be a \pii \ subset of \ca{N}. Following the proof of the basic representation theorem for \pii \ (\cf 4A.1 and 4A.4 \cite{yiannis_dst}) we put down $P$ in the form
\[
P(x) \iff (\forall \alpha)(\exists t)R(x,\barr{\alpha}(t)),
\]
where $R$ is a recursive\footnote{In the case of an arbitrary space \ca{X} the set $R$ would be semirecursive but not necessarily recursive. One could still proceed from this, but they would need to make more modifications on \W.} subset of $\ca{N} \times \om$. We define
\[
u \in T(x) \iff (\forall t < \lh(u))\neg R(x,\langle u(0),\dots,u(t-1) \rangle)
\]
so that $T(x)$ is a tree and
\[
P(x) \iff T(x) \ \textrm{is well-founded},
\]
for all $x \in \ca{X}$. It is easy to find a recursive function $f: \ca{N} \to \om$ such that
\[
\leq_{f(x)} = \textrm{is the linear ordering \kbleq \ on} \ T(x)
\]
and therefore
\[
P(x) \iff f(x) \in \W^x
\]
for all $x \in \ca{N}$.

Now let us take a $\Pi^0_1$ set $F \subseteq \ca{N} \times \ca{N}$ as in Theorem \ref{theorem Gandy extension of Kleene}, \ie every section $F_x$ is non-empty and has no members in $\del(x)$. From this it follows that there exists a recursive relation $K \subseteq \ca{N} \times \Seq$ such that $K(x)$ is a tree and $[K(x)] = F_x$ for all $x \in \ca{N}$. By standard relativization arguments the order type of the well-founded part of $K(x)$ is $\ckr{x}$.

We go back to the preceding proof and we relativize the definition of $L$ and $S$ with respect to $x$. (In particular we replace every instance of the Kleene tree $K$ with the $K(x)$ from above.) Let $S^\ast(e,x,\alpha,\beta)$ be the arithmetical relation which corresponds to $S(e,\alpha,\beta)$. With the same arguments we have that
\begin{align*}
e \in \W^x &\iff (\exists ! (\alpha,\beta))S^\ast(e,x,\alpha,\beta)\\
                 &\iff (\exists (\alpha,\beta) \in \del(x))S^\ast(e,x,\alpha,\beta),
\end{align*}
so that
\begin{align*}
P(x) \iff& f(x) \in \W^x\\
       \iff& (\exists ! (\alpha,\beta))S^\ast(f(x),x,\alpha,\beta)\\
        \iff& (\exists (\alpha,\beta) \in \del(x))S^\ast(f(x),x,\alpha,\beta).
\end{align*}
Therefore we take
\[
S^{\ast \ast}(x,\alpha,\beta) \iff S^\ast(f(x),x,\alpha,\beta)
\]
Clearly $S^{\ast \ast}$ is arithmetical and hence is the recursive injective image of a $\Pi^0_1$ subset of \ca{N} (Theorem \ref{theorem alltogether}-(\ref{theorem 4D.9})). It is not hard to verify that the graph of a recursive function is a $\Pi^0_1$ set, so there exists $\Pi^0_1$ set $C$ such that
\begin{align*}
S^{\ast \ast}(x,\alpha,\beta) \iff& (\exists \gamma)C(x,\alpha,\beta,\gamma)\\
                              \iff& (\exists! \gamma)C(x,\alpha,\beta,\gamma).
\end{align*}
One can then verify that $C$ is the required set. This finishes the proof for the Baire space.

In the general case we have from Theorem \ref{theorem alltogether}-(\ref{theorem 3E.6}) a recursive surjection $\pi: \ca{N} \surj \ca{X}$ and a $\Pi^0_1$ set $A \subseteq \ca{X}$ such that $\pi$ is injective on $A$ and $\pi[A] = \ca{X}$. The set $Q=\pi^{-1}[P] \cap A$ is a \pii \ subset of \ca{N}, so from above there exists a $\Pi^0_1$ set $C$ such that
\begin{align*}
Q(\ep) \iff& (\exists ! \alpha)C(\ep,\alpha)\\
       \iff& (\exists \alpha \in \del(\ep))C(\ep,\alpha)
\end{align*}
for all $\ep \in \ca{N}$. We define
\[
C'(x,\ep,\alpha) \iff C(\ep,\alpha) \ \& \ \pi(\ep) = x.
\]
It is now easy to verify that
\begin{align*}
P(x) \iff& (\exists ! (\ep,\alpha))C'(x,\ep,\alpha)\\
       \iff& (\exists (\ep,\alpha) \in \del(x))C'(x,\ep,\alpha).
\end{align*}
for all $x \in \ca{X}$. (Notice that if $\pi(\ep) = x$ with $\ep \in A$ then $\ep \heq x$. So if $(\ep,\alpha)$ is in $\del(x)$ and $C'(x,\ep,\alpha)$ holds, then we have in particular that $\alpha$ is in \del(\ep).)
\end{proof}

\subsection{Application to the spaces \spat{T}} Suppose that $P$ is a \pii \ subset of \om \ and that $R$ is a recursive set such that
\begin{align*}
P(n) \iff& (\exists \alpha \in \del)(\forall t)R(n,\barr{\alpha}(t))\\
     \iff& (\exists! \alpha)(\forall t)R(n,\barr{\alpha}(t)).
\end{align*}
We define the recursive tree $T$ by
\[
u \in T \iff u = \empt \ \vee \ (\forall t < \lh(u))R(u(0),\langle u(1),\dots,u(t)\rangle),
\]
so that
\begin{align*}
P(n) \iff& (\exists \alpha \in \del)[\cn{(n)}{\alpha} \in [T]]\\
     \iff& (\exists! \alpha)[\cn{(n)}{\alpha} \in [T]].
\end{align*}
It follows that $[T_{(n)}]$ is a singleton for $n \in P$. Moreover if $x$ is in $[T]$ and $x(0) \not \in P$ then $x$ is not a \del \ point. Hence the set of all \del-points of the space \spat{T} is $\set{x \in [T]}{x(0) \in P} \cup T$. Also notice that we cannot reject the possibility that \spat{T} is countable. In fact if $P$ is a recursive set one can easily choose $R$ so that if $n \not \in P$ then $[T_{(n)}] = \emptyset$. So in order for this space to be more interesting we will ask that $P$ is a $\pii \setminus \del$ set.

\begin{definition}
\label{definition of Spector-Gandy tree and space}
\normalfont
Suppose that $T$ is a recursive tree on \om \ and consider the set
\[
P(n) \iff (\exists x \in \del)[ x \in [T] \ \& \ x(0)=n].
\]
Clearly from the Theorem on Restricted Quantification the set $P$ is \pii. We say that $T$ is a \emph{Spector-Gandy tree} if $P$ is not \del \ and
\[
P(n) \iff (\exists! x)[ x \in [T] \ \& \ x(0)=n]
\]
for all \n. A space of the form \spat{T} is a \emph{Spector-Gandy space} if $T$ is a Spector-Gandy tree. The set $P$ as above is the \emph{companion set} of the Spector-Gandy space \spat{T} (or of the Spector-Gandy tree $T$).
\end{definition}

It is clear from the preceding comments that for all sets $P \subseteq \om$ which are \pii \ and not \del \ there is a Spector-Gandy space \spat{T} whose companion set is exactly $P$.

\begin{remark}
\label{remark first remark about Spector-Gandy spaces}
\normalfont
For a Spector-Gandy space \spat{T} with companion set $P$ we have the following.

\tu{(1)} The set of all \del \ points of \spat{T} is not \del. This is because
\[
P(n) \iff (\exists x)[ x \in [T] \ \& \ x \in \del \ \& \ x(0)=n ],
\]
so if the set $\del \cap \spat{T}$ were \del \ then $P$ would be \sig.\smallskip

\tu{(2)} There is some $n \not \in P$ such that $[T_{(n)}] \neq \emptyset$. In fact the \sig \ set
\[
I=\set{n \not \in P}{[T_{(n)}] \neq \emptyset}
\]
is not \del \ (and in particular is non-empty), since
\[
\neg P(n) \iff n \in I  \ \vee \ (\forall x \in [T])[x(0) \neq n]
\]
and $\neg P$ is not in \pii. Notice that $T_{(n)}$ is a Kleene tree for all $n \in I$. It follows that a Spector-Gandy tree is the infinite sum of Kleene trees and of recursive trees whose body is at most a singleton, see Figure \ref{fig:sgtree}. (Both cases occur infinitely many times.)

In particular a Spector-Gandy space contains a Kleene space and therefore is uncountable.\smallskip

\tu{(3)} The preceding Kleene trees $T_{(n)}$, $n \in I$ are not all similar in the sense that they cannot all be the tree $\set{\cn{(n)}{u}}{u \in S} \cup \{\emptyset\}$ for some fixed Kleene tree $S$. Indeed if this were not the case then we would have that
\[
P(n) \iff (\exists x)[\cn{(n)}{x} \in [T] \ \& \ x \not \in [S]]
\]
for all \n. According to the preceding equivalence the set $P$ would be \sig, a contradiction.\smallskip

\tu{(4)} Using sums of trees one can see that every Kleene tree has a recursive copy inside a Spector-Gandy tree. In particular every Kleene space is below a Spector-Gandy space under \dleq.\smallskip

\tu{(5)} Since the set of \del \ points of a Spector-Gandy space is not \del \ it follows that no Spector-Gandy space is \del-isomorphic to a Kleene space. The next step is to show that Spector-Gandy spaces are different from the Baire space.

\tu{(6)} From Corollary \ref{corollary incomparable hyperdegrees} and the preceding remarks it follows that every Spector-Gandy space contains an infinite set whose members are pairwise hyperarithmetically incomparable and their hyperdegree is below \W.
\end{remark}

\begin{figure}[t]
\begin{picture}(300,100)(0,0)
\put(140,90){\vector(-3,-1){60}}
\put(140,90){\vector(-1,-1){20}}
\put(140,90){\vector(1,-4){5.1}}
\put(140,90){\vector(2,-1){40}}
\put(140,90){\vector(4,-1){80}}
\put(235,70){$\dots$}
\put(80,70){\line(1,-2){35}}
\put(80,70){\line(-1,-2){35}}
\put(64.5,30){\small{Kleene}}
\put(68.5,20){\small{tree}}
\put(80,5){$\vdots$}
\put(180,70){\line(1,-2){35}}
\put(180,70){\line(-1,-2){35}}
\put(164.5,30){\small{Kleene}}
\put(168.5,20){\small{tree}}
\put(180,5){$\vdots$}
\put(145,70){\vector(-2,-3){13}}
\put(132,51){\vector(0,-1){21}}
\put(131,15){$\vdots$}
\put(127,3){\del}
\put(220,70){\vector(0,-1){20}}
\put(220,70){\vector(1,-1){20}}
\put(240,50){\vector(1,-2){10}}
\put(248,15){$\vdots$}
\put(245,3){\del}
\end{picture}
\caption{A typical Spector-Gandy tree.}
\label{fig:sgtree}
\end{figure}
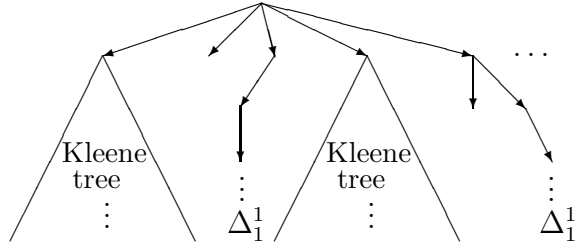

\begin{theorem}

\label{theorem Spector-Gandy spaces are not del isomorphic}

No Spector-Gandy space is \del-isomorphic to the Baire space.
\end{theorem}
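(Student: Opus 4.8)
The plan is to argue by contradiction, and the point to keep in mind is that a \del-isomorphism is only a Borel isomorphism, so it need \emph{not} preserve the Cantor--Bendixson decomposition (for instance $\ca{N} \dequal \ca{N} \oplus \om$, although $\ca{N}$ is perfect). Hence we cannot just transport the perfect kernel across a putative isomorphism; instead we use the intrinsic characterization of the Baire space up to \del-isomorphism, namely that $\spat{T} \dequal \ca{N}$ forces a \emph{non-empty perfect} subset of $\spat{T}$ carrying a \del \ point (Theorem~\ref{theorem characterization of Baire space in terms of perfect sets}, or more conveniently Corollary~\ref{corollary del injection from C to X implies continuous del injection}), and then show that no such set exists in a Spector--Gandy space because its whole perfect kernel avoids \del. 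So, fixing a Spector--Gandy tree $T$ with companion set $P$, the first step is the claim that $\pker{\spat{T}} \cap \del = \emptyset$. By Theorem~\ref{theorem properties of Nt}(1) we have $\scat{\spat{T}} = \scat{[T]} \cup T$, and since $T \cap [T] = \emptyset$ this gives $\pker{\spat{T}} = \pker{[T]}$. Now $[T]$ is the topological disjoint sum of the non-empty bodies $[T_{(n)}]$, each of which equals $[T] \cap \{x : x(0)=n\}$ and hence is clopen in $[T]$; consequently $\pker{[T]} = \bigcup_{n} \pker{[T_{(n)}]}$, the union taken over those $n$ with $[T_{(n)}] \neq \emptyset$. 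For $n \in P$ the body $[T_{(n)}]$ is a singleton by Definition~\ref{definition of Spector-Gandy tree and space}, so $\pker{[T_{(n)}]} = \emptyset$; and for $n \notin P$ with $[T_{(n)}] \neq \emptyset$ (these are the $n$ in the set $I$ of Remark~\ref{remark first remark about Spector-Gandy spaces}(2)) the body $[T_{(n)}]$ contains no \del \ member, since a \del \ member $x$ with $x(0)=n$ would witness $n \in P$ by the definition of the companion set. Therefore $\pker{\spat{T}}$ contains no \del \ point.

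For the second step, suppose towards a contradiction that $\spat{T} \dequal \ca{N}$. Since $\ca{C}$ is a $\Pi^0_1$ subset of $\ca{N}$, the inclusion $\ca{C} \hookrightarrow \ca{N}$ composed with a \del-isomorphism $\ca{N} \bij \spat{T}$ is a \del-injection $\ca{C} \inj \spat{T}$, so by Corollary~\ref{corollary del injection from C to X implies continuous del injection} there is a \emph{continuous} \del-injection $\tau : \ca{C} \inj \spat{T}$. As $\ca{C}$ is compact and $\spat{T}$ is metric, $\tau$ is a homeomorphism onto its image $F = \tau[\ca{C}]$, so $F$ is a non-empty perfect subset of $\spat{T}$; hence $F \subseteq \pker{\spat{T}}$, because if some $x \in F$ lay in $\scat{\spat{T}}$ a witnessing open neighbourhood would meet the non-empty perfect Polish space $F$ in a non-empty open, hence uncountable, set, while meeting $\spat{T}$ (a fortiori $F$) in a countable set. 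On the other hand the constant zero sequence $\mathbf{0} \in \ca{C}$ is recursive, so $\tau(\mathbf{0}) \in \del(\mathbf{0}) = \del$ by Lemma~\ref{lemma the inverse function is del and same hyperdegree}(1), and $\tau(\mathbf{0}) \in F \subseteq \pker{\spat{T}}$. This contradicts the first step, and we conclude that no Spector--Gandy space is \del-isomorphic to $\ca{N}$.

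Both steps are short. The first is essentially bookkeeping on top of the structural description of Spector--Gandy trees in Remark~\ref{remark first remark about Spector-Gandy spaces}, and the second is a one-line application of already-developed machinery. The only genuine subtlety — and what would be the main obstacle if one tried a more naive route — is precisely that a \del-isomorphism does not carry the perfect kernel to the perfect kernel, so one must not try to compare $\pker{\spat{T}}$ with $\pker{\ca{N}} = \ca{N}$ directly; the correct move is to encode the obstruction as the non-existence of \emph{any} perfect set in $\spat{T}$ with a \del \ point, which is exactly what Theorem~\ref{theorem characterization of Baire space in terms of perfect sets} and Corollary~\ref{corollary del injection from C to X implies continuous del injection} are designed to deliver.
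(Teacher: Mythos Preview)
Your proof is correct and is essentially the paper's first argument, repackaged at the level of the space rather than the tree. The paper applies Theorem~\ref{theorem characterization of isomorphic to the Baire space} directly to obtain a \del-embedding of $2^{<\om}$ into $T$, observes that it must live in some $T_{(n)}$, and reaches the same dichotomy you use (either $[T_{(n)}]$ is a singleton or it has no \del\ members). You instead invoke the topological corollary (Corollary~\ref{corollary del injection from C to X implies continuous del injection}), land a perfect compact set in $\pker{\spat{T}}$, and produce a \del\ point there; your Step~1 is exactly the computation $\scat{\spat{T}}=\del\cap\spat{T}$ that the paper records immediately afterwards as Theorem~\ref{theorem CB decomposition of Spector-Gandy spaces}(1). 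The paper also supplies a second, self-contained measure-theoretic proof (using Tanaka--Sacks on the pullback of the coin-toss measure) that avoids the characterization theorem altogether; your route does not reproduce that alternative, but the argument you give is complete as written.
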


\begin{proof}
If \spat{T} is Spector-Gandy space which is \del-isomorphic to \ca{N} then from Theorem \ref{theorem characterization of isomorphic to the Baire space} the tree $T$ would contain a \del-isomorphic copy of the complete binary tree. It follows that for some \n \ the subtree $T_{(n)}$ would contain a \del \ copy of the complete binary tree. However $[T_{(n)}]$ is either a singleton or does not contain \del \ members and we have a contradiction.

We also put down a second proof which does not invoke Theorem \ref{theorem characterization of isomorphic to the Baire space} - in fact the proof of the latter was motivated by the measure-theoretic argument that we present below.

Suppose towards a contradiction that there is a Spector-Gandy space \spat{T} with companion set $P$, which is \del \ isomorphic to the Baire space. In particular there exists a \del-injection $f: \ca{C} \inj \spat{T}$. Let $\mu$ be the usual coin-toss measure on the Cantor space \ca{C}. For all $(n) \in T$ we define the set
\[
C_n = f^{-1}[[T_{(n)}]] = \set{\alpha \in \ca{C}}{f(\alpha) \in [T] \ \& \ f(\alpha)(0)=n}.
\]
It is clear that each $C_n$ is a \del \ set and that $\cup_{(n) \in T} C_n = f^{-1}[[T]]$. Since $f$ is injective the set $f^{-1}[\spat{T} \setminus [T]] = f^{-1}[T]$ is at most countable and hence it has zero $\mu$-measure. It follows that 
\[
\mu(\ca{C}) = \mu(f^{-1}[\spat{T}]) = \mu(f^{-1}[[T]]) = \mu(\cup_{(n) \in T} C_n).
\]
Since $\mu \neq 0$ there is some $(n_0) \in T$ such that $\mu(C_{n_0}) > 0$. In particular the set $C_{n_0}$ is uncountable and since $f$ is injective, the set $[T_{(n_0)}]$ is uncountable as well. It follows that $n_0$ does not belong to the companion set $P$.

On the other hand since $\mu(C_{n_0}) > 0$ and $C_{n_0}$ is a \del \ set it follows from Tanaka \cite{tanaka_a_basis_result_for_pii_sets_of_positive_measure} - Sacks \cite{sacks_measure_theoretic_uniformity} that $C_{(n_0)}$ contains some $\alpha \in \del$. So $f(\alpha)$ is a $\del$ infinite branch of $T$ with $f(\alpha)(0) = n_0$. This implies that $n_0$ belongs to the companion set $P$, a contradiction.
\end{proof}

If $T, S$ are Spector-Gandy trees then the tree $T \oplus S$ a Spector-Gandy tree as well. So from Lemma \ref{lemma sum and product of trees} it follows that the sum of Spector-Gandy spaces is recursively isomorphic to a Spector-Gandy space. It is not clear if the similar assertion for products is true, since the product of two Spector-Gandy trees is not a Spector-Gandy tree. Nevertheless the product of two Spector-Gandy spaces is different under \dleq \ from the other known spaces.

\begin{theorem}
\label{theorem product of Spector-Gandy spaces is neither Kleene nor Baire}
The product of two Spector-Gandy spaces is not \del-isomorphic to any Kleene space or to the Baire space.
\end{theorem}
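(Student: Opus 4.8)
The plan is to use the recursive isomorphism $\spat{\prodt{T}{S}} \simeq_{\rm rec} \spat{T} \times \spat{S}$ from Lemma~\ref{lemma sum and product of trees} and to treat the two assertions separately; note that $\spat{\prodt{T}{S}}$ is an uncountable recursively presented metric space, as it contains the uncountable space $\spat{T}$.

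\emph{Not \del-isomorphic to a Kleene space.} Under the preceding recursive isomorphism a point $z$ of $\spat{\prodt{T}{S}}$ is \del exactly when both coordinates of the corresponding pair $(x,y) \in \spat{T} \times \spat{S}$ are \del, so the set $\del \cap \spat{\prodt{T}{S}}$ corresponds to $(\del \cap \spat{T}) \times (\del \cap \spat{S})$. Were this product \del, then fixing any $y_0 \in \del \cap \spat{S}$ (such points exist, e.g.\ the members of $S$) the section of the product at $y_0$ is exactly $\del \cap \spat{T}$, which would then be \del, contradicting the first part of Remark~\ref{remark first remark about Spector-Gandy spaces}. Hence $\del \cap \spat{\prodt{T}{S}}$ is not \del, and by Theorem~\ref{theorem characterization the space is bad} the space $\spat{\prodt{T}{S}}$ is not \del-isomorphic to any Kleene space.

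\emph{Not \del-isomorphic to the Baire space.} Suppose towards a contradiction that $\spat{\prodt{T}{S}} \dequal \ca{N}$. Since $\ca{C} \dequal \ca{N}$, composing a \del-bijection with the recursive isomorphism above yields a \del-injection $f = (f_1,f_2) : \ca{C} \inj \spat{T} \times \spat{S}$, with $f_1,f_2$ the coordinate functions. Let $\mu$ be the coin-toss measure on $\ca{C}$. Under the identification $\spat{\prodt{T}{S}} = \spat{T} \times \spat{S}$, the finite part $\prodt{T}{S}$ corresponds to $T \times S$ and the body $[\prodt{T}{S}]$ to $\{(x,y) \mid x \in [T] \textrm{ or } y \in [S]\}$; since $f$ is injective, $f^{-1}[T \times S]$ is countable, hence $\mu$-null, so $\mu(f_1^{-1}[[T]] \cup f_2^{-1}[[S]]) = 1$. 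Without loss of generality $\mu(f_1^{-1}[[T]]) > 0$ (the other case is symmetric, $T$ and $S$ both being Spector-Gandy trees). Writing $f_1^{-1}[[T]] = \bigcup_{(n) \in T} f_1^{-1}[[T_{(n)}]]$ as a disjoint union of \del sets and using countable additivity, there is some $n_0$ with $(n_0) \in T$ and $\mu(C_{n_0}) > 0$, where $C_{n_0} = f^{-1}[[T_{(n_0)}] \times \spat{S}]$ is \del.

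Now $C_{n_0}$ is a \del (hence \pii) subset of $\ca{C}$ of positive measure, so by the theorem of Tanaka~\cite{tanaka_a_basis_result_for_pii_sets_of_positive_measure}--Sacks~\cite{sacks_measure_theoretic_uniformity} it contains a \del point $\gamma_0$; then $f(\gamma_0) \in \del$ by Lemma~\ref{lemma the inverse function is del and same hyperdegree}, so $f_1(\gamma_0)$ is a \del member of $[T]$ with first coordinate $n_0$, i.e.\ $n_0$ belongs to the companion set of $T$ (Definition~\ref{definition of Spector-Gandy tree and space}). As $T$ is a Spector-Gandy tree, $[T_{(n_0)}]$ is therefore a singleton $\{x_{n_0}\}$, so $f_1 \equiv x_{n_0}$ on $C_{n_0}$, and the injectivity of $f$ forces $g := f_2 \upharpoonright C_{n_0}$ to be injective. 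We now repeat the argument with the \del-recursive $g : C_{n_0} \inj \spat{S}$ in place of $f$ and $S$ in place of $T$: the sets $D_m := C_{n_0} \cap f_2^{-1}[[S_{(m)}]]$ (for $(m) \in S$) are \del and pairwise disjoint, their union is $C_{n_0}$ minus the countable ($\mu$-null, as $g$ is injective) set $C_{n_0} \cap f_2^{-1}[S]$, so some $\mu(D_{m_0}) > 0$. By Tanaka--Sacks, $D_{m_0}$ contains a \del point $\gamma_1$, and $g(\gamma_1) \in \del$ is a member of $[S]$ with first coordinate $m_0$, so $m_0$ is in the companion set of $S$ and $[S_{(m_0)}]$ is a singleton. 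But $D_{m_0}$ has positive measure, hence is uncountable, and $g \upharpoonright D_{m_0}$ is an injection of $D_{m_0}$ into $[S_{(m_0)}]$ --- a contradiction.

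The main obstacle is the bookkeeping in the last step: a priori $f_1$ need not be injective, so one cannot directly transfer cardinality information from $\ca{C}$ to $\spat{T}$. The point of invoking Tanaka--Sacks already at the first stage is precisely that it produces a \del point inside $C_{n_0}$, which \emph{forces} $n_0$ into the companion set of $T$ and hence collapses $[T_{(n_0)}]$ to a single point; this collapse is what lets the argument descend into the second factor $\spat{S}$, where a second application of Tanaka--Sacks (together with the positive measure of $D_{m_0}$) yields the contradiction. This is also why the Kleene-tree alternative for $T_{(n_0)}$ never arises.
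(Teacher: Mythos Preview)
Your proof is correct. The Kleene-space half is essentially the paper's argument, only phrased more cleanly through the product structure: where you take a section of $(\del\cap\spat{T})\times(\del\cap\spat{S})$ at a fixed $y_0\in\del$, the paper fixes $k\notin Q$ and exhibits the recursive reduction $P(n)\Leftrightarrow R(\pairint{n}{k})$ explicitly; the content is the same.

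The Baire-space half, however, takes a genuinely different route. The paper invokes Theorem~\ref{theorem characterization of isomorphic to the Baire space} to obtain a \del-copy of $2^{<\omega}$ inside $\prodt{T}{S}$, localizes it to some $(\prodt{T}{S})_{(m)}$ with $m=\pairint{n^\ast}{k^\ast}$, and then runs a case analysis on whether $n^\ast,k^\ast$ lie in the respective companion sets (or equal $-1$). Your argument avoids Theorem~\ref{theorem characterization of isomorphic to the Baire space} altogether and instead iterates the measure-theoretic idea of the second proof of Theorem~\ref{theorem Spector-Gandy spaces are not del isomorphic}: one application of Tanaka--Sacks forces $n_0$ into the companion set of $T$, which collapses $[T_{(n_0)}]$ to a singleton and makes $f_2$ injective on $C_{n_0}$; a second application then forces $m_0$ into the companion set of $S$, contradicting the positive measure of $D_{m_0}$. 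This is more self-contained (it needs only the basic Tanaka--Sacks basis theorem, not the full characterization of \ca{N} up to \del-isomorphism) and makes transparent why the two-factor structure is what matters. The paper's approach, on the other hand, gives finer structural information about the subtrees $(\prodt{T}{S})_{(m)}$ and would generalize more readily to questions about embeddings of other trees into $\prodt{T}{S}$.
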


\begin{proof}
Suppose that $T$ and $S$ are Spector-Gandy trees and let $P, Q$ be the corresponding companion sets. From Lemma \ref{lemma sum and product of trees} the product $\spat{T} \times \spat{S}$ is recursively isomorphic to $\spat{\prodt{T}{S}}$.

We first show the assertion about Kleene spaces. We define
\[
R(m) \iff (\exists \gamma \in \del)[\cn{(m)}{\gamma} \in [\prodt{T}{S}]].
\]
Pick some $(k) \in S$ with $k \not \in Q$. We claim that
\begin{align}
\label{equation theorem product of Spector-Gandy spaces is neither Kleene nor Baire A}P(n) \iff R(\pairint{n}{k})
\end{align}
for all \n. For the left-to-right-hand implication let $\alpha \in \del$ be such that $\cn{(n)}{\alpha} \in [T]$. We take $\gamma$ to be $$(\pairint{\alpha(0)}{-1},\pairint{\alpha(1)}{-1},\dots,\pairint{\alpha(n)}{-1},\dots).$$ Clearly $\gamma \in \del$ and $\cn{(\pairint{n}{k})}{\gamma} \in [\prodt{T}{S}]$. Therefore $\pairint{n}{k} \in R$. Conversely assume that there exists some $\gamma \in \del$ such that $\cn{(\pairint{n}{k})}{\gamma} \in [\prodt{T}{S}]$. There exist \del-recursive functions $x,y : \om \to \om \cup \{-1\}$ such that $\gamma = \prodt{x}{y}$. If $y(i) \geq 0$ for all $i$, then from the fact that $\cn{(\pairint{n}{k})}{\prodt{x}{y}} \in [\prodt{T}{S}]$ we would have that $\cn{(k)}{y} \in [S]$. Since $y \in \del$ it would follows that $k \in Q$, a contradiction. Therefore $y$ is eventually $-1$, and since $\gamma = \prodt{x}{y} \in \ca{N}$ it follows that $x(i) \geq 0$ for all $i$ and $\cn{(n)}{x} \in [T]$. Using that $x \in \del$ we have that $n \in P$.

It is clear that
\[
R(m) \iff (\exists \delta)[\delta(0) = m \ \& \ \delta \in \del \cap [\prodt{T}{S}]].
\]
If the space $\spat{\prodt{T}{S}}$ were \del-isomorphic to a Kleene space then the set $\del \cap [\prodt{T}{S}] = (\del \cap \spat{\prodt{T}{S}}) \setminus \prodt{T}{S}$ would be \del \ as well. From the preceding equivalence $R$ would be \sig, and from (\ref{equation theorem product of Spector-Gandy spaces is neither Kleene nor Baire A}) it would follow that $P$ is also a \sig \ set, contradicting the definition of a Spector-Gandy space.

Now we show the assertion about the Baire space. Assume towards a contradiction that \spat{\prodt{T}{S}} is \del-isomorphic to the Baire space. From Theorem  \ref{theorem characterization of isomorphic to the Baire space} the tree \prodt{T}{S} contains a \del-copy of the complete binary tree. Then for some $m$ the subtree $(\prodt{T}{S})_{(m)}$ contains a \del-copy of the complete binary tree and so the space \spat{(\prodt{T}{S})_{(m)}} is \del-isomorphic to the Baire space. There exist $n^\ast, k^\ast \in \om \cup \{-1\}$ not both $-1$ with $$m = \pairint{n^\ast}{k^\ast}.$$

If $n^\ast \in P$ and $k^\ast \in Q$ then the body of the subtree $(\prodt{T}{S})_{(m)}$ is countable, a contradiction. If $n^\ast \in \om \setminus P$ and $k^\ast \in \om \setminus Q$ then $(\prodt{T}{S})_{(m)}$ is a Kleene tree, a contradiction again. If $n^\ast \in \om \setminus P$ and $k^\ast \in Q$ then the \del \ infinite branches of $(\prodt{T}{S})_{(m)}$ have the form
\begin{align*}
(\pairint{n^\ast}{\beta(0)},\pairint{u(0)}{\beta(1)}&,\dots,\\&\hspace*{-10mm}\pairint{u(t-1)}{\beta(t)},\pairint{-1}{\beta(t+1)}, \pairint{-1}{\beta(t+2)},\dots),
\end{align*}
where $u \in T_{(n^\ast)}$ and $\beta$ is the unique infinite branch of $S_{(k^\ast)}$. From this it follows that the set $\del \cap \spat{(\prodt{T}{S})_{(m)}}$ is \del, contradicting that the space $\spat{(\prodt{T}{S})_{(m)}}$ is \del-isomorphic to the Baire space. Similarly one rejects the case $n^\ast \in P$ and $k^\ast \in \om \setminus Q$. Finally assume that $n^\ast=-1$ and that $k^\ast \geq 0$. In this case every $z \in [(\prodt{T}{S})_{(m)}]$ has the form
\[
(\pairint{-1}{y(0)},\pairint{-1}{y(1)},\dots, \pairint{-1}{y(n)}, \dots)
\]
for some $y \in [S_{(k^\ast)}]$. If $k^\ast \in Q$ then the body of $(\prodt{T}{S})_{(m)}$ would contain only one infinite branch and if $k^\ast \not \in Q$ then the tree $(\prodt{T}{S})_{(m)}$ would be a Kleene tree. In both cases we have a contradiction. The case $n^\ast \geq 0$ and $k^\ast =-1$ is treated similarly.
\end{proof}

Now we prove the basic facts about the Cantor-Bendixson decomposition of a Spector-Gandy space.

\begin{theorem}

\label{theorem CB decomposition of Spector-Gandy spaces}
\tu{(1)} For every Spector-Gandy space \spat{T} with scattered part $ \scat{\spat{T}}$ we have that
\begin{align*}
\scat{\spat{T}} =& \set{x \in \spat{T}}{x \in \del}.
\end{align*}
In particular the scattered part of \spat{T} is $\pii \setminus \sig$ and the perfect kernel is $\sig \setminus \pii$.\smallskip

\tu{(2) (\cf \cite{kreisel_cantor_bendixson})} For every Spector-Gandy tree the scattered part \scat{[T]} of $[T]$ as a closed subset of the Baire space is $\pii \setminus \sig$ and its perfect kernel is $\sig \setminus \pii$. Moreover\footnote{It is not hard to see that the set of isolated points
of a recursively presented metric space is arithmetical. This theorem says that when it comes to $\Pi^0_1$ sets the question of being an
isolated point of $A$ is no better than a \pii \ question.}
\[
\scat{[T]} = \set{x \in [T]}{x \ \mathrm{is \ an \ isolated \ point \ of} \ [T]} = \del \cap [T].
\]
\end{theorem}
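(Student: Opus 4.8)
The plan is to build everything on the structural description of a Spector--Gandy tree recorded in Remark~\ref{remark first remark about Spector-Gandy spaces}. Write $P$ for the companion set. For every $n$ with $n\in P$ the section $[T_{(n)}]$ is a singleton (the uniqueness clause of Definition~\ref{definition of Spector-Gandy tree and space}) whose unique member is \del \ (a \sig \ --- in fact $\Pi^0_1$ --- singleton whose member cannot fail to be \del, by the Effective Perfect Set Theorem~\ref{theorem alltogether}-(\ref{theorem 4F.1})); and for every $n\notin P$ with $[T_{(n)}]\neq\emptyset$ the tree $T_{(n)}$ is a Kleene tree. The load-bearing dichotomy is then: for $x\in[T]$, writing $n=x(0)$, either $n\in P$, in which case $[T_{(n)}]=\{x\}$, $x\in\del$, and $N(T,(n))\cap[T]=[T_{(n)}]=\{x\}$ so $x$ is isolated in $[T]$; or $n\notin P$, in which case $x\notin\del$ (otherwise $n\in P$), $T_{(n)}$ is a Kleene tree, and for \emph{every} $u\sqsubseteq x$ the set $N(T,u)\cap[T]=[T_u]$ is a nonempty \sig \ set all of whose members fail to be \del \ (it is contained in $[T_{(n)}]$, or equals $[T]\supseteq[T_{(n)}]$ when $u=\emptyset$), hence contains a perfect subset by Theorem~\ref{theorem alltogether}-(\ref{theorem 4F.1}) and is uncountable.

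For part (1) I would prove $\scat{\spat{T}}=\del\cap\spat{T}$ by double inclusion. The inclusion $\subseteq$ is Remark~\ref{remark scattered part consists of del points} applied to the trivially \sig \ space $\spat{T}$. For $\supseteq$: any $u\in T$ is isolated in $\spat{T}$ by Theorem~\ref{theorem properties of Nt}-(1), hence lies in the scattered part; a \del \ point $x\in[T]$ has $x(0)=n\in P$, so $[T_{(n)}]=\{x\}$ and $N(T,(n))=\{x\}\cup\set{v\in T}{(n)\sqsubseteq v}$ is countable, witnessing $x\in\scat{\spat{T}}$. The dichotomy above shows that a point $x\in[T]$ with $x(0)\notin P$ belongs to neither side (no countable neighbourhood in $\spat{T}$, and not \del), completing the equality. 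Then $\scat{\spat{T}}$ is \pii \ by Theorem~\ref{theorem alltogether}-(\ref{theorem 4D.14}) and not \del \ by Remark~\ref{remark first remark about Spector-Gandy spaces}-(1), hence $\pii\setminus\sig$; and $\pker{\spat{T}}$, being the relative complement of a \pii \ set in the $\Sigma^0_1$ space $\spat{T}$, is \sig, and cannot be \pii \ (else $\scat{\spat{T}}$ would be \sig), so it is $\sig\setminus\pii$.

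For part (2) the same dichotomy gives directly that $\scat{[T]}$, the set of isolated points of $[T]$, and $\del\cap[T]$ all coincide with $\set{x\in[T]}{x(0)\in P}$: points with $x(0)\notin P$ have no countable neighbourhood in $[T]$ (so they are not in $\scat{[T]}$, a fortiori not isolated and, already noted, not \del), while points with $x(0)\in P$ are isolated and \del. For the complexity, $\del\cap[T]=[T]\cap\set{x}{x(0)\in P}$ is \pii \ (conjunction of the $\Pi^0_1$ condition $x\in[T]$ with the \pii \ condition $x(0)\in P$), and if it were \sig \ then projecting along the clopen relation $x(0)=n$ would exhibit $P$ as a \sig \ set, contradicting Definition~\ref{definition of Spector-Gandy tree and space}; so it is $\pii\setminus\sig$. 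Dually $\pker{[T]}=[T]\setminus(\del\cap[T])$ is \sig \ (intersection of the \del \ set $[T]$ with a \sig \ set) and is not \pii \ (else $\scat{[T]}=[T]\cap\neg\pker{[T]}$ would be \sig), hence $\sig\setminus\pii$. Finally I would remark that by Theorem~\ref{theorem pointclasses of Nt} (and, for points, Theorem~\ref{theorem points of Nt}) these complexity statements may be read equivalently inside $\spat{T}$ or inside \ca{N}.

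The only genuinely delicate point is the ``no countable neighbourhood'' claim for $x$ with $x(0)\notin P$: one must check carefully that each basic neighbourhood $[T_u]$ ($u\sqsubseteq x$) is nonempty and \del-free so that the Effective Perfect Set Theorem forces it to be uncountable. This is precisely what separates the perfect kernel from the scattered part and what makes the three-fold equality of part (2) hold; everything else is routine bookkeeping with the closure properties of the Kleene pointclasses already collected in Theorem~\ref{theorem alltogether}.
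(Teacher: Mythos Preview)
Your proposal is correct and follows essentially the same route as the paper: both establish $\scat{\spat{T}}=\del\cap\spat{T}$ via Remark~\ref{remark scattered part consists of del points} for $\subseteq$ and the singleton section $[T_{(x(0))}]=\{x\}$ for $\supseteq$, and both read off the three-way equality in part~(2) from the same dichotomy on $x(0)\in P$ versus $x(0)\notin P$. Two minor differences worth noting: your appeal to the Effective Perfect Set Theorem to force uncountability of $[T_u]$ for $u\sqsubseteq x$, $x(0)\notin P$, is not actually needed---once you know such $x$ are not \del, Remark~\ref{remark scattered part consists of del points} already excludes them from the scattered part (and isolated points are \emph{a fortiori} in the scattered part); and for the complexity in part~(2) the paper derives $\scat{[T]}\in\pii\setminus\sig$ from part~(1) via the identity $\scat{\spat{T}}=T\cup\scat{[T]}$ of Theorem~\ref{theorem properties of Nt}, whereas you compute it directly from the description $\set{x\in[T]}{x(0)\in P}$. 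Both routes are equally short.
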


\begin{proof}
Let $P$ be the companion set of \spat{T}. If $x$ is in $\scat{\spat{T}}$ then $x \in \del$ from Remark \ref{remark scattered part consists of del points}. Conversely if $x$ is in \del \ and $x \in [T]$ then $x(0) \in P$. Therefore the open neighborhood $\set{y \in \spat{T}}{y(0) = x(0)}$ of $x$ is contained in $T \cup \{x\}$ and so it is countable. On the other hand if $x \in T$ then $x$ is an isolated point of \spat{T}. In any case we have that $x \in \scat{\spat{T}}$.

Since the set of all \del \ points of a Spector-Gandy space is $\pii \setminus \sig$ it follows that the scattered part of \spat{T} is in $\pii\setminus\sig$ as well. The assertion about the perfect kernel follows by taking the complements.

Regarding the assertion about $[T]$ we recall from Theorem \ref{theorem properties of Nt} that
\[
\scat{\spat{T}} = T \cup \scat{[T]}.
\]
Since $T$ is a $\Sigma^0_1$ subset of \spat{T} it follows that \scat{[T]} is a $\pii \setminus \sig$ subset of \spat{T} and thus a $\pii \setminus \sig$ subset of \ca{N}. It follows that the perfect kernel $[T] \setminus \scat{[T]}$ is $\sig \setminus \pii$. The equality $\scat{[T]} = \del \cap [T]$ is proved exactly as above.

Finally if $x \in [T]$ is in \del \ then from the properties of a Spector-Gandy tree the body $[T_{(x(0))}]$ is the singleton $\{x\}$, hence $x$ is an isolated point of $[T]$. On the other hand it is clear that the isolated points of $[T]$ are \del, therefore we have the equality $$\set{x \in [T]}{x \ \textrm{is an isolated point of} \ [T]} = \del \cap [T].$$
\end{proof}

\begin{remark}
\label{remark perfect kernel not in del and isomorphic to Baire space}\normalfont
Suppose that \spat{T} is a Spector-Gandy space and that $\ca{X} = \ca{N} \oplus \spat{T}$, \ie \ca{X} is the topological sum of \ca{N} and \spat{T}. From the preceding theorem we can see that the perfect kernel of \ca{X} is not a \del \ set. On the other hand $\ca{N} \dleq \ca{X}$ and so \ca{X} is \del-isomorphic to the Baire space.

Therefore it is possible for a recursively presented metric space to be \del-isomorphic to the Baire space and yet its perfect kernel fails to be a \del \ set.
\end{remark}

As it is with Kleene spaces, one can pass from \del \ injections $f: \spat{T} \inj \spat{S}$ to \del-injections on $[T]$ to $[S]$, where $S$ is a Spector-Gandy tree. In particular the converse of Lemma \ref{lemma bodyT less than bodyS implies Nt less than Ns} is true in Spector-Gandy spaces. The proof is less trivial and uses the result of Luckham that every infinite \pii \ set of naturals contains an infinite \del \ subset, \cf Corollary XXVI \cite{rogers_theory_of_recursive_functions_effective_computability}.

\begin{lemma}

\label{lemma [T] is embedded into [S] in the Spector-Gandy case}

Suppose that $T$ and $S$ are recursive trees with non-empty body and that there is a \del \ injection $$f: \spat{T} \inj \spat{S}.$$ If $S$ is a Spector-Gandy tree then there exists a \del \ recursive function $$\pi: \ca{N} \to \ca{N}$$ which is injective on $[T]$ such that $\pi[[T]] \subseteq [S]$.
\end{lemma}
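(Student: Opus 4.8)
The plan is to keep $f$ almost unchanged and reroute only a small, well-behaved part of it. Since $\spat{T}$ is a recursively presented metric space (Theorem \ref{theorem properties of Nt}(3)), it is a non-empty \sig \ set on which $f$ is injective, so Lemma \ref{lemma the inverse function is del and same hyperdegree}(2) gives $f(x) \heq x$ for every $x \in \spat{T}$; in particular $x$ is \del \ iff $f(x)$ is. Write $D = \del \cap [T]$ (a countable set, as there are only countably many \del \ reals) and $D_0 = [T] \cap f^{-1}[S]$, the set of branches of $T$ that $f$ sends into the finite part $S$ of $\spat{S}$. Since $S$ is a $\Sigma^0_1$ subset of $\spat{S}$ (Theorem \ref{theorem properties of Nt}(4)) and $f$ is \del-recursive, $f^{-1}[S]$ and hence $D_0$ are \del; moreover $D_0 \subseteq D$ is countable, because every point of $S$ is recursive, so each $x \in D_0$ has $x \heq f(x) \in \del$. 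On $[T] \setminus D_0$ the function $f$ already takes values in $[S]$, so I shall put $\pi = f$ there; what remains is to redefine $\pi$ on $D_0$ so that it maps $D_0$ injectively and \del-recursively into $[S]$ by values avoiding $f[[T]\setminus D_0]$.

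For $n$ in the companion set $P$ of $S$ (Definition \ref{definition of Spector-Gandy tree and space}) the body $[S_{(n)}]$ is a singleton $\{\beta_n\}$ with $\beta_n \in \del$, while for $n \notin P$ the body $[S_{(n)}]$ is empty or that of a Kleene tree (Remark \ref{remark first remark about Spector-Gandy spaces}(2)), hence has no \del \ members; thus $\del \cap [S] = \set{\beta_n}{n \in P}$, and these are exactly the admissible targets for points of $D_0$ (their $\pi$-images must again be \del). The crucial point will be that the \emph{occupied indices} $P_1 = \set{y(0)}{y \in f[[T]] \cap [S]}$ form a \del \ set. Indeed, using that $f$ is injective with $f(x) \heq x$ one has $y \in f[[T]] \iff (\exists x \in \del(y))[x \in [T] \ \& \ f(x) = y]$; the matrix is \del \ (the graph of the \del-recursive $f$ is \del, and $[T]$ is $\Pi^0_1$), so $f[[T]]$ is \pii \ by Kleene's Theorem on Restricted Quantification \ref{theorem alltogether}-(\ref{theorem 4D.3}); being also \sig \ (the image under the \del-recursive $f$ of the $\Pi^0_1$ set $[T]$), $f[[T]]$ is \del. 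Hence $f[[T]] \cap [S] \in \del$, and $P_1$ is its image under the injective recursive map $y \mapsto y(0)$ (injective since the $\beta_n$ have distinct first coordinates), so $P_1 \in \del$.

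Now $P \setminus P_1$ is \pii \ ($P$ is \pii, $P_1 \in \del$) and it is infinite, for otherwise $P$ would be the union of the \del \ set $P_1$ with a finite set, contradicting that $P$ is not \del \ (Definition \ref{definition of Spector-Gandy tree and space}). By Luckham's theorem (\cf Corollary XXVI in \cite{rogers_theory_of_recursive_functions_effective_computability}) $P \setminus P_1$ contains an infinite \del \ subset $P''$; let $(p_n)_{\n}$ be its \del-recursive increasing enumeration. On $P''$ the map $m \mapsto \beta_m$ is \del-recursive: for $m \in P'' \subseteq P$ the body $[S_{(m)}]$ is a singleton, so $\beta_m \in N(\ca{N},s) \iff (\exists \beta)[\beta \in [S_{(m)}] \ \& \ \beta \in N(\ca{N},s)] \iff [S_{(m)}] \subseteq N(\ca{N},s)$, which, intersected with $P'' \times \om$, is at once \sig \ and \pii, hence \del. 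Enumerate $D_0$ by a \del-recursive injective sequence $(x_n)_{\n}$ (Lemma \ref{lemma enumeration by a del sequence}; if $D_0$ is finite the construction is only simpler), fix a \del-recursive total extension $\tilde{f}:\ca{N}\to\ca{N}$ of $f \upharpoonright [T]$, and set $\pi(x) = \beta_{p_n}$ if $x = x_n$ for some $n$, and $\pi(x) = \tilde{f}(x)$ otherwise. Because $D_0 \in \del$ and all ingredients are \del-recursive, $\pi$ is \del-recursive; it maps $[T]$ into $[S]$ ($\beta_{p_n} \in [S]$ on $[T]\cap D_0$, and $f[[T]\setminus D_0] \subseteq [S]$); and it is injective on $[T]$: distinct points of $[T]\cap D_0$ go to $\beta_{p_n}$ with distinct first coordinates $p_n$, distinct points of $[T]\setminus D_0$ are separated by injectivity of $f$, and some $x_n \in D_0$ cannot collide with $f(y)$ for $y \in [T]\setminus D_0$ since $f(y)$ is either a non-\del \ branch of $S$ (when $y \notin \del$) or some $\beta_m$ with $m \in P_1$ (when $y \in D \setminus D_0$), whereas $\beta_{p_n}$ is \del \ with $p_n \notin P_1$.

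The step I expect to be the main obstacle is showing that $f[[T]]$, equivalently $P_1$, is \del: this is what guarantees that $P \setminus P_1$ is still \pii \ and infinite, so that Luckham's theorem supplies a fresh, \del-recursively enumerable reservoir of \del \ branches of $S$ into which $D_0$ can be injected without disturbing the branches already used by $f$. Without it one is left with a target set that is merely the intersection of a \sig \ and a \pii \ set, to which neither Luckham's theorem nor a direct \del \ enumeration applies; the resolution exploits precisely that $f$ is a \del-injection and hence, via $f(x)\heq x$ and Kleene's theorem on restricted quantification, turns the image of the $\Pi^0_1$ set $[T]$ into a \del \ set.
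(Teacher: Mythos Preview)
Your argument has a genuine gap, and it stems from a single misconception: you treat $f[[T]]\cap[S]$ as if it consisted only of the \del\ branches $\beta_n$. In fact, any $x\in[T]\setminus D_0$ with $x\notin\del$ is mapped by $f$ to a non-\del\ branch of $[S]$, and such branches live in the Kleene subtrees $[S_{(n)}]$ for $n\notin P$. Many of them can share the same first coordinate $n$, so the map $y\mapsto y(0)$ is \emph{not} injective on $f[[T]]\cap[S]$, and your conclusion that $P_1$ is \del\ fails. (Your argument that $f[[T]]$ itself is \del\ via restricted quantification is fine; the problem is only the passage to $P_1$.) Moreover, since these extra first coordinates lie outside $P$, we have $P_1\not\subseteq P$ in general; hence your infiniteness argument ``otherwise $P$ would be the union of the \del\ set $P_1$ with a finite set'' is also wrong: $P$ equals $(P\cap P_1)\cup(P\setminus P_1)$, and $P\cap P_1$ is merely \pii, not \del, even granting everything else. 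What you can salvage is that $P_1$ is \sig\ (recursive image of the \del\ set $f[[T]]\cap[S]$), so $P\setminus P_1$ is \pii; but you have not shown it is infinite, and I do not see a direct repair that avoids the difficulty.

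The paper's proof sidesteps this entirely by a different device: rather than searching for \emph{fresh} targets disjoint from $f[[T]]$, it fixes once and for all an infinite \del\ set $J\subseteq P$ (Luckham applied to $P$ itself, which needs no information about $f$), lets $B=\{\beta_n:n\in J\}$, and then reroutes \emph{both} the bad set $A=D_0$ and the potential-collision set $f^{-1}[B]\cap[T]$ into $B$, using even/odd indices of a \del\ enumeration of $B$ to keep the two pieces apart. Outside $A\cup f^{-1}[B]$ one keeps $f$, whose values then lie in $[S]\setminus B$. This ``make room by moving the occupants'' trick eliminates any need to compute which $\beta_n$'s are already in $f[[T]]$, and hence avoids the complexity obstacle that your approach runs into.
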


\begin{proof}
Let $f: \spat{T} \bij \spat{S}$ be a \del \ isomorphism. The idea is to collect all points of $[T]$ which are mapped by $f$ inside $S$ and reassign them to \del \ points in $ [S]$.

Define the set $A \subseteq \spat{T}$ by
\[
x \in A \eq x \in [T] \ \& \ f(x) \in S.
\]
If $A = \emptyset$ then we are done, so assume that $A \neq \emptyset$. Clearly $A$ is in \del \ and since $f$ is injective it follows that the set $A$ is countable. Moreover every $x \in A$ is a \del \ point since from Lemma \ref{lemma the inverse function is del and same hyperdegree} $x \heq f(x) \in S$ for all $x \in A$.

Therefore we may write $A = \set{x_n}{n \in I}$, where $I$ is either \om \ or a finite segment of \om, $x_n \neq x_m$ for $n \neq m$ in $I$ and the relation
\[
W(n,t) \eq n \in I \ \& \ x_n \in N(\spat{T},t)
\]
is \del. Let $Q$ be the companion set of the Spector-Gandy space \spat{S} and choose an infinite \del \ set $J$ which is contained in $Q$.

We consider the set
\begin{align*}
B =& \set{y \in [S]}{y(0) \in J}\\
    =& \set{y \in [S] \cap \del}{y(0) \in J}.
\end{align*}
Then $B$ is a \del \ subset of \spat{S} which consists of \del \ points. Also $B$ is infinite since $J$ is infinite. So we can write $B = \set{y_n}{\n}$, where $(y_n)_{\n}$ is an injective \del \ sequence. Notice that $f^{-1}[B] \cap A = \emptyset$ since $B \subseteq [S]$ while $f(x) \in S$ for all $x \in A$.

Finally define the function $\pi: \ca{N} \to \ca{N}$ by:
\[
\pi(x)  =\begin{cases}
              y_{2n}, &  \textrm{if $x \in A$ and $x=x_n$ for some $n \in I$},\\
              y_{2n+1}, &  \textrm{if $x \in f^{-1}[B] \cap [T]$ and $f(x)=y_n$ for some $n$},\\
             f(x), & \textrm{if $x \in [T] \setminus (A \cup f^{-1}[B])$},\\
             \beta_0, & x \not \in [T],
              \end{cases}
\]
where $\beta_0$ is some fixed recursive point in $\ca{N} \setminus [S]$.  (Notice that $[S] \neq \ca{N}$ for otherwise \spat{S} would be \del \ isomorphic to the Baire space contradicting Theorem \ref{theorem Spector-Gandy spaces are not del isomorphic}.)

The function $\pi$ is clearly \del-recursive, injective on $[T]$ and $\pi[[T]] \subseteq [S]$.
\end{proof}

\subsection{Chains and antichains in Spector-Gandy spaces under \dleq} Here we prove the existence of antichains, strictly increasing and strictly decreasing sequences of Spector-Gandy spaces. As before Kreisel compactness is the key tool, but here there is an extra difficulty: we need to make sure that at the end we will have constructed a Spector-Gandy tree. In the case of Kleene spaces it was fairly simple to end up with a Kleene tree, for all we had to do was to ensure that enough \del \ points were being removed at a given stage. In the case of Spector-Gandy spaces however we need to make sure that the body of the resulting tree contains many \del \ points. The idea in order to achieve this, is to use an already given Spector-Gandy tree as a pilot. This is explained in the following simple but useful lemma.

\begin{lemma}
\label{lemma obtaining spector-gandy tree}
Suppose that $K$ is a Spector-Gandy tree with a companion set $P$ and that $T$ is a recursive tree such that $T_{(n)} = K_{(n)}$, whenever $n \in P$ or $[T_{(n)}]$ has a \del \ member. Then $T$ is also a Spector-Gandy tree with the same companion set $P$.
\end{lemma}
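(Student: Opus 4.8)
The plan is to verify directly, from Definition \ref{definition of Spector-Gandy tree and space}, the two requirements for $T$ to be a Spector-Gandy tree with companion set $P$: that the set
\[
P'(n) \iff (\exists x \in \del)[x \in [T] \ \& \ x(0) = n]
\]
associated with $T$ is not \del, and that $P'(n) \iff (\exists! x)[x \in [T] \ \& \ x(0) = n]$ for all $n$. The key elementary observation is that $\set{x \in [T]}{x(0) = n} = [T_{(n)}]$ for every $n$, and likewise for $K$, so everything reduces to an analysis of the fibres $[T_{(n)}]$, split according to whether $n \in P$ or not.

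First I would treat the case $n \in P$. By hypothesis $T_{(n)} = K_{(n)}$, hence $[T_{(n)}] = [K_{(n)}]$. Since $K$ is a Spector-Gandy tree with companion set $P$ and $n \in P$, there is a unique $x \in [K]$ with $x(0) = n$, and this $x$ lies in \del. Transporting along the equality of fibres, there is a unique $x \in [T]$ with $x(0) = n$ and it is \del; thus $P'(n)$ holds and the uniqueness clause holds at $n$.

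Next I would treat the case $n \notin P$. The crucial step is that $[T_{(n)}]$ has no \del\ member: if it had one, then by hypothesis $T_{(n)} = K_{(n)}$, so that member would witness $n$ in the companion set of $K$, i.e.\ $n \in P$, a contradiction. Hence $\neg P'(n)$. To finish it remains to see that $\neg(\exists! x)[x \in [T] \ \& \ x(0) = n]$. If $[T_{(n)}] = \emptyset$ this is clear; and if $[T_{(n)}] \neq \emptyset$ then, since $T$ and hence $T_{(n)}$ is recursive, $[T_{(n)}]$ is a non-empty $\Pi^0_1$ (in particular \sig) subset of \ca{N} all of whose members lie outside \del, so by the Effective Perfect Set Theorem \ref{theorem alltogether}-(\ref{theorem 4F.1}) it is uncountable, in particular it has at least two points. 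Either way the ``exactly one'' clause fails, matching $\neg P(n)$.

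Putting the two cases together yields $P' = P$; since $P$ is not \del\ (as $K$ is a Spector-Gandy tree with companion set $P$), neither is $P'$, and combined with the uniqueness clause established above this is precisely the statement that $T$ is a Spector-Gandy tree with companion set $P$. I do not anticipate a serious obstacle; the only point that needs a little care is the case $n \notin P$, where one must not stop at ``no \del\ member in $[T_{(n)}]$'' but also invoke the Effective Perfect Set Theorem to rule out $[T_{(n)}]$ being a singleton — it is exactly the recursiveness of $T$ that makes each fibre $\Pi^0_1$ and lets that theorem apply.
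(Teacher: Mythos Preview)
Your proposal is correct and follows the same approach as the paper. The paper's own proof is extremely terse---it handles only the forward direction for $n\in P$ and then says ``the rest implications are proved similarly''---whereas you spell out both cases in full, including the point (left implicit in the paper) that for $n\notin P$ one must rule out $[T_{(n)}]$ being a singleton, which you do via the Effective Perfect Set Theorem.
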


\begin{proof}
We need to show that
\begin{align*}
P(n) \iff& (\exists \alpha \in \del)[\cn{(n)}{\alpha} \in [T]]\\
     \iff& (\exists! \alpha)[\cn{(n)}{\alpha} \in [T]],
\end{align*}
for all \n. Suppose that $n$ is a member of $P$. From our hypothesis we have that $K_{(n)} = T_{(n)}$. Since $P$ is the companion set of $K$ there exists some $\alpha \in \del$ such that that $\cn{(n)}{\alpha} \in [K]$. Hence $\cn{(n)}{\alpha} \in [T]$. The rest implications are proved similarly.
\end{proof}

We are now ready for our next incomparability result.

\begin{lemma}
\label{lemma incomparable Spector-Gandy spaces}
There exists a sequence of Spector-Gandy trees $(T_{i})_{i \in \om}$ and a sequence $(\alpha_i)_{i \in \om}$ in \ca{N} such that $\alpha_i \in [T_i]$ and $\alpha_i \not \heq \gamma$ for all $\gamma \in [T_j]$ and all $i \neq j$.\footnote{Since Spector-Gandy trees contain \del \ infinite branches it is not possible to obtain the stronger incomparability condition $\del(\alpha_i) \cap [T_j] = \emptyset$ of Theorem \ref{theorem Fokina-Friedman-Toernquist}.}
\end{lemma}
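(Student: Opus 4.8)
The plan is to adapt the Kreisel compactness machinery of Lemma \ref{lemma for every kleene tree there exists an incomparable kleene tree} and its infinite version (Remark \ref{remark on lemma for every kleene tree there exists an incomparable kleene tree}), but with two modifications forced by the Spector-Gandy setting: the objects we quantify over must be (codes of) Spector-Gandy trees rather than Kleene trees, and we can only hope for the weaker conclusion $\alpha_i \not\heq \gamma$ for $\gamma \in [T_j]$, never $\del(\alpha_i) \cap [T_j] = \emptyset$, since every Spector-Gandy tree has \del\ infinite branches. To manufacture the Spector-Gandy trees at the end, I would use the pilot-tree trick of Lemma \ref{lemma obtaining spector-gandy tree}: fix once and for all a Spector-Gandy tree $K$ with companion set $P_0$, and instead of searching over arbitrary recursive trees, search over recursive trees $S$ that agree with $K$ on all subtrees $S_{(n)}$ for $n \in P_0$ or whenever $[S_{(n)}]$ contains a \del\ member; by that lemma any such $S$ is automatically a Spector-Gandy tree with companion set $P_0$. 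This ``$S$ is $K$-admissible'' condition is arithmetical (it says $\dec{w} \in S_{(n)} \leftrightarrow \dec{w} \in K_{(n)}$ for the relevant $n$), so it can be folded into the \sig\ set $D$ without damage.

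First I would set up, exactly as in the proof of Lemma \ref{lemma for every kleene tree there exists an incomparable kleene tree}, the \pii-recursive partial function $\pdel$ parametrizing the \del\ points of \ca{N}, the \pii\ set $I = \set{i}{\pdel(i)\downarrow}$ which is not \del, a regular \pii-norm $\ggf$ on $I$, and the \sig\ and \pii\ versions $\leq^\varphi_\sig$, $\leq^\varphi_\pii$ of the initial-segment relation induced by $\ggf$. Then I would build $D^\ast \subseteq \om \times \ca{N} \times \Tr^\om$ by saying $D^\ast(i,\alpha,(S_n)_{\n})$ holds when: each $S_n$ is a recursive tree which is $K$-admissible in the above sense, $(\alpha)_n \in [S_n]$, and for all $m \neq n$, writing the incomparability demand in the ``no same hyperdegree'' form: $(\forall \gamma \in \del((\alpha)_n))[(\alpha)_n \in \del(\gamma) \longrightarrow \gamma \not\in [S_m]]$; plus the bookkeeping clause $(\forall \gamma \in \del)(\forall j \leq^\varphi_\pii i)[\gamma \in [S_n] \longrightarrow \gamma \neq \pdel(j)]$ for all $n$ — wait, this last clause cannot be used verbatim because $[S_n]$ genuinely contains \del\ points; instead the clause must restrict to the \emph{``extra''} \del\ points, i.e.\ I would phrase it as: if $\pdel(j) \in [S_n]$ and $j \leq^\varphi_\pii i$ then $\pdel(j)$ lies in some $[S_n{}_{,(\ell)}]$ with $\ell \in P_0$ (equivalently $\pdel(j)(0) \in P_0$). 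This is the Spector-Gandy analogue of ``$S$ is a Kleene tree'' — it forces the companion set of the limit tree to stay inside $P_0$, and combined with $K$-admissibility it forces equality with $P_0$. Using the Theorem on Restricted Quantification \ref{theorem alltogether}-(\ref{theorem 4D.3}) and transitivity of $\leq^\varphi_\pii$, the set $D^\ast$ is \sig\ and $D^\ast_i \subseteq D^\ast_j$ for $j \leq^\varphi_\pii i$, so as in the earlier lemma, once I show $D^\ast_i \neq \emptyset$ for all $i \in I$, the Covering Lemma plus regularity of $\ggf$ give $D^\ast_i \subseteq \cap_{j \in H}D^\ast_j$ for \del\ $H \subseteq I$, and Kreisel compactness (Theorem \ref{theorem Kreisel compactness}) delivers a point in $\cap_{i \in I}D^\ast_i$.

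To verify $D^\ast_i \neq \emptyset$ for fixed $i \in I$: as before, consider the \del\ set $C$ of all $\ep$ recursive in some $\pdel(j)$ with $\ggf(j) \leq \ggf(i)$; it is a proper \del\ subset of $\del \cap \ca{N}$, so pick $\ep \in \del \setminus C$. I want Spector-Gandy trees $S_n$ (here all equal to a single $S^\ast$, or shifted copies, suffices since the only cross-condition is $m \neq n$) with a \del\ branch coding $\ep$ somewhere and no ``unwanted'' \del\ branch, plus $(\alpha)_n \in [S_n]$. Concretely I would take $S^\ast$ obtained from the pilot $K$ by replacing one Kleene subtree $K_{(n_1)}$, $n_1 \in I_0 := \set{n \not\in P_0}{[K_{(n)}]\neq\emptyset}$ (non-empty by Remark \ref{remark first remark about Spector-Gandy spaces}(2)), with a recursive tree whose body is the singleton $\{\beta\}$ where $\beta \in \del$ codes $\ep$ — but $n_1 \not\in P_0$, which would wrongly enlarge the companion set. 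The cleaner route: keep $S^\ast = K$ itself for the branch-coding purpose and instead encode $\ep$ into the auxiliary point $\alpha$, exactly mirroring how the original lemma puts $\ep$ into $\beta$ rather than into the tree. So set $(\alpha)_n = $ (a \del\ branch of $K$, e.g.\ through $K_{(n_0)}$ for some $n_0 \in P_0$, chosen recursive in $\ep$ — possible since $\ep\in\del$, using Kleene Basis inside $[K_{(n_0)}]$ relative to $\ep$) and check all clauses: $K$-admissibility is trivial ($S_n = K$), the companion-set clause holds because $K$'s companion set is exactly $P_0$, the ``$\ep \not\in C$'' choice guarantees $(\alpha)_n \neq \pdel(j)$ for $j \leq^\varphi_\pii i$, and the cross-clause $(\alpha)_n \in \del(\gamma) \to \gamma \not\in [S_m]$ holds because if $\gamma \heq (\alpha)_n$ and $(\alpha)_n$ codes $\ep \not\in C$ then $\gamma$ codes $\ep$ too and cannot sit in $[K_{(\ell)}]$ for $\ell \in P_0$ without forcing $\ep \in C$ — here I must be careful and will need $[S_m] = [K]$'s \del\ branches to be exactly the $\pdel(j)$'s with $j$'s bounded, so the argument is: a \del\ $\gamma$ in $[K]$ has $\gamma = \pdel(j)$ for some $j \in I$, and if $\ggf(j) \leq \ggf(i)$ then $\ep$ recursive in $\gamma = (\alpha)_n$ gives $\ep \in C$, contradiction; if $\ggf(j) > \ggf(i)$, one argues using a slightly larger $C$ or a uniformity, and this is the delicate point.

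The main obstacle I anticipate is precisely this last verification — arranging that the auxiliary point $\alpha$ simultaneously (a) is \del, (b) has components with the ``hyperdegree not realized in the other trees'' property, and (c) does not disturb the companion set, all while the trees themselves must contain \del\ branches by fiat. In the Kleene case this balance is easy because Kleene trees have \emph{no} \del\ branches, so ``$\gamma \heq \alpha \Rightarrow \gamma \not\in [T]$'' is automatic; here the Spector-Gandy trees are riddled with \del\ branches, so the incomparability must be engineered to avoid exactly those. I expect the resolution is to route the coding of $\ep$ through the \emph{Kleene part} of the Spector-Gandy tree — i.e.\ demand $(\alpha)_n \in [S_n{}_{(\ell)}]$ for $\ell$ in the non-\del\ index set, where $(\alpha)_n$ is forced to be \del\ only because $S_n$ itself is tailored to have a \del\ branch there via a companion-set constraint that differs between the $S_n$'s. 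This means the $S_n$ should \emph{not} all be copies of one $K$ but should have pairwise-distinct companion sets $P_n$, arranged (using that there are \pii-but-not-\del\ sets $P$ with prescribed ``slices'') so that a \del\ branch witnessing $n \in P_n$ never witnesses $m \in P_m$. Sorting out that arrangement, and confirming it is \sig-definable uniformly in a parameter suitable for Kreisel compactness, is where the real work lies; everything else is a routine transcription of Lemma \ref{lemma for every kleene tree there exists an incomparable kleene tree} with Lemma \ref{lemma obtaining spector-gandy tree} inserted to certify the limit objects are Spector-Gandy trees.
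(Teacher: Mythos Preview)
Your overall plan---Kreisel compactness plus Lemma~\ref{lemma obtaining spector-gandy tree} to certify the limit trees as Spector-Gandy---is right, as is the $Hd$-form of the incomparability clause. But your stage-$i$ verification fails in an essential way. A $\del$ witness $(\alpha)_n$ is a non-starter: then $\gamma \heq (\alpha)_n$ iff $\gamma \in \del$, and since any Spector-Gandy $[S_m]$ has $\del$ branches, the clause ``no $\gamma \heq (\alpha)_n$ lies in $[S_m]$'' is violated outright. So the witness must sit in a Kleene subtree; but if your $K$-admissibility forces each $S_m$ to agree with the pilot $K$ on all of $P_0$, any recursive such $S_m$ must carry along Kleene subtrees $K_{(\ell)}$ for some $\ell \notin P_0$ as well (a recursive tree that matched $K$ on exactly $P_0$ and was trivial elsewhere would compute $P_0$), and nothing prevents some $\gamma \heq (\alpha)_n$ from landing in one of those. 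Replacing a single Kleene subtree by an incomparable one does not neutralise the others.

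The paper's way out is to take the index set for Kreisel compactness to be the companion set $P$ itself, realised as the well-founded part of a recursive pseudo-well-ordering $\preceq$, and to demand at stage $i$ only $T_{(j)} = K_{(j)}$ for $j \preceq i$. The stage-$i$ witness tree is then \emph{sparse}: it consists of $K_{(j)}$ for $j$ in the $\del$ initial segment $\set{j}{j \preceq i}$ (each such $[K_{(j)}]$ a $\del$ singleton), a single Kleene tree at some fixed $n \notin P$ drawn from a mutually incomparable family supplied by Lemma~\ref{lemma for every kleene tree there exists an incomparable kleene tree}, and nothing else. The witness point lives in that Kleene subtree; since $[S_{m'}]$ now contains only $\del$ points and points of the $m'$-th Kleene tree, a non-$\del$ $\gamma \heq (\alpha)_m$ has nowhere to go in $[S_{m'}]$. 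In the intersection over $i \in P$ the full agreement with $K$ on $P$ is recovered and Lemma~\ref{lemma obtaining spector-gandy tree} applies. Your parametrisation over $I = \Dom(\pdel)$ does not give access to this sparse approximation, and that is the missing idea.
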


\begin{proof}
We first prove the assertion for only two trees and then we describe how one can get the result for an infinite sequence. We will construct two Spector-Gandy trees $T$ and $S$ with the help of a given set $P \subseteq \om$ in $\pii \setminus \sig$. The idea is to approximate $T$ and $S$ from two directions: from ``inside" of $P$, which together with the preceding lemma will guarantee that we will end up with Spector-Gandy trees (with the same companion set $P$), and also from ``outside" of $P$ by plugging in some Kleene trees which satisfy the incomparability conditions of Lemma \ref{lemma for every kleene tree there exists an incomparable kleene tree}. This will settle the incomparability between $T$ and $S$.

We now proceed to the construction. Let $\preceq \subseteq \om^2$ be a recursive pseudo-well-ordering and $P$ be the well-founded part of $\preceq$. It is not hard to see that $P$ is a \pii \ set. From Theorem \ref{theorem Gandy length of well-founded part} the set $P$ is not \del. It follows that for all \del \ sets $H \subseteq P$ there exists some $i \in P$ such that $j \prec i$ for all $j \in H$, for otherwise we would have that
\[
i \in P \iff (\exists j)[j \in H \ \& \ i \preceq j]
\]
and $P$ would be \del. We also notice that $P$ is closed from below under $\preceq$.

From the strong form of Spector-Gandy Theorem there is a recursive tree $K$ such that
\begin{align*}
P(n)   \iff& (\exists ! \alpha)[\cn{(n)}{\alpha} \in [K]]\\
         \iff& (\exists \alpha \in \del)[\cn{(n)}{\alpha} \in [K]].
\end{align*}
We define the relations $A \subseteq \om \times \Tr$, $B \subseteq \Tr$ and $Hd \subseteq \ca{N} \times \Tr$ by
\begin{align*}
A(i,T) \iff& (\forall j \preceq i)[T_{(j)} = K_{(j)}],\\
B(T) \iff& (\forall \alpha \in \del)(\forall n)[\cn{(n)}{\alpha} \in [T] \ \longrightarrow \ T_{(n)} = K_{(n)}],\\
Hd(\alpha,S) \iff& (\forall \gamma \in \del(\alpha))[\alpha \in \del(\gamma) \ \longrightarrow \gamma \not \in [S]]
\end{align*}
In other words condition $A(i,T) \ \& \ B(T) \ \& \ Hd(\alpha,S)$ means that $T_{(j)}$ agrees with $K_{(j)}$ whenever $j \preceq i$ or $[T_{(j)}]$ has a \del \ infinite branch, and the hyperdegree of $\alpha$ does not occur in $[S]$.

It is clear that $A$ is \del. Using that the relation $\alpha \in \del(\beta)$ is \pii \ and the Theorem on Restricted Quantification one can verify that the sets $B$ and $Hd$ are \sig. Moreover if $j \preceq i$ we have that
\begin{align}
\label{equation incomparable sg A3} A(i,T) \ \Longrightarrow \ A(j,T)
\end{align}
for all $T \in \Tr$.

Now we define the set $D \subseteq \om \times (\ca{N} \times \Tr)^2$ by saying that $D(i,\alpha,T,\beta,S)$ holds
exactly when
\begin{align}
\label{equation incomparable sg D1}& \ \ \ \ T, S \ \textrm{are recursive trees} \ \& \ \alpha \in [T] \ \& \ \beta \in [S]\\
\label{equation incomparable sg D2}&\& \ A(i,T) \ \& \ A(i,S)\\
\label{equation incomparable sg D6}&\& \ B(T) \ \& \ B(S)\\
\label{equation incomparable sg D4}&\& \ Hd(\alpha,S) \ \& \ Hd(\beta,T)
\end{align}
We notice that it is only condition (\ref{equation incomparable sg D2}) which depends on $i$. Our purpose is to show
that $\cap_{i \in P}D_i \neq \emptyset$ by applying Kreisel compactness. Clearly $D$ is a \sig \ set, so in particular $D$ is computed by a \sig \ set on $P \times (\ca{N} \times \Tr)^2$.

From (\ref{equation incomparable sg A3}) we have that $D_i \subseteq D_j$ for all $j \preceq i$. Now let $H
\subseteq P$ be in \del \ and $i \in P$ be such that $j \preceq i$ for all $j \in H$. It follows that $D_i \subseteq
\cap_{j \in H} D_j$. Hence in order to verify that the set $\cap_{j \in H} D_j$ is non-empty it is enough to show that
$D_i \neq \emptyset$ for all $i \in P$.

We now prove the latter. Let $i \in P$. From Lemma \ref{lemma for every kleene tree there exists an incomparable kleene tree} there exist
Kleene trees $T^0$ and $S^0$ and $\alpha^\ast, \beta^\ast$ in $[T^0]$ and $[S^0]$ respectively such that
\begin{align}
\label{equation incomparable sg D5}\beta^\ast \not \heq \delta \quad \textrm{and} \quad \alpha^\ast \not \heq \gamma,
\end{align}
for all $\delta \in [T^0]$ and $\gamma \in [S^0]$. (Alternatively one can use Theorem \ref{theorem Fokina-Friedman-Toernquist}.) We also pick some $n \not \in P$ and we define
\begin{align*}
\cn{(j)}{u} \in T \iff [j \preceq i \ \& \ \cn{(j)}{u} \in K] \ \vee \  [j =n \ \& \ u \in T^0]
\end{align*}
see Figure \ref{fig:approx-sg}. Notice that exactly one of the cases in the preceding equivalence applies. We include the empty sequence to $T$ as well, so that $T$ becomes a tree.
\begin{figure}[t]
\begin{picture}(300,130)(0,-5)
\put(120,110){\tiny{$\bullet$}}
\put(125,115){$K$}
\put(122,112){\line(-4,-1){75}}
\put(123,112){\line(3,-1){59}}
\put(27,80){$\prec i$}
\put(57,80){$\not \in P$}
\put(87,80){$(i)$}
\put(124,80){$\prec i$}
\put(157,80){$\succ i$}
\put(254,80){$(n)$}
\put(30,70){\tiny{$\bullet$}}
\put(32,71){\vector(-1,-1){19}}
\put(32,71){\vector(1,-4){5.1}}
\put(37,51){\vector(-2,-3){13}}
\put(24,32){\vector(0,-1){21}}
\put(22,-5){$\vdots$}
\put(30,3){\del}
\put(60,70){\tiny{$\circ$}}
\put(90,70){\tiny{$\bullet$}}
\put(92,72){\vector(0,-1){20}}
\put(92,72){\vector(-2,-3){13.5}}
\put(92,72){\vector(2,-3){13.5}}
\put(92,53){\vector(1,-4){5.5}}
\put(97,32){\vector(0,-1){21}}
\put(96,-5){$\vdots$}
\put(102,3){\del}
\put(125,70){\tiny{$\bullet$}}
\put(127,72){\vector(0,-1){20}}
\put(127,72){\vector(2,-3){13.5}}
\put(140,53){\vector(1,-4){5.5}}
\put(145,32){\vector(0,-1){21}}
\put(144,-5){$\vdots$}
\put(150,3){\del}
\put(160,70){\tiny{$\circ$}}
\put(180,70){$\dots$}
\put(260,70){\tiny{$\bullet$}}
\put(262,72){\line(1,-2){35}}
\put(262,72){\line(-1,-2){35}}
\put(255,30){$T^0$}
\end{picture}
\caption{The approximation at the stage $i$.}
\label{fig:approx-sg}
\end{figure}
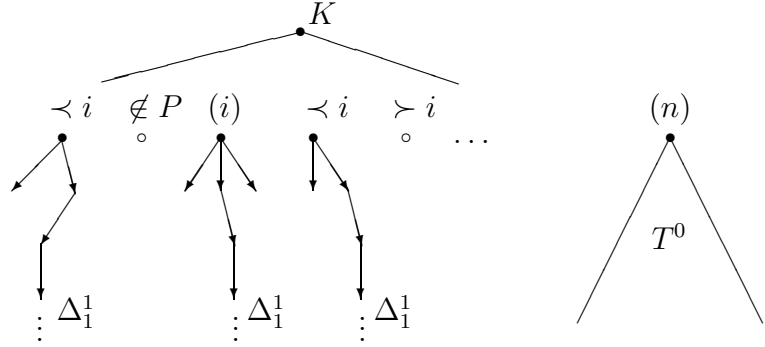

Similarly we define $S$ by replacing $T^0$ with $S^0$. Since $\preceq$ is recursive we have that $T$ and $S$ are recursive trees.  Moreover $T_{(j)} = S_{(j)} = K_{(j)}$ for all $j \preceq i$ and $T_{(n)}, S_{(n)}$ are Kleene trees. In particular we have that $A(i,T)$ and $A(i,S)$ hold, \ie condition (\ref{equation
incomparable sg D2}) is satisfied.

We verify that $D(i,\alpha,T,\beta,S)$ holds, where $\alpha = \cn{(n)}{\alpha^\ast}$ and $\beta = \cn{(n)}{\beta^\ast}$.
Condition (\ref{equation incomparable sg D1}) is satisfied, since  $\alpha^\ast$ and $\beta^\ast$ are in $[T^0]$ and
$[S^0]$ respectively. Now we show that $B(T)$ holds. (The argument for $B(S)$ is similar.) If $\cn{(n')}{\alpha'}$ is
in $[T]$ for some $\alpha'$ in \del \ then $n'$ is not equal to $n$ since $T_{(n)}$ is a Kleene tree. Therefore
$n' \preceq i$ and from the definition we have that $T_{(n')} = K_{(n')}$ and $B(T)$ holds. Hence condition (\ref{equation
incomparable sg D6}) is satisfied.

Finally we show condition (\ref{equation incomparable sg D4}). Let $\gamma \in \ca{N}$ be such that $\alpha \heq
\gamma$ and suppose towards a contradiction that $\gamma \in [S]$. If $\gamma(0) \preceq i$ then (since $i$ is in $P$) we have that $\gamma(0) \in P$ and $\gamma \in [K]$. It follows from the key properties of $K$ that $[K_{(\gamma(0))}]$ has exactly one infinite branch and that this branch is \del. Hence $\gamma$ is this unique infinite branch and in particular we have that $\gamma \in \del$. Since $\alpha \in \del(\gamma)$ it follows that $\alpha \in \del$, contradicting that $\alpha^\ast \not \in \del$. So it cannot be the case $\gamma(0) \preceq i$ and hence $\gamma(0) = n$. In this case however we have that
$\gamma^\ast \in [S^0]$, where $\gamma^\ast = (t \mapsto \gamma(t+1))$. On the other hand $\gamma^\ast \heq \gamma \heq \alpha \heq \alpha^\ast$ contradicting the second inequality of (\ref{equation incomparable sg D5}). It follows that $\gamma$ is not a member of $[S]$, \ie
$Hd(\alpha,S)$ holds. Similarly one shows that $Hd(\beta,T)$ holds as well and condition (\ref{equation incomparable sg D4}) is
satisfied.

Having proved that $D_i \neq \emptyset$ for all $i \in P$ we may apply Kreisel compactness (Theorem \ref{theorem Kreisel
compactness}) and obtain $(\alpha,T,\beta,S)$ in the intersection $\cap_{i \in P}D_i$. From condition (\ref{equation
incomparable sg D1}) we have that $T$ and $S$ are recursive trees and that $\alpha \in [T]$ and $\beta \in [S]$.
Moreover $\alpha \not \heq \gamma$ for all $\gamma \in [S]$ because of condition (\ref{equation incomparable sg D4}).
Similarly we have that $\beta \not \heq \delta$ for all $\delta \in [T]$.

Finally since $B(T)$ holds and $A(i,T)$ is satisfied for all $i \in P$, we have $T_{(n)} = K_{(n)}$, whenever $n$ is in $P$ or $[T_{(n)}]$ contains a \del \ member. Hence from Lemma \ref{lemma obtaining spector-gandy tree} it follows that $T$ is a Spector-Gandy tree with companion set $P$. Similarly one shows the same assertion about $S$ and the proof for two trees is complete.

We now describe the modifications needed to get the result for infinitely many trees. Keeping the same notation we define the set $D^\ast \subseteq \om \times \ca{N} \times \Tr^\om$ by saying that $D^\ast(i,\alpha,(T_m)_{m \in \om})$ holds exactly when
\begin{align*}
& \ \ \ (\forall m)[T_m\ \textrm{is a recursive tree} \ \& \ (\alpha)_m \in [T_m]]\\
&\& \ (\forall m)(\forall n \neq m)[A(i,T_m) \ \& \ B(T_m) \ \& \ Hd((\alpha)_m,T_n)].
\end{align*}
In order to show that $D^\ast_i$ is not-empty for all $i \in P$ we apply Lemma \ref{lemma for every kleene tree there exists an incomparable kleene tree} infinitely many times to get a sequence $(T^0_m)_{m \in \om}$ of Kleene trees and $\alpha \in \ca{N}$ such that $(\alpha)_m \in [T^0_m]$ and
\[
(\alpha)_m \not \heq \gamma
\]
for all $\gamma \in [T^0_n]$ and all $m \neq n$. (Alternatively one can use Theorem \ref{theorem
Fokina-Friedman-Toernquist}.) We then define
\begin{align*}
\cn{(j)}{u} \in T_m \iff [j \preceq i \ \& \ \cn{(j)}{u} \in K] \ \vee \  [j =n \ \& \ u \in T^0_m]
\end{align*}
and we include in $T_m$ the empty sequence as well. Clearly each $T_m$ is a recursive tree and with the same arguments as above one can show that $(\alpha,(T_m)_{m \in \om}) \in D^\ast_i$.

Finally if $(\alpha,(T_m)_{m \in \om})$ is in $\cap_{i \in P}D^\ast_i$ then $T_m$ is a Spector-Gandy tree with companion set $P$ and also $(\alpha)_m \not \heq \gamma$ for all $\gamma \in [T_n]$ and for all $n \neq m$.
\end{proof}

\begin{remark}\normalfont
\label{remark after incomparability of Spector-Gandy spaces}
In the preceding proof we have actually shown that for every recursive pseudo-well-ordering $\preceq$ there exists a sequence $(T_i)_{\iin}$ of Spector-Gandy trees as in Lemma \ref{lemma incomparable Spector-Gandy spaces} which have the same companion set $\WF(\preceq)$. Moreover if $T$ is a Spector-Gandy tree with companion set $\WF(\preceq)$ then the $T_i$'s can be chosen so that $\del \cap [T_i] = \del \cap [T]$. The latter follows from conditions (\ref{equation incomparable sg D2}) and (\ref{equation incomparable sg D6}) in the preceding proof.\smallskip
\end{remark}

We proceed to the consequences of Lemma \ref{lemma incomparable Spector-Gandy spaces}.

\begin{theorem}
\label{theorem incomparable Spector-Gandy spaces}
There exists a sequence $T_0, T_1, \dots, T_i,\dots$ of Spector-Gandy trees such that $\spat{T_i} \not \dleq \spat{T_j}$ for all $i \neq j$.
\end{theorem}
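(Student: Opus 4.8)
The plan is to derive this at once from the incomparability Lemma \ref{lemma incomparable Spector-Gandy spaces} together with the embedding Lemma \ref{lemma [T] is embedded into [S] in the Spector-Gandy case}, in exactly the same way that Theorem \ref{theorem for every kleene tree there exists another one incomparable and incomparable Kleene spaces} was deduced from Lemmas \ref{lemma for every kleene tree there exists an incomparable kleene tree} and \ref{lemma converse of lemma bodyT less than bodyS in Kleene trees} in the Kleene case. First I would apply Lemma \ref{lemma incomparable Spector-Gandy spaces} to fix a sequence of Spector-Gandy trees $(T_i)_{\iin}$ and a sequence $(\alpha_i)_{\iin}$ in \ca{N} such that $\alpha_i \in [T_i]$ and $\alpha_i \not\heq \gamma$ for all $\gamma \in [T_j]$ and all $i \neq j$. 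I claim that these trees already satisfy the conclusion.

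Suppose towards a contradiction that $\spat{T_i} \dleq \spat{T_j}$ for some $i \neq j$, and let $f: \spat{T_i} \inj \spat{T_j}$ be a \del-injection. Since $T_i$ is a Spector-Gandy tree it is a recursive tree with non-empty body (its body contains even a \del\ infinite branch by Remark \ref{remark first remark about Spector-Gandy spaces}), and $T_j$ is a Spector-Gandy tree; hence the hypotheses of Lemma \ref{lemma [T] is embedded into [S] in the Spector-Gandy case} are met and we obtain a \del-recursive function $\pi: \ca{N} \to \ca{N}$ which is injective on $[T_i]$ and satisfies $\pi[[T_i]] \subseteq [T_j]$. Now $[T_i]$ is a non-empty \sig \ (indeed $\Pi^0_1$) subset of \ca{N} on which $\pi$ is injective, so by Lemma \ref{lemma the inverse function is del and same hyperdegree}(2) we get $\pi(\alpha_i) \heq \alpha_i$. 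But $\pi(\alpha_i) \in [T_j]$, which contradicts the property $\alpha_i \not\heq \gamma$ for all $\gamma \in [T_j]$. Therefore $\spat{T_i} \not\dleq \spat{T_j}$ for all $i \neq j$.

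I do not expect any genuine obstacle at this stage, since all the work has been carried out in Lemmas \ref{lemma incomparable Spector-Gandy spaces} and \ref{lemma [T] is embedded into [S] in the Spector-Gandy case}. The only thing worth double-checking while writing the proof is that the side conditions of Lemma \ref{lemma [T] is embedded into [S] in the Spector-Gandy case} are literally satisfied — namely that both trees have non-empty body, which is automatic for Spector-Gandy trees, and that the "target" tree in each alleged reduction is the Spector-Gandy tree, which it is by construction.
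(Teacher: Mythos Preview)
Your proposal is correct and follows essentially the same route as the paper's proof: apply Lemma~\ref{lemma incomparable Spector-Gandy spaces} to obtain the sequences $(T_i)$ and $(\alpha_i)$, then use Lemma~\ref{lemma [T] is embedded into [S] in the Spector-Gandy case} to turn a hypothetical \del-injection $\spat{T_i}\inj\spat{T_j}$ into a \del-recursive $\pi$ injective on $[T_i]$ with $\pi[[T_i]]\subseteq[T_j]$, and derive the contradiction $\pi(\alpha_i)\heq\alpha_i$ with $\pi(\alpha_i)\in[T_j]$. Your write-up is slightly more explicit in verifying the side conditions and in citing Lemma~\ref{lemma the inverse function is del and same hyperdegree}(2), but the argument is the same.
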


\begin{proof}
We consider the sequences $(T_i)_{i \in \om}$ and $(\alpha_i)_{i \in \om}$ as in Lemma \ref{lemma incomparable Spector-Gandy spaces}. If there were a \del \ injection $f: \spat{T_i} \to \spat{T_j}$ for some $i \neq j$ then from Lemma \ref{lemma [T] is embedded into [S] in the Spector-Gandy case} there would be a \del-recursive function $\pi: \ca{N} \to \ca{N}$ which is injective on $[T_i]$ and $\pi[[T_i]] \subseteq [T_j]$. Since $\alpha_i \in [T_i]$ we would have that $\pi(\alpha_i) \heq \alpha_i$ and $\pi(\alpha_i) \in [T_j]$ contradicting the conclusion Lemma \ref{lemma incomparable Spector-Gandy spaces}.\footnote{Remark \ref{remark after incomparability of Spector-Gandy spaces} applies here as well.}
\end{proof}

\begin{theorem}
\label{theorem increasing sequences of SG spaces}
There exists a sequence $(S_i)_{\iin}$ of Spector-Gandy trees such that
\[
\spat{S_0} \dless \spat{S_1} \dless \dots \dless \spat{S_i} \dless \dots.
\]
\end{theorem}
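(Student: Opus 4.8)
The plan is to mimic, at the level of Spector--Gandy spaces, the argument used for Theorem~\ref{theorem strictly increasing Kleene spaces}: finite sums of trees, with Lemma~\ref{lemma for every kleene tree there exists an incomparable kleene tree} replaced by Lemma~\ref{lemma incomparable Spector-Gandy spaces} and Lemma~\ref{lemma converse of lemma bodyT less than bodyS in Kleene trees} replaced by Lemma~\ref{lemma [T] is embedded into [S] in the Spector-Gandy case}. Fix a sequence $(T_i)_{\iin}$ of Spector--Gandy trees and points $(\alpha_i)_{\iin}$ as supplied by Lemma~\ref{lemma incomparable Spector-Gandy spaces}, so that $\alpha_i\in[T_i]$ and $\alpha_i\not\heq\gamma$ for all $\gamma\in[T_j]$ with $i\neq j$. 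Put $S_i=T_0\oplus T_1\oplus\dots\oplus T_i$. Since $\oplus$ is recursive, $S_i$ is a recursive tree, and since the sum of two Spector--Gandy trees is again a Spector--Gandy tree, each $S_i$ is a Spector--Gandy tree.

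Next I would record two routine structural facts about the sum operation. First, by Lemma~\ref{lemma sum and product of trees} we have $\spat{S_{i+1}}\simeq_{{\rm rec}}\spat{S_i}\oplus\spat{T_{i+1}}$, and the canonical embedding of $\spat{S_i}$ as the left summand is a recursive injection (equivalently, $S_i$ embeds recursively into $S_{i+1}$ and one applies Lemma~\ref{lemma bodyT less than bodyS implies Nt less than Ns}); hence $\spat{S_i}\dleq\spat{S_{i+1}}$ for all $i$. Secondly, unwinding the definition of $\oplus$, every infinite branch of $S_i$ is obtained from an infinite branch of exactly one $T_j$ with $j\le i$ by altering finitely many of its initial values in a recursive and recursively invertible manner; consequently every $\beta\in[S_i]$ satisfies $\beta\heq\gamma$ for some $j\le i$ and some $\gamma\in[T_j]$, and, dually, there is a branch $\tilde\alpha_{i+1}\in[S_{i+1}]$ with $\tilde\alpha_{i+1}\heq\alpha_{i+1}$.

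It remains to show $\spat{S_{i+1}}\not\dleq\spat{S_i}$. Suppose towards a contradiction that there is a \del-injection $f:\spat{S_{i+1}}\inj\spat{S_i}$. Since $S_i$ is a Spector--Gandy tree, Lemma~\ref{lemma [T] is embedded into [S] in the Spector-Gandy case} yields a \del-recursive $\pi:\ca{N}\to\ca{N}$ which is injective on $[S_{i+1}]$ and carries $[S_{i+1}]$ into $[S_i]$. Apply $\pi$ to $\tilde\alpha_{i+1}$: as $[S_{i+1}]$ is \sig \ and $\pi$ is injective on it, Lemma~\ref{lemma the inverse function is del and same hyperdegree} gives $\pi(\tilde\alpha_{i+1})\heq\tilde\alpha_{i+1}\heq\alpha_{i+1}$, while $\pi(\tilde\alpha_{i+1})\in[S_i]$ so by the second structural fact $\pi(\tilde\alpha_{i+1})\heq\gamma$ for some $\gamma\in[T_j]$ with $j\le i$. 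Hence $\alpha_{i+1}\heq\gamma$ for some $\gamma\in[T_j]$ with $j\neq i+1$, contradicting the choice of the $\alpha_i$'s. Therefore $\spat{S_i}\dless\spat{S_{i+1}}$ for every $i$, which is the asserted strictly increasing sequence (transitivity of $\dleq$ then even gives strictness for all pairs).

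I do not expect a genuine obstacle here: this theorem is essentially a bookkeeping corollary of Lemma~\ref{lemma incomparable Spector-Gandy spaces}, which already carries the Kreisel-compactness construction, and of Lemma~\ref{lemma [T] is embedded into [S] in the Spector-Gandy case}, which lets us pass from a \del-injection of spaces to a \del-map of bodies. The only point needing care is the hyperdegree bookkeeping through the iterated sum $T_0\oplus\dots\oplus T_i$, i.e.\ verifying that branches of $S_i$ are hyperdegree copies of branches of the individual $T_j$'s; this is exactly the ``recursive aspect'' of sums for which the operation $\oplus$ was designed, and everything else parallels the proof of Theorem~\ref{theorem strictly increasing Kleene spaces} verbatim.
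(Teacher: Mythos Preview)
Your argument is correct and follows essentially the same route as the paper: take the incomparable Spector--Gandy trees $(T_i)$ from Lemma~\ref{lemma incomparable Spector-Gandy spaces}, form the finite sums $S_i=T_0\oplus\cdots\oplus T_i$, and derive a contradiction from a putative \del-injection $\spat{S_{i+1}}\inj\spat{S_i}$ via Lemma~\ref{lemma [T] is embedded into [S] in the Spector-Gandy case}. The only cosmetic difference is that the paper first restricts to the summand $\spat{T_{i+1}}\dleq\spat{S_{i+1}}$ before invoking Lemma~\ref{lemma [T] is embedded into [S] in the Spector-Gandy case}, so it works directly with $\alpha_{i+1}\in[T_{i+1}]$ and avoids introducing the shifted branch $\tilde\alpha_{i+1}$; your version is equivalent.
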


\begin{proof}
Suppose that $(T_i)_{i \in \om}$ and $(\alpha_i)_{i \in \om}$ are as in Lemma \ref{lemma incomparable Spector-Gandy spaces}. We define $\ca{Y}_i = \oplus_{j \leq i} \ \spat{T_j}$ for all $i \in \om$. From the remarks following Theorem \ref{theorem Spector-Gandy spaces are not del isomorphic} we have that every $\ca{Y}_i$ is recursively isomorphic to the Spector-Gandy space $\spat{S_i}$, where $S_i = \oplus_{j \leq i} T_j$. Also it is clear that $\ca{Y}_i \dleq \ca{Y}_{i+1}$ for all $i \in \om$.

If it were $\ca{Y}_{i+1} \dleq \ca{Y}_i$ for some $i$, then we would have in particular that $\spat{T_{i+1}} \dleq \ca{Y}_i$ and so $\spat{T_{i+1}} \dleq \spat{S_i}$. From Lemma \ref{lemma [T] is embedded into [S] in the Spector-Gandy case} there would be a \del-recursive function $\pi: \ca{N} \to \ca{N}$ which is injective on $[T_{i+1}]$ with $\pi[[T_{i+1}]] \subseteq [S_i]$. Hence the hyperdegree of $\alpha_{i+1}$ would appear in $[S_i]$, and from the definition of $S_i$ there would be some $j \leq i$ such that $\alpha_{i+1} \heq \gamma$ for some $\gamma \in [T_j]$. This however would contradict the key property of $\alpha_{i+1}$. Hence $\ca{Y}_i \dless \ca{Y}_{i+1}$ for all $i \in \om$ and we are done.
\end{proof}

The next problem is about strictly decreasing sequences.

\begin{lemma}
\label{lemma decreasing sequence of Spector-Gandy spaces}
Suppose that $P \subseteq \om$ is in $\pii \setminus \sig$ and $\preceq$ is a \del \ linear pre-ordering, whose field includes $P$ and satisfies
\[
P(i) \ \& \ j \preceq i \Longrightarrow P(j)
\]
for all $i, j \in \om$. Then for every Spector-Gandy tree $T$ with companion set $P$ there exists a Spector-Gandy tree $S$ with the same companion set and some $\gamma \in [T]$ not in \del, such that $[T]$ contains the recursive injective image of $[S]$ and $\alpha \not \heq \gamma$ for all $\alpha \in [S]$.\smallskip

In fact if $\preceq$ is recursive the tree $S$ can be taken to be a subset of $T$.
\end{lemma}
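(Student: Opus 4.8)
The plan is to follow the pattern of Theorem \ref{theorem for every kleene space there is another one from below} together with Lemma \ref{lemma incomparable Spector-Gandy spaces}: I would use Kreisel compactness to manufacture $S$ as a ``limit'' of recursive trees that agree with $T$ along longer and longer $\preceq$-initial segments (this is what will force the companion set of $S$ to come out equal to $P$) while simultaneously killing off one fixed non-hyperarithmetical hyperdegree of $[T]$ (this is what will give $\spat{S}\dless\spat{T}$, through Lemma \ref{lemma [T] is embedded into [S] in the Spector-Gandy case}). Before running the argument I would fix the witness data. By Remark \ref{remark first remark about Spector-Gandy spaces}(2) pick $n_0\notin P$ with $[T_{(n_0)}]\neq\emptyset$, so that $T_{(n_0)}$ is a Kleene tree; by the Gandy Basis Theorem \ref{theorem Gandy basis} pick $\gamma^\ast\in[T_{(n_0)}]$ with $\gamma^\ast\hless\W$, so $\W\notin\del(\gamma^\ast)$; by Corollary \ref{corollary del of gamma is dense in a Kleene tree} pick $u_0\in T_{(n_0)}$ with $[(T_{(n_0)})_{u_0}]\neq\emptyset$ and $\del(\gamma^\ast)\cap[(T_{(n_0)})_{u_0}]=\emptyset$; and set $\gamma_0=\cn{(n_0)}{\gamma^\ast}\in[T]$, which is not in \del\ because $T_{(n_0)}$ is a Kleene tree. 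Note that $(T_{(n_0)})_{u_0}$ is again a Kleene tree whose body misses the hyperdegree of $\gamma_0$.

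For the compactness argument I would work in $\ca{N}\times\Tr$ and let $D(i,\gamma,S)$ hold exactly when: (i) $S$ is a recursive tree and $\gamma\in[T]\setminus\del$; (ii) $S_{(j)}=T_{(j)}$ for all $j\preceq i$; (iii) $S_{(n)}=T_{(n)}$ whenever $n\in\om$ and $\alpha\in\del$ satisfy $\cn{(n)}{\alpha}\in[S]$; (iv) $\delta\notin[S]$ for every $\delta\in\del(\gamma)$ with $\gamma\in\del(\delta)$; and -- for the last assertion of the lemma -- (v) $S\subseteq T$. Clauses (i), (ii), (v) are arithmetical, hence \sig; clauses (iii) and (iv) are \sig\ by the Theorem on Restricted Quantification (Theorem \ref{theorem alltogether}-(\ref{theorem 4D.3})), since their matrices are \sig\ and $\alpha,\delta$ are quantified over $\del$, $\del(\gamma)$, exactly as for the relations $B$ and $Hd$ in Lemma \ref{lemma incomparable Spector-Gandy spaces}; thus $D$ is \sig. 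Only (ii) depends on $i$, and transitivity of $\preceq$ gives $D_i\subseteq D_j$ for $j\preceq i$. Being the companion set of $T$, $P$ is \pii\ and not \del, and it is downward closed under $\preceq$; so, as in the proof of Lemma \ref{lemma incomparable Spector-Gandy spaces}, every \del\ set $H\subseteq P$ has a $\preceq$-upper bound $i\in P$, whence $D_i\subseteq\bigcap_{j\in H}D_j$. Hence it is enough to show $D_i\neq\emptyset$ for each $i\in P$, and Theorem \ref{theorem Kreisel compactness} then yields $(\gamma,S)\in\bigcap_{i\in P}D_i$.

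To see $D_i\neq\emptyset$ for $i\in P$ when $\preceq$ is recursive, note $n_0\npreceq i$ (otherwise $n_0\in P$ by downward closure) and consider the recursive tree $S_0$ determined by $\empt\in S_0$ and, for $n,u$, by $\cn{(n)}{u}\in S_0$ iff $[\,n\preceq i$ and $\cn{(n)}{u}\in T\,]$ or $[\,n=n_0$ and $u\in(T_{(n_0)})_{u_0}\,]$. Then $D(i,\gamma_0,S_0)$ holds: (ii) and (v) are immediate; for (iii), a \del\ branch of $[S_0]$ cannot run through the Kleene tree $(T_{(n_0)})_{u_0}$, so it goes through some $n\preceq i$ where $(S_0)_{(n)}=T_{(n)}$; for (iv), if $\delta=\cn{(n)}{\beta}\in[S_0]$ had $\delta\heq\gamma_0$ then either $n\preceq i$, so $n\in P$ and $\delta$ is the unique, hence \del, branch of $[T]$ over $n$, forcing $\gamma_0\in\del$ -- impossible -- or $n=n_0$ and $\beta\in[(T_{(n_0)})_{u_0}]$ with $\beta\heq\gamma^\ast$, so $\beta\in\del(\gamma^\ast)$, contradicting the choice of $u_0$. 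Finally, given $(\gamma,S)\in\bigcap_{i\in P}D_i$: since $j\preceq j$ for $j\in P$, clause (ii) gives $S_{(j)}=T_{(j)}$ for all $j\in P$, and then (iii) gives $S_{(n)}=T_{(n)}$ whenever $[S_{(n)}]$ has a \del\ member; so $S$ is a Spector-Gandy tree with companion set $P$ by Lemma \ref{lemma obtaining spector-gandy tree} (with pilot $K=T$). By (v), $[S]\subseteq[T]$, so $\spat{S}\dleq\spat{T}$ by Lemma \ref{lemma bodyT less than bodyS implies Nt less than Ns} and the identity on $\ca{N}$ exhibits $[S]$ as a recursive injective image inside $[T]$; (i) gives $\gamma\in[T]\setminus\del$; and (iv) says precisely that $\alpha\not\heq\gamma$ for all $\alpha\in[S]$. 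Together with Lemma \ref{lemma [T] is embedded into [S] in the Spector-Gandy case} this also forces $\spat{T}\not\dleq\spat{S}$, hence $\spat{S}\dless\spat{T}$, and the strictly decreasing sequences advertised after the statement follow by iteration.

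The step I expect to be the real obstacle is the effectivity of the approximating trees in the general case: $S_0$ above is recursive only because ``$n\preceq i$'' is a recursive predicate, and this is exactly why the ``in fact'' clause singles out recursive $\preceq$. For a merely \del\ linear pre-ordering $\preceq$ one drops clause (v), replaces it by the (still \sig) requirement that $[S]$ admit a recursive injection into $[T]$, and must then build, for each $i\in P$, a recursive tree realizing (i)--(iv) by a less transparent construction -- feeding the \del-recursive information attached to the \del\ set $\set{j}{j\preceq i}\subseteq P$ into a recursive skeleton, in the same spirit as the other passages in this article from recursive to \del\ index sets -- after which the remaining verifications go through word for word. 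Getting that surgery right, and checking that the recursive tree extracted from the intersection still satisfies the hypotheses of Lemma \ref{lemma obtaining spector-gandy tree}, are the only points that require genuine care.
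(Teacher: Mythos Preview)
Your argument for the recursive case is essentially identical to the paper's: your clauses (i)--(v) are the paper's five conditions verbatim, your witness tree $S_0$ for $D_i\neq\emptyset$ is the same construction (you fix $n_0,u_0,\gamma_0$ once in advance rather than re-choosing them at stage $i$, but since they do not depend on $i$ this is immaterial), and the extraction via Kreisel compactness together with the appeal to Lemma~\ref{lemma obtaining spector-gandy tree} matches line for line.

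For the general \del\ case the paper takes a cleaner and different route from the one you sketch. You propose to keep ``$S$ recursive'' in clause (i), drop (v), and manufacture at each stage $i$ a \emph{recursive} tree satisfying (ii)--(iv); as you yourself flag, clause (ii) pins down $S_{(j)}$ on the \del-but-not-recursive set $\{j:j\preceq i\}$, so getting a recursive $S_0$ is genuinely delicate. The paper sidesteps this entirely: it simply reruns the compactness argument allowing $S$ to be a \emph{\del} tree (so the natural witnesses work unchanged), obtaining a \del\ Spector-Gandy tree $S\subseteq T$ with $\alpha\not\heq\gamma$ for all $\alpha\in[S]$, and then invokes a separate conversion lemma proved immediately afterwards (Lemma~\ref{lemma del Spector-Gandy tree}): every \del\ Spector-Gandy tree admits a \emph{recursive} Spector-Gandy tree $L$ with the same companion set together with a recursive injection $f\colon[L]\to[S]$. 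Since $f$ preserves hyperdegrees and $[S]\subseteq[T]$, the pair $(L,\gamma)$ does the job. Your ``surgery'' would have to accomplish the same upgrade inside the compactness step; the paper's two-step separation (first get a \del\ tree, then convert) is both simpler and more transparent.
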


\begin{proof}[\textit{Proof \tu{(}first part\tu{)}}.]
Let us assume for the time being that $\preceq$ is recursive. We define the set $D \subseteq \om \times \ca{N} \times \Tr$ by saying that $D(i,\gamma,S)$ holds exactly when
\begin{align}
\label{equation lemma decreasing sequence of Spector-Gandy spaces A}    & S \ \textrm{is a recursive tree and} \ S \subseteq T\\
\label{equation lemma decreasing sequence of Spector-Gandy spaces E}\& \ & \gamma \in [T] \ \& \ \gamma \not \in \del\\
\label{equation lemma decreasing sequence of Spector-Gandy spaces B}\& \ &(\forall j \preceq i)[S_{(j)} = T_{(j)}]\\
\label{equation lemma decreasing sequence of Spector-Gandy spaces C}\& \ &(\forall n)(\forall \alpha^\ast \in \del)[\cn{(n)}{\alpha^\ast} \in [S] \longrightarrow S_{(n)} = T_{(n)}]\\
\label{equation lemma decreasing sequence of Spector-Gandy spaces D}\& \ &(\forall \alpha \in \del(\gamma))[\gamma \in \del(\alpha) \longrightarrow \alpha \not \in [S]].
\end{align}
If $(\gamma, S)$ is in the intersection $\cap_{i \in P}D_i$ then $S$ is a recursive subtree of $T$ and from conditions (\ref{equation lemma decreasing sequence of Spector-Gandy spaces B}), (\ref{equation lemma decreasing sequence of Spector-Gandy spaces C}) we have $S_{(n)} = T_{(n)}$ whenever $n$ is in $P$ or $S_{(n)}$ contains a \del \ member. Hence from Lemma \ref{lemma obtaining spector-gandy tree} $S$ is a Spector-Gandy tree with companion set $P$. The rest assertions about $\gamma$ and $S$ follow from conditions (\ref{equation lemma decreasing sequence of Spector-Gandy spaces E}) and (\ref{equation lemma decreasing sequence of Spector-Gandy spaces D}).

We verify that the intersection $\cap_{i \in P}D_i$ is non-empty using Kreisel compactness. Clearly $D$ is computed by a \sig \ set on its domain. Moreover every \del \ set $H \subseteq P$ is $\preceq$-bounded in $P$, for otherwise (using our hypothesis about $\preceq$ and $P$) we would have that
\[
i \in P \iff (\exists j)[j \in H \ \& \ i \prec j]
\]
for all $i \in \om$. This would imply that $P$ is \del \ contradicting our hypothesis about $P$. Since $\preceq$ is transitive we have $D_{i'} \subseteq D_i$ for all $i \preceq i'$ in $P$. Therefore if $H$ is a \del \ subset of $P$ and $i \in P$ is an upper bound of $H$ we have that $D_i \subseteq \cap_{j \in H}D_j$. Hence it is enough to prove that $D_i \neq \emptyset$ for all $i \in P$.

Now we verify the latter. Suppose that $i$ is in $P$. We pick some $n \not \in P$ such that $T_{(n)}$ is a Kleene tree. Following the proof of Theorem \ref{theorem for every kleene space there is another one from below} there exists some $\gamma \in [T_{(n)}]$ and some $w$, which is an initial segment of the leftmost infinite branch of $T_{(n)}$, such that $[T_w] \cap \del(\gamma) = \emptyset$. We define the tree $S$ by
\[
\cn{(j)}{u} \in S \iff [j \preceq i \ \& \ \cn{(j)}{u} \in T] \ \vee \ [j = n \ \& \ \cn{(j)}{u} \in T_w]
\]
see Figure \ref{fig:approx2-sg}.
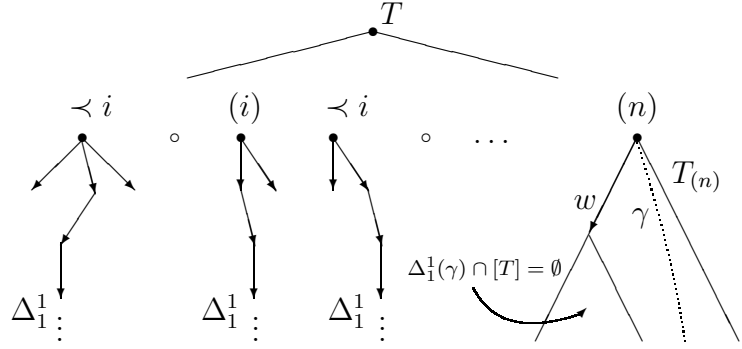
\begin{figure}[t]
\begin{picture}(300,130)(0,-10)
\put(140,110){\tiny{$\bullet$}}
\put(145,115){$T$}
\put(142,112){\line(-4,-1){70}}
\put(142,112){\line(4,-1){70}}
\put(27,80){$\prec i$}
\put(87,80){$(i)$}
\put(124,80){$\prec i$}
\put(234,80){$(n)$}
\put(30,70){\tiny{$\bullet$}}
\put(32,71){\vector(-1,-1){19}}
\put(32,71){\vector(1,-4){5.1}}
\put(32,72){\vector(1,-1){20}}
\put(37,51){\vector(-2,-3){13}}
\put(24,32){\vector(0,-1){21}}
\put(22,-5){$\vdots$}
\put(5,3){\del}
\put(65,70){\tiny{$\circ$}}
\put(90,70){\tiny{$\bullet$}}
\put(92,72){\vector(0,-1){20}}
\put(92,72){\vector(2,-3){13.5}}
\put(92,53){\vector(1,-4){5.5}}
\put(97,32){\vector(0,-1){21}}
\put(96,-5){$\vdots$}
\put(77,3){\del}
\put(125,70){\tiny{$\bullet$}}
\put(127,72){\vector(0,-1){20}}
\put(127,72){\vector(2,-3){13.5}}
\put(140,53){\vector(1,-4){5.5}}
\put(145,32){\vector(0,-1){21}}
\put(144,-5){$\vdots$}
\put(125,3){\del}
\put(160,70){\tiny{$\circ$}}
\put(180,70){$\dots$}
\put(240,70){\tiny{$\bullet$}}
\put(242,72){\line(1,-2){38.5}}
\put(242,72){\line(-1,-2){38.5}}
\put(255,55){$T_{(n)}$}
\put(242,72){\vector(-1,-2){18}}
\put(218,45){$w$}
\put(224,35){\line(1,-2){20}}
\qbezier[40](242,70)(255,30)(260,-5)
\put(240,40){$\gamma$}
\put(155,20){\tiny{$\del(\gamma) \cap [T] = \emptyset$}}
\qbezier(180,15)(190,-5)(220,5)
\put (222,6.5){\vector(1,1){1}}
\end{picture}
\caption{The approximation at the stage $i$.}
\label{fig:approx2-sg}
\end{figure}
Since $\preceq$ is recursive $S$ is a recursive subtree of $T$, and since $T_{(n)}$ is a Kleene tree we have $\gamma \not \in \del$. Clearly $\gamma$ is a member of $[T]$ and so conditions (\ref{equation lemma decreasing sequence of Spector-Gandy spaces A}) and (\ref{equation lemma decreasing sequence of Spector-Gandy spaces E}) are satisfied. Conditions (\ref{equation lemma decreasing sequence of Spector-Gandy spaces B}) and (\ref{equation lemma decreasing sequence of Spector-Gandy spaces C}) are also satisfied by the definition of $S$ (using that $T_w$ is a Kleene tree).

Assume towards a contradiction that $\alpha \in [S]$ has the same hyperdegree as $\gamma$. If $\alpha(0) \preceq i$ then, using that $i \in P$, we have that $\alpha(0) \in P$, and since $\alpha \in [T]$ it follows that $\alpha$  is \del, contradicting that $\alpha \heq \gamma$. If $\alpha(0) = n$ then $\alpha \in [T_w]$ and so $\del(\gamma) \cap [T_w] \neq \emptyset$, contradicting the choice of $w$ and $\gamma$. Hence no $\alpha \in [S]$ has the same hyperdegree as $\gamma$ and condition (\ref{equation lemma decreasing sequence of Spector-Gandy spaces D}) is satisfied.

It follows that $(\gamma,S) \in D_i$ and so $D_i \neq \emptyset$ for all $i \in P$. This finishes the proof in the case where $\preceq$ is recursive.
\end{proof}

Now we show how one can get the result when $\preceq$ is \del. A tree $T$ is a \emph{\del \ Spector-Gandy tree} if it is a \del \ tree and satisfies the remaining conditions in the definition of Spector-Gandy trees, \ie we just replace the term ``recursive tree" with ``\del \ tree" in the definition. Notice the companion set of a \del \ Spector-Gandy tree is also a $\pii \setminus \sig$ set.

\begin{lemma}
\label{lemma del Spector-Gandy tree}
For every \del \ Spector-Gandy tree $T$ there exists a Spector-Gandy tree $S$ with the same companion set and a recursive function $f: \ca{N} \to \ca{N}$, which is injective on $[S]$ and $f[[S]] = [T]$.
\end{lemma}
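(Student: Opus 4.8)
The plan is to show that $[T]$ is a $\del$ subset of $\ca{N}$, exhibit it as the recursive injective image of a $\Pi^0_1$ set, and then re-index that $\Pi^0_1$ set coordinatewise so that the leading entry of each branch records which section $T_{(n)}$ of $T$ the branch lies above. First I would observe that $[T]$ is $\del$: it is $\pii$ because $\alpha\notin[T]\iff(\exists n)[\alpha\upharpoonright n\notin T]$ and $\alpha\upharpoonright n\notin T$ is a $\del$ condition, and it is $\sig$ because, fixing a $\sig$ definition $u\in T\iff(\exists\gamma)Q(u,\gamma)$ with $Q$ arithmetical, one has $\alpha\in[T]\iff(\forall n)(\exists\gamma)Q(\alpha\upharpoonright n,\gamma)\iff(\exists\delta)(\forall n)Q(\alpha\upharpoonright n,(\delta)_n)$. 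Hence by Theorem \ref{theorem alltogether}-(\ref{theorem 4D.9}) and Lemma \ref{lemma pi-zero-one sets recursive tree} there are a recursive tree $S_0$ and a recursive $\pi_0:\ca{N}\to\ca{N}$ which is injective on $[S_0]$ with $\pi_0[[S_0]]=[T]$.

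Next, let $P$ be the companion set of $T$ and, for each $n$, put $A_n=\set{\alpha\in[S_0]}{\pi_0(\alpha)(0)=n}$, so that $\pi_0$ restricts to a recursive bijection of $A_n$ onto $[T_{(n)}]=\set{x\in[T]}{x(0)=n}$. Each $A_n$ is $\Pi^0_1$ uniformly in $n$, since $[S_0]$ is $\Pi^0_1$ and the condition $\pi_0(\alpha)(0)=n$ is $\Delta^0_1$ in $(\alpha,n)$; so there is a tree $R_n$, recursive uniformly in $n$, with $[R_n]=A_n$ (the parametrized form of Lemma \ref{lemma pi-zero-one sets recursive tree}). I would then take $S=\{\empt\}\cup\set{\cn{(n)}{u}}{u\in R_n}$, a recursive tree with $[S]=\set{\cn{(n)}{\beta}}{\beta\in A_n}$, and define $f:\ca{N}\to\ca{N}$ by $f(\alpha)=\pi_0(\alpha^\ast)$, which is recursive. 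For $\cn{(n)}{\beta}\in[S]$ one has $f(\cn{(n)}{\beta})=\pi_0(\beta)\in[T_{(n)}]$, whence $f[[S]]=\bigcup_n\pi_0[A_n]=\bigcup_n[T_{(n)}]=[T]$; and $f$ is injective on $[S]$ because $\pi_0(\beta)(0)=n$ recovers $n$ from $f(\cn{(n)}{\beta})$ and $\pi_0$ is injective on $[S_0]$.

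Finally I would check that $S$ is a Spector-Gandy tree with companion set $P$. The maps $\beta\mapsto\cn{(n)}{\beta}$ and $\pi_0\upharpoonright A_n$ set up a bijection between $\set{x\in[S]}{x(0)=n}$ and $[T_{(n)}]$ which preserves cardinality, and which by Lemma \ref{lemma the inverse function is del and same hyperdegree} (applied to $\pi_0$ on the $\sig$ set $[S_0]$, giving $\beta\heq\pi_0(\beta)$) preserves membership of $\del$ points. Hence for $n\in P$, where $[T_{(n)}]$ is the singleton of a $\del$ point, $\set{x\in[S]}{x(0)=n}$ is again such a singleton; and for $n\notin P$, where $[T_{(n)}]$ is empty or has at least two members and contains no $\del$ point, the same holds for $\set{x\in[S]}{x(0)=n}$. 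Therefore $\set{n}{(\exists! x)[x\in[S]\ \&\ x(0)=n]}=\set{n}{(\exists x\in\del)[x\in[S]\ \&\ x(0)=n]}=P$, and since $P\in\pii\setminus\sig$ the recursive tree $S$ is a Spector-Gandy tree with companion set $P$. The delicate point — the one place the argument can go wrong — is this last bookkeeping: the coordinatewise re-indexing must neither create a $\del$ branch over some $n\notin P$ nor destroy the unique $\del$ branch over some $n\in P$, all while remaining uniformly recursive, and it is precisely the hyperdegree-preservation of $\pi_0$ from Lemma \ref{lemma the inverse function is del and same hyperdegree} that guarantees this.
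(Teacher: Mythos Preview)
Your proof is correct and follows essentially the same approach as the paper: both arguments use the characterization of \del\ sets as recursive injective images of $\Pi^0_1$ sets (Theorem \ref{theorem alltogether}-(\ref{theorem 4D.9})) to produce $S$, and both verify the Spector-Gandy property via the hyperdegree preservation of Lemma \ref{lemma the inverse function is del and same hyperdegree}. The only organizational difference is that the paper applies the representation theorem to the parametrized relation $Q(n,\alpha)\iff\cn{(n)}{\alpha}\in[T]$ (so the first coordinate $n$ is carried along from the outset and the witness $\beta$ is interleaved with $\alpha$), whereas you apply it globally to $[T]$ and then slice by the condition $\pi_0(\alpha)(0)=n$ afterward; this is a cosmetic rearrangement rather than a different idea.
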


\begin{proof}
We define
\[
Q(n,\alpha) \iff \cn{(n)}{\alpha} \in [T].
\]
Clearly $Q$ is \del \ so from Theorem \ref{theorem alltogether}-(\ref{theorem 4D.9}) and the basic representation of $\Sigma^0_1$ (\cf 4A.1 \cite{yiannis_dst}) there exists a recursive $R \subseteq \om^3$ such that
\begin{align*}
Q(n,\alpha) \iff& (\exists \beta)(\forall t)R(n,\barr{\alpha}(t),\barr{\beta}(t))\\
            \iff& (\exists! \beta)(\forall t)R(n,\barr{\alpha}(t),\barr{\beta}(t)).
\end{align*}
For the needs of the proof we consider a recursive isomorphism $\tau: \om^2 \bij \om$ and for simplicity of notation we identify every pair of naturals $(u_i,v_i)$ with $\tau(u_i,v_i)$. Under this notation we define $S$ by
\begin{align*}
\cn{(n)}{(u_0,v_0,\dots,u_{m-1},v_{m-1})}& \in S \iff\\& (\forall t < m)R(n,\langle u_0,\dots,u_{t-1} \rangle, \langle v_0,\dots,v_{t-1}\rangle).
\end{align*}
We include to $S$ the empty sequence, so that it becomes a recursive tree. Notice that for all $n$ and $\alpha$ with $\cn{(n)}{\alpha} \in [T]$ there exists exactly one $\beta$ such that $\cn{(n)}{(\alpha(0),\beta(0),\dots)} \in [S]$. Conversely if $\cn{(n)}{(\alpha(0),\beta(0),\dots)} \in [S]$ then $\cn{(n)}{\alpha} \in [T]$.

Suppose now that $P$ is the companion set of $T$. It is easy to verify that
\begin{align*}
P(n) \iff& (\exists! \alpha,\beta)[\cn{(n)}{(\alpha(0),\beta(0),\dots,\alpha(t),\beta(t),\dots)} \in [S]]\\
     \iff& (\exists \alpha,\beta \in \del)[\cn{(n)}{(\alpha(0),\beta(0),\dots,\alpha(t),\beta(t),\dots)} \in [S]]
\end{align*}
for all \n. So $S$ is a Spector-Gandy tree with companion set $P$. Finally we define
\[
f: \ca{N} \to \ca{N} : f(\gamma) = \cn{\gamma(0)}{\alpha}, \quad \textrm{where} \ \gamma = \cn{\gamma(0)}{(\alpha(0),\beta(0),\dots)}.
\]
Clearly $f$ is recursive. For all $\gamma = \cn{\gamma(0)}{(\alpha(0),\beta(0),\dots)} \in [S]$ we have $Q(\gamma(0),\alpha)$ and so $f(\gamma) = \cn{\gamma(0)}{\alpha} \in [T]$. Conversely if $\cn{(n)}{\alpha} \in [T]$ then there exists (a unique) $\beta$ such that $\gamma = \cn{(n)}{(\alpha(0),\beta(0),\dots)}$ is a member of $[S]$. Hence $f(\gamma) = \cn{(n)}{\alpha} \in [T]$. Thus we have shown that $f[[S]] =[T]$.

To finish the proof suppose that $\gamma, \gamma'$ are in $[S]$ and $f(\gamma) = f(\gamma')$. In particular we have that $\gamma(0) = \gamma'(0)$. Suppose that $\alpha, \alpha', \beta, \beta'$ are the corresponding components. Since  $\cn{(n)}{\alpha} = \cn{(n)}{\alpha'} \in [T]$ from the uniqueness of the $\beta$'s we have $\beta = \beta'$. It follows that $\gamma = \gamma'$ and $f$ is injective on $[S]$.
\end{proof}

The preceding lemma is also true in Kleene trees and one can see this easily. If $T$ is a \del \ tree, whose body is non-empty and has no \del \ members, then the set $[T]$ is \del, and so it is the recursive injective image of the body of a recursive tree $S$. It is then clear that $S$ is a Kleene tree.

\begin{proof}[\textit{Proof of Lemma \ref{lemma decreasing sequence of Spector-Gandy spaces} \tu{(}continued\tu{)}}.]
We consider the general case where $\preceq$ is \del. By repetition of the arguments in the first part of the proof we obtain a \del \ Spector-Gandy tree $S \subseteq T$ and $\gamma \in [T]$ such that $\alpha \not \heq \gamma$ for all $\alpha \in [S]$. We apply Lemma \ref{lemma del Spector-Gandy tree} in order to obtain a Spector-Gandy tree $L$ and a recursive function $f: \ca{N} \to \ca{N}$, which is injective on $[L]$ and $f[[L]] = [S]$. Therefore we have that $\alpha \heq f(\alpha) \not \heq \gamma$ for all $\alpha \in [L]$.
\end{proof}

\begin{theorem}
\label{theorem decreasing sequence of Spector-Gandy spaces}
There exists a sequence of Spector-Gandy spaces, which is strictly decreasing under \dleq.\smallskip

In fact for every pair $(P,\preceq)$ as in Lemma \ref{lemma decreasing sequence of Spector-Gandy spaces} and every Spector-Gandy space \spat{T} with companion set $P$, there exists a strictly decreasing sequence of Spector-Gandy spaces starting with \spat{T}, whose companion set is $P$.
\end{theorem}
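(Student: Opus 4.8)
The plan is to iterate Lemma \ref{lemma decreasing sequence of Spector-Gandy spaces}. Fix a pair $(P,\preceq)$ as in that lemma and a Spector-Gandy space $\spat{T_0}$ (with $T_0 := T$) whose companion set is $P$. Since the hypotheses on $(P,\preceq)$ do not involve the tree, the lemma applies to every Spector-Gandy tree with companion set $P$; so, recursively, having obtained a Spector-Gandy tree $T_i$ with companion set $P$, apply the lemma to $T_i$ to get a Spector-Gandy tree $T_{i+1}$ --- again with companion set $P$ --- together with a point $\gamma_i \in [T_i] \setminus \del$ such that $[T_i]$ contains the recursive injective image of $[T_{i+1}]$ and $\alpha \not\heq \gamma_i$ for all $\alpha \in [T_{i+1}]$. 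This produces a sequence $(T_i)_{\iin}$ of Spector-Gandy trees, each with companion set $P$, together with the witnessing points $(\gamma_i)_{\iin}$.

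Next I would show $\spat{T_{i+1}} \dless \spat{T_i}$ for every \iin. The inequality $\spat{T_{i+1}} \dleq \spat{T_i}$ is immediate: extend the recursive injection of $[T_{i+1}]$ into $[T_i]$ to a recursive function $\ca{N} \to \ca{N}$ by assigning a fixed recursive point off $[T_i]$, and invoke Lemma \ref{lemma bodyT less than bodyS implies Nt less than Ns} (both bodies are non-empty, being uncountable by Remark \ref{remark first remark about Spector-Gandy spaces}). For the strict part, suppose toward a contradiction that $\spat{T_i} \dleq \spat{T_j}$ for some $j > i$. By transitivity of $\dleq$ (composition of \del-injections) and the inequalities $\spat{T_j} \dleq \spat{T_{j-1}} \dleq \cdots \dleq \spat{T_{i+1}}$, we get $\spat{T_i} \dleq \spat{T_{i+1}}$, i.e. a \del-injection $\spat{T_i} \inj \spat{T_{i+1}}$. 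Since $T_{i+1}$ is a Spector-Gandy tree, Lemma \ref{lemma [T] is embedded into [S] in the Spector-Gandy case} yields a \del-recursive $\pi : \ca{N} \to \ca{N}$ injective on $[T_i]$ with $\pi[[T_i]] \subseteq [T_{i+1}]$. As $[T_i]$ is a non-empty \sig \ set containing $\gamma_i$, Lemma \ref{lemma the inverse function is del and same hyperdegree}(2) gives $\pi(\gamma_i) \heq \gamma_i$, while $\pi(\gamma_i) \in [T_{i+1}]$ --- contradicting the choice of $\gamma_i$. Hence $\spat{T_i} \not\dleq \spat{T_j}$ for all $j > i$, so the sequence $\spat{T_0} \dgreat \spat{T_1} \dgreat \spat{T_2} \dgreat \dots$ is strictly decreasing, which proves the second statement; the first follows once a pair $(P,\preceq)$ and an initial space are exhibited.

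For the bare existence statement one may take $\preceq$ to be a recursive pseudo-well-ordering and $P = \WF(\preceq)$: then $P$ is \pii, it is downward closed under $\preceq$, and by Theorem \ref{theorem Gandy length of well-founded part} it is not \del, so $(P,\preceq)$ satisfies the hypotheses of Lemma \ref{lemma decreasing sequence of Spector-Gandy spaces}; by the remarks following Definition \ref{definition of Spector-Gandy tree and space} there is a Spector-Gandy space with companion set $P$ to serve as $\spat{T_0}$. I do not expect a serious obstacle in the present theorem: the analytic content --- the Kreisel-compactness construction that shrinks the subtrees while keeping them Spector-Gandy and simultaneously kills the hyperdegree of a fixed non-\del \ branch --- is already packaged in Lemma \ref{lemma decreasing sequence of Spector-Gandy spaces}, exactly as Theorem \ref{theorem for every kleene space there is another one from below} packages the Kleene case. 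The points needing care are merely bookkeeping: that every $T_i$ retains companion set $P$ (so the lemma can be reapplied and the final conclusion about companion sets holds), and that the non-reduction must be argued for all later indices through transitivity rather than only for consecutive ones.
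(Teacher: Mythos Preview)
Your proposal is correct and follows essentially the same approach as the paper: iterate Lemma \ref{lemma decreasing sequence of Spector-Gandy spaces}, use the recursive injection on bodies together with Lemma \ref{lemma bodyT less than bodyS implies Nt less than Ns} for $\spat{T_{i+1}} \dleq \spat{T_i}$, and derive the strict part from the hyperdegree obstruction on $\gamma_i$. The paper's argument for strictness is marginally more direct---it observes that a \del-injection $\pi:\spat{T_i}\inj\spat{T_{i+1}}$ must send the non-\del\ point $\gamma_i$ into $[T_{i+1}]$ (since $\pi(\gamma_i)\heq\gamma_i\notin\del$ forces $\pi(\gamma_i)\notin T_{i+1}$), avoiding the appeal to Lemma \ref{lemma [T] is embedded into [S] in the Spector-Gandy case}---but it explicitly notes your route via that lemma as an alternative; also, your transitivity detour through $j>i$ is harmless but unnecessary, as $\spat{T_{i+1}}\dless\spat{T_i}$ for each $i$ already suffices.
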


\begin{proof}
First notice that there exists a pair $(P,\preceq)$ as in Lemma \ref{lemma decreasing sequence of Spector-Gandy spaces}, for example one takes $P$ to be the well-founded part of a recursive pseudo-well-ordering $\preceq$.

Now assume that the pair $(P,\preceq)$ is as in Lemma \ref{lemma decreasing sequence of Spector-Gandy spaces} and that $\spat{T}$ is a Spector-Gandy space with companion set $P$. Let $\gamma$ and $S$ be as in the conclusion of the latter lemma. Since $[T]$ contains a recursive injective image of $S$ it is clear that $\spat{S} \dleq \spat{T}$. Assume towards a contradiction that there exists a \del-injection $\pi: \spat{T} \inj \spat{S}$. Since $\gamma \heq \pi(\gamma)$ and $\gamma$ is not in \del, we have that $\pi(\gamma) \not \in S$. Hence $\pi(\gamma) \in [S]$, contradicting the key property of $\gamma$ and $S$. (Alternatively one can use Lemma \ref{lemma [T] is embedded into [S] in the Spector-Gandy case}.)

This shows that $\spat{S} \dless \spat{T}$. Since $S$ has the same companion set $P$ we may repeat the preceding argument and construct a strictly decreasing sequence under \dleq \ of Spector-Gandy trees starting with $\spat{T}$.
\end{proof}

It is worth noting that whenever we proved $\ca{X} \dless \ca{Y}$ in this article, the $\dleq$ part is witnessed by a \emph{recursive} injection. In fact this recursive function is the identity with the only exception of Theorem \ref{theorem decreasing sequence of Spector-Gandy spaces} when the preorder $\preceq$ is not recursive. Therefore our inequality results are the strongest possible in terms of injective functions and recursion theory.

\section{\sc Questions and related results}
\label{section questions open problems}

We conclude this article with a list of questions together with some related results and comments.

\subsection{Parametrization} Perhaps the single most important topic that we hardly discussed is the one of parametrization of a pointclass by a recursively presented metric space.

As we mentioned in the Introduction for all recursively presented metric spaces \ca{X} there exists a set $G \subseteq \ca{N} \times \ca{X}$ in $\Gamma$ which is universal for $\tboldsymbol{\Gamma} \upharpoonright \ca{X}$, where $\Gamma$ is a Kleene pointclass (Theorem \ref{theorem alltogether}-(\ref{theorem parametrization over N})). The same is true if we replace \ca{N} by any perfect recursively presented metric space \ca{Y}, \cf 3E.9 \cite{yiannis_dst}. The proof however does not go through if \ca{Y} is uncountable non perfect. Since every uncountable recursively presented metric space is $\Delta^1_2$ isomorphic to the Baire space the problem of parametrization from the level of \del \ sets and above is reduced to the following.

\begin{question}
\label{question parametrization}
Suppose that $T$ is a recursive tree with uncountable body and \ca{X} is a recursively presented metric space. Is it true that there exists a set $G \subseteq \spat{T} \times \ca{X}$ in $\Sigma^1_1$, which is universal for $\tboldsymbol{\Sigma}^1_1 \upharpoonright \ca{X}$?
\end{question}

The existence of universal sets is important because it provides a way of distinguishing pointclasses. For example the existence of a set $G \subseteq \ca{X} \times \ca{X}$ in \sig \ which is universal for $\tboldsymbol{\Sigma}^1_1 \upharpoonright \ca{X}$ implies the existence of a \sig \ subset of $\ca{X}$ which is not co-analytic, \cf 3E.9 \cite{yiannis_dst}. In Theorem \ref{theorem sig not in pii subset of bodyT} we have shown that for all recursively presented spaces \ca{X} there exists a set $A \subseteq \ca{X}$ which is in $\sig \setminus \pii$. Our examples however are countable sets and thus Borel.

\begin{question}
\label{question existence of sigma and not coanalutic}
Suppose that \ca{X} is an uncountable recursively presented metric space. Does there exist a set $P \subseteq \ca{X}$ in \sig, which not coanalytic?
\end{question}

\subsection{Kleene spaces} H. Friedman \cf \cite{friedman_harvey_borel_sets_and_hyperdegrees} has proved that for every $\gamma \in \ca{N}$ with $\W \hleq \gamma$ and for every Kleene tree $T$ there exists some $\delta \in [T]$ such that $\gamma \heq \delta$, \ie every hyperdegree from $\W$ and above appears in every Kleene tree, and he conjectured that the latter is true for all recursive trees with uncountable body. This conjecture was eventually proved independently by D. Martin and H. Friedman, \cf \cite{martin_proof_of_a_conjecture_of_friedman}. It would be interesting to see if one can prove this result by reducing the general case of a recursive tree with an uncountable body to the case of Kleene trees.

\begin{question}
\label{question NT contains Kleene}
Is it true that the body of a recursive tree is either countable or contains the \del \ injective image of a Kleene tree?
\end{question}
The preceding question is of particular interest as an affirmative answer to this would provide affirmative answers to other questions that we pose in the sequel. For now we just mention that an affirmative answer to Question \ref{question NT contains Kleene} implies that every uncountable recursively presented metric space contains the \del-injective image of a Kleene space.

From Corollary \ref{corollary leftmost branch is del or has hyperdegree W refined} we can restrict Question \ref{question NT contains Kleene} to recursive trees in which the class \del \ is dense. It is very tempting to conjecture that if \del \ is dense in $[T]$ then \spat{T} is either countable or it is \del-isomorphic to \ca{N}, so that (in the light of the preceding comments) Question \ref{question NT contains Kleene} would have an affirmative answer. However the latter is not correct. To see this we take a Kleene tree $T$ and we consider the image of the space \spat{T} under $\rho^T[\spat{T}]$, where $\rho^T$ is as in Lemma \ref{lemma the space Nt is embedded in the Baire space}. We take the recursive tree $S$ of the latter lemma so that $[S] = \rho^T[\spat{T}]$. Finally we consider the space \spat{S}. Since \spat{T} has a recursive presentation and $\rho^T$ is a recursive isometry, we have in particular that \del \ is dense in $[S]$. Moreover for all $x \in \spat{S}$ we have that
\[
x \in \del \iff x \in S \ \vee \ [x \in [S] \ \& \ x \ \textrm{is ultimately} \ 0],
\]
since $[T^{+1}]$ does not contain \del \ members. The preceding equivalence shows that $\del \cap \spat{S}$ is a \del \ set and so from Theorem \ref{theorem characterization the space is bad} the space \spat{S} is a Kleene tree.\footnote{In fact it is easy to check that $\spat{S^T} \dequal \spat{T}$ for every tree $T$, where $S^T$ is as in Lemma \ref{lemma the space Nt is embedded in the Baire space}.} Therefore \spat{S} is not \del-isomorphic to the Baire space and is uncountable.

Using the fact that every \del \ set is the recursive injective image of a $\Pi^0_1$ set one may reformulate Question \ref{question NT contains Kleene} as follows.

\addtocounter{question}{-1}
\begin{question}[\textbf{reformulated}]
Does every uncountable \del \ subset of a recursively presented metric space contain a non-empty \del \ subset with no \del \
members?
\end{question}
Let us call a recursively presented metric space \ca{X} \emph{minimal} if \ca{X} is uncountable and for every uncountable \ca{Y} with $\ca{Y} \dleq \ca{X}$ we have that $\ca{X} \dequal \ca{Y}$. Theorem \ref{theorem for every kleene space there is another one from below} implies that no Kleene space is minimal.

\begin{question}
\label{question existence of minimal space}
Does there exist a recursive tree $T$ such that \spat{T} is minimal?
\end{question}
Of course at most one of Questions \ref{question NT contains Kleene} and \ref{question existence of minimal space} has an affirmative answer.

We have seen that there are \dleq-incomparable Kleene spaces and that every Kleene space $\dleq$-strictly contains a Kleene space. One may ask if these two facts can be put together in a splitting-type theorem.

\begin{question}[Compare with Theorem 1 in \cite{binns_stephen_a_splitting_theorem_Medvedev_Muchnik_lattices}]
\label{question Kleene space contains incomparable subspaces}
Is it true that every Kleene space contains strictly with respect to $\dleq$ two subspaces which are $\dleq$-incomparable?\smallskip

Moreover, is it true that every Kleene space is strictly contained with respect to \dleq \ in two \dleq-incomparable Kleene spaces?
\end{question}

We recall the incomparability condition of Theorem \ref{theorem Fokina-Friedman-Toernquist}: for recursive trees $T$, $S$ let us write $T \dleq^{\rm s} S$ for the condition
\[
(\forall \alpha \in [S])(\exists \beta \in [T])[\beta \in \del(\alpha)],
\]
so that Theorem \ref{theorem Fokina-Friedman-Toernquist} reads that there exists a sequence $(T_i)_{\iin}$ of recursive trees, which are pairwise incomparable under $\dleq^{\rm s}$. (This is the \del \ version of Muchnik reducibility that we mentioned in the Introduction.)

It is clear that incomparability under  $\dleq^{\rm s}$ implies incomparability under \dleq \ of the corresponding spaces.

\begin{question}
\label{question stronger incomparability}
Can our incomparability results about Kleene spaces be strengthened to incomparability results under $\dleq^{\rm s}$?
\end{question}

Granted the rich structure of the classes of Kleene spaces one might ask if they somehow resemble the structure of \emph{recursively enumerable} Turing degrees.

\begin{question}
\label{question Kleene spaces and recursively enumerable degrees}
Are the classes of \del-isomorphism of Kleene spaces dense under \dleq? \tu{(}\cf \cite{sacks_the_recursively_enumerable_degrees_are_dense}\tu{)}\smallskip

Is $(\mathbb{Q},\leq_{\mathbb{Q}})$ embedded into the classes of \del-isomorphism of Kleene spaces? \tu{(}\cf \cite{kleene_post_the_upper_semilattice_of_degrees_of_recursive_unsolvability}\tu{)}
\end{question}

In the proof of Theorem \ref{theorem Fokina-Friedman-Toernquist} one uses a result in recursion theory (Harrington's Theorem \ref{theorem Harrington xi-incomparable}) about the levels of Turing jumps up to \ck \ and with a proper use of a compactness argument one derives a result about hyperdegrees in Kleene trees. It would be interesting to find other similar applications. This is closer to a question scheme rather than a simple question, but we nevertheless put it down for completeness.

\begin{question}
\label{question scheme}
Can we extend results about low-level Turing jumps to all levels $\xi < \ck$ and with a proper use of Kreisel compactness give results about hyperdegrees and in particular in connection with $\Pi^0_1$ subsets of the Baire space?
\end{question}

\subsection{Incomparable and minimal hyperdegrees} As we have seen in Corollary \ref{corollary incomparable hyperdegrees} incomparable hyperdegrees do occur in every recursively presented metric space which contains a \del-isomorphic copy of a Kleene space. It would be interesting to see if the latter is true for all uncountable spaces.

\begin{question}
\label{question incomparable hyperdegrees everywhere}
Given an uncountable recursively presented metric space \ca{X}, do they exist $x,y \in \ca{X}$ which are hyperarithmetically incomparable?
\end{question}

Therefore if Question \ref{question NT contains Kleene} is answered affirmatively then the preceding question has an affirmative answer as well. It would also be interesting to see if a perfect subset of pairwise hyperarithmetically incomparable members can exist in Kleene spaces.

\begin{question}
\label{question hyperdegrees perfect set}
Given a Kleene tree $T$ does there exist a non-empty perfect $P \subseteq [T]$ such that all $x \neq y$ in $P$ are hyperarithmetically incomparable?
\end{question}

A point $x$ in a recursively presented metric space $\ca{X}$ has \emph{minimal hyperdegree} if $x \not \in \del$ and for all $y \hleq x$ we have either $y \in \del$ or $x \hleq y$. Using ideas of Spector's construction of a minimal Turing degree \cite{spector_on_degrees_of_recursive_unsolvability}, and \emph{Sacks forcing} (or \emph{perfect tree forcing}), Gandy and Sacks \cf \cite{gandy_sacks_a_minimal_hyperdegree} proved the existence of a minimal hyperdegree, (see also \cite{sacks_higher_recursion_theory} Chapter IV Sections 4 and 5). One could ask if minimal hyperdegrees exist in every uncountable recursively presented metric space. The latter however is not correct even for Kleene spaces.

\begin{lemma}
\label{lemma minimal hyperdegree does not always occur}
There exists a Kleene space which contains no minimal hyperdegree.
\end{lemma}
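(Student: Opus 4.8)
\textit{Proof idea.} The plan is to build a single Kleene tree $T$ whose body $[T]$ contains no real of minimal hyperdegree. This is enough: by Theorem \ref{theorem counterexample to del with Baire (Kleene space)} the $\del$‑points of $\spat{T}$ are exactly the points of $T$, and those are recursive, hence not of minimal hyperdegree (minimality requires $x\notin\del$); so if no $x\in[T]$ has minimal hyperdegree, then no point of the Kleene space $\spat{T}$ does. Note also that $\hleq$ is tested against arbitrary reals (the relation $y\hleq x$ makes sense for $y$ in any recursively presented metric space, in particular $y\in\ca{N}$), so a witness for non‑minimality need not itself lie in $\spat{T}$.

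First I would fix, via Gandy's extension of Kleene's theorem (Theorem \ref{theorem Gandy extension of Kleene}), a $\Pi^0_1$ set $\ca{G}\subseteq\ca{N}\times\ca{N}$ with $\ca{G}_\alpha\neq\emptyset$ and $\ca{G}_\alpha\cap\del(\alpha)=\emptyset$ for every $\alpha\in\ca{N}$, and also a Kleene tree $K$ (Theorem \ref{theorem Kleene there exists a Kleene tree}). Then I would consider
\[
C=\set{x\in\ca{N}}{(x)_0\in[K]\ \&\ (x)_1\in\ca{G}_{(x)_0}}.
\]
Since $[K]$ and $\ca{G}$ are $\Pi^0_1$ and the maps $x\mapsto(x)_i$ are recursive, $C$ is a $\Pi^0_1$ subset of $\ca{N}$; and $C\neq\emptyset$ because $[K]\neq\emptyset$ and every section $\ca{G}_\alpha$ is non‑empty. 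By Lemma \ref{lemma pi-zero-one sets recursive tree} pick a recursive tree $T$ with $[T]=C$.

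The remaining steps are hyperdegree bookkeeping. For $x\in C$ both $(x)_0$ and $(x)_1$ are recursive in $x$, and $(x)_1\notin\del((x)_0)$ since $(x)_1\in\ca{G}_{(x)_0}$; by transitivity of $\hleq$ this forces $x\notin\del((x)_0)$, and in particular $x\notin\del$ (as $(x)_0\in[K]$ has no $\del$‑witnesses would already give this, or directly: $x\in\del$ would put $(x)_0\in\del\cap[K]=\emptyset$). Hence $C$ has no $\del$‑member, so $T$ is a Kleene tree and $\spat{T}$ is a Kleene space. Finally, for $x\in[T]=C$ put $y=(x)_0$: then $y\hleq x$ (recursive in $x$), $y\notin\del$ (as $y\in[K]$ and $K$ is a Kleene tree), and $x\notin\del(y)$ by the computation above; thus $\del\hless y\hless x$ and $y$ witnesses that $x$ is not of minimal hyperdegree. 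Combined with the reduction in the first paragraph, $\spat{T}$ contains no minimal hyperdegree.

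I do not expect a genuine obstacle; the only points needing care are the two just used — that $(x)_1\not\hleq(x)_0$ together with $(x)_1\hleq x$ really yields $x\not\hleq(x)_0$, and that the reduction of ``$\spat{T}$ has no minimal hyperdegree'' to ``$[T]$ has no real of minimal hyperdegree'' is legitimate, which relies on Theorem \ref{theorem counterexample to del with Baire (Kleene space)} identifying the $\del$‑points of $\spat{T}$ with $T$ and on the fact that the quantifier over $y$ in the definition of minimal hyperdegree ranges over all reals, so the external witness $y=(x)_0\in\ca{N}$ is admissible.
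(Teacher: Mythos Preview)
Your proof is correct. Both your argument and the paper's share the same core idea---encode in each point $x$ of the target set a witness $y\hless x$ with $y\notin\del$---but the implementations differ. The paper first builds the \sig\ set $Q=\set{\gamma}{(\gamma)_0,(\gamma)_1\in[K] \ \& \ (\gamma)_0\not\heq(\gamma)_1}$, appeals to Theorem~\ref{theorem incomparable hyperdegrees in Kleene spaces} (or the Gandy Basis argument) to see $Q\neq\emptyset$, argues that no $\gamma\in Q$ is minimal since otherwise $(\gamma)_0\heq\gamma\heq(\gamma)_1$, and then unfolds $Q$ into normal form $(\exists\beta)F(\alpha,\beta)$ to obtain the desired $\Pi^0_1$ set. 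You instead go straight to a $\Pi^0_1$ set by pairing a branch of $[K]$ with a point of Gandy's section $\ca{G}_{(x)_0}$ above it; this sidesteps both the incomparability result and the unfolding step, at the cost of the witness $(x)_0$ being merely $\hless x$ rather than hyperarithmetically incomparable with another coordinate. Your route is somewhat more self-contained, relying only on Theorems~\ref{theorem Gandy extension of Kleene} and~\ref{theorem Kleene there exists a Kleene tree}; the paper's route has the minor by-product of exhibiting the set $Q$ explicitly as a \sig\ set of non-minimal, non-\del\ points before passing to its $\Pi^0_1$ unfolding.
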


\begin{proof}
To begin with we need a non-empty \sig \ set $Q \subseteq \ca{N}$ whose members are not \del \ points and have non-minimal hyperdegree. A set $Q$, which satisfies these properties (as well as others) is constructed in \cite{harrison_recursive_pseudo_wellorderings}, but we can also give a proof. Fix a Kleene tree $K$ and take the set
\[
Q = \set{\gamma \in \ca{N}}{(\gamma)_0,(\gamma)_1 \in [K]^2 \ \& \ (\gamma)_0 \not \heq (\gamma)_1}.
\]
Clearly $Q$ is a \sig \ set. It is also easy to verify that $Q$ is non-empty: one way to see this is by applying Theorem \ref{theorem incomparable hyperdegrees in Kleene spaces}, another way is to apply the Gandy Basis Theorem and use the fact that the leftmost infinite branch of a Kleene tree has the same hyperdegree as the one of \W. For all $\gamma \in Q$ we have that  $(\gamma)_0, (\gamma)_1 \not \in \del$, since $K$ is a Kleene tree. It follows that no $\gamma \in Q$ is \del. Moreover no $\gamma \in Q$ has minimal hyperdegree, for otherwise it would follow that
\[
(\gamma)_0 \heq \gamma \heq (\gamma)_1
\]
contradicting that $\gamma \in Q$.

Now we choose a non-empty \sig \ set $Q$ as above and we put it down in normal form
\[
Q(\alpha) \iff (\exists \beta)F(\alpha,\beta)
\]
where $F$ is $\Pi^0_1$. Clearly $F$ is non-empty and does not contain \del \ members. Moreover if $(\alpha,\beta) \in F$ had minimal hyperdegree then, since $\alpha$ is recursive in $(\alpha,\beta)$ and $\alpha \not \in \del$, it would follow that $(\alpha,\beta) \hleq \alpha$. So we would have $(\alpha,\beta) \heq \alpha$ and $\alpha$ would have minimal hyperdegree, contradicting the choice of $Q$.

Finally $F$ is obviously recursively isomorphic to the body of a Kleene tree $T$, hence $[T]$ has no members of minimal hyperdegree.
\end{proof}

\begin{question}
\label{question minimal hyperdegree in a Kleene space}
Does there exist a Kleene space which contains a minimal hyperdegree?
\end{question}
An affirmative answer to the preceding question would probably provide a new method for proving the existence of minimal hyperdegrees, and this would perhaps be more interesting than the answer itself.

\subsection{Connections with lattice theory} Unlike various notions of reducibility it is not clear that the partial order \dleq \ admits a join, \ie that every two elements have a least upper bound. This brings us to the next question.

\begin{question}
\label{question upper semilattice}
Is it true that for all recursively presented metric spaces \ca{X} and \ca{Y} there exists a recursively presented metric space \ca{Z} which is the least upper bound with respect to \dleq \ of the set $\{\ca{X},\ca{Y}\}$?\smallskip

Equivalently is the poset $(\set{\spat{T}}{T \ \textrm{is a recursive tree}}, \dleq)$ an upper semi-lattice?
\end{question}

A good candidate for a join seems to be the operation of the sum, but still this claim is not clear even if $\ca{X} = \ca{Y}$.

\addtocounter{question}{-1}
\begin{question}[$\mathbf{\ast}$]
\label{question X sum X dequals X}
Is it true that $\spat{T} \oplus \spat{T} \dequal \spat{T}$ for all recursive trees $T$?
\end{question}

Evidently $\ca{X} \times \ca{X} \dequal \ca{X} \dequal \ca{N}$ for every perfect recursively presented metric space \ca{X}. One may ask if the same is true in the general case. This would provide us a way of encoding pairs of \ca{X} by one element of \ca{X}.

\addtocounter{question}{-1}
\begin{question}[$\mathbf{\dagger}$]
\label{question X times X dequals X}
Is it true that $\spat{T} \times \spat{T} \dequal \spat{T}$ for all recursive trees $T$?
\end{question}

Of course an affirmative answer to the preceding question would imply that $\ca{X} \dequal \ca{X} \oplus \ca{X} \dequal \ca{X} \times \ca{X}$ for all recursively presented metric spaces \ca{X}.

\subsection{Attempted Embeddings and incomparability}

We have seen that for every Kleene space there exists a \dleq-incomparable Kleene space. The question is whether this result can be extended to other spaces as well.

\begin{question}
\label{question for every space there is an incomparable space}
Given a recursive tree $T$ for which the space \spat{T} is uncountable and not \del-isomorphic to the Baire space does there exist a recursive tree $S$ such that the spaces \spat{T} and \spat{S} are $\dleq$-incomparable?
\end{question}
As we have seen in Theorem \ref{theorem about tree of attempts} one can give a partial answer to the preceding question under some conditions which hold in Kleene spaces.

\begin{question}
\label{question conditions of theorem about tree of attempts}
Given a recursive tree $T$ with uncountable body does there exist an uncountable \del \ set $C$ and some $\gamma \in [T] \setminus \del$ such that the set $\del(\gamma) \cap C$ is $\del(\gamma)$?
\end{question}
This question underlines the importance of having an affirmative answer to Question \ref{question NT contains Kleene}. Let us explain this in detail. Suppose that $T$ is a recursive tree with uncountable body and that $S$ answers Question \ref{question NT contains Kleene} for this $T$, \ie
$S$ is a Kleene tree and $\spat{S} \dleq \spat{T}$. From Corollary \ref{corollary del of gamma is dense in a Kleene tree} there exist $u \in S$ and $\gamma^\ast \in [S]$ such that $[S_u] \neq \emptyset$ and $\del(\gamma^\ast) \cap [S_u] = \emptyset$. Consider a \del-injection $f$ from \spat{S} into \spat{T} and take $C=[S_u]$ and $\gamma = f(\gamma^\ast) \in [T]$ (Lemma \ref{lemma converse of lemma bodyT less than bodyS in Kleene trees}). Then $\del(\gamma) \cap C = \emptyset$, $C$ is an uncountable $\Pi^0_1$ set and $\gamma \heq \gamma^\ast \not \in \del$. Hence the pair $(\gamma,C)$ witnesses that Question \ref{question conditions of theorem about tree of attempts} has an affirmative answer for $T$.

Now assume that Question \ref{question conditions of theorem about tree of attempts} has an affirmative answer for $T$ and let $(\gamma,C)$ be a witnessing pair. Since $C$ is a \del \ set from Theorem \ref{theorem alltogether}-(\ref{theorem 4D.9}) and Lemma \ref{lemma pi-zero-one sets recursive tree} there exists a recursive tree $S$ and a recursive function $f: \ca{N} \to \ca{N}$ which is injective on $[S]$ and $f[[S]] = C$. From Lemma \ref{lemma the inverse function is del and same hyperdegree} we have that
\[
\alpha \in \del(\gamma) \cap[S] \iff \alpha \in [S] \ \& \ f(\alpha) \in \del(\gamma) \cap C
\]
for all $\alpha \in \ca{N}$, and since $\del(\gamma) \cap C$ is a $\del(\gamma)$ set it follows that set $\del(\gamma) \cap [S]$ is $\del(\gamma)$. Hence the hypothesis of Theorem \ref{theorem about tree of attempts} is satisfied and thus we have that $\spat{T} \not \dleq \spat{\att{S}}$.

Putting the remarks of the preceding two paragraphs together we conclude to the following.
\begin{align*}
&\textrm{Question \ref{question NT contains Kleene} has an affirmative answer for $T$}\\
&\hspace{10mm}\Longrightarrow \ \textrm{Question \ref{question conditions of theorem about tree of attempts} has an affirmative answer for $T$}\\
&\hspace{15mm}\Longrightarrow \ \textrm{there exists a recursive tree $S$ such that $\spat{T} \not \dleq \spat{\att{S}}$}.
\end{align*}

This would give a partial answer to Question \ref{question for every space there is an incomparable space} for $T$. Now we deal with the other side of the incomparability of this question. A clue for the answer may lie in the structure of the set
\[
A_T = \set{x \in [T]}{(\exists \gamma \in \del(x)) [\gamma \in [\att{T}]]}.
\]
Clearly the set $A_T$ is \pii. What is less obvious is that $A_T$ is non-empty: from the result of Martin \cite{martin_proof_of_a_conjecture_of_friedman} there exists some $\alpha \in [T]$ which has the same hyperdegree as \W \  \ and so from the Kleene Basis Theorem  there exists some $\gamma \in [\att{T}]$ which is recursive in $\W$ -and hence $\del(\alpha)$. (In fact if $T$ is a Kleene tree one can take $\alpha$ to be the leftmost infinite branch of $T$ which from Lemma \ref{lemma leftmost branch is del or has hyperdegree W} has the same hyperdegree as \W.)

So the question is whether we are able to say anything else about $A_T$. Does it contain for example a non-empty perfect set? Or equivalently does it contain some $\gamma$ not in $L(\ckr{\gamma})$ (\cf \cite{sacks_higher_recursion_theory} Theorem 9.5 Ch. III)?

Let us assume for the sake of the discussion that the preceding set $A_T$ (with $[T]$ being uncountable) does not have a non-empty perfect subset. Suppose that there exists a recursive tree $S$ with uncountable body and a \del-recursive function $f: \ca{N} \to \ca{N}$ which carries $[S]$ in an injective way into $[T]$. We claim that there does not exist a \del-injection $\sigma: \spat{K} \inj \spat{T}$ for any $K \subseteq \att{S}$ with uncountable body. To see this assume otherwise, and notice that the set $\sigma[[K]] \cap [T]$ is uncountable \del. If $x \in [T]$ satisfies $x = \sigma(\gamma)$ for some $\gamma \in [K]$ we have in particular that $\gamma \in \del(x)$. Since $K \subseteq \att{S}$ the point $\gamma \in [K]$ gives rise to a $\del(\gamma)$-injection $\tau: \ca{C} \inj [S]$ and the function $\pi = f \circ \tau$ is a $\del(\gamma)$-injection from \ca{C} into $[T]$. Therefore from the relativized version of Theorem \ref{theorem characterization of isomorphic to the Baire space} there exists a $\del(\gamma)$-embedding of the complete binary tree into $T$, \ie there exists some $\gamma^\ast \in [\att{T}]$ such that $\gamma^\ast \in \del(\gamma) \subseteq \del(x)$. This shows that $x$ is a member of $A_T$ and so $\sigma[[K]] \cap [T]$ is a subset of $A_T$. From the Perfect Set Theorem $A_T$ has a non-empty perfect subset, contradicting our hypothesis.

\emph{Therefore if $T$ is a recursive tree with uncountable body, $A_T$ has no non-empty perfect subset and $S$ is a recursive tree for which there exists a \del-recursive function which carries $[S]$ in an injective way into $[T]$, then we have that $\spat{K} \not \dleq \spat{T}$ for all recursive trees $K \subseteq \att{S}$ with uncountable body.}

Putting all these remarks together we conclude to the following.

\begin{lemma}
\label{lemma conditions to produce incomparable spaces}
Suppose that $T$ is a recursive tree with uncountable body which satisfies the following.\smallskip

\tu{(1)} The set $\set{x \in [T]}{(\exists \gamma \in \del(x)) [\gamma \in [\att{T}]]}$ has no non-empty perfect subset.\smallskip

\tu{(2)} There exists a recursive tree $S$ with uncountable body, a \del-recursive function $f: \ca{N} \to \ca{N}$ and $\gamma \in [T]$ such that: \tu{(a)} the function $f$ is injective on $[S]$, \tu{(b)} $f[[S]] \subseteq [T]$, and \tu{(c)} the set $\del(\gamma) \cap [S]$ is $\del(\gamma)$.\smallskip

Then we have that
\[
\spat{T} \not \dleq \spat{\att{S}} \quad \textrm{and} \quad \spat{\att{S}} \not \dleq \spat{T}.
\]
\end{lemma}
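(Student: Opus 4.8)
The plan is to establish the two non-reducibilities by two almost independent arguments, both of which are sketched in the discussion preceding the statement; what remains is to assemble them and verify the measure-free details. The inequality $\spat{T}\not\dleq\spat{\att{S}}$ needs essentially nothing new: hypothesis~(2) supplies a recursive tree $S$ together with a point $\gamma\in[T]\setminus\del$ for which $\del(\gamma)\cap[S]$ is a $\del(\gamma)$ set (this last clause has content only when $\gamma\notin\del$, which is the case at issue), and this is exactly the hypothesis of the first part of Theorem~\ref{theorem about tree of attempts}, whose conclusion is precisely $\spat{T}\not\dleq\spat{\att{S}}$. Conditions~(1), (2)(a) and (2)(b) play no role in this half.

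For $\spat{\att{S}}\not\dleq\spat{T}$ I would argue by contradiction. Assume $\sigma:\spat{\att{S}}\inj\spat{T}$ is a \del-injection and set $A=\sigma[[\att{S}]]\cap[T]$, viewed as a subset of \ca{N}. First I would check that $A$ is an uncountable \sig \ set: it is \sig \ because the graph of $\sigma$ is \sig \ and $[\att{S}]$, $[T]$ are closed (and a \sig \ subset of $[T]$ is \sig \ in \ca{N} by Theorem~\ref{theorem pointclasses of Nt}), and it is uncountable because $[\att{S}]$ is uncountable, $\sigma$ is injective on it, and $\spat{\att{S}}\setminus[\att{S}]=\att{S}$ is countable. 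The uncountability of $[\att{S}]$ is the one point here that is not purely formal: if $\spat{S}$ is not \del-isomorphic to \ca{N} it follows from Lemma~\ref{lemma attT is a Kleene tree} together with the fact that Kleene trees have uncountable body, while if $\spat{S}$ is \del-isomorphic to \ca{N} then by Theorem~\ref{theorem characterization of isomorphic to the Baire space} $S$ contains a copy of $2^{<\om}$, into which, by the argument in the proof of Remark~\ref{remark tree of attempts}, there are continuum many embeddings of $2^{<\om}$, each yielding a distinct branch of $\att{S}$.

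The crux is then to show $A\subseteq A_T$, where $A_T=\set{x\in[T]}{(\exists\gamma\in\del(x))[\gamma\in[\att{T}]]}$ is the set of hypothesis~(1). Fix $x\in A$, say $x=\sigma(\gamma)$ with $\gamma\in[\att{S}]$. Since $\sigma$ is a \del-injection on the \sig \ set $[\att{S}]$, Lemma~\ref{lemma the inverse function is del and same hyperdegree} gives $x\heq\gamma$, so $\gamma\in\del(x)$. As $\gamma\in[\att{S}]$, Remark~\ref{remark tree of attempts} furnishes a $\del(\gamma)$-injection $\tau:\ca{C}\inj[S]$, and then, using (2)(a) and (2)(b), the composition $f\circ\tau$ is a $\del(\gamma)$-injection of \ca{C} into $[T]\subseteq\spat{T}$, hence a $\del(x)$-injection since $\gamma\in\del(x)$. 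Applying the relativized form of Theorem~\ref{theorem characterization of isomorphic to the Baire space} (standard relativization with parameter $x$, passing through Schr\"{o}der-Bernstein to see that $\spat{T}$ is $\del(x)$-isomorphic to \ca{N}), the tree $T$ contains a $\del(x)$-copy of $2^{<\om}$; that is, there is $\gamma^\ast\in[\att{T}]$ with $\gamma^\ast\in\del(x)$, so $x\in A_T$. Consequently $A_T$ contains the uncountable \sig \ set $A$, which therefore has a member not in \del, and by the Effective Perfect Set Theorem~\ref{theorem alltogether}-(\ref{theorem 4F.1}) $A$, and hence $A_T$, contains a non-empty perfect subset, contradicting hypothesis~(1). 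Therefore no such $\sigma$ exists.

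I expect the main obstacle to be exactly the two bookkeeping points flagged above: (i) verifying that uncountability of $[S]$ propagates to $[\att{S}]$ (without this $\spat{\att{S}}$ could be countable, whence $\spat{\att{S}}\not\dleq\spat{T}$ would be vacuously false), and (ii) pinning down the relativized version of Theorem~\ref{theorem characterization of isomorphic to the Baire space} for a non-hyperarithmetical parameter, together with the passage back from a $\del(x)$-embedding of $2^{<\om}$ into $T$ to a branch of $\att{T}$ lying in $\del(x)$. Both are routine given the relativization apparatus already used in the paper, and everything else is manipulation of \del-recursive functions.
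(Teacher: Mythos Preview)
Your proposal is correct and follows essentially the same route as the paper: the lemma is stated there as a summary of the discussion immediately preceding it, and you have reconstructed that discussion faithfully---the first half via Theorem~\ref{theorem about tree of attempts}, the second by showing $\sigma[[\att{S}]]\cap[T]\subseteq A_T$ and invoking the perfect set theorem. The paper in fact notes that $\sigma[[\att{S}]]\cap[T]$ is \del\ (not merely \sig), but your weaker observation suffices. One small caveat: your parenthetical that condition~(2)(c) ``has content only when $\gamma\notin\del$'' is not literally right---the condition is nontrivial even for $\gamma\in\del$---and the hypotheses as written do not force $\gamma\notin\del$; the paper's preceding discussion (via Question~\ref{question conditions of theorem about tree of attempts}) tacitly assumes $\gamma\in[T]\setminus\del$, and that is what Theorem~\ref{theorem about tree of attempts} actually requires, so you should simply take it as an implicit hypothesis rather than try to derive it.
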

Notice that the space \spat{T} is not \del-isomorphic to the Baire space for every $T$ which satisfies the hypothesis of the preceding lemma.

\begin{question}
\label{question about the structure of AT}
Suppose that $T$ is a recursive tree with uncountable body. What properties does the non-empty  \pii \ set
\[
A_T=\set{x \in [T]}{(\exists \gamma \in \del(x)) [\gamma \in [\att{T}]]}
\]
have?
\end{question}
One may also consider the set
\[
B_T = \set{x \in [T]}{(\exists \gamma \ \textrm{arithmetical in} \ x) [\gamma \in [\att{T}]]}.
\]
Clearly $B_T$ is a \del \ subset of $A_T$, but it might be empty. However if $T$ is a Kleene tree and one manages to prove that \W \ is in fact arithmetical in the leftmost infinite branch of $T$ then by applying the  Kleene Basis Theorem we will have that $B_T$ is non-empty (with no \del \ elements). From the Effective Perfect Set Theorem it would follow that $B_T$ and subsequently $A_T$ has a perfect subset. This leads us to the following.
\addtocounter{question}{-1}
\begin{question}[$\mathbf{\ast}$]
Does there exist a Kleene tree $T$ such that \W \ is arithmetical in $$\set{t \in \om}{[T_{\dec{t}}] = \emptyset}?$$
\end{question}
\addtocounter{question}{-1}
\begin{question}[$\mathbf{\dagger}$]
What else can be said about the relation between \spat{T} and \spat{\att{T}}?
\end{question}

\subsection{Spector-Gandy spaces} Besides some analogous results with Kleene spaces some key aspects about Spector-Gandy spaces remain open for investigation.

\begin{question}
\label{question product of Spetor-Gandy trees}
Is the product of two Spector-Gandy spaces \del-isomorphic to a Spector-Gandy space?
\end{question}

If the answer to the preceding question is negative then from Theorem \ref{theorem product of Spector-Gandy spaces is neither Kleene nor Baire} there would be Spector-Gandy trees $T, S$ such that the space $\spat{T} \times \spat{S}$ does not fall into the categories of Kleene spaces, Spector-Gandy spaces, \om \ and \ca{N}. In particular $\spat{T} \times \spat{S}$ would answer Question \ref{question spaces other than Kleene and SG} below.

It would also be interesting to see if Question \ref{question product of Spetor-Gandy trees} can be refuted by a single Spector-Gandy tree, \ie if there exists a Spector-Gandy tree $T$ such that the space $\spat{T} \times \spat{T}$ is not \del-isomorphic to a Spector-Gandy space. In this case we would have in particular that $\spat{T} \times \spat{T} \not \dequal \spat{T}$ and Question \ref{question X times X dequals X} $(\dagger)$ would also be answered.

We have seen that strictly decreasing and strictly increasing under \dleq \ sequences of Spector-Gandy spaces exist. One can ask if \dleq \ restricted on these spaces has no maximal or minimal elements, as it is the case with Kleene spaces.

\begin{question}
\label{question chains of SG spaces}
Is every Spector-Gandy space the top \tu{(}bottom\tu{)} of a strictly decreasing \tu{(}increasing\tu{)} under \dleq \ sequence of Spector-Gandy spaces?
\end{question}

It is easy to verify that the sum of a Spector-Gandy space and a Kleene space is a Spector-Gandy space. A clue for finding a \dleq \ strictly increasing sequence of Spector-Gandy spaces, which starts with a given Spector-Gandy space, might lie in the proof of Theorem \ref{theorem strictly increasing Kleene spaces}.

\addtocounter{question}{-1}
\begin{question}[$\mathbf{\ast}$]
If $T$ is a Spector-Gandy tree does there exist a Kleene tree $K$ such that $\spat{K} \not \dleq \spat{T}$?
\end{question}

Spector-Gandy spaces unlike Kleene spaces are clearly not closed from below with respect to \dleq, since every Spector-Gandy space  contains a Kleene space. Moreover there is little evidence to suggest that Spector-Gandy spaces are closed from above.

\begin{question}
\label{question SG closed from above}
Is it possible to have $\spat{T} \dless \ca{X} \dless \ca{N}$ with $\ca{X} \not \dequal \spat{S}$ for every Spector-Gandy tree $S$?
\end{question}
Of course no \ca{X} as in the preceding question is a Kleene space (\cf Corollary \ref{corollary Kleene spaces are closed from below}). This raises the following.
\begin{question}
\label{question spaces other than Kleene and SG}
Does there exist a recursively presented metric space which is not \del-isomorphic to any Kleene space, Spector-Gandy space or the Baire space?
\end{question}
One can state the analogue of Question \ref{question Kleene space contains incomparable subspaces}.

\begin{question}
\label{question splitting in SG spaces}
Is it true that every Spector-Gandy space contains strictly under $\dleq$ two Spector-Gandy subspaces which are $\dleq$-incomparable?\smallskip

Moreover, is it true that every Spector-Gandy space is contained strictly with respect to $\dleq$ into two Spector-Gandy spaces which are $\dleq$-incomparable?
\end{question}

\subsection{Connections with \pii \ equivalence relations} The relation $\dequal$ induces in a natural way an equivalence relation on the set of recursive trees.
\begin{definition}\normalfont
\label{definition pii equivalence relation}
Define
\[
\RTr = \set{T \in \Tr}{T \ \textrm{is a recursive tree}}
\]
and the relation $\equiv$ by
\[
T \equiv S \iff \spat{T} \dequal \spat{S}
\]
for $T, S \in \RTr$.
\end{definition}

\begin{lemma}
\label{lemma equivalence relation is pii}
The preceding equivalence relation $\equiv$ is the intersection of a \pii \ subset of $\Tr^2$ \ with $\RTr^2$.
\end{lemma}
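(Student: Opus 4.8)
The statement is a complexity computation; here is the plan.

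\emph{Step 1 (reduction).} By the effective Schr\"oder--Bernstein theorem (the remark following Definition \ref{definition of dleq}) one has $\spat{T}\dequal\spat{S}$ iff $\spat{T}\dleq\spat{S}$ and $\spat{S}\dleq\spat{T}$, and $\pii$ is closed under conjunction, so it suffices to produce a $\pii$ relation $\Theta\subseteq\Tr^2$ with
\[
\Theta(T,S)\iff\spat{T}\dleq\spat{S}\qquad\text{whenever }(T,S)\in\RTr^2 ;
\]
then $E:=\set{(T,S)}{\Theta(T,S)\ \&\ \Theta(S,T)}$ is $\pii$ and $\equiv\,=E\cap\RTr^2$. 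To make the two varying spaces uniform I would pass through the Baire space. By Lemma \ref{lemma the space Nt is embedded in the Baire space}, $\rho^T$ is an isometry of $\spat{T}$ onto $[S^T]$, the assignment $T\mapsto S^T$ is a recursive function $\Tr\to\Tr$, and for recursive $T$ the function $\rho^T$ together with its partial inverse is recursive uniformly in $T$ (the standard computations behind Lemma \ref{lemma transition between nhbds Nt and N}). Hence, for $T,S\in\RTr$,
\[
\spat{T}\dleq\spat{S}\iff\text{there is a }\del\text{-recursive }g:\ca{N}\to\ca{N}\text{ injective on }[S^T]\text{ with }g[[S^T]]\subseteq[S^S],
\]
the forward direction by conjugating a $\del$-injection with $\rho^T,\rho^S$ and extending by a fixed recursive point, the backward direction by conjugating back (using that $\rho^T$ is recursive and $\rfn{e}$, resp.\ $g$, is $\del$-recursive on the relevant $\Pi^0_1$ set).

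\emph{Step 2 (encoding by a number).} Fix a set $\mathsf{G}\subseteq\om\times\ca{N}\times\om$ universal for $\del\upharpoonright(\ca{N}\times\om)$ (Theorem \ref{theorem alltogether}-(\ref{theorem parametrization over N})), so $\mathsf{G}$ is in $\del$, and let $\rfn{e}:=\rfn{e}^{\ca{N},\ca{N}}_{\del}$ be the resulting partial $\del$-recursive functions; every total $\del$-recursive $\ca{N}\to\ca{N}$ equals some $\rfn{e}$. Put $\Gamma(e,\alpha,\beta):\iff(\forall s)[\beta\in N(\ca{N},s)\leftrightarrow\mathsf{G}(e,\alpha,s)]$; since $\mathsf{G}$ is $\del$ and $\del$ is closed under number quantifiers, $\Gamma$ is a $\del$ relation of $(e,\alpha,\beta)$, holding exactly when $\rfn{e}(\alpha)\!\downarrow$ and $\rfn{e}(\alpha)=\beta$. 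Also ``$\alpha\in[S^T]$'' is a $\Pi^0_1$ relation of $(\alpha,T)$ (the relation $Q$ of Section \ref{section spaces Nt} composed with the recursive map $T\mapsto S^T$), and ``$\rfn{e}(\alpha)\!\downarrow$'' is a $\del$ relation of $(e,\alpha)$: being defined means that $\set{s}{\mathsf{G}(e,\alpha,s)}$ is the neighbourhood filter of a point of $\ca{N}$, which --- using completeness of $\ca{N}$ --- unwinds to a number-quantifier condition over the $\del$ predicate $\mathsf{G}$ (cf.\ 7A in \cite{yiannis_dst}). Now let $\Theta(T,S)$ hold iff there is $e\in\om$ with
\begin{itemize}
\item[(i)] $(\forall\alpha,\beta,\beta')\big[\alpha\in[S^T]\ \&\ \Gamma(e,\alpha,\beta)\ \&\ \Gamma(e,\alpha,\beta')\to\beta=\beta'\big]$;
\item[(ii)] $(\forall\alpha,\beta)\big[\alpha\in[S^T]\ \&\ \Gamma(e,\alpha,\beta)\to\beta\in[S^S]\big]$;
\item[(iii)] $(\forall\alpha,\alpha',\beta)\big[\alpha\in[S^T]\ \&\ \alpha'\in[S^T]\ \&\ \Gamma(e,\alpha,\beta)\ \&\ \Gamma(e,\alpha',\beta)\to\alpha=\alpha'\big]$;
\item[(iv)] $(\forall\alpha)\big[\alpha\in[S^T]\to\rfn{e}(\alpha)\!\downarrow\big]$.
\end{itemize}
Together (i)--(iv) say exactly that $\rfn{e}$ is a well-defined injection of $[S^T]$ into $[S^S]$ that is defined on all of $[S^T]$, so by Step 1 $\Theta(T,S)\iff\spat{T}\dleq\spat{S}$ for $T,S\in\RTr$ (conjugating by $\rho^T,\rho^S$ and recalling $\rfn{e}$ is $\del$-recursive on its domain).

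\emph{Step 3 (complexity and the main point).} In (i)--(iii) the matrix after the $(\forall\ldots)$ is a Boolean combination of $\Pi^0_1$ and $\del$ relations, hence $\del$; in (iv) the matrix ``$\alpha\in[S^T]\to\rfn{e}(\alpha)\!\downarrow$'' is again $\del$. Since $\pii$ is closed under $\forall^{\ca{N}}$ of a $\del$ relation, each of (i)--(iv) is $\pii$ in $(e,T,S)$; a conjunction of $\pii$ relations is $\pii$, and prefixing ``$(\exists e\in\om)$'' does not leave $\pii$ (either because $\pii$ is closed under countable unions, or by the Theorem on Restricted Quantification \ref{theorem alltogether}-(\ref{theorem 4D.3}), every natural number being a $\del$ point). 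Hence $\Theta$ is $\pii$, and $E=\set{(T,S)}{\Theta(T,S)\ \&\ \Theta(S,T)}$ is a $\pii$ subset of $\Tr^2$ with $\equiv\,=E\cap\RTr^2$. The one place the argument genuinely needs care is the claim in Step 2 that ``$\rfn{e}(\alpha)\!\downarrow$'' is $\del$ (and so clause (iv) stays $\pii$, rather than being pushed up to $\Sigma^1_2$ by a $\forall\exists$ alternation hidden in ``totality''): this is the effective-metric-space analogue of the classical fact that the index set of total functions is arithmetical, and it rests on values lying in $\ca{N}$ and on completeness; everything else is routine bookkeeping with the Kleene pointclasses.
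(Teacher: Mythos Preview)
Your Step 1 reduction is fine and matches the paper's approach. The genuine gap is in Step 2: you claim to fix a set $\mathsf{G}\subseteq\om\times\ca{N}\times\om$ in $\del$ which is universal for $\del\upharpoonright(\ca{N}\times\om)$, citing Theorem \ref{theorem alltogether}-(\ref{theorem parametrization over N}). No such set exists. The parametrization theorem applies to the $\Sigma$- and $\Pi$-classes, not to the ambiguous classes: if $\mathsf{G}$ were $\del$ and universal for $\del\upharpoonright(\ca{N}\times\om)$, then fixing any recursive $\alpha_0$ the set $H(e,n):=\mathsf{G}(e,\alpha_0,n)$ would be a $\del$ set universal for $\del\upharpoonright\om$, and the usual diagonal $D(n)\iff\neg H(n,n)$ gives a contradiction. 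Consequently your relation $\Gamma(e,\alpha,\beta)$ is not $\del$, and the complexity bookkeeping in Step 3 collapses; in particular the claim that ``$\rfn{e}(\alpha)\!\downarrow$'' is $\del$ (which you flag as the delicate point) has no basis once $\mathsf{G}$ is gone.

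The paper circumvents this by coding $\del$-recursive functions through a $\pii$ universal set rather than a nonexistent $\del$ one. One takes $G\subseteq\om\times\ca{N}^2$ universal for $\pii$, uniformizes it in the last variable (Kondo) to get a $\pii$ set $G^\ast$, and lets $C=\{e:(\forall\alpha)(\exists\beta\in\del(\alpha))G^\ast(e,\alpha,\beta)\}$, which is $\pii$ by Restricted Quantification and serves as the set of codes of total $\del$-recursive maps. The key move is that for $e\in C$ the graph $G^\ast_e$ is also computed by a $\sig$ relation $H$ (namely $H(e,\alpha,\beta)\iff(\forall\beta^\ast\in\del(\alpha))[G^\ast(e,\alpha,\beta^\ast)\to\beta^\ast=\beta]$), so one can use $G^\ast$ or $H$ as convenient to keep each clause $\pii$. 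The outer $(\exists e\in C)$ is then an $\exists^\om$ over a $\pii$ matrix with the side condition $e\in C\in\pii$, which stays $\pii$. Your architecture is the same, but you need this $\pii$/$\sig$ double description of the graph in place of the impossible $\del$ universal set.
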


\begin{proof}
It is enough to show that the relation
\[
R(T,S) \iff T,S \ \textrm{are recursive trees} \ \longrightarrow \ \spat{T} \dleq \spat{S},
\]
is \pii. For the needs of this proof we view the spaces $\spat{T}$ as $\Pi^0_1$ subsets of \ca{N}. We will encode all \del-recursive functions $f: \ca{N} \to \ca{N}$ in a \pii-way. The method is along the lines of the coding of $\Gamma$-recursive functions we gave in Section \ref{section Kleene spaces}.

Consider a set $G \subseteq \om \times \ca{N}^2$ which is universal for \pii \ and uniformize it with respect to the last variable by a \pii \ set $G^\ast$. We define $C \subseteq \om$ by
\begin{align*}
e \in C \iff& (\forall \alpha)(\exists \beta \in \del(\alpha))G^\ast(e,\alpha,\beta).
\end{align*}
Clearly $C$ is a \pii \ set and moreover for all functions $f: \ca{N} \to \ca{N}$ we have that
\[
f \ \textrm{is \del-recursive} \ \iff (\exists e)(\forall \alpha)[e \in C \ \& \ G^\ast(e,\alpha,f(\alpha))].
\]
In other words $C$ serves as set of codes of all \del \ functions on \ca{N} to \ca{N}. We also define the set $H \subseteq \om \times \ca{N}^2$ by
\[
H(e,\alpha,\beta) \iff (\forall \beta^\ast \in \del(\alpha))[G^\ast(e,\alpha,\beta^\ast) \ \longrightarrow \ \beta^\ast = \beta]
\]
so that $H$ is \sig \ and for all $e \in C, \alpha, \beta \in \ca{N}$ we have that
\[
G^\ast(e,\alpha,\beta) \iff H(e,\alpha,\beta).
\]
Using these remarks it is easy to see that
\begin{align*}
&\spat{T} \dleq \spat{S} \iff (\exists e \in C)\\
                            & \big \{(\forall \alpha)(\exists \beta \in \del(\alpha))[\alpha \in \spat{T} \ \longrightarrow \ G^\ast(e,\alpha,\beta) \ \& \ \beta \in \spat{S}] \\
                            &\ \ \& \ (\forall \alpha,\alpha',\beta)[[\alpha, \alpha'\in \spat{T} \ \& \ H(e,\alpha,\beta) \ \& \ H(e,\alpha',\beta)] \longrightarrow \ \alpha = \alpha'] \big \},
\end{align*}
from which follows that the preceding relation $R$ is \pii.
\end{proof}
We may view $\equiv$ as an equivalence relation on \om \ by identifying a recursive tree (in a \del-way) with a natural number, say its code through a universal $\Sigma^0_1$ set. This way the preceding lemma says that $\equiv$ is a \pii \ set.

We conjecture that $\equiv$ is not \sig. In fact we will ask for something stronger. An equivalence relation $\approx$ on \ca{X} is $\tboldsymbol{\Pi}^1_1$-\emph{hard} if for all $\tboldsymbol{\Pi}^1_1$ equivalence relations $\approx'$ on \ca{N} there exist a continuous function $f: \ca{N} \to \ca{X}$ such that
\[
\alpha \approx' \beta \iff f(\alpha) \approx f(\beta)
\]
for all $\alpha, \beta \in \ca{N}$, \ie $\approx'$ is \emph{continuously reduced} to $\approx$. G. Hjorth \cite{hjorth_universal_coanalytic_sets} has proved that there exists a \pii-equivalence relation on \ca{N} which is $\tboldsymbol{\Pi}^1_1$-hard.

In our case it does not make sense to ask if $\equiv$ is $\tboldsymbol{\Pi}^1_1$-hard simply because its domain is countable. But we may pose the countable version of this question.
\enlargethispage{5mm}
\begin{question}
\label{question equivalence relation pii complete}
Suppose that $\approx$ is a \pii \ equivalence relation on \om. Does there exist a \del-recursive function $f: \om \to \Tr$ such that
\[
n \approx m \iff f(n) \equiv f(m)
\]
for all $n,m$?
\end{question}

\ifx\undefined\bysame
\newcommand{\bysame}{\leavevmode\hbox to3em{\hrulefill}\,}
\fi

\end{document}